\theoremstyle{plain}
\newtheorem{theorem}{Theorem}[section]
\newtheorem{lemma}[theorem]{Lemma}
\newtheorem{proposition}[theorem]{Proposition}
\newtheorem{corollary}[theorem]{Corollary}
\theoremstyle{remark}
\newtheorem{defn}{Definition}[section]
\newtheorem{defnprop}{Definition-Proposition}[section]
\newtheorem{remark}[theorem]{Remark}
\newtheorem{example}[theorem]{Example}
\newtheorem{conjecture}[theorem]{Conjecture}
\newcommand{\gr}{{\mathrm{gr}}}
\title{On a conjecture of Hosono--Lee--Lian--Yau}
\author{Andrew Harder and Sukjoo Lee}
\date{}
\begin{document}
\begin{abstract}
    We extend the mirror construction of singular Calabi–Yau double covers, introduced by Hosono, Lee, Lian, and Yau, to a broader class of singular Calabi–Yau $(\mathbb{Z}/2)^k$-Galois covers, and prove Hodge number duality for both the original and extended mirror pairs. A main tool in our approach is an analogue of the Cayley trick, which relates the de Rham complex of the branched covers to the twisted de Rham complex of certain Landau–Ginzburg models. In particular, it reveals direct relations between the Hodge numbers of the covers and the irregular Hodge numbers of the associated Landau--Ginzburg models. This construction is independent of mirror symmetry and may be of independent interest.
\end{abstract}
\maketitle

\section{Introduction}
\subsection{Double covers and Landau--Ginzburg models}\label{s:intro doublecover}
Let $X$ denote a smooth, projective variety. All varieties in this paper will be defined over $\mathbb{C}$. The Hodge theory of a branched double cover
\[
\rho: \widehat{X} \to X
\]
is a classical topic that has been extensively studied; for instance in the foundational work of Esnault and Viehweg \cite{Esnault1992vanishing} (see also \cite{Arapura2014cyclic, Schnell2016vanishing}). Without loss of generality, we may assume that the double cover $\widehat{X}$ is determined by a global section $\sigma \in \Gamma(X, L^{ 2})$ of a line bundle $L$ on a smooth variety $X$. The branch locus is the zero locus of $\sigma$, denoted by $B = \{\sigma = 0\}$, and the ramification locus is given by the preimage of $B$, $R:=\rho^{-1}(B)$.  When $B$ is smooth (or more generally, a simple normal crossings divisor), the de Rham complex $\mathrm{DR}({\widehat{X}}) := (\Omega^\bullet_{\widehat{X}}, \mathrm{d})$, equipped with the usual Hodge filtration $F^\bullet$, admits a natural decomposition induced by the canonical $(\mathbb{Z}/2)$-action on $\widehat{X}$:
\begin{equation}\label{e:intro DR Z/2}
\mathrm{R}\rho_*\mathrm{DR}({\widehat{X}}) = \mathrm{DR}({\widehat{X}})^{(+)} \oplus \mathrm{DR}({\widehat{X}})^{(-)}
\end{equation}
where the superscripts $(+)$ and $(-)$ indicate the invariant and coinvariant part, respectively. The invariant part is isomorphic to the de Rham complex of $X$, $\mathrm{DR}(X)$, while the coinvariant part is naturally identified with the following twisted de Rham complex on the open subset $X \setminus B$, 
\[
\mathrm{DR}(X \setminus B, \log \sqrt{\sigma}) := (\Omega_X^\bullet(\log B) \otimes L^{-1},\mathrm{d} + \tfrac{1}{2}\mathrm{d}\log \sigma).
\]
We often denote its cohomology by $H^*(\widehat{X}\setminus R)^{(-)}$ and Hodge filtration by $F^\bullet$, which is simply defined as the stupid filtration as usual. Then the identification in \eqref{e:intro DR Z/2} respects the Hodge filtration $F^\bullet$. 

An interesting feature of the setup above is that the data $(L, \sigma)$ gives rise to two distinct Landau--Ginzburg models. The first one is a pair $(\mathrm{Tot}(L^{-2}), g_{1,\sigma})$ where the potential $g_{1,\sigma}:=\langle \sigma, -\rangle$ is the linear map induced by the section $\sigma$. The second one is a pair $(\mathrm{Tot}(L^{-1}), g_{2,\sigma})$ where the potential $g_{2,\sigma}=\langle \sigma, (-)^2 \rangle$ is the quadratic function induced by $\sigma$. One of our goals is to study Hodge theory of these Landau--Ginzburg models and verify the relationship with Hodge theory of the double cover $\widehat{X}$ (see Theorem \ref{t:intro hyps}).


The Hodge theory of Landau--Ginzburg models has been developed in recent years \cite{katzarkov2017bogomolov, esnault20171, yu2014irregular, shamoto2018hodge, harder, Lee2024mirrorPW}. Recall that for any Landau--Ginzburg model $(Y,f: Y \to \mathbb{A}^1)$, the relevant de Rham cohomology is given by the cohomology of the twisted de Rham complex $\mathrm{DR}(Y, f):=(\Omega_Y^\bullet, \mathrm{d} + \mathrm{d}f)$ where we simply write $\mathrm{d}f:=\mathrm{d}f\wedge$. The Hodge filtration on cohomology was constructed by Yu \cite{yu2014irregular}, which is called the \emph{irregular Hodge filtration}, denoted by $F^\bullet_{\mathrm{irr}}$. This filtration is a finite $\mathbb{Q}$-filtration where the non-integrality of the grading reflects the multiplicities of the polar divisor of $f$. We review some basic properties of the irregular Hodge filtration in Section \ref{s:coverLG}.

According to the general philosophy of the Cayley method, the Hodge-theoretic data of the Landau--Ginzburg model $(\mathrm{Tot}(L^{-2}), g_{1,\sigma})$ reflects the Hodge theory of the branch hypersurface $B \subset X$. More precisely, there is a filtered isomorphism
\[
(\mathrm{DR}_B(X), F^\bullet) \cong (\mathrm{R}\pi_*\mathrm{DR}(\mathrm{Tot}(L^{-2}), g_{1,\sigma}), F^\bullet_{\mathrm{irr}}),\qquad \pi : \mathrm{Tot}(L^{-2}) \longrightarrow X.
\]
where $\mathrm{DR}_B(X)$ denotes the de Rham complex of $X$ localized to $B$. A natural question then arises: \emph{what kind of Hodge-theoretic information is encoded in the Landau--Ginzburg model $(\mathrm{Tot}(L^{-1}), g_{2,\sigma})$}?

In the first part of this article, we answer this question. The key observation is that the Landau--Ginzburg model $(\mathrm{Tot}(L^{-1}), g_{2,\sigma})$ is the double cover of $(\mathrm{Tot}(L^{-2}), g_{1,\sigma})$ branched along the zero section, so that there is a $(\mathbb{Z}/2)$-decomposition similar to \eqref{e:intro DR Z/2}. 
\begin{theorem}[Theorem \ref{t : single direct sum decomposition}]\label{t:intro hyps}
Let the notation be as above. There is a filtered isomorphism
\[
\begin{aligned}
    H^*(\mathrm{Tot}(L^{-1}), g_{2,\sigma}) & \cong H^*(\mathrm{Tot}(L^{-2}), g_{1,\sigma}) \oplus H^{*-1}(\widehat{X} \setminus R)^{(-)}(1/2) \\
    &\cong H_B^*(X) \oplus H^{*-1}(\widehat{X} \setminus R)^{(-)}(1/2).
\end{aligned}
\]
Here, $H^*(\widehat{X}\setminus R)^{(-)}$ and $H_B^*(X)$ carry their usual pure Hodge structures, and the notation $(1/2)$ indicates a rational shift by $1/2$ (see \S \ref{sss:lg} for details).
\end{theorem}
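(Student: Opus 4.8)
The plan is to realise $(\mathrm{Tot}(L^{-1}), g_{2,\sigma})$ as a \emph{branched double cover} of the Landau--Ginzburg model $(\mathrm{Tot}(L^{-2}), g_{1,\sigma})$ and then to run the $(\mathbb{Z}/2)$-decomposition of \eqref{e:intro DR Z/2} in the twisted de Rham setting. Let $q\colon \mathrm{Tot}(L^{-1}) \to \mathrm{Tot}(L^{-2})$ be the fibrewise squaring morphism; since $L^{-2} = (L^{-1})^{\otimes 2}$, this realises $\mathrm{Tot}(L^{-1})$ as the double cover of $\mathrm{Tot}(L^{-2})$ branched along the zero section $Z$, namely the cover attached to the tautological section $\tau$ of $\pi^{*}(L^{-1})^{\otimes 2}$ that cuts out $Z$ (here $\pi\colon \mathrm{Tot}(L^{-2})\to X$). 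A local computation in fibre coordinates shows $q^{*}g_{1,\sigma} = g_{2,\sigma}$, because $g_{1,\sigma}$ is fibrewise linear; hence $q$ is a morphism of Landau--Ginzburg models, it is finite, and the deck involution $\iota\colon v\mapsto -v$ fixes $g_{2,\sigma}$.

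Because $\iota^{*}g_{2,\sigma} = g_{2,\sigma}$ and $q$ is finite, $\mathrm{DR}(\mathrm{Tot}(L^{-1}), g_{2,\sigma})$ carries a $(\mathbb{Z}/2)$-action and $\mathrm{R}q_{*} = q_{*}$ of it splits, exactly as in \eqref{e:intro DR Z/2}, into an invariant and a coinvariant part; this splitting is compatible with the stupid filtration and --- once an $\iota$-equivariant good compactification has been chosen (see below) --- with the irregular Hodge filtration. The invariant part is $\mathrm{DR}(\mathrm{Tot}(L^{-2}), g_{1,\sigma})$, whose hypercohomology is, by the filtered Cayley trick recalled in \S\ref{s:intro doublecover}, $(\mathrm{DR}_B(X), F^{\bullet})$, i.e.\ $H_B^{*}(X)$: this produces the second summand and the second displayed isomorphism. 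Repeating the identification of the coinvariant part from the untwisted case, the coinvariant complex is
\[
\mathcal{K}^{\bullet} := \bigl(\Omega^{\bullet}_{\mathrm{Tot}(L^{-2})}(\log Z)\otimes \pi^{*}L,\ \mathrm{d} + \tfrac{1}{2}\,\mathrm{d}\log\tau + \mathrm{d}g_{1,\sigma}\bigr),
\]
the term $\tfrac12\,\mathrm{d}\log\tau$ being produced by differentiating the half-power $\sqrt{\tau}$ that generates the anti-invariant summand of $q_{*}\cO_{\mathrm{Tot}(L^{-1})}$.

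It remains to identify $\mathbb{H}^{*}(\mathrm{Tot}(L^{-2}),\mathcal{K}^{\bullet})$ with $H^{*-1}(\widehat{X}\setminus R)^{(-)}(1/2)$. This is a Cayley-type computation for $\pi\colon \mathrm{Tot}(L^{-2})\to X$, whose fibres are affine lines, except that the logarithmic pole along $Z$ together with the twisting form $\tfrac12\,\mathrm{d}\log\tau$ replaces the trivial fibrewise local system by the order-two one about the origin. Concretely, the relative twisted de Rham complex over a point $x\in X$ is that of $(\mathbb{A}^{1}\setminus\{0\}, \sigma(x)w)$ carrying $(-1)$-monodromy at $0$: for $\sigma(x)\neq 0$ its cohomology is one-dimensional, concentrated in degree $1$ (the polynomial part of $\Omega^{1}$ being killed by the exponential, just as $H^{*}(\mathbb{A}^{1}, c\,w^{2}) = \mathbb{C}\,\mathrm{d}w$ for $c\neq 0$), while for $\sigma(x) = 0$ the potential vanishes and the nontrivial monodromy forces the stalk cohomology to vanish. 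Hence $\mathrm{R}\pi_{*}\mathcal{K}^{\bullet}$ is, after the resulting shift by $[-1]$, filtered quasi-isomorphic to $\mathrm{DR}(X\setminus B, \log\sqrt{\sigma}) = (\Omega^{\bullet}_{X}(\log B)\otimes L^{-1}, \mathrm{d} + \tfrac12\,\mathrm{d}\log\sigma)$: its logarithmic poles along $B$ and the absence of stalk cohomology over $B$ are precisely the de Rham incarnation of $\mathrm{R}j_{*}\mathcal{L} = j_{!}\mathcal{L}$ for the order-two local system $\mathcal{L}$ on $X\setminus B$ (and the identification of coefficients is routine compactification/twist bookkeeping). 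Taking hypercohomology and assembling the two summands gives the first displayed isomorphism, with the Tate shift by $1/2$ accounted for as follows.

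The main obstacle is the filtered statement, and especially pinning down the rational shift $(1/2)$. One must choose compatible good compactifications of $(\mathrm{Tot}(L^{-1}), g_{2,\sigma})$ and $(\mathrm{Tot}(L^{-2}), g_{1,\sigma})$ --- for instance the fibrewise $\mathbb{P}^{1}$-compactification over a smooth compactification of $X$, on which $q$, $\iota$ and both potentials extend --- verify the logarithmic comparison theorem for the twisted de Rham complexes in this relative, non-proper situation, and track the divisor-multiplicity data in Yu's construction of $F^{\bullet}_{\mathrm{irr}}$ through $q$. The half-integer is forced because at the fibrewise divisor at infinity $g_{2,\sigma}$ has a pole of order $2$ where $g_{1,\sigma}$ has a pole of order $1$, while the coinvariant summand is generated by $\sqrt{\tau}$; pushing this through the $\lceil\,\cdot\,\rceil$-truncations in Yu's definition shifts the filtration index on the coinvariant summand by $1/2$, whereas the invariant summand, pulled back from $(\mathrm{Tot}(L^{-2}), g_{1,\sigma})$, is unshifted. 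Everything else --- the $(\mathbb{Z}/2)$-splitting, the cited Cayley trick, and the fibrewise computation over $X$ --- is formal given the results recalled in the paper.
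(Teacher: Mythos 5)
Your proposal begins exactly as the paper does: realizing $\mathrm{Tot}(L^{-1})\to\mathrm{Tot}(L^{-2})$ as a double cover branched along the zero section with $q^*g_{1,\sigma}=g_{2,\sigma}$, splitting the twisted de Rham complex into $(\mathbb{Z}/2)$-eigenparts, and identifying the invariant part with $H^*(\mathrm{Tot}(L^{-2}),g_{1,\sigma})\cong H_B^*(X)$ via Proposition~\ref{p : invariant cohomology and covers of LG models} and the Cayley trick (Proposition~\ref{p : reducing local cohomology to the base}). That half is fine.

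Where you genuinely diverge from the paper is the coinvariant part, and there your argument has gaps. You write down a ``coinvariant complex'' $\mathcal{K}^\bullet=(\Omega^\bullet_{\mathrm{Tot}(L^{-2})}(\log Z)\otimes\pi^*L,\ \mathrm{d}+\tfrac12\mathrm{d}\log\tau+\mathrm{d}g_{1,\sigma})$ on $\mathrm{Tot}(L^{-2})$ and propose to compute $\mathrm{R}\pi_*\mathcal{K}^\bullet$ fibrewise. Two problems. First, $\mathcal{K}^\bullet$ is a hybrid: a logarithmic (regular) twist along $Z$ superimposed on the irregular twist by $\mathrm{d}g_{1,\sigma}$, with coefficients in a nontrivial line bundle. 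This sits outside the precise framework in which Yu's filtration $F_\mathrm{irr}^\bullet$ is defined and in which the properties used in the paper (e.g.\ strictness via \cite{esnault20171}, the K\"unneth formula of Theorem~\ref{t : Kuenneth theorem}) are proved; you would need to construct a $\mathbb{Q}$-filtration on $\mathcal{K}^\bullet$ and verify these properties for it, which you only gesture at as ``track the divisor-multiplicity data through Yu's construction.'' Second, your identification $\mathrm{R}\pi_*\mathcal{K}^\bullet\simeq\mathrm{DR}(X\setminus B,\log\sqrt{\sigma})[-1]$ is argued by stalk cohomology over points of $X$, but a stalkwise count of ranks does not by itself give a filtered quasi-isomorphism of complexes of sheaves; one needs an actual morphism of filtered complexes and a comparison of filtrations, which ``routine compactification/twist bookkeeping'' leaves unsubstantiated. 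The same vagueness afflicts your derivation of the $(1/2)$: you attribute it to the $\lceil\cdot\rceil$-truncations acting on $\sqrt{\tau}$, but do not compute anything.

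The paper avoids both issues with a different and more concrete route for the coinvariant part: it first shows (via the commutative diagram \eqref{e : commutative diagram from pullback} and the residue long exact sequence) that the coinvariant summand of $H^*(\mathbb{L}_1,\mathsf{g}_2)$ equals $H^*(\mathbb{L}_1\setminus\mathbb{B}_1,\mathsf{g}_2)$, then passes to the fibre-product double cover $\widehat{\mathbb{L}}_1$ where the space trivializes as $\mathbb{A}^1\times(\widehat{X}\setminus R)$ with potential $q^2+\widehat{f}$, and finally applies the filtered K\"unneth formula together with the explicit computation of $H^1(\mathbb{A}^1,q^2)$ in Example~\ref{e : t^n LG model}. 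This never leaves the class of honest twisted de Rham complexes with trivial coefficients, so Yu's filtration and its properties apply verbatim, and the $(1/2)$ falls out of the explicit K\"unneth factor. If you want to salvage your route, the missing work is to develop the filtered theory for the hybrid complex $\mathcal{K}^\bullet$ (or invoke the exponential mixed Hodge module formalism of Sabbah--Yu alluded to in the paper's Remark after Theorem~\ref{t:intro hyps}) and to replace the stalkwise argument by an actual filtered quasi-isomorphism; otherwise, following the paper's detour through $H^*(\mathbb{L}_1\setminus\mathbb{B}_1,\mathsf{g}_2)$ and $\widehat{\mathbb{L}}_1$ is the more economical path.
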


\begin{remark}
The framework discussed here extends naturally to the setting of cyclic covers of higher degree. However, for the purposes of this article, particularly in view of an application to be addressed later, we focus exclusively on double covers. We also note that a more general and abstract form of such decompositions has appeared in the work of Sabbah and Yu \cite{Sabbah2023cyclic} in the affine case, formulated using the language of exponential mixed Hodge modules. We believe that a similar formalism should hold in the global (non-affine) case as well, which would be of independent interest.
\end{remark}

Finally, one can generalize this construction to iterated or fibered double covers (cf. \cite[Section 4]{Lee2025CYdouble}). Fix an index set $[k]=\{1, \cdots, k\}$. Let $\{(L_i, \sigma_i)|i\in [k]\}$ be a collection of pairs, where each $L_i$ is a line bundle on $X$ and $\sigma_i \in \Gamma(X, L_i^{\otimes 2})$. Each pair defines a branched double cover $\widehat{X}_i$ branched over $B_i := \{\sigma_i = 0\}$ as before. For any subset $I=\{i_1, ..., i_r\} \subset [k]$, the fiber product
\[
\widehat{X}_I:=\widehat{X}_{i_1} \times_X \cdots \times_X \widehat{X}_{i_r}
\]
is a $(\mathbb{Z}/2)^{|I|}$-Galois cover of $X$ branched over the divisor $B_I := \bigcup_{j\in I} B_j$. Let $\rho_I:\widehat{X}_I \to X$ be the covering map. Moreover, this cover can be realized as the complete intersection
\[
\{ y_i^2 - \sigma_i = 0 \mid i\in I\} \subset \operatorname{Tot}\left( \bigoplus_{i\in I} L_i \right)
\]
where $y_i$ is the fiber coordinate of $L_i$. On the other hand, for any subset $J$, we consider the Landau--Ginzburg model 
\[
\mathbb{V}_J:=\mathrm{Tot}\left(\bigoplus_{i\in J} L^{-1}_i \oplus \bigoplus_{i\notin J} L^{-2}_i \right), \quad g_{2,J}+g_{1,J^c}
\]
where $g_{2,J}=\sum_{i \in J} g_{2,\sigma_i}$ and $g_{1,J^c}=\sum_{i \notin J} g_{1,\sigma_i}$. Our first main theorem finds the Hodge theoretic relationship between this Landau--Ginzburg model and the Galois covers $\widehat{X}_I$'s for $I \subseteq J$.

\begin{theorem}[Theorem \ref{t : multiple direct sum decomposition}]\label{t:intro com.int}
    For any subset $J \subseteq [k]$, there is a filtered isomorphism 
    \begin{equation}\label{e:mdsd}
    H^*(\mathbb{V}_{J}, g_{2,J}+g_{1,J^c}) \cong \bigoplus_{I \subseteq J}H^{*-|I|}_{Z^I}(\widehat{X}_I \setminus R_I)^{(I)}(|I|/2)
    \end{equation}
    where $Z^{I}=\rho_I^{-1}(B^{I^c})$ and $B^{I^c} = \cap_{i \notin I} B_i$. Here the superscript $(I)$ indicates the $(-1)^{|I|}$-eigenspace of $(\mathbb{Z}/2)^{|I|}$ action on $\widehat{X}_I\setminus R_I$. 
\end{theorem}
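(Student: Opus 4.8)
The plan is to bootstrap from the one‑variable case, Theorem~\ref{t:intro hyps}, by means of a relative Thom--Sebastiani (Künneth) theorem for twisted de Rham complexes. Write $\pi\colon\mathbb{V}_J\to X$ for the projection, and for each $i$ let $\pi_i$ be the projection to $X$ of the corresponding one‑variable total space ($\mathrm{Tot}(L_i^{-1})$ for $i\in J$, $\mathrm{Tot}(L_i^{-2})$ for $i\notin J$). Since $\mathbb{V}_J$ is the fiber product over $X$ of these spaces and the potential $g_{2,J}+g_{1,J^c}=\sum_{i\in J}g_{2,\sigma_i}+\sum_{i\notin J}g_{1,\sigma_i}$ is a sum of functions each pulled back from a single factor, one expects a filtered quasi‑isomorphism of complexes on $X$
\[
\mathrm{R}\pi_*\mathrm{DR}(\mathbb{V}_J,g_{2,J}+g_{1,J^c})\;\simeq\;\Bigl(\bigotimes_{i\in J}\mathrm{R}\pi_{i,*}\mathrm{DR}(\mathrm{Tot}(L_i^{-1}),g_{2,\sigma_i})\Bigr)\otimes\Bigl(\bigotimes_{i\notin J}\mathrm{R}\pi_{i,*}\mathrm{DR}(\mathrm{Tot}(L_i^{-2}),g_{1,\sigma_i})\Bigr),
\]
where the tensor products are taken over $\mathcal{O}_X$ (at the level of the underlying filtered $\mathcal{D}_X$-modules) and the right‑hand filtration is the convolution of the irregular Hodge filtrations. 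Taking hypercohomology over $X$ then reduces the problem to the one‑variable factors.

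For $i\notin J$, the Cayley trick recalled in Section~\ref{s:coverLG} identifies $\mathrm{R}\pi_{i,*}\mathrm{DR}(\mathrm{Tot}(L_i^{-2}),g_{1,\sigma_i})$ with $\mathrm{DR}_{B_i}(X)$, filtered. For $i\in J$, the argument proving Theorem~\ref{t:intro hyps} — which in fact yields the decomposition already at the level of $\mathrm{R}\pi_{i,*}\mathrm{DR}$ rather than merely of hypercohomology — gives
\[
\mathrm{R}\pi_{i,*}\mathrm{DR}(\mathrm{Tot}(L_i^{-1}),g_{2,\sigma_i})\;\simeq\;\mathrm{DR}_{B_i}(X)\;\oplus\;\mathrm{DR}(\widehat{X}_i\setminus R_i)^{(-)}[-1]\bigl(\tfrac12\bigr)
\]
as filtered complexes. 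Substituting these into the Künneth formula and distributing the tensor product over the direct sums, the summands are indexed by the subsets $I\subseteq J$ recording the factors for which the $\mathrm{DR}(\widehat{X}_i\setminus R_i)^{(-)}$-piece was chosen (for $i\in J\setminus I$ and for $i\notin J$ one keeps the $\mathrm{DR}_{B_i}(X)$-piece). The summand attached to $I$ equals
\[
\Bigl(\bigotimes_{i\in I}\mathrm{DR}(\widehat{X}_i\setminus R_i)^{(-)}\Bigr)\otimes\Bigl(\bigotimes_{i\notin I}\mathrm{DR}_{B_i}(X)\Bigr)[-|I|]\bigl(\tfrac{|I|}{2}\bigr),
\]
and it remains to identify this with $\mathrm{DR}_{Z^I}(\widehat{X}_I\setminus R_I)^{(I)}[-|I|]\bigl(|I|/2\bigr)$.

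This identification rests on two further Künneth‑type facts, both filtered. In the covering directions, $\widehat{X}_I=\prod_X^{i\in I}\widehat{X}_i$ and the $(\mathbb{Z}/2)^I$-isotypic decomposition of $\mathrm{R}\rho_{I,*}\mathrm{DR}(\widehat{X}_I\setminus R_I)$ is the $\mathcal{O}_X$-tensor product of the $(\mathbb{Z}/2)$-isotypic decompositions of the factors; the superscript $(I)$ names the product‑of‑signs character (the one on which each generator of $(\mathbb{Z}/2)^I$ acts by $-1$), which selects $\bigotimes_{i\in I}\mathrm{DR}(\widehat{X}_i\setminus R_i)^{(-)}$. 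In the branch directions, localizing the de Rham complex along $B_i$ iterates under $\otimes_{\mathcal{O}_X}$ to localization along the intersection, so $\bigotimes_{i\notin I}\mathrm{DR}_{B_i}(X)\simeq\mathrm{DR}_{B^{I^c}}(X)$ with $B^{I^c}=\cap_{i\notin I}B_i$ (the multi‑variable Cayley trick, equivalently the relation $\mathrm{R}\Gamma_{B_i}\circ\mathrm{R}\Gamma_{B_j}=\mathrm{R}\Gamma_{B_i\cap B_j}$ for local cohomology). Combining these, and using that localization commutes with $\rho_{I,*}$ since $\rho_I$ is finite — so $\rho_{I,*}\mathrm{R}\Gamma_{\rho_I^{-1}(B^{I^c})}=\mathrm{R}\Gamma_{B^{I^c}}\circ\rho_{I,*}$ — identifies the $I$-summand with the de Rham complex of $\widehat{X}_I\setminus R_I$, in the $(I)$-eigenspace, localized along $\rho_I^{-1}(B^{I^c})=Z^I$; the shift $[-|I|]$ and the twist $(|I|/2)$ accumulate from the $|I|$ occurrences of $[-1]$ and $(\tfrac12)$, the latter using additivity of rational shifts under convolution of filtrations. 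Passing to $\mathbb{H}^*(X,-)$ gives \eqref{e:mdsd}.

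The principal obstacle is the first step: establishing the relative Thom--Sebastiani quasi‑isomorphism \emph{strictly filtered} for the irregular Hodge filtration in this non‑proper, relative setting. Since the irregular Hodge filtration is built from a good compactification of the $\mathbb{A}^1$-direction together with a Deligne‑type meromorphic extension (following Yu, Esnault--Sabbah--Yu, Sabbah--Yu), one must choose compatible compactifications of the several affine‑line factors, verify that the product model computes the irregular Hodge filtration of the sum potential $g_{2,J}+g_{1,J^c}$, and check that the Künneth map carries the convolution filtration to it. One also needs the relevant $E_1$-degeneration and strictness statements so that the filtration induced on hypercohomology is the expected one, and must track the half‑integral Tate twists carefully through the convolution. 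Once this filtered Künneth statement is in place, the remaining steps are the bookkeeping above; the two auxiliary Künneth inputs (for isotypic decompositions and for local cohomology) are classical and pose no difficulty.
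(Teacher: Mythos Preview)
Your approach is natural but differs substantially from the paper's, and it has a real gap at the very point you flag as the obstacle.

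You assume a sheaf-level decomposition
\[
\mathrm{R}\pi_{i,*}\mathrm{DR}(\mathrm{Tot}(L_i^{-1}),g_{2,\sigma_i})\;\simeq\;\mathrm{DR}_{B_i}(X)\;\oplus\;\mathrm{DR}(\widehat{X}_i\setminus R_i)^{(-)}[-1]\bigl(\tfrac12\bigr),
\]
claiming the proof of Theorem~\ref{t:intro hyps} already gives this. It does not: the paper's proof of the single-variable case (Theorem~\ref{t : single direct sum decomposition}) is explicitly cohomological, not sheaf-theoretic. The argument following Proposition~\ref{p : reducing local cohomology to the base} uses that a certain connecting map $i_*$ in a long exact sequence of hypercohomology is injective, deduced from a commuting square; this is a statement about vector spaces, not about complexes of sheaves on $X$. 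The paper says as much in the closing remark of Section~\ref{s : proof of t: single direct sum decomposition}. Without a sheaf-level splitting, your relative K\"unneth step has nothing to tensor, and the whole bootstrap collapses. The relative Thom--Sebastiani itself, strictly filtered for $F_{\mathrm{irr}}^\bullet$ over a base $X$, is also not available off the shelf; you acknowledge this, but it is not merely bookkeeping.

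The paper avoids both issues by working inductively on $k$ at the level of cohomology, but for this it must first \emph{strengthen} the single-variable theorem. Theorem~\ref{t : single direct sum decomposition} is stated for an arbitrary nondegenerate compactified LG model $(X,D,f)$ and arbitrary local-cohomology supports $E^{[r]}$, not just for $(X,\emptyset,0)$. This extra generality is what makes the induction close: viewing $\mathbb{V}_J$ as a line bundle over $\mathbb{V}_J'$ (with one factor removed), one applies the single-variable theorem with the remaining potential $g_{2,J}'+g_{1,J^c}'$ playing the role of $f$ and with the accumulated branch divisors playing the role of $E^{[r]}$; the inductive hypothesis~\eqref{e : inductive hypothesis} then decomposes each piece further. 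Your global K\"unneth is morally the right heuristic, but the paper's substitute for it is this carefully formulated inductive package, which never leaves the world of filtered vector spaces.
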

The summands on the right hand side of \eqref{e:mdsd} are equipped with fractionally shifted versions of the usual Hodge filtration, while the left hand side carries Yu's irregular Hodge filtration. In the case where $I=[k]$, the right hand side of \eqref{e:mdsd} has a factor of $H^{*-k}(\widehat{X}_{[k]}\setminus R_{[k]})^{([k]^-)}(k/2)$. We show in Proposition \ref{p : double cover interpretation of the first factor} that this cohomology group is a direct summand of the cohomology of the double cover $\widetilde{X}\rightarrow X$ branched along $B_{[k]}$. Therefore, there is a concrete cohomological relationship between the irregular Hodge numbers of $(\mathbb{V}_{[k]},g_{2,{[k]}})$ and those of the branched double cover $\widetilde{X}\rightarrow X$. This is a key observation for the applications described in the following section.
\subsection{Applications to mirror symmetry}
In mirror symmetry, one of the foundational achievements is the work of Batyrev and Borisov, who established a striking duality between certain Calabi–Yau complete intersections in toric varieties \cite{batyrev1996mirror}. As we will rely on this construction, let us first recall it. Their setup begins with a reflexive Gorenstein cone or, more concretely, with a pair of dual reflexive polytopes equipped with nef partitions, 
\[
(\Delta, \{\Delta_i\}_{i=1}^k) \quad \text{and} \quad (\check{\Delta}, \{\check{\Delta}_i\}_{i=1}^k)
\]
where $\Delta=\Delta_1\cup\cdots\cup\Delta_k$ and $\check{\Delta}=\check{\Delta}_1\cup\cdots\cup\check{\Delta}_k$. Let $T(\Sigma_{\Delta})$ (resp. $T(\Sigma_{\check{\Delta}})$ be the toric variety associated to the spanning fan $\Sigma_\Delta$ of $\Delta$ (resp. $\Sigma_{\check{\Delta}}$) of $\Delta$ (resp. $\check{\Delta}$). Throughout this article, we assume that both admit maximal projective crepant smooth (MPCS, for short) resolutions. This corresponds to the existence of projective unimodular triangulations of both $\Delta$ and $\check{\Delta}$. Thus, we choose such resolutions and denote them by $T_\Delta$ and $T_{\check{\Delta}}$, respectively. 

The nef partition induces the decomposition of the anticanonical line bundle:
\[
K^{-1}_{T_\Delta} = \bigotimes_{i=1}^k \mathcal{O}(E_{\Delta_i}),
\]
where each $E_{\Delta_i}$ is the toric divisor corresponding to the chosen projective unimodular triangulation of $\Delta_i$ and $\mathcal{O}(E_{\Delta_i})$ is the corresponding line bundle. The lattice points of $\check{\Delta}_i$ give a basis of torus invariant sections of $\mathcal{O}(E_{\Delta_i})$, and we denote by $\sigma_{i,{gen}}$ a generic section of $\mathcal{O}(E_{\Delta_i})$. The choice of generic sections determines the Calabi--Yau complete intersection that is the common vanishing locus of these generic sections,
\[
X_\Delta := \bigcap_{i=1}^k \{ \sigma_{i,{gen}} = 0 \} \subset T_\Delta.
\]
A parallel construction yields the mirror candidate, denoted by $X_{\check{\Delta}}$. Throughout the article, we use the notation $\check{(-)}$ to indicate the mirror counterpart. The main result of \cite{batyrev1996mirror} is the \emph{stringy Hodge number duality}, which asserts that the stringy Hodge numbers of the mirror pair are related by
\[
h^{p,q}_{\mathrm{st}}(X_\Delta) = h^{d-p,q}_{\mathrm{st}}(X_{\check{\Delta}}),
\]
where $d = \dim X_\Delta = \dim X_{\check{\Delta}}$. 
In particular, since we assume that $T_\Delta$ and $T_{\check{\Delta}}$ are MPCS resolutions, the stringy Hodge numbers are the same as the usual Hodge numbers. 

Building on the same initial data, Hosono, Lee, Lian, and Yau introduced a new family of \emph{singular} Calabi–Yau varieties \cite{Hosono2024doubleCY}. Alongside a generic section $\sigma_{i,{gen}}$ of each $\mathcal{O}(E_{\Delta_i})$, there is a canonical toric section, denoted by $\sigma_{i,{tor}}$. They consider the product
\[
\sigma_i := \sigma_{i,{gen}} \cdot \sigma_{i,{tor}},
\]
which becomes a section of $\mathcal{O}(2E_{\Delta_i})$. The collection of $\sigma_i$'s determines a double cover $\widetilde{T}_\Delta \to T_\Delta$,
branched along the divisor $B_\Delta=\{\sigma:= \prod_{i=1}^k \sigma_i = 0\}$. The resulting space $\widetilde{T}_\Delta$ is a singular Calabi–Yau variety with at worst orbifold singularities (Lemma \ref{l:CY}). A mirror candidate $\widetilde{T}_{\check{\Delta}}$ is obtained in the same way from the dual data. In \textit{loc.cit.}, the following conjecture is implicit and the case when $d=3$ is verified.

\begin{conjecture}[HLLY conjecture]\label{conj:intro-HLLY}
    A pair of $d$-dimensional singular Calabi--Yau varieties $(\widetilde{T}_\Delta, \widetilde{T}_{\check{\Delta}})$ satisfies the Hodge number duality: For $p,q \in \mathbb{Z}$,
    \[
    h^{p,q}(\widetilde{T}_\Delta)=h^{d-p,q}(\widetilde{T}_{\check{\Delta}}).
    \]
\end{conjecture}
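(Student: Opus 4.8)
The plan is to compute the (stringy) Hodge numbers of $\widetilde{T}_\Delta$ by combining the $(\mathbb{Z}/2)$-decomposition \eqref{e:intro DR Z/2} with the Cayley-trick theorems of \S\ref{s:intro doublecover}, reducing everything to cohomologies of Batyrev--Borisov complete intersections and of toric strata, and then invoking polar duality. First one checks that for a generic choice of the sections $\sigma_{i,gen}$ the branch divisor $B_\Delta=\{\prod_i\sigma_{i,gen}\sigma_{i,tor}=0\}\subset T_\Delta$ is simple normal crossings: the toric factors $\{\sigma_{i,tor}=0\}$ are sums of torus-invariant prime divisors, and the generic factors are transverse to them and to one another. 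Hence the formalism of \S\ref{s:intro doublecover}, and in particular Theorems \ref{t : single direct sum decomposition} and \ref{t : multiple direct sum decomposition}, applies with $X=T_\Delta$, $L_i=\mathcal{O}(E_{\Delta_i})$, and $\sigma_i=\sigma_{i,gen}\sigma_{i,tor}\in\Gamma(L_i^{\otimes2})$. Since $\bigotimes_i L_i^{\otimes2}=K_{T_\Delta}^{-2}$ and $B_\Delta=B_{[k]}=\bigcup_i B_i$, the double cover $\rho\colon\widetilde{T}_\Delta\to T_\Delta$ is exactly the cover $\widetilde{X}$ of the last paragraph of \S\ref{s:intro doublecover} (so it is Calabi--Yau with orbifold singularities, Lemma \ref{l:CY}), and its cohomology splits as $H^*(\widetilde{T}_\Delta)=H^*(\widetilde{T}_\Delta)^{(+)}\oplus H^*(\widetilde{T}_\Delta)^{(-)}$, with the $(+)$-part of toric/combinatorial nature (built from $T_\Delta$ and the strata of $B_\Delta$) and the $(-)$-part the object to be controlled.

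To control the $(-)$-part I would use Theorem \ref{t : multiple direct sum decomposition} with $J=[k]$, which gives a filtered isomorphism
\[
H^*(\mathbb{V}_{[k]},g_{2,[k]})\cong\bigoplus_{I\subseteq[k]}H^{*-|I|}_{Z^I}(\widehat{X}_I\setminus R_I)^{(I)}(|I|/2),
\]
with $\mathbb{V}_{[k]}=\mathrm{Tot}\bigl(\bigoplus_i L_i^{-1}\bigr)$ a smooth quasi-projective toric variety and $g_{2,[k]}=\sum_i\sigma_i y_i^2$ a toric potential. By Proposition \ref{p : double cover interpretation of the first factor} the top term ($I=[k]$) is, up to Tate twist, a direct summand of $H^*(\widetilde{T}_\Delta)^{(-)}$, while the lower terms are localizations, along $Z^I=\rho_I^{-1}(\bigcap_{i\notin I}B_i)$, of the cohomologies of the iterated double covers $\widehat{X}_I$. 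Because each $\sigma_i$ factors as $\sigma_{i,gen}\sigma_{i,tor}$, the loci $Z^I$ decompose into nef-partition complete intersections of the $\{\sigma_{i,gen}=0\}$ sitting in toric strata cut out by the $\{\sigma_{i,tor}=0\}$, and the classical Cayley trick underlying \cite{batyrev1996mirror} identifies their contributions with Batyrev--Borisov complete-intersection cohomologies. Running this as an induction on $k$ --- whose error terms are exactly the $J\subsetneq[k]$ cases of Theorem \ref{t : multiple direct sum decomposition}, with Theorem \ref{t : single direct sum decomposition} as base case --- yields an explicit formula expressing $H^*(\widetilde{T}_\Delta)$, with prescribed Tate twists, as a direct sum of (i) toric cohomology of $T_\Delta$ and its toric strata, and (ii) cohomologies of nef-partition complete intersections indexed by faces of the nef partition $(\Delta,\{\Delta_i\})$; the mirror construction gives the analogous formula for $\widetilde{T}_{\check{\Delta}}$.

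It remains to match the two formulas under $p\mapsto d-p$, $d=\dim T_\Delta$. For the pieces in (ii) this is Batyrev--Borisov stringy Hodge duality \cite{batyrev1996mirror}, and for the pieces in (i) it is polar duality of $\Delta$ and $\check{\Delta}$; the essential point is that the index set of the direct sum, together with the Tate twists, is interchanged under $\Delta\leftrightarrow\check{\Delta}$ by the Batyrev--Borisov bijection between faces of $\Delta$ and faces of $\check{\Delta}$, so that the ``toric stratum'' contributions to $\widetilde{T}_\Delta$ pair with the ``deep complete intersection'' contributions to $\widetilde{T}_{\check{\Delta}}$ and conversely. Although no single summand is self-dual, the totals then satisfy $h^{p,q}(\widetilde{T}_\Delta)=h^{d-p,q}(\widetilde{T}_{\check{\Delta}})$, which is Conjecture \ref{conj:intro-HLLY}. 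One may in fact run the whole argument uniformly for all the Galois covers $\widehat{X}_I$ at once, which is what the extended duality announced in the abstract requires, with the HLLY conjecture recovered through Proposition \ref{p : double cover interpretation of the first factor}.

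\textbf{Main obstacle.} The crux is the combinatorial bookkeeping of the last two steps: propagating Yu's $\mathbb{Q}$-graded irregular Hodge filtration and the half-integer Tate shifts $(|I|/2)$ through the tower of Cayley tricks, and organizing the resulting sum so that the face-duality of $\Delta$ and $\check{\Delta}$ produces the genuine flip $p\mapsto d-p$ rather than a mere numerical identity --- this is where a refinement of Borisov's duality for reflexive Gorenstein cones is really used. A second, more technical, difficulty is ensuring that the de Rham formalism of \S\ref{s:intro doublecover} remains valid for the singular covers $\widehat{X}_I$ and for $\widetilde{T}_\Delta$, which carry orbifold singularities along the crossing strata of $B_\Delta$, so that each identification above is a genuine filtered isomorphism of pure (stringy) Hodge structures, and that the induction on $k$ is organized compatibly on both sides of the mirror.
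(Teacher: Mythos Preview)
Your approach diverges substantially from the paper's, and the step you correctly flag as the ``main obstacle'' is in fact the entire content of the proof and is not resolved in your outline.

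The paper does not reduce to Batyrev--Borisov duality at all. Instead it constructs, for each subset $J\subseteq[k]$, a Clarke-dual pair of \emph{stacky} fans $(\Sigma_{2A_J,A_{J^c}},\Sigma_{\check{A}_J,2\check{A}_{J^c}})$ and applies the irregular-Hodge duality for stacky LG models from \cite{HL2025} (Theorem~\ref{t:Cdual}) to the associated pair. Each such duality, after decomposing the orbifold twisted cohomology via Theorem~\ref{t : multiple direct sum decomposition} and the twisted-sector description (Theorem~\ref{t:poddar}), yields a linear relation among certain numbers $B_{I^{(-)},I^{(0)},I^{(+)}}$ and their mirror counterparts $\check{B}$. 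One further relation comes from Theorem~\ref{t:toricmirror}, the Galois-cover duality for $\widehat{T}_\Delta$. The proof of Theorem~\ref{t:HLLY} is then a linear-algebraic extraction: by taking integer combinations of these relations over varying $J$ (via the binomial identities in Lemma~\ref{l:Clrk identity}) and inducting on $k$ (Lemma~\ref{l:clrk mir}, Theorem~\ref{t:hdual}), one isolates the single relation asserting that $B_{[k],\emptyset,\emptyset}+B_{\emptyset,[k],\emptyset}$ satisfies the mirror relation, which is the conjecture.

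Your proposal instead attempts to decompose $H^*(\widetilde{T}_\Delta)$ into pieces identifiable as Batyrev--Borisov complete-intersection cohomologies inside toric strata, and then to match these against the mirror side by a face-duality bijection. There are two gaps. First, the loci $Z^I$ in Theorem~\ref{t : multiple direct sum decomposition} are preimages in covers branched along the \emph{reducible} divisors $D_{i,gen}\cup D_{i,tor}$; you give no formula exhibiting their coinvariant local cohomology as a sum of Batyrev--Borisov pieces plus toric pieces, and no such formula is evident. Second, even granting one, the assertion that ``toric stratum contributions to $\widetilde{T}_\Delta$ pair with deep complete-intersection contributions to $\widetilde{T}_{\check{\Delta}}$'' under a face bijection is exactly what would need proof: Batyrev--Borisov duality is a statement about total stringy $E$-polynomials, not a term-by-term bijection of summands, and the half-integer Tate shifts coming from the $(-)$-eigenspaces have no analogue in their setup. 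The paper sidesteps all of this bookkeeping by never decomposing the LG cohomology geometrically on the mirror side; it compares irregular Hodge numbers of LG models wholesale via Clarke duality and then solves the resulting linear system in the $B$'s.
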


We prove Conjecture \ref{conj:intro-HLLY}. To do this, we introduce other pairs of singular Calabi--Yau varieties. As introduced in Section \ref{s:intro doublecover}, consider a $(\mathbb{Z}/2
)^k$-Galois cover:
\[
\widehat{T}_\Delta := \bigcap_{i=1}^k \left\{ y_i^2 = \sigma_i \right\}\subset \mathrm{Tot} \left( \bigoplus_{i=1}^k \mathcal{O}(E_{\Delta_i}) \right)
\]
where $y_i$ denotes the fiber coordinate in the line bundle $\mathcal{O}(E_{\Delta_i})$. The variety $\widehat{T}_\Delta$ is again a singular Calabi–Yau variety (Lemma \ref{l:CY}). Our second main theorem is to establish Hodge number duality for the mirror pair $(\widehat{T}_\Delta, \widehat{T}_{\check{\Delta}})$ first, and use it to verify Conjecture \ref{conj:intro-HLLY}.

\begin{theorem}\label{t:intro mirror}
    Let the notation be as above. Then the following identities of Hodge numbers hold:
    \begin{enumerate}
        \item (Theorem \ref{t:toricmirror}) $h^{p,q}(\widehat{T}_\Delta)=h^{d-p,q} (\widehat{T}_{\check{\Delta}})$ for $p,q \in \mathbb{Z}$.
        \item (Theorem \ref{t:HLLY}) $h^{p,q}(\widetilde{T}_\Delta)=h^{d-p,q}(\widetilde{T}_{\check{\Delta}})$ for $p,q \in \mathbb{Z}$.
    \end{enumerate} 
\end{theorem}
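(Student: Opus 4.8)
The plan is to prove part (1) first, deducing it from Batyrev--Borisov duality through the Cayley-trick isomorphism of Theorem~\ref{t:intro com.int}, and then to extract part (2) by passing to an index-two subgroup of the Galois group $(\mathbb{Z}/2)^k$. For the first step I apply Theorem~\ref{t:intro com.int} to the toric datum $X=T_\Delta$, $L_i=\mathcal{O}(E_{\Delta_i})$, $\sigma_i=\sigma_{i,tor}\cdot\sigma_{i,gen}$, and $J=[k]$. This realises $H^*(\mathbb{V}_{[k]},g_{2,[k]})$, with Yu's irregular Hodge filtration, as $\bigoplus_{I\subseteq[k]}H^{*-|I|}_{Z^I}(\widehat{X}_I\setminus R_I)^{(I)}(|I|/2)$, where the ambient $\mathbb{V}_{[k]}$ is a quasi-projective toric variety and the potential is assembled from the monomial sections $\sigma_{i,tor}$ and the generic sections $\sigma_{i,gen}$. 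Combining this with the complete-intersection description $\widehat{T}_\Delta=\{y_i^2=\sigma_i\}$, the $(\mathbb{Z}/2)^k$-eigenspace decomposition of $H^*(\widehat{T}_\Delta)$, and the Gysin/weight exact sequences relating the open supported pieces to those eigenspaces, the Hodge numbers $h^{p,q}(\widehat{T}_\Delta)$---and, via Proposition~\ref{p : double cover interpretation of the first factor} applied to the $I=[k]$ summand, also $h^{p,q}(\widetilde{T}_\Delta)$---are reduced to the irregular Hodge numbers of the single toric Landau--Ginzburg model $(\mathbb{V}_{[k]},g_{2,[k]})$.

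Next I compute those irregular Hodge numbers combinatorially. Because each $\sigma_i$ factors through the monomial section $\sigma_{i,tor}$, each twisted-cover summand above splits into a combinatorial factor---indexed by the chosen unimodular triangulation of $\Delta_i$, recording the toric stratification and the character twist by $\sqrt{\sigma_{i,tor}}$---times the primitive Hodge structure of an intersection $X_{\Delta,S}:=\bigcap_{i\in S}\{\sigma_{i,gen}=0\}$ of a sub-collection of the generic hypersurfaces (so $X_{\Delta,[k]}=X_\Delta$), restricted to the toric strata. Carried out compatibly with the (irregular) Hodge filtrations, this expresses $\sum_{p,q}h^{p,q}(\widehat{T}_\Delta)u^pv^q$ as an explicit polynomial in the Hodge--Deligne polynomials of the $X_{\Delta,S}$ and in generating functions of the triangulations of the $\Delta_i$. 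Under polar duality these generating functions are exchanged with their counterparts for $\{\check\Delta_i\}$, while the stringy Hodge number duality of \cite{batyrev1996mirror}, applied to each pair $(X_{\Delta,S},X_{\check\Delta,S})$ (the MPCS hypothesis making the relevant stringy Hodge numbers ordinary), exchanges the Hodge--Deligne polynomials of the $X_{\Delta,S}$ with those of the $X_{\check\Delta,S}$ and reverses the appropriate cohomological degree. Feeding both dualities into the formula yields $h^{p,q}(\widehat{T}_\Delta)=h^{d-p,q}(\widehat{T}_{\check\Delta})$, which is~(1).

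For~(2), note that $(\prod_i y_i)^2=\prod_i\sigma_i$, so $\prod_i y_i$ is invariant under the index-two subgroup $G'=\ker\!\big((\mathbb{Z}/2)^k\xrightarrow{\,\mathrm{sum}\,}\mathbb{Z}/2\big)$ and $\widetilde{T}_\Delta$ is a partial resolution of $\widehat{T}_\Delta/G'$. Hence the Hodge numbers of $\widetilde{T}_\Delta$ are read off from the $G'$-invariant part of $H^*(\widehat{T}_\Delta)$---that is, the $I=\varnothing$ and $I=[k]$ eigenspaces---together with the standard, purely toric, correction terms attached to the quotient singularities. Since the $(\mathbb{Z}/2)^k$-action is built into the mirror construction on both sides, the isomorphism produced above is equivariant and therefore restricts to the $G'$-invariant part; combining this restriction with the polar self-duality of the combinatorial correction terms gives $h^{p,q}(\widetilde{T}_\Delta)=h^{d-p,q}(\widetilde{T}_{\check\Delta})$, i.e.\ Conjecture~\ref{conj:intro-HLLY}.

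The main obstacle is the combinatorial step: one must show that Yu's irregular Hodge filtration on the toric Landau--Ginzburg model $(\mathbb{V}_{[k]},g_{2,[k]})$---an analytic, $\mathcal{D}$-module-theoretic invariant---is governed by exactly the stringy-$E$-function data that feeds Batyrev--Borisov duality, uniformly over the $2^k$ eigenspaces and compatibly with the fractional shifts $(|I|/2)$. Keeping the $(\mathbb{Z}/2)^k$-grading, the toric stratification, and the Hodge grading aligned throughout, and---in the last step---controlling the singularities of $\widehat{T}_\Delta$ and $\widetilde{T}_\Delta$ finely enough to justify the descent of Hodge numbers and the precise matching of correction terms, is where the substance of the argument lies.
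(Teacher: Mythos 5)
Your proposal diverges from the paper's proof in both parts, and in both cases the divergence hides a genuine gap that is precisely where the paper's work lives.

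For part (1), the paper does \emph{not} reduce to Batyrev--Borisov duality for the generic complete intersections $X_{\Delta,S}$. Instead it applies the stacky Clarke duality for Landau--Ginzburg models from \cite{HL2025} (Theorem~\ref{t:Cdual} / Corollary~\ref{c:Cdualgoren}) directly to the Gorenstein Clarke dual pair of fans $(\Sigma_{A_\mathrm{I}},\Sigma_{A_\mathrm{II}})$, and then passes between the irregular Hodge numbers of $(T(\Sigma_{A_\mathrm{I}}), w(\Sigma_{A_\mathrm{II}}))$ and $h^{p,q}(\widehat{T}_\Delta)$ via Proposition~\ref{p : well known} and Poincar\'e--Lefschetz duality. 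In your route, the claim that each eigenspace summand of $H^*(\widehat{T}_\Delta)$ ``splits into a combinatorial factor $\times$ the primitive Hodge structure of $X_{\Delta,S}$'' is not established and is, as far as I can see, not true in any simple form: the eigenspace pieces are cohomology of $\Omega^\bullet_X(\log B_I)\otimes L_I^{-1}$ with a twisted differential, and the branch divisor $B_i=D_{i,gen}\cup D_{i,tor}$ mixes the generic hypersurface with the toric boundary in a way that is not captured by the generic complete intersection alone. You acknowledge this by flagging ``the combinatorial step'' as the main obstacle, but that step is the entire content of part (1), and your proposal gives no argument for it.

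For part (2), the gap is sharper. Your plan is to descend from $\widehat{T}_\Delta$ to $\widetilde{T}_\Delta$ via the index-two subgroup $G'\subset(\mathbb{Z}/2)^k$ and to invoke equivariance of ``the isomorphism produced above'' to restrict the duality to the $G'$-invariant eigenspaces $I\in\{\varnothing,[k]\}$. But Clarke duality (Theorem~\ref{t:Cdual}) is a numerical identity between \emph{total} orbifold irregular Hodge numbers; it is not an isomorphism of $(\mathbb{Z}/2)^k$-representations, and there is no a priori reason that the contribution of a fixed eigenspace $I$ on the $\Delta$-side matches the contribution of any particular eigenspace on the $\check\Delta$-side. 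Indeed the twisted-sector indexing is rearranged under duality (it is governed by $|I^{(-)}\sqcup I^{(0)}|$), which is exactly why the paper introduces the $B_{I^{(-)},I^{(0)},I^{(+)}}$ bookkeeping, considers \emph{all} of the Clarke dual pairs $(\Sigma_{2A_J,A_{J^c}},\Sigma_{\check A_J,2\check A_{J^c}})$ as $J$ ranges over subsets, proves the binomial identities of Lemma~\ref{l:Clrk identity}, and runs the induction of Lemma~\ref{l:clrk mir} and Theorem~\ref{t:hdual} to eliminate the unwanted $B$-terms and isolate $B_{[k],\emptyset,\emptyset}+B_{\emptyset,[k],\emptyset}$. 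Your proposal collapses all of Section~\ref{s:HLLY} into a single assertion of equivariance that is not available and, as the elaborate elimination argument shows, is not something one can get for free. The ``purely toric correction terms'' for the quotient singularities are likewise not something that appears in or is needed by the paper's argument, and invoking them does not repair the missing equivariance.

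In short: part (1) sketches a genuinely different reduction (to Batyrev--Borisov via Hodge--Deligne polynomials) whose key splitting lemma is unproven and doubtful, and part (2) rests on an equivariance claim that contradicts the structure of the actual argument. To make progress along the lines the paper takes, you would want to quote Clarke duality for LG models as a black box, set up the $B_{I^{(-)},I^{(0)},I^{(+)}}$ bookkeeping over all subsets $J$, and then show how the binomial identities let you cancel everything except the $[k]$- and $\varnothing$-eigenspaces.
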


The main tool in the proof is our previous work on Hodge number duality for stacky Clarke mirror pairs of Landau–Ginzburg models~\cite{HL2025}. As described in the previous section, the Hodge numbers of branched covers can be computed from certain Landau–Ginzburg models (see Theorems~\ref{t:intro hyps} and~\ref{t:intro com.int}). Our strategy is therefore to verify Hodge number duality for the corresponding Landau–Ginzburg models. For further background, we refer the reader to Section~\ref{s:toric}.

Among the various classes of singular mirror pairs that can be reinterpreted through the lens of Clarke duality—and thereby lead to Hodge number dualities for singular Calabi–Yau varieties—we highlight one particular example: the so-called \emph{toric extremal transition}.

For instance, there is a classical notion of a conifold transition in the mathematics and physics literature where the setup is as follows. Let $X$ be a Calabi--Yau manifold so that $X$ degenerates to a Calabi--Yau variety  $X'$ with a number of isolated $\mathrm{A}_1$ (``conifold'') singularities. If $X'$ admits a crepant resolution, ${X}''$, then we say that $X$ and ${X}''$ are related by conifold transition. 

It is expected that, in certain circumstances, the mirrors, $\check{X}$ and $\check{X}''$ to $X$ and ${X}''$ respectively, are also related by a conifold transition, where $\check{X}''$ degenerates to a conifold Calabi--Yau variety $\check{X}'$, and $\check{X}$ is a crepant resolution of $\check{X}'$. In other words, we have the following diagrams

 \[
    \begin{tikzcd}
      & {X}''\ar[d,dashed]  \\
      X \ar[r,rightsquigarrow] & X' 
    \end{tikzcd} \hspace{2cm} 
    \begin{tikzcd}
      & \check{X} \ar[d,rightsquigarrow]  \\
      \check{X}'' \ar[r,dashed] & \check{X}'
    \end{tikzcd}
    \]
This construction was generalized to the notion of an {\em extremal transition} in the work of Morrison in the late 1990s~\cite{MR1673108}. Furthermore, Morrison provides a combinatorial description of a class of extremal transitions arising from inclusions between reflexive integral polytopes, $\Delta_{\mathrm{II}} \subseteq \check{\Delta}_{\mathrm{I}}$, which correspond to a dual inclusion $\Delta_{\mathrm{I}} \subseteq \check{\Delta}_{\mathrm{II}}$ of polar-dual polytopes:
 \[
    \begin{tikzcd}
      & X_{\Delta_{\mathrm{II}}} \ar[d]  \\
      X_{\check{\Delta}_{\mathrm{I}}} \ar[r,rightsquigarrow] & X'_{\Delta_{\mathrm{II}}} 
    \end{tikzcd} \hspace{2cm} 
    \begin{tikzcd}
      & X_{\check{\Delta}_{\mathrm{II}}} \ar[d,rightsquigarrow]  \\
      X_{\Delta_{\mathrm{I}}} \ar[r] & X'_{\Delta_{\mathrm{I}}}
    \end{tikzcd}
    \]
It is a consequence of \cite{batyrev1996mirror} that
\[
h^{p,q}(X_{\Delta_{\mathrm{II}}}) = h^{d-p,q}(X_{\check{\Delta}_{\mathrm{II}}}),\qquad h^{p,q}(X_{\Delta_{\mathrm{I}}}) = h^{d-p,q}(X_{\check{\Delta}_{\mathrm{I}}}).
\]
We prove the Hodge number duality between $X'_{\Delta_{\mathrm{I}}}$ and $X'_{\Delta_{\mathrm{II}}}$ (see Theorem \ref{t:transition}): For $p,q \in \mathbb{Z}$,
\begin{equation}\label{e:intro-transition}
    h^{p,q}(X_{\Delta_{\mathrm{I}}}') = h^{d-p,q}(X_{\Delta_{\mathrm{II}}}')
\end{equation}
Note that both $X_{\Delta_{\mathrm{I}}}'$ and $X_{\Delta_{\mathrm{II}}}'$ are singular in general, and we use the notation $h^{p,q}$ to denote the dimension of the $p-$th Hodge graded piece of Deligne's canonical mixed Hodge structures. In particular, if we take the following polytopes with a choice of projective unimodular triangulations, 
\[
    \begin{aligned}
        \Delta_{\mathrm{II}}&:=\mathrm{Conv}(\check{\Delta}_i\times e_i \cup 0\times -e_i|i=1, \cdots, k) \subset M_\mathbb{R}\times \mathbb{R}^k, \\
        \check{\Delta}_{\mathrm{I}}&:=\mathrm{Conv}(2\check{\Delta}_i\times e_i \cup 0\times -e_i|i=1, \cdots, k) \subset M_\mathbb{R}\times \mathbb{R}^k,
    \end{aligned}
\]
then the Hodge number duality \eqref{e:intro-transition} becomes the first item in Theorem \ref{t:intro mirror}.


For the second item in Theorem \ref{t:intro mirror}, we make use of a certain yoga involving several distinct stacky Clarke mirror pairs including pairs related to those that appear in Theorem \ref{t:intro hyps}. Moreover, the first part of Theorem \ref{t:intro mirror} plays a key role in the proof of the second part. We refer the reader to Section \ref{s:HLLY} for further details.

\begin{remark}
    Unlike the proof in \cite{Hosono2024doubleCY}, our proof of the HLLY conjecture does not use  ampleness of the anti-canonical divisor of MPCS resolution of toric varieties, which forces the Hodge numbers $h^{p,q}$ to vanish for $p+q \neq d$ due to \cite{Esnault1992vanishing}. 
    
\end{remark}

\subsection*{Acknowledgements}
AH was supported by a Simons Foundation Collaboration Grant for Mathematicians. SL was supported by the Institute for Basic Science (IBS-R003-D1).


\section{Cyclic covers and Landau--Ginzburg models}\label{s:coverLG}

This section develops technical results relating the irregular Hodge numbers of certain Landau--Ginzburg models to the irregular Hodge numbers of a class of $(\mathbb{Z}/2)^k$-covers of a variety $X$. The reader interested mostly in mirror symmetry applications might find it convenient to skip Section \ref{s : proof of t: single direct sum decomposition} upon first reading.

\subsection{Landau--Ginzburg models and the irregular Hodge filtration}

In this article, a Landau--Ginzburg (LG) model will denote a pair $(Y,w)$ where $Y$ is a quasiprojective variety and $w$ is a regular function on $Y$. A {\em compactified LG model} will be a triple $(X,D,f)$ where $X$ is a smooth projective variety, and $D$ is a simple normal crossings (snc) hypersurface in $X$, and the map $f$ is a rational function on $X$ whose pole divisor is contained in $D$. We say a compactified LG model $(X,D,f)$ \emph{nondegenerate} if the vanishing locus $Z(f)$ is irreducible and $Z(f) \cup D$ is snc. Let $P$ denote the pole divior of $f$. For simplicity, we also assume that $Z(f) \cap P = \emptyset.$  Note that given any LG model for which $Y$ and $\{w=0\}$ are smooth,  one can always find a nondegenerate compactified LG model $(X,D,f)$ for which $Y = X\setminus D$ and $w = f|_Y$ by Hironaka’s theorem.

\subsubsection{Twisted de Rham cohomology} Given a nondegenerate compactified LG model $(X,D,f)$, we have a twisted de Rham complex $(\Omega_X^\bullet(\log D)(*P), \mathrm{d} + \mathrm{d}f )$. We denote its cohomology by $H^*(X\setminus D,f)$. Given a snc hypersurface $E$ for which $E\cup D \cup Z(f)$ is also snc, we define the local twisted cohomology of $E$, denoted by $H^*_E(X\setminus D,f)$, to be the hypercohomology of the complex,
\[
\mathrm{DR}_{E}(X\setminus D,f):=\left(\dfrac{\Omega^\bullet_X(\log D + E)(*P)}{\Omega^\bullet_X(\log D)(*P)}, \mathrm{d} + \mathrm{d}f \right). 
\]
Suppose $E$ is irreducible. Then there is a residue morphism
\[
\mathrm{Res}_E:  \mathrm{DR}(X\setminus (D\cup E),f) \longrightarrow \mathrm{DR}(E \setminus (D\cap E), f|_E)[1].
\]
This morphism is induced by the usual residue morphism (see e.g. \cite[\S 4.2]{peters2008mixed}) on sheaves of $p$-forms. The kernel of $\mathrm{Res}_E$ is $\mathrm{DR}(X\setminus D,f)$ so that we have an isomorphism between $(\Omega_X^\bullet(\log D,E)(*P),\mathrm{d} + \mathrm{d}f)$ and $(\Omega^{\bullet -1}_E(\log E\cap D)(*P\cap D), \mathrm{d} +\mathrm{d}f|_E)$ when $E$ is irreducible. 

More generally, if $E_1,\dots, E_r$ are snc hypersurfaces (not necessarily irreducible) such that $E_1\cup \dots \cup E_r$ is also snc, then the twisted de Rham complex, localized  to $E^{[r]}:=E_1\cap \dots \cap E_r$ is defined by
\[
\mathrm{DR}_{E^{[r]}}(X\setminus D,f):=\left(\Omega^\bullet_X(\log D, E^{[r]}):=  \dfrac{\Omega_X^\bullet(\log D + \sum_{i} E_i)(*P)}{\sum_i \Omega^\bullet(\log D + \sum_{j \neq i} E_j)(*P)}, \mathrm{d} + \mathrm{d}f\right).
\]
If $r=0$, then we take $E^{[r]}$ to mean all of $X$, and the local twisted cohomology coincides with the twisted cohomology of $X$. The hypercohomology of this complex is denoted by $H^*_{E^{[r]}}(X\setminus D,f)$. It is an exercise in local coordinates to check that there is a short exact sequence of complexes,
\begin{equation}\label{e : local cohomology les 1 - sheaf}
0 \longrightarrow \mathrm{DR}_{E^{[r-1]}}(X\setminus D,f) \longrightarrow \mathrm{DR}_{E^{[r-1]}}(X\setminus (D\cup E_r),f) \xlongrightarrow{\mathrm{Res}_{E_r}} \mathrm{DR}_{E^{[r]}}(X\setminus D,f) \longrightarrow 0
\end{equation}
where $E^{[r-1]} = E_1\cap \dots \cap E_{r-1}$ and $\mathrm{Res}_{E_r}$ is the residue morphism. If we write $E_r = R_1\cup \dots \cup R_m$ for smooth, irreducible hypersurfaces $R_i$ and define $R^I= \bigcap_{i\in I} R_i$, $B^I = R^I \cap D \cap \bigcup_{j \notin I} R_{j}$, there is a residue resolution
\begin{equation}\label{e : residue resolution}
\begin{array}{ll}
0 \longrightarrow \Omega_X^\bullet(\log D, E^{[r]})(*P) & \longrightarrow \bigoplus_{|I| =1}\Omega_{R^I}^{\bullet-1}(\log B_I, E^{[r]}\cap R^I)(*P) \\ & \longrightarrow \bigoplus_{|I| = 2}\Omega_{R^I}^{\bullet-2}(\log B_I, E^{[r]}\cap R^I)(*P) \longrightarrow \dots 
\end{array}
\end{equation}
The morphisms in \eqref{e : residue resolution} are alternating sums of the residue morphisms whose precise definition is standard (see e.g. \cite[\S 8.2]{HL2025}), but we will suppress here because it only plays a minor role in our computations.

\subsubsection{Irregular Hodge filtration} 
There is a filtration on the twisted de Rham complex, called the irregular Hodge filtration, which was first defined by Yu \cite{yu2014irregular} as follows. Let $\lambda \in \mathbb{Q}$.
\[
F_{\mathrm{irr}}^\lambda \mathcal{O}_X(*P) = \begin{cases} 0 & \text{ if } \lambda < 0 \\ 
\mathcal{O}_X(\lfloor \lambda P\rfloor) & \text{ if } \lambda \geq 0 \end{cases},\quad F_{\mathrm{irr}}^\lambda\Omega_X^p(\log D)(*P) = \Omega_X^p(\log D)\otimes F_{\mathrm{irr}}^{p-\lambda}\mathcal{O}_X(*P).
\]
Exactness of the tensor product with the line bundle $\mathcal{O}_X(\lfloor  \lambda P\rfloor)$ implies that this filtration also induces an irregular Hodge filtration $\mathrm{DR}_{E^{[r]}}(X\setminus D,f)$. 
Note that the irregular Hodge filtration is a $\mathbb{Q}$-filtration. The induced filtration on $H^p_{E^{[r]}}(X\setminus D,f)$ is defined as usual by letting 
\begin{equation}\label{e : definition of the irregular filtration on cohomology}
F_\mathrm{irr}^\lambda H_{E^{[r]}}^p(\Omega^\bullet_X(\log D)(*P)) = \mathrm{im}\left(H_{E^{[r]}}^p(F^\lambda_\mathrm{irr}\Omega^\bullet_X(\log D)(*P)) \rightarrow H_{E^{[r]}}^p(\Omega^\bullet_X(\log D)(*P) ) \right).
\end{equation}
The irregular filtration is exhaustive. In fact, $F^0_\mathrm{irr}H^p_{E^{[r]}}(\Omega_X^\bullet(\log D)(*P)) = H^p_{E^{[r]}}(\Omega_X^\bullet(\log D)(*P))$ \cite{yu2014irregular} and $F_\mathrm{irr}^\lambda \Omega_X^\bullet(\log D)(*P) = 0$ if $\lambda > \dim X$. Furthermore, if 
\begin{align*}
F^{>\lambda}_\mathrm{irr}H^p_{E^{[r]}}(X\setminus D,f) & = \bigcup_{\lambda' >\lambda} F^{\lambda'}_\mathrm{irr}H^p_{E^{[r]}}(X\setminus D,f), \\  \gr^\lambda_{F_\mathrm{irr}}H^p_{E^{[r]}}(\Omega_X^\bullet(\log D)(*P))& = \dfrac{{F}^{\lambda}_\mathrm{irr}H^p_{E^{[r]}}(\Omega_X^\bullet(\log D)(*P))}{F_\mathrm{irr}^{>\lambda}H^p_{E^{[r]}}(\Omega_X^\bullet(\log D)(*P))},
\end{align*}
then $\gr_{F_\mathrm{irr}}^\lambda H^p_{E^{[r]}}(\Omega_X^\bullet(\log D)(*P)) = 0$ for all but finitely many $\lambda \in \mathbb{Q}_{\geq 0}$. By results of Yu \cite{yu2014irregular}, the twisted cohomology and irregular Hodge filtration of $(X,D,f)$ only depend on $Y = X\setminus D$ and $w = f|_Y$.
\begin{example}\label{ex : irregular = regular}
    If $f = 0$ and $E^{[r]}=\emptyset$, then it is not hard to see that the filtration $F_\mathrm{irr}^\bullet$ is simply the stupid filtration on the log de Rham complex $\Omega^\bullet_X(\log D)$, in which case ${F}^\bullet_\mathrm{irr}$ agrees with the usual Hodge filtration on the cohomology of the noncompact variety $X\setminus D$.
\end{example}

\begin{example}\label{e : t^n LG model}
    The affine LG model $(\mathbb{A}^1,t^k)$ has twisted cohomology computed using the complex
    \[
    \mathbb{C}[t] \xrightarrow{\mathrm{d} + \mathrm{d}(t^k)} \mathbb{C}[t]\cdot \mathrm{d}t
    \]
    It is an exercise to check that 
    \[F^{(k-a)/k}_\mathrm{irr}H^1(\mathbb{A}^1,t^k)=\begin{cases} 0 & a <1 \\
        \mathrm{span}_\mathbb{C}\{\mathrm{d}t, t\mathrm{d}t,\dots, t^{a-1}\mathrm{d}t\} & 1 \leq a \leq k-1 \\
        \mathrm{span}_\mathbb{C}\{\mathrm{d}t, t\mathrm{d}t,\dots, t^{k-2}\mathrm{d}t\} & a=k
    \end{cases}
    \]
    and that $H^0(\mathbb{A}^1,t^k) \cong 0$ if $k \geq 1$. Therefore, $\dim \gr_F^{\lambda}H^1(\mathbb{A}^1,t^k) = 1$ if $\lambda =1/k,\dots, (k-1)/k$ and $0$ otherwise.
\end{example}

An important result in \cite{esnault20171} is that the morphism in \eqref{e : definition of the irregular filtration on cohomology} is injective for all $\lambda$ and as a consequence,
\begin{equation}\label{e : ESY}
\gr_{F_\mathrm{irr}}^\lambda H_{E^{[r]}}^{p}(\Omega^\bullet_X(\log D)(*P)) \cong H_{E^{[r]}}^{p}(\gr_{F_\mathrm{irr}}^\lambda \Omega^\bullet_X(\log D)(*P)).
\end{equation}
There are filtered long exact sequences whenever $E$ is an snc hypersurface in $X$, coming from \eqref{e : local cohomology les 1 - sheaf},
\begin{equation}\label{e : local cohomology les}
\dots \longrightarrow H_{E^{[r-1]}}^p(X\setminus D,f) \longrightarrow H^p_{E^{[r-1]}}(X\setminus (D\cup E_r),f) \longrightarrow H_{E^{[r]}}^p(X\setminus D,f) \longrightarrow \dots 
\end{equation}
The following fact can be deduced directly from the results of \cite{esnault20171} (in particular \eqref{e : ESY}).
\begin{proposition}\label{p : strict morphism of filtered complexes}
    The morphisms in the long exact sequence \eqref{e : local cohomology les} are strict with respect to $F^\bullet_\mathrm{irr}$. In other words, for each $\lambda$ there are long exact sequences 
    \[
\dots \longrightarrow \gr^\lambda_{F_\mathrm{irr}}H_{E^{[r-1]}}^p(X\setminus D,f) \longrightarrow \gr^\lambda_{F_\mathrm{irr}}H_{E^{[r-1]}}^p(X\setminus (D\cup E_r),f) \longrightarrow \gr^\lambda_{F_\mathrm{irr}}H_{E^{[r]}}^p(X\setminus D,f) \longrightarrow \dots 
\]
\end{proposition}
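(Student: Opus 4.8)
The plan is to deduce Proposition~\ref{p : strict morphism of filtered complexes} formally from the injectivity statement \eqref{e : ESY} of \cite{esnault20171}, by running the standard argument that degeneration of a spectral sequence (or, equivalently, "$E_1$-degeneration" / strictness at the level of the associated graded) propagates through the long exact sequence \eqref{e : local cohomology les}. Concretely, the short exact sequence of complexes \eqref{e : local cohomology les 1 - sheaf} is a short exact sequence of filtered complexes with respect to $F^\bullet_\mathrm{irr}$, because the filtration is defined termwise by tensoring with the line bundles $\mathcal{O}_X(\lfloor \lambda P\rfloor)$ and this tensor operation is exact, so it takes the termwise-injective/surjective maps of \eqref{e : local cohomology les 1 - sheaf} to termwise-injective/surjective maps of filtered pieces; in particular it is strict at the sheaf level (indeed split termwise after choosing local trivializations). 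Hence applying $\gr^\lambda_{F_\mathrm{irr}}$ to \eqref{e : local cohomology les 1 - sheaf} yields, for each $\lambda\in\mathbb{Q}$, a short exact sequence of complexes
\[
0 \to \gr^\lambda_{F_\mathrm{irr}}\mathrm{DR}_{E^{[r-1]}}(X\setminus D,f) \to \gr^\lambda_{F_\mathrm{irr}}\mathrm{DR}_{E^{[r-1]}}(X\setminus (D\cup E_r),f) \to \gr^\lambda_{F_\mathrm{irr}}\mathrm{DR}_{E^{[r]}}(X\setminus D,f) \to 0,
\]
whose associated hypercohomology long exact sequence is exactly the claimed one, \emph{provided} one knows that $H^p$ of $\gr^\lambda_{F_\mathrm{irr}}$ of each of the three complexes computes $\gr^\lambda_{F_\mathrm{irr}}$ of the corresponding twisted cohomology. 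But that is precisely \eqref{e : ESY}: the natural map $H^p_{E^{[\ast]}}(\gr^\lambda_{F_\mathrm{irr}}\Omega^\bullet_X(\log D)(*P)) \to \gr^\lambda_{F_\mathrm{irr}} H^p_{E^{[\ast]}}(\Omega^\bullet_X(\log D)(*P))$ is an isomorphism for each of the three localizations $E^{[r-1]}$ (both on $X$ and on $X\setminus(D\cup E_r)$, i.e. with $E_r$ treated as part of the log-pole divisor) and $E^{[r]}$. So the long exact sequence of the graded complexes is identified term by term with the sequence of associated gradeds in the statement.

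The remaining point is to check that this identification is compatible with the connecting maps, and — what is really the same thing phrased differently — that the three maps in \eqref{e : local cohomology les} are genuinely strict, i.e. that $F^\lambda_\mathrm{irr}$ of the target equals the image of $F^\lambda_\mathrm{irr}$ of the source under each map. The clean way to package this: a morphism of filtered vector spaces $\phi\colon (V,F^\bullet)\to (W,F^\bullet)$ fitting into a long exact sequence is strict at every spot if and only if the associated graded sequence is exact, and by \eqref{e : ESY} the associated graded of \eqref{e : local cohomology les} \emph{is} the hypercohomology long exact sequence of the short exact sequence of graded complexes above, hence exact. I would state this as a small lemma (strictness of all arrows in a long exact sequence of filtered vector spaces $\Leftrightarrow$ exactness of the $\gr$-ed sequence) and then simply invoke it. An equivalent and perhaps more transparent route: consider the filtered complex $\mathrm{DR}_{E^{[r-1]}}(X\setminus(D\cup E_r),f)$ together with its subcomplex $\mathrm{DR}_{E^{[r-1]}}(X\setminus D,f)$; by \eqref{e : ESY} applied to all three (sub, total, quotient) complexes, the spectral sequences of the filtered complexes degenerate at the appropriate page, and strictness in the long exact sequence is the standard consequence of such degeneration, cf. \cite[Lemma on filtered complexes]{peters2008mixed} or the analogous statement in \cite{esnault20171}.

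The only genuine subtlety — and the step I expect to require the most care — is making sure that the hypothesis \eqref{e : ESY} is actually available for \emph{all three} complexes simultaneously, including the open/localized one $\mathrm{DR}_{E^{[r-1]}}(X\setminus(D\cup E_r),f)$, and for the nested localizations $E^{[r-1]}$, $E^{[r]}$ appearing here. For the open complex this is fine because $(X, D\cup E_r, f)$ is again a nondegenerate compactified LG model with $E_r$ absorbed into the log divisor (using the nondegeneracy and the snc hypotheses on $E_1\cup\dots\cup E_r\cup D\cup Z(f)$), so \cite{esnault20171} applies verbatim; for the localized versions $\mathrm{DR}_{E^{[r]}}$ one uses the residue resolution \eqref{e : residue resolution}, which expresses $\gr^\lambda_{F_\mathrm{irr}}$ of the localized complex as a complex built out of $\gr$-ed twisted de Rham complexes of the strata $R^I$ — spaces to which \cite{esnault20171} again applies — and then \eqref{e : ESY} for $E^{[r]}$ follows by a spectral-sequence comparison over this resolution. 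Once \eqref{e : ESY} is in hand for the three complexes involved, the proposition is purely formal homological algebra of filtered complexes, so I would keep that part brief and spend the bulk of the write-up on the verification that \eqref{e : ESY} transports along the short exact sequence \eqref{e : local cohomology les 1 - sheaf} and the residue resolution \eqref{e : residue resolution}.
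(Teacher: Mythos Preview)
Your proposal is correct and follows essentially the same approach as the paper, which simply states that the result ``can be deduced directly from the results of \cite{esnault20171} (in particular \eqref{e : ESY})'' without further detail. Your write-up supplies exactly the standard filtered-complex argument the paper leaves implicit, including the care you take to check that \eqref{e : ESY} applies to all three localized complexes via the residue resolution.
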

\begin{remark}[On irregular filtrations]
    We will often suppress discussion of the irregular Hodge filtration in this section, as all morphisms of interest will either be of the type described in Proposition \ref{p : strict morphism of filtered complexes}, and hence will strictly preserve $F^\bullet_\mathrm{irr}$, or they will be filtered morphisms which are isomorphisms of the underlying vector spaces, hence they are filtered isomorphisms. 
\end{remark}

There is also a K\"unneth formula for the irregular Hodge filtration, which appears in work of Chen and Yu \cite{chen2018kunneth} and Sabbah and Yu \cite{Sabbah2023cyclic}, which we state in slightly greater generality than \cite[Theorem 1]{chen2018kunneth} and significantly less generality than \cite[Corollary 3.34]{Sabbah2023cyclic}.
\begin{theorem}\label{t : Kuenneth theorem}
    Suppose $(X_1,D_1,f_1)$ and $(X_2,D_2,f_2)$ are nondegenerate compactified LG models and that $E$ is a subvariety of $X_1$. There is a filtered isomorphism
    \[
    \bigoplus_{p+q=v} H_E^p(X_1\setminus D_1,f_1) \otimes H^q(X_2\setminus D_2,f_2)\longrightarrow H_{E\times X_2}^v((X_1\setminus D_1)\times(X_2 \setminus D_2),f_1\boxplus f_2).
    \]
    Here, $f_1 \boxplus f_2 := \pi_1^*f_1 + \pi_2^*f_2$ where $\pi_i : X_1\times X_2 \rightarrow X_i$ is the projection map. 
\end{theorem}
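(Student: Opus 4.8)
The plan is to establish the isomorphism first at the level of complexes of sheaves on $X_1\times X_2$, check that it is filtered for a natural product filtration which turns out to coincide with Yu's irregular Hodge filtration on the product LG model, and then pass to hypercohomology using the classical K\"unneth formula together with the strictness statement \eqref{e : ESY} of Esnault--Sabbah--Yu. (A much more abstract proof is available from the exponential mixed Hodge module K\"unneth of \cite{Sabbah2023cyclic}, but the hands-on version is more useful here.)

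\textbf{Step 1: an external product of twisted de Rham complexes.} Write $P_i$ for the pole divisor of $f_i$ and set $D=\pi_1^*D_1+\pi_2^*D_2$, $P=\pi_1^*P_1+\pi_2^*P_2$ on $X_1\times X_2$. Starting from $\Omega^\bullet_{X_1\times X_2}(\log D)\cong\pi_1^*\Omega^\bullet_{X_1}(\log D_1)\otimes\pi_2^*\Omega^\bullet_{X_2}(\log D_2)$ and using that tensoring with the flat sheaf $\pi_2^*\Omega^\bullet_{X_2}(\log D_2)(*P_2)$ is exact, I would check that the localizations at $P$ and at $E\times X_2$ are compatible with exterior tensor product, producing an isomorphism of complexes of sheaves
\[
\mathrm{DR}_{E}(X_1\setminus D_1,f_1)\,\boxtimes\,\mathrm{DR}(X_2\setminus D_2,f_2)\ \xrightarrow{\ \sim\ }\ \mathrm{DR}_{E\times X_2}\big((X_1\setminus D_1)\times(X_2\setminus D_2),\,f_1\boxplus f_2\big),
\]
where the total differential on the left is $(\mathrm d+\mathrm df_1\wedge)\boxtimes 1\pm 1\boxtimes(\mathrm d+\mathrm df_2\wedge)$ with the Koszul sign; this matches $\mathrm d+\mathrm d(f_1\boxplus f_2)\wedge$ because $\mathrm d(f_1\boxplus f_2)=\pi_1^*\mathrm df_1+\pi_2^*\mathrm df_2$.

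\textbf{Step 2: the product filtration is Yu's filtration.} Equip the left-hand complex with the product filtration $F^\lambda_{\mathrm{prod}}=\sum_{\mu+\nu=\lambda}F^\mu_{\mathrm{irr}}\boxtimes F^\nu_{\mathrm{irr}}$. Since $\pi_1^*P_1$ and $\pi_2^*P_2$ share no component, one has $\lfloor \alpha P\rfloor=\pi_1^*\lfloor\alpha P_1\rfloor+\pi_2^*\lfloor\alpha P_2\rfloor$ for every $\alpha\ge 0$; a short computation which maximizes the $P_1$-- and $P_2$--coefficients separately over the convolution $\mu+\nu=\lambda$ then shows $F^\lambda_{\mathrm{prod}}\Omega^n_{X_1\times X_2}(\log D)(*P)=\Omega^n_{X_1\times X_2}(\log D)\otimes\mathcal O(\lfloor(n-\lambda)P_1\rfloor+\lfloor(n-\lambda)P_2\rfloor)=F^\lambda_{\mathrm{irr}}\Omega^n_{X_1\times X_2}(\log D)(*P)$, and likewise after passing to the $E\times X_2$ quotient. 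Hence the isomorphism of Step 1 is an isomorphism of \emph{filtered} complexes.

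\textbf{Step 3: pass to cohomology.} Taking $R\Gamma$ and applying the classical algebraic K\"unneth formula for bounded (pro-coherent) complexes on the proper $\mathbb C$--varieties $X_i$ --- there is no Tor term, as we work over a field --- Step 1 already gives the unfiltered isomorphism $H^v_{E\times X_2}(\cdots)\cong\bigoplus_{p+q=v}H^p_E(X_1\setminus D_1,f_1)\otimes H^q(X_2\setminus D_2,f_2)$. To see the cup product map is filtered, I would apply \eqref{e : ESY}: $\gr^\lambda_{F_{\mathrm{irr}}}H^v_{E\times X_2}(\cdots)\cong H^v(\gr^\lambda_{F_{\mathrm{irr}}}\mathrm{DR}_{E\times X_2}(\cdots))$. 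On associated graded the differential becomes $\mathcal O$--linear, and a local computation at $P$ (again using the disjointness of $\pi_1^*P_1,\pi_2^*P_2$) identifies $\gr^\lambda$ of the product complex with $\bigoplus_{\mu+\nu=\lambda}\gr^\mu_{F_{\mathrm{irr}}}\mathrm{DR}_E(X_1\setminus D_1,f_1)\boxtimes\gr^\nu_{F_{\mathrm{irr}}}\mathrm{DR}(X_2\setminus D_2,f_2)$. Applying K\"unneth to each $\gr^\lambda$ and \eqref{e : ESY} once more to each factor identifies the outcome with $\gr^\lambda_{F_{\mathrm{irr}}}$ of the right-hand side of the theorem. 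Thus the cup product map is strict, and since $F^\bullet_{\mathrm{irr}}$ is exhaustive with only finitely many jumps (all in $[0,\dim X_1\times X_2]$) on both sides, a morphism inducing an isomorphism on every $\gr^\lambda$ is automatically a filtered isomorphism.

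\textbf{Main obstacle.} The genuinely non-formal input is Step 2 and the $\gr$--versus--$\boxtimes$ compatibility used in Step 3: the irregular Hodge filtration is built from the non-additive operation $\lfloor\,\cdot\,\rfloor$, and its graded pieces are torsion sheaves on $X_1\times X_2$ rather than vector bundles, so the comparison with the product filtration cannot be done by abstract nonsense. What makes it go through is precisely that the two polar divisors are pulled back from different factors, hence meet only in codimension two; all the required identities can then be checked in local coordinates in which $P_1$ and $P_2$ involve disjoint sets of coordinate functions.
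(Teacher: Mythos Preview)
The paper does not actually prove this theorem; it is quoted from Chen--Yu \cite{chen2018kunneth} and (in greater generality) Sabbah--Yu \cite{Sabbah2023cyclic}. So your proposal is an attempt at an independent proof, and it has a genuine gap in Step~2.

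Your claim that the product filtration $F^\lambda_{\mathrm{prod}}$ coincides with Yu's $F^\lambda_{\mathrm{irr}}$ on the product complex is false already at the sheaf level. Fix a summand $\Omega^p_{X_1}(\log D_1)\boxtimes\Omega^q_{X_2}(\log D_2)$ of $\Omega^n_{X_1\times X_2}(\log D)$ and set $\alpha=p-\mu,\ \beta=q-\nu$. Your convolution gives
\[
\sum_{\substack{\alpha+\beta=n-\lambda\\ \alpha,\beta\geq 0}}\mathcal{O}(\lfloor\alpha P_1\rfloor)\boxtimes\mathcal{O}(\lfloor\beta P_2\rfloor),
\]
whereas Yu's filtration on the product gives $\mathcal{O}(\lfloor(n-\lambda)P_1\rfloor)\boxtimes\mathcal{O}(\lfloor(n-\lambda)P_2\rfloor)$. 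The former is strictly smaller: take $P_1=\{x=0\}$, $P_2=\{y=0\}$ reduced and $n-\lambda=2$; then $x^{-2}y^{-2}$ lies in Yu's filtration but not in yours, since $x^{-2}y^{-2}\notin x^{-2}\mathcal{O}+y^{-2}\mathcal{O}$ (equivalently, $(x^2,y^2)$ is a proper ideal of the local ring). The phrase ``maximize the $P_1$- and $P_2$-coefficients separately'' hides the problem: a single pair $(\mu,\nu)$ cannot achieve both maxima, and summing the subsheaves yields only a staircase region of allowed pole orders, not the full box. This error then propagates to Step~3, where your identification of $\gr^\lambda$ with a tensor product of graded pieces also rests on Step~2.

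A hands-on route that does work, and is essentially what Chen--Yu do, is to replace Yu's filtration by the Kontsevich complex: the local normal form of $\Omega_f^p$ (cf.\ the proof of Proposition~\ref{p : yu filtration versus hodge filtration}) gives a genuine sheaf-level isomorphism $\Omega_{f_1\boxplus f_2}^\bullet\cong\Omega_{f_1}^\bullet\boxtimes\Omega_{f_2}^\bullet$, compatible with the stupid filtrations, so the $E_1$-degeneration of \cite{esnault20171} plus ordinary K\"unneth finishes the integer-indexed case. For the full $\mathbb{Q}$-indexed statement one either runs the same argument for each $F^{\mathrm{Yu},\bullet}_\alpha$ or invokes \cite[Corollary~3.34]{Sabbah2023cyclic}.
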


The Kontsevich sheaves of $(X,D,f)$ are defined to be:
    \[
    \Omega_f^p : = \{ \omega \in \Omega_X^p(\log D) \mid \mathrm{d}f \wedge \omega \in \Omega_{X}^{p+1}(\log D)\}.    
    \]
    Equipping $\Omega_f^\bullet$ with the differential $\mathrm{d} + \mathrm{d}f$, we obtain the Kontsevich complex, and equipping $\Omega_f^\bullet$ with the differential $\mathrm{d}$ we obtain the Kontsevich--de Rham complex.
    
    There is an inclusion of complexes 
    \begin{equation}\label{e: inlcusion kontsevich}
    (\Omega_f^\bullet, \mathrm{d} +\mathrm{d}f ,\sigma^\bullet) \hookrightarrow (\mathrm{DR}(X\setminus D,f),F^{\mathrm{Yu},\bullet}_0).
    \end{equation}
    Here $\sigma^\bullet$ denotes the stupid filtration and where, for any $\alpha \in \mathbb{Q}$, $F^{\mathrm{Yu},p}_\alpha$ is the $\mathbb{Z}$-filtration defined so that $F^{\mathrm{Yu},p}_\alpha = F_\mathrm{irr}^{p+\alpha}$ for $p\in \mathbb{Z}$. The inclusion \eqref{e: inlcusion kontsevich} a quasi-isomorphism of filtered complexes  (\cite[Corollary 1.4.5]{esnault20171}). In particular, if $\mathrm{gr}_{F_\mathrm{irr}}^\lambda H^*(X\setminus D, f)=0$ for $\lambda \in \mathbb{Q} \setminus \mathbb{Z}$ (as will be assumed in Proposition \ref{p : yu filtration versus hodge filtration} below) the irregular filtration on $H^*(X\setminus D,f)$ can be identified with the filtration induced by $\sigma^\bullet$ on $\mathbb{H}^*(\Omega_f^\bullet, \mathrm{d} + \mathrm{d}f)$. 
    
    According to \cite[Theorem 1.3.2]{esnault20171}, we have equalities,
    \[
    \dim \mathbb{H}^n(X,(\Omega_f^\bullet, \mathrm{d} +\mathrm{d}f)) = \dim \mathbb{H}^n(X,(\Omega_f^\bullet, \mathrm{d})) = \sum_{p+q = n} \dim H^p(X,\Omega_f^q).
    \]
It is known, for instance by work of Hien \cite{Hien2009period}, that $\dim H^*(X\setminus D,f) = \dim H^*(X\setminus D,f^{-1}(t);\mathbb{C})$ for a generic value $t$. The next result is presumably well known but does not seem to be stated anywhere in the literature. It says that if $t$ is a generic point in $\mathbb{A}^1$, the usual Hodge numbers of the pair $(X\setminus D,f^{-1}(t))$ can be computed using the irregular Hodge filtration under certain conditions.
\begin{proposition}\label{p : yu filtration versus hodge filtration}
    If $(X,D,f)$ is a nondegenerate compactified LG model for which $\mathrm{gr}_{F_{\mathrm{irr}}}^\lambda H^{*}(X\setminus D,f) = 0$ for $\lambda \in \mathbb{Q} \setminus \mathbb{Z}$, then for any smooth, generic fiber $f^{-1}(t)$ of $f$ and for any $p,q\in \mathbb{Z},$
    \[
    \dim \gr_{F_0^\mathrm{Yu}}^p H^{p+q}(X\setminus D,f) = \dim \gr_{F}^p H^{p+q}(X\setminus D,f^{-1}(t);\mathbb{C}).
    \]
\end{proposition}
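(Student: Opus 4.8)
The plan is to prove that both sides of the asserted equality are equal to $\dim H^q(X,\Omega_f^p)$, where $(\Omega_f^\bullet,\mathrm d+\mathrm df)$ and $(\Omega_f^\bullet,\mathrm d)$ denote the Kontsevich and Kontsevich--de Rham complexes of $(X,D,f)$. Recall from \cite[Theorem 1.3.2]{esnault20171} that the hypercohomology spectral sequences of both $(\Omega_f^\bullet,\mathrm d+\mathrm df,\sigma^\bullet)$ and $(\Omega_f^\bullet,\mathrm d,\sigma^\bullet)$ degenerate at $E_1$; this common $E_1$-page, whose $(p,q)$-entry is $H^q(X,\Omega_f^p)$, is what links the two sides. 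In particular we will not need the dimension equality $\dim H^*(X\setminus D,f)=\dim H^*(X\setminus D,f^{-1}(t);\mathbb C)$ as an input: it will be a by-product.

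First I would handle the left-hand side. By the filtered quasi-isomorphism \eqref{e: inlcusion kontsevich} of \cite[Corollary 1.4.5]{esnault20171}, the filtration $F^{\mathrm{Yu},\bullet}_0=F_{\mathrm{irr}}^\bullet$ on $H^*(X\setminus D,f)$ is computed by the stupid filtration $\sigma^\bullet$ on $\mathbb H^*(\Omega_f^\bullet,\mathrm d+\mathrm df)$; the standing hypothesis $\mathrm{gr}_{F_{\mathrm{irr}}}^\lambda H^*(X\setminus D,f)=0$ for $\lambda\in\mathbb Q\setminus\mathbb Z$ is precisely what forces $\mathrm{gr}^p_{F_0^{\mathrm{Yu}}}=\mathrm{gr}^p_{F_{\mathrm{irr}}}$ (i.e.\ $F_{\mathrm{irr}}^{p+1}=F_{\mathrm{irr}}^{>p}$) rather than a coarsening of a $\mathbb Q$-filtration. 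Combining this with the $E_1$-degeneration gives $\dim\mathrm{gr}^p_{F_0^{\mathrm{Yu}}}H^{p+q}(X\setminus D,f)=\dim H^q(X,\Omega_f^p)$.

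It then remains to prove $\dim\mathrm{gr}^p_F H^{p+q}(X\setminus D,f^{-1}(t);\mathbb C)=\dim H^q(X,\Omega_f^p)$ for generic $t$. Since $Z(f)\cap P=\emptyset$, the function $f$ extends to a morphism $\bar f\colon X\to\mathbb P^1$, and for $t$ in a Zariski-dense open subset of $\mathbb A^1$ the fibre $\bar F_t:=\bar f^{-1}(t)$ is smooth, disjoint from $P\cup Z(f)$, and such that $\bar F_t\cup D$ is snc and $f^{-1}(t)=\bar F_t\setminus(\bar F_t\cap D)$. Then $H^*(X\setminus D,f^{-1}(t);\mathbb C)$ is computed, as a mixed Hodge structure, by the cohomological mixed Hodge complex $\mathrm{cone}\bigl(\Omega_X^\bullet(\log D)\xrightarrow{\mathrm{res}}\Omega^\bullet_{\bar F_t}(\log(\bar F_t\cap D))\bigr)[-1]$ with its stupid Hodge filtration, so that
\[ \dim\mathrm{gr}_F^pH^{p+q}(X\setminus D,f^{-1}(t);\mathbb C)=\dim\mathbb H^{q}\!\left(X,\ \mathrm{cone}\bigl(\Omega_X^p(\log D)\to\Omega^p_{\bar F_t}(\log(\bar F_t\cap D))\bigr)[-1]\right). \]
So I would reduce the whole proposition to showing that this last group is $H^q(X,\Omega_f^p)$ for each $p$ and generic $t$. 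The inclusion $\Omega_f^p\hookrightarrow\Omega_X^p(\log D)$ is an isomorphism over $X\setminus P$, so these two sheaves differ by a coherent sheaf supported on $P$, and the resulting discrepancy between $\mathbb H^*(\Omega_f^\bullet,\mathrm d)$ and $H^*(X\setminus D)=\mathbb H^*(\Omega_X^\bullet(\log D),\mathrm d)$ is exactly the ``cohomology at infinity'' of $f$ (à la Sabbah). Through the long exact sequence of the pair $(X\setminus D,f^{-1}(t))$, this same cohomology at infinity also governs the discrepancy between $H^*(X\setminus D,f^{-1}(t);\mathbb C)$ and $H^*(X\setminus D)$. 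Identifying these two descriptions and carrying along the stupid resp.\ Hodge filtrations yields the claim; the integrality hypothesis re-enters here to guarantee that the comparison maps are morphisms of honest $\mathbb Z$-filtered complexes (no fractional twists of line bundles), so that $\sigma^\bullet$ on $\Omega_f^\bullet$ matches the Hodge filtration of the cone.

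The hard part will be this last comparison. It is not a distinguished triangle of sheaf complexes on $X$, since the discrepancies are supported on the \emph{disjoint} loci $P$ and $\bar F_t$; identifying ``cohomology at infinity'' with ``cohomology relative to a generic fibre'' requires a global analysis near $\bar f^{-1}(\infty)$ (in the spirit of Sabbah's study of the Gauss--Manin system at infinity) together with careful tracking of the two filtrations. Probably the most economical route is to import the required compatibility from the theory of exponential mixed Hodge structures of Sabbah and Yu \cite{Sabbah2023cyclic}: under our integrality hypothesis their formalism should directly yield a filtered isomorphism $(H^*(X\setminus D,f),F_{\mathrm{irr}})\cong(H^*(X\setminus D,f^{-1}(t);\mathbb C),F)$, from which Proposition \ref{p : yu filtration versus hodge filtration} is immediate; the Kontsevich-complex argument above would then serve as the more self-contained alternative that makes the role of \cite[Theorem 1.3.2]{esnault20171} explicit.
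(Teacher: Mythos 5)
Your reduction of the left-hand side is correct and coincides with what the paper uses: under the integrality hypothesis $F_0^{\mathrm{Yu}}$ and $F_{\mathrm{irr}}$ induce the same $\mathbb Z$-filtration on cohomology, and the filtered quasi-isomorphism \eqref{e: inlcusion kontsevich} together with the $E_1$-degeneration from \cite[Theorem 1.3.2]{esnault20171} gives $\dim\gr^p_{F_0^{\mathrm{Yu}}}H^{p+q}(X\setminus D,f)=\dim H^q(X,\Omega_f^p)$.

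The right-hand side is where the argument breaks down, and you are candid that it does. You want to identify $\dim\mathbb H^q\bigl(X,\,\mathrm{cone}(\Omega_X^p(\log D)\to\Omega^p_{\bar F_t}(\log(\bar F_t\cap D)))[-1]\bigr)$ with $\dim H^q(X,\Omega_f^p)$ by a direct sheaf-theoretic comparison that matches the contribution of $P$ (where $\Omega_f^p$ differs from $\Omega_X^p(\log D)$) against the contribution of $\bar F_t$ (where the cone restriction lives). Since those loci are disjoint, there is no morphism of complexes on $X$ realizing this matching, and the ``cohomology at infinity'' bridge you invoke is precisely the nontrivial global input one needs. The fallback via Sabbah--Yu \cite{Sabbah2023cyclic} is also not available in the generality needed: the paper explicitly remarks that their formalism is developed only in the affine case, and the proposition here concerns an arbitrary nondegenerate compactified LG model.

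What the paper does instead sidesteps any direct comparison. It packages both complexes as the fibers of a single relative complex $\Omega^\bullet_{\mathfrak X/\mathbb D}(\log\mathfrak D,\mathrm{rel}\,\mathfrak P)$ over a small disc $\mathbb D$ near $\infty\in\mathbb P^1$: the fiber at $\varepsilon\neq\infty$ is $\Omega_X^\bullet(\log D,\mathrm{rel}\,f^{-1}(\varepsilon))$ and the fiber at $\varepsilon=\infty$ is $\Omega_f^\bullet$ (via the local coordinate analysis copied from \cite{katzarkov2017bogomolov} and \cite{chen2018kunneth}). Both endpoints have $E_1$-degenerate stupid filtrations, so the total ranks $\sum_{p+q=n}\dim H^p(\cdot,\cdot^q)$ equal $\dim H^n(X\setminus D,f^{-1}(\varepsilon);\mathbb C)$ which is independent of $\varepsilon$ (including $\varepsilon=\infty$, by Hien's theorem). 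Then Grauert's upper semicontinuity applied to the individual $H^p(X_\varepsilon,\Omega^q_{\mathfrak X/\mathbb D}(\log\mathfrak D,\mathrm{rel}\,\mathfrak P)|_{X_\varepsilon})$ combined with constancy of the sum forces each summand to have constant dimension. That is the step missing from your argument. Without some such flatness/semicontinuity device, or an actual construction of the ``cohomology at infinity'' comparison, the right-hand reduction is unproven.
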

\begin{proof}
 The proof is a slight generalization of the proofs of \cite[Claim 2.22]{katzarkov2017bogomolov} and \cite[Theorem 3.1]{harder}. 

 Let $\mathfrak{X} = X \times \mathbb{D}, \mathfrak{D} = D \times \mathbb{D}$ where $\mathbb{D}$ denotes a small disc near $\infty \in \mathbb{P}^1$, and let $\mathfrak{P} = \{ (x,\varepsilon) \in \mathfrak{X} \mid f(x) = \varepsilon\}$. By construction, $\mathfrak{P}\cup \mathfrak{D}$ is a normal crossings divisor in $\mathfrak{X}$.  There are complexes of sheaves $(\Omega_{\mathfrak{X}/\mathbb{D}}^\bullet(\log \mathfrak{D}), \mathrm{d})$ and $(\Omega^\bullet_{\mathfrak{P}/\mathbb{D}}(\log \mathfrak{D}\cap \mathfrak{P}),\mathrm{d})$ and a restriction morphism, so, following notation of \cite{katzarkov2017bogomolov}, we may define
    \begin{equation}\label{e: stalk 1}
    \Omega_{\mathfrak{X}/\mathbb{D}}^\bullet(\log \mathfrak{D},\mathrm{rel}\,\mathfrak{P}):= \ker\left[\Omega_{\mathfrak{X}/\mathbb{D}}^\bullet(\log \mathfrak{D}) \longrightarrow \Omega^\bullet_{\mathfrak{P}/\mathbb{D}}(\log \mathfrak{D}\cap \mathfrak{P})\right].
    \end{equation}
    We now check that $\Omega_{\mathfrak{X}/\mathbb{D}}^\bullet(\log \mathfrak{D},\mathrm{rel}\, \mathfrak{P})|_{X_\varepsilon}$ is isomorphic to $\Omega_{X}^\bullet(\log D,\mathrm{rel}\, {f}^{-1}(\varepsilon))$ if $\varepsilon \neq \infty$ and $\Omega_f^\bullet$ if $\varepsilon = \infty$. We write $f$ locally as $1/(x^{e_1}_1\dots x_k^{e_k})$ for positive exponents $e_1,\dots, e_k$. As a general fact, if $V$ is smooth in $X$, $V\cup D$ has normal crossings, and $g$ is a function in local coordinates for which $V= \{g = t\}$ for some value $t$, we may write
    \begin{equation}\label{e : stalk 2}
    \Omega_X^p(\log D,\mathrm{rel}\, V)_{x} = d\log g \wedge \left( \bigwedge^{p-1} W\right) \oplus (g-t)\cdot \left(\bigwedge^p W\right)
    \end{equation}
    where $W$ is the $\mathcal{O}_{X,x}$ span of a collection of forms so that $\mathcal{O}_{X,x}\cdot d\log g \oplus W = \Omega_{X}^1(\log D)_x$. We may write $f^{-1}(\varepsilon)$ locally as $\{x_1^{e_1}\dots x_k^{e_k} - t = 0\}$ where $t = 1/\varepsilon$. As in \cite[Proof of Claim 2.22]{katzarkov2017bogomolov}, we may write 
    \begin{equation}\label{e : stalk 3}
    \Omega_{\mathfrak{X}/\mathbb{D}}^p(\log \mathfrak{D},\mathrm{rel}\, \mathfrak{P})_x = (x_1^{e_1}\dots x_k^{e_k} - t) \left( \bigwedge^p \mathcal{W} \right) \oplus d\log(x_1^{e_1}\dots x_k^{e_k})\wedge \left(\bigwedge^{p-1} \mathcal{W}\right)
    \end{equation}
    where $\mathcal{W}$ is a free $\mathcal{O}_{\mathfrak{X},x}$ submodule of $\Omega_{\mathfrak{X}/\mathbb{D}}^1(\log \mathfrak{D})$ so that $\mathcal{O}_{\mathfrak{X},x} \cdot (d\log (x_1^{e_1}\dots x_k^{e_k})) \oplus\mathcal{W}   = \Omega_{\mathfrak{X}/\mathbb{D}}^1(\log \mathfrak{D})_x$. Precisely, we may let 
    \[
    \mathcal{W} = \mathrm{span}_{\mathcal{O}_{\mathfrak{X},x}}\{ \mathrm{d}\log x_2,\dots, \mathrm{d}\log x_k, \mathrm{d}\log y_1,\dots, \mathrm{d}\log y_m, \mathrm{d}z_1,\dots, \mathrm{d} z_n\}
    \]
    where, locally, $\mathrm{supp}(P) = \{x_1 \dots x_k = 0\}, D = \{x_1\dots x_ky_1\dots y_m= 0\}$ and $z_1,\dots, z_n$ are the remaining coordinates. Note that we ignore $\mathrm{d}\log t$ because we are considering relative differential forms. Now by comparing \eqref{e : stalk 2} and \eqref{e : stalk 3} we see that $\Omega_{\mathfrak{X}/\mathbb{D}}^\bullet(\log \mathfrak{D},\mathrm{rel}\, \mathfrak{P})|_{X_\varepsilon}$ is isomorphic to $\Omega_{X}^\bullet(\log D,\mathrm{rel}\, {f}^{-1}(\varepsilon))$ if $\varepsilon \neq \infty$. The local form for $\Omega_f^\bullet$ given in \cite[Proof of Proposition 1]{chen2018kunneth} is identical to $\Omega_{\mathfrak{X}/\mathbb{D}}^\bullet(\log \mathfrak{D},\mathrm{rel}\, \mathfrak{P})|_{X_0}$.

     The relative de Rham complex $\Omega_{X}^\bullet(\log D,\mathrm{rel}\,f^{-1}(\varepsilon))$ equipped with its stupid filtration $\sigma^\bullet$ underlies a mixed Hodge structure, therefore the filtration induced by $\sigma^\bullet$ degenerates at the $E_1$ term and 
    \[
   \dim H^n(X\setminus D,f^{-1}(\varepsilon);\mathbb{C}) =  \dim \mathbb{H}^n(X_\varepsilon, \Omega_{X}^\bullet(\log D,\mathrm{rel}\,f^{-1}(\varepsilon))) = \sum_{p+q = n} H^p( \Omega_{X}^q(\log D,\mathrm{rel}\,f^{-1}(\varepsilon)))
    \]
    and by \cite[Theorem 1.3.2]{esnault20171} and the results of Hien \cite{Hien2009period} mentioned above,
    \[
    \dim H^n(X\setminus D,f^{-1}(\varepsilon);\mathbb{C}) =  \dim \mathbb{H}^n(X, (\Omega_{f}^\bullet,\mathrm{d})) = \sum_{p+q = n} H^p(X, \Omega_{f}^q)
    \]
    Now we invoke Grauert's base change theorem (e.g. \cite[Theorem 10.30]{peters2008mixed}) to see that rank of $H^p(X_\varepsilon,\Omega^q_{\mathfrak{X}/\mathbb{D}}(\log \mathfrak{D}, \mathrm{rel}\, \mathfrak{P})|_{X_\varepsilon})$ is upper semicontinuous on $\mathbb{D}$ to conclude that $\dim H^p(X,\Omega_f^q) = \dim H^p(X,\Omega_X^q(\log D,\mathrm{rel}\, f^{-1}(\varepsilon)))$ for all $\varepsilon$ in $\mathbb{D}$ as required.
\end{proof}
\begin{corollary}
     Suppose $(X,D,f)$ is a nondegenerate compactified LG model for which $\mathrm{gr}_{F_{\mathrm{irr}}}^\lambda H^{*}(X\setminus D,f) = 0$ when $\lambda \in \mathbb{Q} \setminus \mathbb{Z}$. For $p,q \in \mathbb{Z}$ and a generic value of $t$,
    \[
    \dim \gr_{F_\mathrm{irr}}^pH^{p+q}(X\setminus D,f) = \dim \gr_F^pH^{p+q}(X\setminus D, f^{-1}(t);\mathbb{C}).
    \]
    
\end{corollary}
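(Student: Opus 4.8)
The plan is to read this off directly from Proposition~\ref{p : yu filtration versus hodge filtration}. That proposition already supplies the right-hand side: under the present hypothesis (which is word-for-word the hypothesis of Proposition~\ref{p : yu filtration versus hodge filtration}) it gives $\dim \gr_{F_0^{\mathrm{Yu}}}^p H^{p+q}(X\setminus D,f) = \dim \gr_F^p H^{p+q}(X\setminus D,f^{-1}(t);\mathbb{C})$ for any smooth generic fiber $f^{-1}(t)$. So the only thing left to verify is that the $\mathbb{Z}$-filtration $F_0^{\mathrm{Yu},\bullet}$ and the $\mathbb{Q}$-filtration $F_{\mathrm{irr}}^\bullet$ have the same graded pieces in integer degrees on $H^{p+q}(X\setminus D,f)$.

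This is immediate from the vanishing hypothesis. Recall from the paragraph preceding Proposition~\ref{p : yu filtration versus hodge filtration} that, by definition, $F_0^{\mathrm{Yu},p} = F_{\mathrm{irr}}^{p}$ for every $p\in\mathbb{Z}$. Since $\gr_{F_{\mathrm{irr}}}^\lambda H^{p+q}(X\setminus D,f) = 0$ for all $\lambda\in\mathbb{Q}\setminus\mathbb{Z}$, the finite, exhaustive, decreasing $\mathbb{Q}$-filtration $F_{\mathrm{irr}}^\bullet$ on $H^{p+q}(X\setminus D,f)$ has no fractional jumps, so $F_{\mathrm{irr}}^{>p} = F_{\mathrm{irr}}^{p+1}$. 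Consequently
\[
\gr_{F_{\mathrm{irr}}}^p H^{p+q}(X\setminus D,f) \;=\; \frac{F_{\mathrm{irr}}^p}{F_{\mathrm{irr}}^{p+1}} \;=\; \frac{F_0^{\mathrm{Yu},p}}{F_0^{\mathrm{Yu},p+1}} \;=\; \gr_{F_0^{\mathrm{Yu}}}^p H^{p+q}(X\setminus D,f)
\]
for all $p,q\in\mathbb{Z}$ (all the subquotients being taken inside $H^{p+q}(X\setminus D,f)$); in particular the dimensions agree. Combining this with the dimension identity from Proposition~\ref{p : yu filtration versus hodge filtration} gives the corollary.

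I do not expect any genuine obstacle: the whole argument is the bookkeeping identification of the irregular Hodge filtration (a priori $\mathbb{Q}$-indexed) with its integer reindexing $F_0^{\mathrm{Yu}}$, valid precisely because the hypothesis kills all fractional-degree graded pieces. The only point worth stating carefully is that the absence of fractional jumps forces $F_{\mathrm{irr}}^{>p}=F_{\mathrm{irr}}^{p+1}$, which is what makes $\gr^p$ insensitive to the choice between the $\mathbb{Q}$- and the $\mathbb{Z}$-filtration.
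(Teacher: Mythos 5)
Your argument is correct and follows the paper's own reasoning: identify $F_{\mathrm{irr}}$ with $F_0^{\mathrm{Yu}}$ in integer degrees using the vanishing of fractional graded pieces, then invoke Proposition~\ref{p : yu filtration versus hodge filtration}. The paper states the identification in one line; you have simply spelled out why the absence of fractional jumps makes $\gr_{F_{\mathrm{irr}}}^p = \gr_{F_0^{\mathrm{Yu}}}^p$, which is exactly the intended justification.
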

\begin{proof}
    If the irregular Hodge numbers of $(X,D,f)$ vanish away from integer indices, the filtration induced by $F_\mathrm{irr}$ on twisted cohomology is the same as the filtration induced by $F_0^{\mathrm{Yu}}$. The result then follows from Proposition \ref{p : yu filtration versus hodge filtration}.
\end{proof}

We present a particular case, which we will discuss in more detail in the next section. Suppose $\sigma$ is a global section of a vector bundle $\pi: \mathcal{E}\rightarrow X$ and let $Z_\sigma$ denote the vanishing locus of $\sigma$ in $X$. There is an induced regular map $g=g_\sigma : \mathbb{V}:=\mathrm{Tot}(\mathcal{E}^\vee) \rightarrow \mathbb{A}^1$, and we may view $(\mathbb{V},g)$ as a LG model. Completely explicitly, if we trivialize $\mathbb{V}$ on an open set $U$, $\varphi : \mathbb{C}^k \times U \rightarrow \mathbb{V}|_U $ and we may write  
\begin{equation}\label{e : local representation for potential}
\varphi^*g = t_1f_1(\underline{x}) + \dots + t_k f_k(\underline{x})
\end{equation}
where $t_1,\dots, t_k$ are coordinates on $\mathbb{C}^k$ and $(f_1, \dots, f_k)$ is the pullback of $\sigma$. Results closely related to Proposition \ref{p : well known} below appear several places (e.g. \cite{dimca2000dwork,yu2014irregular,Fresan2022}) but we are not aware of a statement in the literature where the precise form used below can be found.
\begin{proposition}\label{p : well known}
    If $\gr_{F_\mathrm{irr}}^\lambda H^*(\mathbb{V},g) = 0$ for all $\lambda \in \mathbb{Q}\setminus \mathbb{Z}$, then 
    \[
    \dim \gr_{F_\mathrm{irr}}^p H^{p+q}(\mathbb{V},g) = \dim \gr_F^pH^{p+q}_{Z_\sigma}( X)
    \]
    for $p,q \in \mathbb{Z}$.
\end{proposition}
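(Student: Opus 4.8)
The plan is to compactify the Landau--Ginzburg model $(\mathbb{V}, g)$ to a nondegenerate compactified LG model, use the previous Corollary to reduce the statement to a comparison with the usual Hodge filtration on a generic fiber of $g$, and then identify that generic fiber with an affine bundle complement that is homotopy equivalent to $X \setminus Z_\sigma$. More precisely, let $(\overline{\mathbb{V}}, D, \bar g)$ be a nondegenerate compactification of $(\mathbb{V}, g)$; such exists by Hironaka, and we may take it so that $\bar g$ restricts to $g$ on $\mathbb{V} = \overline{\mathbb{V}} \setminus D$. The hypothesis $\gr^\lambda_{F_\mathrm{irr}} H^*(\mathbb{V},g) = 0$ for $\lambda \notin \mathbb{Z}$ is exactly what the preceding Corollary requires, so we obtain
\[
\dim \gr_{F_\mathrm{irr}}^p H^{p+q}(\mathbb{V}, g) = \dim \gr_F^p H^{p+q}(\mathbb{V}, g^{-1}(t); \mathbb{C})
\]
for generic $t$, where the right-hand side uses Deligne's mixed Hodge structure on the pair.

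The main work is then to identify $(\mathbb{V}, g^{-1}(t))$ Hodge-theoretically with $(X, Z_\sigma)$. Using the local description \eqref{e : local representation for potential}, the map $g : \mathbb{V} = \mathrm{Tot}(\mathcal{E}^\vee) \to \mathbb{A}^1$ is, fiberwise over $X$, the linear functional $\langle \sigma(x), - \rangle$. Over the open set $X \setminus Z_\sigma$ where $\sigma$ is nonvanishing, this linear functional is surjective on each fiber, so $g^{-1}(t) \to X \setminus Z_\sigma$ is an affine bundle (a torsor under $\ker\langle\sigma,-\rangle$, a rank $k-1$ vector bundle), hence a homotopy equivalence inducing an isomorphism of mixed Hodge structures $H^*(g^{-1}(t)) \cong H^*(X \setminus Z_\sigma)$; moreover the total space $\mathbb{V}$ retracts onto the zero section $X$. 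Then I would invoke the long exact sequence of the pair together with the local cohomology long exact sequence relating $H^*(X)$, $H^*(X\setminus Z_\sigma)$ and $H^*_{Z_\sigma}(X)$: comparing the two long exact sequences
\[
\cdots \to H^{p+q}(\mathbb{V}, g^{-1}(t)) \to H^{p+q}(\mathbb{V}) \to H^{p+q}(g^{-1}(t)) \to \cdots
\]
and
\[
\cdots \to H^{p+q}_{Z_\sigma}(X) \to H^{p+q}(X) \to H^{p+q}(X\setminus Z_\sigma) \to \cdots,
\]
which are compatible via the retractions above, gives an isomorphism $H^{p+q}(\mathbb{V}, g^{-1}(t)) \cong H^{p+q}_{Z_\sigma}(X)$ of mixed Hodge structures, strictly compatible with $F^\bullet$ by standard strictness in mixed Hodge theory. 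Taking $\gr^p_F$ yields the claim.

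The step I expect to require the most care is the verification that the identifications above are genuinely \emph{filtered} isomorphisms with the correct indexing — in particular, that the affine-bundle retraction $g^{-1}(t) \simeq X\setminus Z_\sigma$ and the zero-section retraction $\mathbb{V} \simeq X$ are compatible with the connecting maps in the two long exact sequences \emph{and} preserve $F^\bullet$ without any Tate twist, so that the degree index $p$ matches on both sides. One must also confirm that the generic fiber $g^{-1}(t)$ is smooth and that $g^{-1}(t) \cup D$ (or rather its proper transform in a suitable model) has simple normal crossings, so that the cited results of \cite{esnault20171} and Deligne's theory genuinely apply; here the nondegeneracy hypotheses built into the definition of a compactified LG model, together with generic smoothness of $g$, do the job, but this should be spelled out. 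Everything else is a routine assembly of the quoted results.
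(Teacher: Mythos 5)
Your proposal is correct and follows essentially the same route as the paper: apply the preceding corollary to reduce the irregular Hodge numbers to the (usual) relative Hodge numbers of a generic fiber $g^{-1}(\varepsilon)$, observe via the local expression \eqref{e : local representation for potential} that $g^{-1}(\varepsilon)\to X\setminus Z_\sigma$ is an affine ($\mathbb{C}^{k-1}$-)bundle while $\mathbb{V}\to X$ is a vector bundle, and compare the two long exact sequences of pairs via the resulting commutative square of restrictions. The extra caveats you raise (generic smoothness of the fiber, snc compatibility with the compactification, strictness of $F^\bullet$) are legitimate but routine and are handled implicitly by the nondegeneracy hypotheses and standard mixed Hodge theory, exactly as you anticipate.
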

\begin{proof}
    Under the assumptions of the proposition, we see that 
    \[
    \dim \gr_{F_\mathrm{irr}}^p H^{p+q}(\mathbb{V},g) = \dim \gr_F^pH^{p+q}( \mathbb{V}, g^{-1}(\varepsilon))
    \]
    for sufficiently generic $\varepsilon$. From \eqref{e : local representation for potential} we can see that the projection map induces an isomorphism between  $g^{-1}(\varepsilon)$ and a $\mathbb{C}^{k-1}$ bundle over $X \setminus Z_\sigma$. There is a commutative diagram in cohomology whose vertical maps are isomorphisms and the horizontal maps are restrictions,
    \[
    \begin{tikzcd}
        H^*(X) \ar[r] \ar[d,"\pi^*"]& H^*(X\setminus Z_\sigma) \ar[d, "\pi|_{g^{-1}(\varepsilon)}"] \\
        H^*(\mathbb{V}) \ar[r] & H^*(g^{-1}(\varepsilon))
    \end{tikzcd}
    \]
    Completing the rows of this diagram to long exact sequences, we obtain an isomorphism of mixed Hodge structures between $H^*(\mathbb{V},g^{-1}(\varepsilon))$ and $H^*(X,X\setminus Z_\sigma) = H^*_{Z_\sigma}(X)$.
\end{proof}

By Poincar\'e--Lefschetz duality (e.g. \cite[Theorem B.29]{peters2008mixed} or \cite[(1.7.1)]{Fujiki1980duality}) there is an isomorphism of mixed Hodge structures between $H^*_{Z_\sigma}(X)$ and $H^{*-2c}(Z_\sigma)$ where $c = \mathrm{codim}_\mathbb{C}\, Z_\sigma$. Therefore, Proposition \ref{p : well known} identifies the twisted cohomology of $(\mathbb{V},g)$ with the usual cohomology of $Z_\sigma$ after an appropriate Tate twist.

\subsubsection{Affine bundles} The following result is a rather straightforward generalization of known results for the cohomology of projective varieties. Let $\pi:\mathbb{P}\rightarrow X$ denote a $\mathbb{P}^1$ bundle, which is the projectivization of the total space of a line bundle $\mathbb{L} = \mathrm{Tot}(L^{-1})$. Let $\mathbb{X}$ denote the hypersurface in $\mathbb{P}$ so that $\mathbb{L} = \mathbb{P}\setminus \mathbb{X}$. Let $E_1,\dots, E_r$ denote hypersurfaces in $X$ as above and let $\mathbb{E}_i:=\pi^{-1}E$. If $D$ is a hypersurface in $X$, let $\mathbb{D}:=\pi^{-1}D$, and if $f$ is a regular function on $X\setminus D$, let $\mathsf{f} := \pi^*f$.

\begin{proposition}\label{p : Affine bundle cohomology agrees with the base}
    Let notation be as above and suppose $(X,D,f)$ is a nondegenerate compactified LG model. Then $(\mathbb{P},\mathbb{D}\cup \mathbb{X},\mathsf{f})$ is a nondegenerate compactified LG model, $\mathrm{DR}_{\mathbb{E}^{[r]}}(\mathbb{P}\setminus (\mathbb{D} + \mathbb{X}),\mathsf{f})$ is $\pi$-acyclic,  and there is a quasi-isomorphism of complexes, 
    \[
    \mathrm{R}\pi_*\mathrm{DR}_{\mathbb{E}^{[r]}}(\mathbb{P}\setminus  (\mathbb{D}+\mathbb{X}),\mathsf{f}) \longrightarrow \mathrm{DR}_{E^{[r]}}(X \setminus D,f).
    \]
\end{proposition}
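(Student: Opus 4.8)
The plan is to reduce the whole statement to the log relative cotangent sequence of $\pi$ together with a cohomology vanishing on the fibres $\mathbb{P}^1$, and then to propagate it along the residue short exact sequences \eqref{e : local cohomology les 1 - sheaf}. The first point, that $(\mathbb{P},\mathbb{D}\cup\mathbb{X},\mathsf f)$ is a nondegenerate compactified LG model, is routine: $\mathbb{P}$ is smooth projective; since $\pi$ is smooth, $\mathbb{D}=\pi^{-1}D$ and $\mathbb{E}_i=\pi^{-1}E_i$ are snc; since $\mathbb{X}$ is a section of $\pi$ it meets the vertical divisors transversally, so $\mathbb{D}\cup\mathbb{X}\cup\bigcup_i\mathbb{E}_i\cup Z(\mathsf f)$ is snc (the preimage of an snc divisor together with a transverse section); the pole divisor of $\mathsf f=\pi^*f$ is $\pi^{-1}P\subseteq\mathbb{D}$; and $Z(\mathsf f)=\pi^{-1}Z(f)$ is irreducible (the preimage of an irreducible variety under the $\mathbb{P}^1$-bundle $\pi$), with $Z(\mathsf f)\cap\pi^{-1}P=\pi^{-1}\bigl(Z(f)\cap P\bigr)=\emptyset$.

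The core is the following claim, which I would establish one cohomological degree at a time: for any snc divisor $A$ on $X$ with $P\subseteq A$ and $A\cup Z(f)$ snc, pullback of forms gives an isomorphism of complexes
\[
\mathrm{DR}(X\setminus A,f)\ \xrightarrow{\ \sim\ }\ \pi_*\,\mathrm{DR}\bigl(\mathbb{P}\setminus(\pi^{-1}A+\mathbb{X}),\mathsf f\bigr),
\]
and every term of the right-hand side is $\pi$-acyclic. Pullback commutes with $\mathrm d+\mathrm df$ because $\mathrm d\mathsf f=\pi^*\mathrm df$, so it suffices to compute $\mathrm R\pi_*\Omega^p_{\mathbb{P}}\bigl(\log(\pi^{-1}A+\mathbb{X})\bigr)(*\pi^{-1}P)$. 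The log relative cotangent sequence of the smooth morphism $\pi\colon(\mathbb{P},\pi^{-1}A+\mathbb{X})\to(X,A)$,
\[
0\to\pi^*\Omega^1_X(\log A)\to\Omega^1_{\mathbb{P}}\bigl(\log(\pi^{-1}A+\mathbb{X})\bigr)\to\Omega^1_{\mathbb{P}/X}(\log\mathbb{X})\to0,
\]
is exact with locally free quotient, hence locally split, and its quotient is the line bundle $\Omega^1_{\mathbb{P}/X}(\log\mathbb{X})$, which on each fibre $\mathbb{P}^1$ is $\Omega^1_{\mathbb{P}^1}(\log\infty)\cong\mathcal{O}(-2)\otimes\mathcal{O}(1)=\mathcal{O}(-1)$. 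Taking $p$-th exterior powers (so that, $\Omega^1_{\mathbb{P}/X}(\log\mathbb{X})$ having rank one, only two graded pieces survive) and tensoring with $\pi^*\mathcal{O}_X(nP)$ gives, for each $n$, a short exact sequence of coherent sheaves
\[
0\to\pi^*\Omega^p_X(\log A)(nP)\to\Omega^p_{\mathbb{P}}\bigl(\log(\pi^{-1}A+\mathbb{X})\bigr)(n\pi^{-1}P)\to\pi^*\Omega^{p-1}_X(\log A)(nP)\otimes\Omega^1_{\mathbb{P}/X}(\log\mathbb{X})\to0.
\]
Since $H^*\bigl(\mathbb{P}^1,\mathcal{O}(-1)\bigr)=0$, base change gives $\mathrm R\pi_*\Omega^1_{\mathbb{P}/X}(\log\mathbb{X})=0$, so by the projection formula the right-hand term has vanishing derived pushforward; and $\mathrm R\pi_*\mathcal{O}_{\mathbb{P}}=\mathcal{O}_X$ together with the projection formula gives $\mathrm R\pi_*\bigl(\pi^*\Omega^p_X(\log A)(nP)\bigr)=\Omega^p_X(\log A)(nP)$ in degree $0$. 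The long exact sequence of $\mathrm R\pi_*$ then shows the middle sheaf is $\pi$-acyclic with pushforward $\Omega^p_X(\log A)(nP)$, the identification being induced by pullback; passing to the colimit over $n$, which commutes with each $\mathrm R^q\pi_*$, proves the claim.

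To reach the localized complexes I would induct on $r$. Let $(\star_r)$ be the assertion: for snc $A\supseteq P$ and snc hypersurfaces $E_1,\dots,E_r$ with $A\cup Z(f)\cup\bigcup_i E_i$ snc, the pullback map $\mathrm{DR}_{E^{[r]}}(X\setminus A,f)\to\pi_*\,\mathrm{DR}_{\pi^{-1}E^{[r]}}(\mathbb{P}\setminus(\pi^{-1}A+\mathbb{X}),\mathsf f)$ is an isomorphism of complexes with $\pi$-acyclic right-hand terms. The base case $(\star_0)$ is the claim above (applied to $A$ and to $A\cup E_r$). For the inductive step, apply $\mathrm R\pi_*$ to the upstairs instance of \eqref{e : local cohomology les 1 - sheaf}; by $(\star_{r-1})$ its first two terms have $\pi$-acyclic components, so the associated long exact sequence, together with $\mathrm R^{\geq2}\pi_*=0$ since $\pi$ has relative dimension $1$, forces the third to be $\pi$-acyclic as well, and $\pi_*$ of the sequence stays short exact. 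Comparing it with the downstairs instance of \eqref{e : local cohomology les 1 - sheaf} via the compatible pullback maps identifies $\pi_*\,\mathrm{DR}_{\pi^{-1}E^{[r]}}(\mathbb{P}\setminus(\pi^{-1}A+\mathbb{X}),\mathsf f)$ with $\mathrm{DR}_{E^{[r]}}(X\setminus A,f)$. Specializing to $A=D$ finishes the proof: the terms of $\mathrm{DR}_{\mathbb{E}^{[r]}}(\mathbb{P}\setminus(\mathbb{D}+\mathbb{X}),\mathsf f)$ are $\pi$-acyclic, so $\mathrm R\pi_*$ is computed by $\pi_*$, and the pullback map $\mathrm{DR}_{E^{[r]}}(X\setminus D,f)\to\mathrm R\pi_*\mathrm{DR}_{\mathbb{E}^{[r]}}(\mathbb{P}\setminus(\mathbb{D}+\mathbb{X}),\mathsf f)$ is an isomorphism; its inverse is the quasi-isomorphism asserted in the statement.

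The hard part is not any single step but keeping the bookkeeping honest: the only geometric input is the fibrewise vanishing $H^*(\mathbb{P}^1,\mathcal{O}(-1))=0$ paired with $\mathrm R\pi_*\mathcal{O}_{\mathbb{P}}=\mathcal{O}_X$, and everything else (exterior powers of a locally split sequence, the projection formula, filtered colimits, long exact sequences of derived pushforward) is formal. The one point that genuinely needs care is verifying that pullback of forms commutes with the residue morphisms occurring in \eqref{e : local cohomology les 1 - sheaf}, which is a local computation in coordinates where $\mathbb{P}=X\times\mathbb{P}^1$ and every divisor in sight is either vertical or equal to the constant section $\mathbb{X}$; this is precisely what makes the inductive comparison of short exact sequences legitimate.
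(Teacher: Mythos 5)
Your proof is correct and follows essentially the same route as the paper's: the nondegeneracy check, the log relative cotangent sequence together with the fibrewise vanishing $\mathrm{R}\pi_*\Omega^1_{\mathbb{P}/X}(\log\mathbb{X})=0$ and $\mathrm{R}\pi_*\mathcal{O}_{\mathbb{P}}=\mathcal{O}_X$, and an induction on $r$ over the residue filtration. The only cosmetic differences are that you handle $(*P)$ by tensoring with $\mathcal{O}(nP)$ and passing to the filtered colimit (the paper invokes the projection formula directly), and your inductive step is phrased via the short exact sequence \eqref{e : local cohomology les 1 - sheaf} whereas the paper packages the same content as a commutative diagram of inclusions with identified cokernels; both versions rely on $\mathrm{R}^{\geq 2}\pi_*=0$ and the compatibility of pullback with residues, which you correctly flag as the one point needing a local coordinate check (cf.\ Remark \ref{r : commutes with residue maps}).
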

\begin{proof}
    The first statement, that the compactification $(\mathbb{P},\mathbb{D} \cup \mathbb{X},\mathsf{f})$ is nondegenerate if $(X,D,f)$ is a nondegenerate compactified LG model, is straightforward so we omit its proof. Let $P_\mathsf{f}$ denote the pole divisor of $\mathsf{f}$. There is a short exact sequence of sheaves for each $q$ (see e.g. \cite[\S 10.4]{peters2008mixed}),
    \begin{equation}\label{e: relative de Rham sequence}
    0 \longrightarrow \pi^*(\Omega_X^q(\log D)(*P)) \longrightarrow \Omega_{\mathbb{P}}^q(\log \mathbb{D}+ \mathbb{X})(*P_\mathsf{f}) \longrightarrow \Omega^1_{\mathbb{P}/X}(\log \mathbb{X}) \otimes \pi^*(\Omega_X^{q-1}(\log D)(*P)) \longrightarrow 0.
    \end{equation}
    Now we may apply $\mathrm{R}\pi_*$ to \eqref{e: relative de Rham sequence} to obtain a long exact sequence of sheaves on $X$. 
    \begin{align*}
    0 &\rightarrow \Omega_X^q(\log D)(*P) \otimes \pi_*\mathcal{O}_\mathbb{P} \rightarrow \pi_*\Omega_{\mathbb{P}}^q(\log \mathbb{D}+ \mathbb{X})(*P_\mathsf{f}) \rightarrow \pi_*\Omega^1_{\mathbb{P}/X}(\log \mathbb{X}) \otimes (\Omega_X^{q-1}(\log D)(*P)) \\ & \rightarrow \Omega_X^q(\log D)(*P) \otimes \mathrm{R}^1\pi_*\mathcal{O}_\mathbb{P}\rightarrow \mathrm{R}^1\pi_*\Omega_{\mathbb{P}}^q(\log \mathbb{D}+ \mathbb{X})(*P_\mathsf{f}) \rightarrow \mathrm{R}^1\pi_*\Omega^1_{\mathbb{P}/X}(\log \mathbb{X}) \otimes \Omega_X^{q-1}(\log D)(*P) \\
    & \rightarrow \dots 
    \end{align*}
    Here we have applied the projection formula several times. Since $\pi :\mathbb{P}\rightarrow X$ is a $\mathbb{P}^1$ bundle, $\mathrm{R}\pi_*\mathcal{O}_{\mathbb{P}} \cong \pi_*\mathcal{O}_\mathbb{P}\cong \mathcal{O}_X$ and $\Omega^1_{\mathbb{P}/X}(\log \mathbb{X}) \cong \mathcal{O}_{\mathbb{P}}(-1)$, which means that $\mathrm{R}\pi_* \Omega^1_{\mathbb{P}/X}(\log \mathbb{X}) \cong 0$.  From this, it follows that $\mathrm{R}\pi_*\Omega_{\mathbb{P}}^q(\log \mathbb{D}+ \mathbb{X})(*P_\mathsf{f}) \cong \Omega_X^q(\log D)(*P)$. A straightforward local computation shows that $\pi_*(\mathrm{d} + \mathrm{d}\mathsf{f}) = \mathrm{d} + \mathrm{d}f$ under the isomorphism above. Therefore, 
    \[
    \mathrm{R}\pi_*(\Omega_\mathbb{P}^\bullet(\log \mathbb{D} + \mathbb{X})(*P_\mathsf{f}), \mathrm{d} + \mathrm{d}\mathsf{f}) \cong    \pi_*(\Omega_\mathbb{P}^\bullet(\log \mathbb{D} + \mathbb{X})(*P_\mathsf{f}), \mathrm{d} + \mathrm{d}\mathsf{f}) \cong (\Omega_X^\bullet(\log D)(*P),\mathrm{d} +\mathrm{d}f).
    \]
    This proves the result when $r = 0$. If $r > 0$, we have the following diagram commutes for each $j$
    \[
    \begin{tikzcd}
    \pi_*(\Omega_{\mathbb{P}}^\bullet(\log \mathbb{D} + \mathbb{X} + \sum_{i\neq j} \mathbb{E}_i)(*P_\mathsf{f}),\mathrm{d} + \mathrm{d}\mathsf{f}) \ar[r,"\cong"] \ar[d,hookrightarrow] &  (\Omega_X^\bullet(\log D + \sum_{i\neq j} E_i)(*P),\mathrm{d} + \mathrm{d}f) \ar[d,hookrightarrow]\\ 
    \pi_*(\Omega_{\mathbb{P}}^\bullet(\log \mathbb{D} + \mathbb{X} + \sum_{i} \mathbb{E}_i)(*P_\mathsf{f}),\mathrm{d} + \mathrm{d}\mathsf{f}) \ar[r,"\cong"] &  (\Omega_X^\bullet(\log D + \sum_i E_i)(*P),\mathrm{d} +\mathrm{d}f)
    \end{tikzcd}
    \]
    The horizontal isomorphisms follow from the argument above. Applying an induction argument, we see that there is also a commutative diagram
    \[
    \begin{tikzcd}
    \sum_j\pi_*(\Omega_{\mathbb{P}}^\bullet(\log \mathbb{D} + \mathbb{X} + \sum_{i\neq j} \mathbb{E}_i)(*P_\mathsf{f}),\mathrm{d} + \mathrm{d}\mathsf{f}) \ar[r,"\cong"] \ar[d,hookrightarrow] & \sum_j (\Omega_X^\bullet(\log D + \sum_{i\neq j} E_i)(*P),\mathrm{d} + \mathrm{d}f) \ar[d,hookrightarrow]\\ 
    \pi_*(\Omega_{\mathbb{P}}^\bullet(\log \mathbb{D} + \mathbb{X} + \sum_{i} \mathbb{E}_i)(*P_\mathsf{f}),\mathrm{d} + \mathrm{d}\mathsf{f}) \ar[r,"\cong"] &  (\Omega_X^\bullet(\log D + \sum_i E_i)(*P),\mathrm{d} +\mathrm{d}f)
    \end{tikzcd}
    \]
    The cokernels of the vertical morphisms are the complexes of sheaves that we wish to compare. Basic homological algebra tells us that there is an isomorphism of complexes
    \[
    (\mathrm{R}\pi_*\Omega_\mathbb{P}^\bullet(\log \mathbb{D} + \mathbb{X},\mathbb{E}^{[r]}),\mathrm{d} + \mathrm{d}\mathsf{f}) \longrightarrow (\Omega_X^\bullet(\log D, E^{[r]}),\mathrm{d} + \mathrm{d}f)
    \]
    as required.
\end{proof}
\begin{remark}\label{r : commutes with residue maps}
    Let notation be as in Proposition \ref{p : Affine bundle cohomology agrees with the base}. Suppose $R$ is a smooth, irreducible hypersurface in $X$ and $\mathbb{R} = \pi^{-1}R$. Then $\mathbb{R}$ is a $\mathbb{P}^1$ bundle over $R$ as well. A local computation is enough to show that the isomorphism above commutes with the residue morphisms. Precisely,
    \[
    \begin{tikzcd}
         \pi_*\Omega^\bullet_\mathbb{P}(\log \mathbb{D} + \mathbb{R} + \mathbb{X},\mathbb{E}^{[r]}, \mathrm{d} + \mathrm{d}\mathsf{f}) \ar[d,"\mathrm{Res}_\mathbb{R}"] \ar[r,"\cong"]& (\Omega_X^\bullet(\log D+R, E^{[r]}),\mathrm{d} + \mathrm{d}f) \ar[d,"\mathrm{Res}_R"]  \\
        \pi_*\Omega^{\bullet-1}_\mathbb{R}(\log \mathbb{R}\cap (\mathbb{D} + \mathbb{X}),\mathbb{R}\cap\mathbb{E}^{[r]}, \mathrm{d} + \mathrm{d}\mathsf{f}|_\mathbb{R}) \ar[r,"\cong"] & (\Omega_R^{\bullet-1}(\log D\cap R, R\cap E^{[r]}),\mathrm{d} + \mathrm{d}f|_R)
    \end{tikzcd}
    \]
    commutes. This property will be used in the following discussion.
\end{remark}

\subsection{Statement of Theorem \ref{t : multiple direct sum decomposition} and discussion} \label{s : statement}

Let $X$ be a smooth projective variety and suppose $B_1,\dots, B_k$ are snc hypersurfaces chosen so that $B = \bigcup_{i=1}^k B_i$ is also snc. We assume that there are line bundles $L_1,\dots, L_k$ so that $\mathcal{O}_X(B_i) \cong L_i^{\otimes 2}$ for all $i$. To this data, there are two classes of geometric objects that are related to one another. 

\subsubsection*{Cyclic covers} For each subset $I \subseteq [k]$, we have a $(\mathbb{Z}/2)^{|I|}$-cover $\widehat{X}_I$ of $X$ with branch divisors $\bigcup_{i \in I} B_i$. Precisely, if $\widehat{X}_i$ is the double cover of $X$ branched at ${B}_i$, and $I = \{i_1,\dots,i_j\} \subseteq [k]$
\[
\widehat{X}_I = \widehat{X}_{i_1} \times_X \dots \times_X \widehat{X}_{i_j}.
\]
We let ${R}_I$ denote the ramification divisor of $\rho_I : \widehat{X}_I\rightarrow X$, which is the preimage of $B_I:=\bigcup_{i \in I}B_i$ in $\widehat{X}_I$. We also let $B^J = \cap_{j\in J} B_j$ and  ${Z}^{I} = \rho_I^{-1}B^{I^c}$. The local cohomology $H^*_{{Z}^{I}}(\widehat{X}_I\setminus {R}_I)$ admits a linear $(\mathbb{Z}/2)^{|I|}$ action which respects the Hodge filtration because the $\mathbb{Z}/2$ actions in question are by automorphisms preserving $Z^I$. We use the notation $H^*_{{Z}^{I}}(\widehat{X}_I\setminus {R}_I)^{(I)}$ to denote the intersection of the $(-1)$-eigenspaces of each of the $|I|$ different $\mathbb{Z}/2$ actions inherited from the automorphisms $\tau_{i_j} : \widehat{X}_{i_j} \rightarrow \widehat{X}_{i_j}$ with $i_j \in I$. $H_{Z^{I}}^*(\widehat{X}_I\setminus {R}_I)^{(I)}$ is a direct summand of the mixed Hodge structure $H_{Z^I}^*(\widehat{X}_I\setminus {R}_I)$. When $I$ is a single element set, we often simplify notation to $(-)$ instead of $(I)$.

\subsubsection*{LG models}\label{sss:lg} For each subset $J \subseteq [k]$ we denote 
\[
\mathbb{V}_J = \mathrm{Tot}\left( \bigoplus_{i \in J} L_i^{-1} \oplus \bigoplus_{i \notin J} L_i^{-2}\right).
\]
Let $\pi_J : \mathbb{V}_J \rightarrow X$ denote the canonical morphism. For each $i$, there are tautological sections $s_i$ for $\pi_J^*L_i^{-1}$ (if $i \in J$) and $t_i$ of $\pi_J^*L_i^{-2}$ (if $i \notin J$). There are sections $\sigma_i$ of $L_i^{\otimes 2}$ so that  $\{\sigma_i = 0\} = B_i$. When $i \in J$, $g_{2,i} := s_i^2\cdot \pi_J^*\sigma_i$ is a regular function on $\mathbb{V}_J$ and when $i \notin J$, $g_{1,i} := t_i\cdot \pi_J^*\sigma_i$ is a regular function on $\mathbb{V}_J$. Let $g_{2,J} := \sum_{i \in J}g_{2,i}$ and $g_{1,J^c} := \sum_{i \notin J} g_{1,i}$.

The main result of this section relates the cohomology of these two objects. For a $\mathbb{Q}$-filtered vector space $(V,F^\bullet)$,  and $\alpha \in \mathbb{Q}$ we let $(V(\alpha),F^\bullet)$ denote the vector filtered vector space $F^\lambda(V(\alpha)) = F^{\lambda-\alpha}V$. This notation is meant to be reminiscent of the Tate twist in usual Hodge theory.

\begin{theorem}\label{t : multiple direct sum decomposition}
    There is a filtered direct sum decomposition
    \[
    H^*(\mathbb{V}_J, g_{2,J} + g_{1,J^c}) \cong \bigoplus_{I \subseteq J} H^{*-|I|}_{Z^{I}}(\widehat{X}_I\setminus {R}_I)^{(I)}(|I|/2).
    \]
\end{theorem}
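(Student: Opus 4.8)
The plan is to prove the decomposition by induction on $|J|$, peeling off one index of $J$ at a time and using the $\mathbb{Z}/2$-decomposition of a double cover at each step. When $J = \emptyset$ the statement is Proposition~\ref{p : well known} (together with the observation that the right-hand side is just $H^{*}_{Z^\emptyset}(\widehat{X}_\emptyset \setminus R_\emptyset) = H^*_{B^{[k]}}(X)$, interpreted as the twisted cohomology of $(\mathbb{V}_\emptyset, g_{1,[k]})$), and more precisely the $r=0$ case of the residue machinery of Section~\ref{s:coverLG}. For the inductive step, fix $j \in J$ and write $J' = J \setminus \{j\}$. The total space $\mathbb{V}_J$ differs from $\mathbb{V}_{J'}$ only in the $j$-th factor: over the common base $\mathbb{V}_{J'}$ we have replaced $\mathrm{Tot}(\pi^*L_j^{-2})$ by $\mathrm{Tot}(\pi^*L_j^{-1})$, and the $j$-th piece of the potential changes from the linear function $g_{1,j} = t_j \cdot \sigma_j$ to the quadratic function $g_{2,j} = s_j^2 \cdot \sigma_j$. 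The key geometric point—already used in the single-variable Theorem~\ref{t : single direct sum decomposition}—is that $(\mathbb{V}_J, g_{2,J}+g_{1,J^c})$ is the double cover of $(\mathbb{V}_{J'}, g_{2,J'} + g_{1,J'^c})$ branched along the zero section of the $L_j^{-1}$-factor, which is cut out by the relevant square root. Hence the $(\mathbb{Z}/2)$-decomposition of the twisted de Rham complex of a branched double cover applies fibrewise in the $L_j^{-1}$-direction.

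Concretely, I would fix a compactification: projectivize the $L_j^{-1}$-bundle to get a $\mathbb{P}^1$-bundle over the total space of $\bigoplus_{i \neq j \text{ or } i \in J'} L_i^{-1} \oplus \bigoplus_{i \notin J} L_i^{-2}$, push forward along that $\mathbb{P}^1$-bundle using Proposition~\ref{p : Affine bundle cohomology agrees with the base} (and its compatibility with residues, Remark~\ref{r : commutes with residue maps}), and identify the invariant and coinvariant parts of the twisted de Rham complex. The invariant part is the twisted de Rham complex of $(\mathbb{V}_{J'}, g_{2,J'} + g_{1,J'^c})$, which by the induction hypothesis gives $\bigoplus_{I \subseteq J'} H^{*-|I|}_{Z^I}(\widehat{X}_I \setminus R_I)^{(I)}(|I|/2)$. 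The coinvariant part is a twisted de Rham complex on (the total space over) the open set where $\sigma_j \neq 0$, with an extra $\frac{1}{2}\mathrm{d}\log \sigma_j$ in the differential; the Cayley-type argument (linear potential $\leftrightarrow$ localization to a vanishing locus, as in Proposition~\ref{p : well known}, now twisted) identifies its cohomology, after a shift by one and a Tate-type twist by $1/2$, with the local twisted cohomology of the double cover $\widehat{X}_j \to X$ restricted to the complement of $B_j$, intersected against the coinvariant eigenspace. Running this over the remaining linear potentials $g_{1,i}$, $i \notin J$, via the residue resolution \eqref{e : residue resolution} contributes the localization $Z^{I \cup \{j\}} = \rho_{I\cup\{j\}}^{-1}(B^{(I\cup\{j\})^c})$ and produces exactly the summands indexed by subsets $I \subseteq J$ containing $j$. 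Combining the two pieces and reindexing $I \subseteq J' \sqcup \{I' \cup \{j\} : I' \subseteq J'\} = \{I : I \subseteq J\}$ yields the claimed decomposition, with the filtrations matching because every morphism involved is either a residue map (strict for $F^\bullet_\mathrm{irr}$ by Proposition~\ref{p : strict morphism of filtered complexes}) or an underlying-space isomorphism of filtered complexes, per the Remark on irregular filtrations.

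The main obstacle I anticipate is bookkeeping the eigenspace decompositions coherently across the induction: at the $j$-th step one must check that the $\mathbb{Z}/2$-action in the $L_j^{-1}$-direction commutes with the previously-accumulated $(\mathbb{Z}/2)^{|J'|}$-action and with all the residue maps for the linear directions $i \notin J$, so that taking the $(-1)$-eigenspace in the new direction and then the eigenspaces in the old directions genuinely produces $H^*_{Z^{I}}(\widehat{X}_I \setminus R_I)^{(I)}$ for $I = I' \cup \{j\}$ and not some other subquotient. This requires identifying $\widehat{X}_{I'\cup\{j\}}\setminus R_{I'\cup\{j\}}$ with the appropriate fibre product and checking that the residue resolution \eqref{e : residue resolution} applied on the double cover is equivariant for the Galois group—a local computation, but one that must be set up carefully. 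A secondary technical point is verifying that the rational $1/2$-shifts in the irregular Hodge filtration bookkeep correctly: each time we pass from a linear to a quadratic potential in one variable we introduce a half-integer shift (this is the $t^2$ versus $t$ computation underlying Example~\ref{e : t^n LG model}), and one must confirm that after $|I|$ such steps the cumulative shift is exactly $|I|/2$, matching the Tate twist on the right-hand side. Both points are routine in local coordinates but are where the real content of the proof lies; the global structure is then formal given the results of Section~\ref{s:coverLG}.
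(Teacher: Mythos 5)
Your proposed argument is close in spirit to the paper's, but the induction you set up is weaker than what the paper actually uses, and this is where the gap lies. The paper does not prove the theorem statement as written by induction; it proves the stronger statement \eqref{e : inductive hypothesis}, which carries additional local cohomology supports $E^{[r]}$, and inducts on the \emph{total} number $k$ of line bundles by dropping $L_k$ and applying Theorem \ref{t : single direct sum decomposition} to the resulting line bundle over $\mathbb{V}_J'$. The footnote to \eqref{e : second decomposition} explicitly warns that a direct induction on the unstrengthened statement — which is what you propose — ``would require more subtle induction.'' Your observation that $\mathbb{V}_J\to\mathbb{V}_{J'}$ (with $J'=J\setminus\{j\}$) is a double cover branched at the zero section is correct, and it is the same observation that makes the $(+)/(-)$ split of Theorem \ref{t : single direct sum decomposition} work, but phrasing the induction on $|J|$ does not make the supporting subtleties disappear: it relocates them into the base case and into the identification of the coinvariant summand.

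Concretely, the gap appears in two places. First, your base case $J=\emptyset$ is \emph{not} handled by Proposition \ref{p : well known}, which only gives an equality of graded dimensions under a vanishing hypothesis on fractional irregular Hodge numbers, not the filtered isomorphism the theorem asserts. Establishing $H^*(\mathbb{V}_\emptyset,g_{1,[k]})\cong H^*_{B^{[k]}}(X)$ as a filtered isomorphism requires iterating \eqref{e : situation for the f reduction} across all $k$ factors; already at the second factor one is computing local cohomology along $\mathbb{B}_k'\cap\mathbb{E}^{[r]}$, so the ``$r=0$ case of the residue machinery'' is not enough — one needs precisely the $E^{[r]}$-enriched statement. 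Second, in your inductive step the coinvariant summand, after passing to the double cover $\widehat{X}_j$ and applying Künneth, lives over the \emph{open} base $\widehat{X}_j\setminus R_j$, and you must match it against cohomology localized along $Z^{I\cup\{j\}}$; tracking the localization supports is exactly what the $\widehat{E}^{[r]}\cap Z^I$ bookkeeping in \eqref{e : inductive hypothesis} is for, and it does not come for free from the unstrengthened inductive hypothesis. Your concern about coherence of the $(\mathbb{Z}/2)^{|J'|}$-actions across steps is also legitimate, but it is the support-tracking that is the hard part. In short: the idea is right, but to carry it out you would need to re-derive the very strengthening of the induction that the paper's proof is built around.
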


\begin{remark}\label{rem:notation change}
    In Section \ref{s:HLLY} we will use the notation
\begin{equation}\label{eq: setup notation for section 6}
H^*(\mathbb{V}_J,g_{2,J} + g_{1,J^c})^{(I)} := H^{*-|I|}_{Z^{I}}(\widehat{X}_I\setminus {R}_I)^{(I)}(|I|/2),
\end{equation}
to focus more on the irregular Hodge numbers of Landau--Ginzburg model $(\mathbb{V}_J, g_{2,J}+g_{1,J^c})$. 
\end{remark}

The remainder of Section \ref{s:coverLG} is devoted to proving Theorem \ref{t : multiple direct sum decomposition}. The remainder of Section \ref{s : statement} is meant to explain the geometric meaning of Theorem \ref{t : multiple direct sum decomposition}     to the reader.

In Theorem \ref{t : multiple direct sum decomposition}, we may either view objects on the right hand side as LG models equipped with potential $f =0$ or, by Example \ref{ex : irregular = regular}, the objects on the right hand side may be viewed as cohomology groups equipped with their canonical Hodge filtrations.

The following remarks and examples describe the content of Theorem \ref{t : multiple direct sum decomposition} in a few particular cases.

\begin{example}\label{ex : simple examples}
    If $r= 1$, Theorem \ref{t : multiple direct sum decomposition} says:
    \[
    H^*(\mathbb{V}_{\{1\}},g_{2,\{1\}}) \cong H^*_B(X) \oplus H^{*-1}(\widehat{X}\setminus {R})^{(-)}(1/2).
    \]
    If $r= 2$, Theorem \ref{t : multiple direct sum decomposition} says:
    \begin{align*}
    H^*(\mathbb{V}_{\{1,2\}},g_{2,\{1,2\}}+g_{1,\{1,2\}^c}) \cong H^{*-2}(\widehat{X}_{\{1,2\}}\setminus {R}_{\{1,2\}})^{(\{1,2\}^-)}(1) &\oplus H^{*-1}_{Z^{\{1\}}}(\widehat{X}_1 \setminus {R}_1)^{(-)}(1/2) \\ & \oplus H^{*-1}_{Z^{\{2\}}}(\widehat{X}_2 \setminus {R}_2)^{(-)}(1/2) \\ & \oplus H^{*}_{{B}_1\cap B_2}(X).
    \end{align*}
\end{example}

\begin{example}
    Suppose we have a hyperelliptic curve of genus $g$ with its usual double cover $C \rightarrow \mathbb{P}^1$ of the projective line whose branch locus $B$ is a collection of $2g+2$ points, thus $L = \mathcal{O}_{\mathbb{P}^1}(g+1)$. Then $\mathbb{V}_{\{1\}}$ is the total space of $\mathcal{O}_{\mathbb{P}^1}(-(g+1))$.  We see that ${H}^2_B(X) \cong \mathbb{Q}(-1)^{\oplus 2g+2}$ and $\dim \gr_F^0H^1(E)^{(-)} = \dim \gr_F^1H^1(E)^{(-)} = g$ and all other Hodge numbers vanish. Therefore Theorem \ref{t : multiple direct sum decomposition} claims that:
    \[
    \dim \gr_{F_{\mathrm{irr}}}^{\lambda}H^2(\mathbb{V}_{\{1\}},g_{2,\{1\}}) = \begin{cases}
        g & \text{ if } \lambda = 1/2 \text{ or } 3/2,\\
        2g+2 & \text{ if } \lambda = 1.
    \end{cases} 
    \]
    All other irregular Hodge numbers are zero.
\end{example}
\begin{remark}
    Assume that $B_1,\dots, B_k$ are smooth and the union $\bigcup_{i=1}^k B_i$ is snc. Then we may reinterpret Theorem \ref{t : multiple direct sum decomposition} in terms of the cohomology of the varieties $Z^I$. Note that in this case, each variety $Z_I$ is a codimension $|I^c|$ smooth complete intersection in $\widehat{X}_I$ and a  $(\mathbb{Z}/2)^{|I|}$-cover of the smooth variety  $B^{I^c}=\cap_{i\in I^c}B_i$ ramified along $Z^I\cap B_I$. The local cohomology $H_{Z^I}^{*-|I|}(\widehat{X}_I\setminus R_I)$ is isomorphic to $H^{*-k-|I^c|}(Z^I \setminus (Z^I\cap R_I))$ if $I\neq \emptyset$ by Poincar\'e--Lefschetz duality. Furthermore, one can check that $H^{*-k-|I^c|}(Z^I\setminus (Z^I\cap R_I))^{(I)} \cong H^{*-k-|I^c|}(Z^I)^{(I)}$ (see Proposition \ref{p : double cover interpretation of the first factor} below). Therefore, in this case, Theorem \ref{t : multiple direct sum decomposition} takes the form
    \[
    H^*(\mathbb{V}_J, g_{2,J} + g_{1,J^c}) \cong \bigoplus_{I \subseteq J} H^{*-k-|I^c|}(Z^I)^{(I)}(|I|/2).
    \]
\end{remark}

The following statement is an obvious consequence of Theorem \ref{t : multiple direct sum decomposition}, which we include because the underlying principle is important in Section \ref{s:HLLY}.
\begin{corollary}\label{cor:another description}
     If $J \subseteq K$ then $H^*(\mathbb{V}_J, g_{2,J} + g_{1,J^c})$ is a filtered direct summand of $H^*(\mathbb{V}_K,g_{2,K} + g_{1,K})$.
\end{corollary}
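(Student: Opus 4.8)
The plan is to read this off directly from Theorem~\ref{t : multiple direct sum decomposition}, which is exactly why it is phrased as a corollary. Applying that theorem to $J$ and to $K$ gives filtered isomorphisms
\[
H^*(\mathbb{V}_J, g_{2,J} + g_{1,J^c}) \cong \bigoplus_{I \subseteq J} H^{*-|I|}_{Z^{I}}(\widehat{X}_I\setminus R_I)^{(I)}(|I|/2)
\]
and
\[
H^*(\mathbb{V}_K, g_{2,K} + g_{1,K^c}) \cong \bigoplus_{I \subseteq K} H^{*-|I|}_{Z^{I}}(\widehat{X}_I\setminus R_I)^{(I)}(|I|/2),
\]
where in both cases the summand indexed by $I$ carries the Hodge filtration of $H^{*-|I|}_{Z^{I}}(\widehat{X}_I\setminus R_I)^{(I)}$ shifted by $|I|/2$ and the total filtration is the direct-sum filtration.

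The key observation is that each summand on the right --- the cover $\widehat{X}_I\to X$, its ramification divisor $R_I$, the subvariety $Z^I=\rho_I^{-1}(B^{I^c})$ with $B^{I^c}=\bigcap_{i\in[k]\setminus I}B_i$, the $(\mathbb{Z}/2)^{|I|}$-eigenspace $(I)$, and the associated shifted Hodge filtration --- depends only on $I$ and on the fixed ambient data $(X,\{B_i\}_{i\in[k]},\{L_i\}_{i\in[k]})$, and not on whether we started with $J$ or with $K$; this is immediate from the definitions in Section~\ref{s : statement}. Since $J\subseteq K$ forces every $I\subseteq J$ to satisfy $I\subseteq K$, the index set of the first direct sum is a subset of that of the second, so the first sum is literally a sub-sum of the second, with complement $\bigoplus_{I\subseteq K,\ I\not\subseteq J}H^{*-|I|}_{Z^{I}}(\widehat{X}_I\setminus R_I)^{(I)}(|I|/2)$. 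A sub-sum of a direct-sum-filtered vector space is a filtered direct summand, which is the claim.

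There is no genuine obstacle; the only point worth spelling out in the write-up is the independence of the right-hand summands from the ambient index set, after which the conclusion is formal. (Geometrically, the splitting reflects the fact that $(\mathbb{V}_K,g_{2,K}+g_{1,K^c})$ is an iterated double cover of $(\mathbb{V}_J,g_{2,J}+g_{1,J^c})$, branched along the zero-sections of the line bundles $L_i^{-1}$ for $i\in K\setminus J$ and compatible with the potentials via $s_i\mapsto s_i^2$, so that $H^*(\mathbb{V}_J,-)$ is recovered as the invariant part of $H^*(\mathbb{V}_K,-)$ for the induced $(\mathbb{Z}/2)^{|K\setminus J|}$-action; one does not need this refinement for the corollary, but it is the principle invoked later in Section~\ref{s:HLLY}.)
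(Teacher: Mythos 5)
Your proof is correct and is precisely the argument the paper has in mind; the paper itself merely labels the corollary an ``obvious consequence'' of Theorem~\ref{t : multiple direct sum decomposition} without writing out the details, and you have supplied exactly those details. The one point worth making explicit---that the summand $H^{*-|I|}_{Z^{I}}(\widehat{X}_I\setminus R_I)^{(I)}(|I|/2)$ depends only on $I$ and the fixed ambient data $(X,\{B_i\},\{L_i\})$, not on the choice of $J$ or $K$---you identify and justify correctly. (You also silently corrected a typo in the corollary's statement, reading $g_{1,K^c}$ for the paper's $g_{1,K}$, which is the intended meaning.) The closing geometric remark about $\mathbb{V}_K \to \mathbb{V}_J$ being an iterated double cover along the zero sections for $i \in K \setminus J$, with $\rho^*(g_{2,J}+g_{1,J^c}) = g_{2,K}+g_{1,K^c}$, is accurate and is indeed the principle that Proposition~\ref{p : invariant cohomology and covers of LG models} makes precise.
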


Suppose notation is as above. There is also a double cover $\widetilde{X}_{[k]}$ of $X$ whose branch locus is $B_{[k]} = \bigcup_{i=1}^k B_i$. Let $\widetilde{R}_{[k]}$ be the preimage of $B_{[k]}$ in $\widetilde{X}_{[k]}$. 

\begin{proposition}\label{p : double cover interpretation of the first factor}
    There is an isomorphism of filtered vector spaces:
    \begin{equation}\label{eq: cohomological decomposition}
    H^*(\widehat{X}_{[k]}) \cong \bigoplus_{I\subseteq [k]} H^*(\widehat{X}_{I})^{(I)} \cong \bigoplus_{I\subseteq [k]}H^*(\widehat{X}_{I}\setminus R_I)^{(I)}.
    \end{equation}
    Furthermore, $H^*(\widehat{X}_{[k]})^{([k])} \cong H^*(\widetilde{X}_{[k]})^{(-)}$ and $H^*(\widehat{X}_{[k]})^{(\emptyset)} \cong H^*(X).$
\end{proposition}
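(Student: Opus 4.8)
The plan is to reduce everything to the representation theory of the finite abelian group $G=(\mathbb{Z}/2)^{k}$ acting on $\widehat{X}_{[k]}$, plus one local computation showing that the ramification divisor contributes nothing to the appropriate anti-invariant part of cohomology. First I would observe that the $k$ commuting involutions $\tau_{i}$ generate a $G$-action by algebraic automorphisms, so $H^{*}(\widehat{X}_{[k]})$ splits, compatibly with the Hodge filtration, as a sum over the characters of $G$; these are indexed by subsets $I\subseteq[k]$ via the character $\chi_{I}$ with $\chi_{I}(\tau_{i})=-1$ for $i\in I$ and $+1$ otherwise. For fixed $I$, the subgroup $G_{I^{c}}=\prod_{i\notin I}\langle\tau_{i}\rangle$ has quotient $\widehat{X}_{[k]}/G_{I^{c}}=\widehat{X}_{I}$, so $H^{*}(\widehat{X}_{I})=H^{*}(\widehat{X}_{[k]})^{G_{I^{c}}}=\bigoplus_{J\subseteq I}H^{*}(\widehat{X}_{[k]})^{\chi_{J}}$, and the part on which all $\tau_{i}$ with $i\in I$ act by $-1$ is exactly the $\chi_{I}$-summand; hence $H^{*}(\widehat{X}_{I})^{(I)}\cong H^{*}(\widehat{X}_{[k]})^{\chi_{I}}$, and summing over $I$ gives the first isomorphism. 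Taking $I=\emptyset$ (where $\widehat{X}_{\emptyset}=X$ and $(\emptyset)$ means no condition) gives $H^{*}(\widehat{X}_{[k]})^{(\emptyset)}\cong H^{*}(X)$. For the statement about $\widetilde{X}_{[k]}$, the multiplication map $(y_{1},\dots,y_{k})\mapsto y_{1}\cdots y_{k}$ exhibits $\widetilde{X}_{[k]}$ as the quotient of $\widehat{X}_{[k]}$ by the even-weight subgroup $H=\ker(G\to\mathbb{Z}/2)$; the characters trivial on $H$ are precisely $\chi_{\emptyset}$ and $\chi_{[k]}$, and the Galois involution of $\widetilde{X}_{[k]}$ (lifted e.g. to $\tau_{1}$) acts by $+1$ on $H^{*}(\widehat{X}_{[k]})^{\chi_{\emptyset}}=H^{*}(X)$ and by $-1$ on $H^{*}(\widehat{X}_{[k]})^{\chi_{[k]}}$, so $H^{*}(\widetilde{X}_{[k]})^{(-)}\cong H^{*}(\widehat{X}_{[k]})^{([k])}$.

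It then remains to check, for each $I$, that the restriction map induces an isomorphism $H^{*}(\widehat{X}_{I})^{(I)}\xrightarrow{\ \sim\ }H^{*}(\widehat{X}_{I}\setminus R_{I})^{(I)}$; since restriction along an open immersion is a morphism of mixed Hodge structures and is $G$-equivariant, this is automatically filtered, so only the isomorphism of vector spaces has content. Here I would push forward along the finite map $\rho_{I}$: writing $\rho_{I*}\mathbb{Q}_{\widehat{X}_{I}}=\bigoplus_{J\subseteq I}\mathcal{F}_{J}$ for the isotypic decomposition, one has $H^{*}(\widehat{X}_{I})^{(I)}=H^{*}(X,\mathcal{F}_{I})$. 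The sheaf $\mathcal{F}_{I}$ restricts on $X\setminus B_{I}$ to the rank-one local system $\mathcal{L}_{I}$ with monodromy $-1$ around every local branch of $B_{I}$, and it has vanishing stalks along $B_{I}$ (over a point of $B_{i}$, $i\in I$, the fibre of $\rho_{I}$ is ramified in the $y_{i}$-direction, so $\tau_{i}$ acts trivially on it and the $\chi_{I}$-part of the stalk dies). The one genuinely local input is cleanness: on the snc model, near a point of $B_{I}$ the system $\mathcal{L}_{I}$ is an exterior tensor product of rank-one systems with monodromy $-1$ on punctured discs, each acyclic, so $\mathbb{R}j_{*}\mathcal{L}_{I}$ also has vanishing stalks along $B_{I}$ (writing $j:X\setminus B_{I}\hookrightarrow X$), whence $\mathcal{F}_{I}=j_{!}\mathcal{L}_{I}=\mathbb{R}j_{*}\mathcal{L}_{I}$. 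Therefore $H^{*}(\widehat{X}_{I})^{(I)}=H^{*}(X,\mathcal{F}_{I})=H^{*}(X\setminus B_{I},\mathcal{L}_{I})=H^{*}(\widehat{X}_{I}\setminus R_{I})^{(I)}$, the last equality because $\rho_{I}$ is an étale $(\mathbb{Z}/2)^{|I|}$-cover over $X\setminus B_{I}$. (In the language of the earlier sections this is the statement $H^{*}_{R_{I}}(\widehat{X}_{I})^{(I)}=0$: applying the residue resolution \eqref{e : residue resolution} to $\widehat{X}_{I}$ with $D=\emptyset$, $f=0$, every term is the de Rham complex of an intersection $R^{S}$ with $S\neq\emptyset$, and for $j\in S$ the residue forms $\mathrm{d}\log y_{j}\wedge(-)$ are $\tau_{j}$-invariant, so $\tau_{j}$ acts as the identity there and the $(-1)$-eigenspace vanishes.)

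The main obstacle is precisely this last step: controlling the ramification divisor, i.e. showing the anti-invariant local system is acyclic transverse to $B_{I}$. Once that is in hand the rest is formal bookkeeping with characters of $(\mathbb{Z}/2)^{k}$. The care needed is mostly in the general snc (not necessarily smooth $B_{i}$) situation — there one works with the normalizations of the $\widehat{X}_{I}$, or branch-by-branch in local coordinates — but the monodromy-$(-1)$ cleanness argument persists in that generality.
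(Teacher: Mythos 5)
Your proof is correct, and it takes a genuinely different route from the paper's. The paper proceeds by an explicit de~Rham-theoretic induction on $k$, invoking the Esnault--Viehweg cyclic-cover decomposition at each stage to identify $\mathrm{R}\rho_{[k]*}\Omega^\bullet_{\widehat{X}_{[k]}}(\log R_{[k]})$ with $\bigoplus_{I}\bigl(\Omega^\bullet_X(\log B_I)\otimes L_I^{-1},\,\mathrm d+\tfrac12\mathrm d\log\sigma_I\bigr)$, and then matches these twisted log de~Rham complexes with the ones computing both $H^*(\widehat X_I)^{(I)}$ and $H^*(\widehat X_I\setminus R_I)^{(I)}$; the identification $H^*(\widehat X_{[k]})^{([k])}\cong H^*(\widetilde X_{[k]})^{(-)}$ is then read off by seeing the same complex on both sides. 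You instead work entirely with constructible $\mathbb{Q}$-sheaves: the $(\mathbb{Z}/2)^k$ character decomposition handles all the formal bookkeeping in one stroke (no induction), and the crux becomes the topological cleanness $j_!\mathcal L_I=\mathrm Rj_*\mathcal L_I$ for the rank-one monodromy $(-1)$ local system, reduced to the acyclicity of such a system on a punctured disc and a K\"unneth argument in an snc polydisc chart. The filtered statement then comes for free from strictness of morphisms of mixed Hodge structures, rather than from an explicit match of Hodge complexes.

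Both arguments have to confront the same genuine issue, which is that $\widehat X_I$ is singular when the $B_i$ are not individually smooth; you flag this honestly (normalization or branch-by-branch local analysis), and the paper sweeps it under a remark about blowing up elsewhere. Your approach is shorter and more conceptual; what the paper's de~Rham route buys is an explicit identification of the Hodge filtration with the stupid filtration on a concrete twisted log complex, which is the form actually plugged into Theorem~\ref{t : multiple direct sum decomposition} and the irregular-Hodge computations downstream. Two small things worth tightening if you were to write this up: (i) the identification $\widetilde X_{[k]}\cong\widehat X_{[k]}/H$ should be justified scheme-theoretically (both are normal finite covers of $X$ agreeing over $X\setminus B_{[k]}$), not only on the level of generic torsors; (ii) the claim $H^*(\widehat X_I)=H^*(\widehat X_{[k]})^{G_{I^c}}$ with its MHS uses that the quotient by a finite group of an algebraic variety induces an isomorphism of MHS onto invariants, a standard but worth-citing fact.
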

\begin{proof}
We may describe the $\mathbb{Z}/2$ coinvariant cohomology of $\widetilde{X}_{[k]}$ using \cite[Theorem 3.2]{Esnault1992vanishing} (see also Section \ref{s:intro doublecover}). For concreteness, we recall the statement we need. For any $I \subseteq [k]$, let $L_I = \bigotimes_{i \in I} L_i, \sigma_I = \prod_{i\in I}\sigma_i$. If $\widetilde{\rho} : \widetilde{X}_{[k]} \rightarrow X$ is the double covering map then: 
\begin{align*}
\mathrm{R}\widetilde{\rho}_*\Omega_{\widetilde{X}_{[k]}}^\bullet(\log \widetilde{R}_{[k]}) & \cong \Omega^\bullet_X(\log B_{[k]}) \oplus (\Omega_{X}^\bullet(\log B_{[k]})\otimes L_{[k]}^{-1}), \\ 
\mathrm{R}\widetilde{\rho}_*\Omega_{\widetilde{X}_{[k]}}^\bullet & \cong \Omega^\bullet_X \oplus (\Omega_{X}^\bullet(\log B_{[k]})\otimes L_{[k]}^{-1}).
\end{align*}
Here, $\Omega_{X}^\bullet(\log B_{[k]})$ is the $\mathbb{Z}/2$ coinvariant subsheaf and is equipped with the twisted differential $\mathrm{d} + \tfrac{1}{2}\mathrm{d}\log \sigma_{[k]}$ and the other summand in each expression, $\Omega_X^\bullet(\log B_{[k]})$ resp. $\Omega_X^\bullet$, is $\mathbb{Z}/2$ invariant and is equipped with the de Rham differential $\mathrm{d}$. Therefore  
\[
H^*(\widetilde{X}_{[k]} \setminus R_{[k]})^{(-)} \cong H^*(\widetilde{X}_{[k]})^{(-)}.
\]
Furthermore, the Hodge filtration  on $H^*(\widetilde{X}_{[k]})$ is identified with the filtration induced by truncation $\tau_{\leq p}$ filtration on $(\Omega_{X}^\bullet(\log B_{[k]})\otimes L_{[k]}^{-1},\mathrm{d} + \tfrac{1}{2}\mathrm{d}\log \sigma_{[k]})$. Now we prove that the same complex and filtration provide the Hodge filtration on $H^*(\widehat{X}_{[k]}\setminus R_{[k]})^{(-)}$ by induction on $k$. We note that there is a tower of double covers
\[
\widehat{X}_{[k]} \longrightarrow \widehat{X}_{[k-1]} \longrightarrow \dots \longrightarrow \widehat{X}_{[1]} \longrightarrow X.
\]
Let ${\rho}_{[k-1]} : \widehat{X}_{[k-1]} \rightarrow X$ denote the induced degree $2^{k-1}$ map. We may assume by induction that
\[
\mathrm{R}{\rho}_{[k-1]*}\Omega^\bullet_{\widehat{X}_{[k-1]}}(\log {R}_{[k-1]}) \cong \bigoplus_{I \subseteq [k-1]} \left(\Omega_{X}^\bullet(\log B_{[k-1]})\otimes L^{-1}_{I}, \mathrm{d} + \tfrac{1}{2}\mathrm{d}\log \sigma_{I}, \tau_{\leq \bullet}\right)
\]
and each factor corresponding to $I \subseteq [k-1]$ is coinvariant with respect to the subgroup $(\mathbb{Z}/2)^{I} \subseteq (\mathbb{Z}/2)^{k-1}$. The case $k-1 = 1$  is the case described by Esnault and Viewheg \cite{Esnault1992vanishing}.

Let $\widehat{\rho}_k : \widehat{X}_{[k]} \rightarrow \widehat{X}_{[k-1]}$ denote the double covering map. There is a decomposition into $\mathbb{Z}/2$ invariant and coinvariant parts, respectively,
\[
\mathrm{R}\widehat{\rho}_{k*}\Omega_{\widehat{X}_{[k]}}^\bullet(\log {R}_{[k]}) \cong \Omega^\bullet_{\widehat{X}_{[k-1]}}(\log \widehat{\rho}_k({R}_{[k]})) \oplus \left( \Omega^\bullet_{\widehat{X}_{[k-1]}}(\log \widehat{\rho}_k({R}_{[k]})) \otimes {\rho}_{[k-1]}^*L_k^{-1}\right)
\]
Applying the projection formula and induction, we get
\begin{align}\label{eq : eigenspace decomp}
 \mathrm{R}\rho_{[k]*}\Omega_{\widehat{X}_{[k]}}^\bullet(\log {R}_{[k]}) & \cong \bigoplus_{I \subseteq [k]} \left(\Omega_{X}^\bullet(\log B_{I})\otimes L^{-1}_{I}, \mathrm{d} + \tfrac{1}{2}\mathrm{d}\log \sigma_{I}\right).
\end{align}
If $\alpha_1,\dots, \alpha_k$ are canonical generators of $(\mathbb{Z}/2)^k$ then this induction argument applies to show that $\Omega_{X}^\bullet(\log B_{I})\otimes L^{-1}_{I}$ is the intersection of the $(-1)$-eigensheaves of $\alpha_i, i \in I$. Taking (hyper)cohomology of both sides of \eqref{eq : eigenspace decomp} one obtains \eqref{eq: cohomological decomposition}. Comparing with the first paragraph of the proof, one obtains the fact that $H^*(\widehat{X}_{[k]})^{([k])} \cong H^*(\widetilde{X}_{[k]})^{(-)}$ and $H^*(\widehat{X}_{[k]})^{(\emptyset)} \cong H^*(X)$.
\end{proof}

As a consequence, we see that $H^*(\widetilde{X}_{[k]})^{(-)}(k/2)$ is a filtered direct summand of $H^*(\mathbb{V}_{[k]},g_{2,[k]})$. This observation is important in Section \ref{s:HLLY} because it allows us to recast part of the cohomology of the double cover $\widetilde{X}_{[k]}$ in terms of the cohomology of the LG model $(\mathbb{V}_{[k]},g_{2,[k]})$. 


\subsection{Proof of Theorem \ref{t : multiple direct sum decomposition}, assuming Theorem \ref{t : single direct sum decomposition}}\label{s : mdsdecom}
Let $(X,D,f)$ denote a nondegenerate compactified LG model and assume that $E^{[r]} \subseteq X$ is the intersection of a collection of snc hypersurfaces, $E_1,\dots, E_r$.  There are three auxiliary LG models that we study in relation to $(X,D,f)$:
\begin{itemize}[\quad --]
    \item Suppose $\rho : \widehat{X}\rightarrow X$ is a double cover with normal crossings branch divisor $B$. Let $\widehat{D}$ denote $\rho^{-1}D$ and $\widehat{f} = \rho^{*}f$.  Let $\widehat{E}^{[r]}$ denote $\rho^{-1}E^{[r]}$ and let ${R} := \rho^{-1}B(\cong B)$. We are interested in understanding the nondegenerate compactified LG model $(\widehat{X},\widehat{D},\widehat{f})$.
\end{itemize}
Suppose there exists a line bundle $L$ so that $L^2 \cong \mathcal{O}_X(B)$.
\begin{itemize}[\quad --]
    \item Let $\mathbb{L}_{1} = \mathrm{Tot}(L^{-1})$ and let $\pi_{1} :\mathbb{L}_{1}\rightarrow X$ denote the usual projection morphism. Let $\mathbb{D}_{1} = \pi_{1}^{-1}D, \mathbb{E}^{[r]}_{1} = \pi_{1}^{-1}E^{[r]}$, and let $s$ denote the tautological section of $\pi_1^*L$. Then $s^2 \in \Gamma(\mathbb{L}_1,\pi_1^*L^{-2})$ and $\pi_1^*\sigma \in \Gamma(\mathbb{L}_2,\pi_1^*L^2)$. Let $\mathsf{g}_{2} = s^2\cdot (\pi_1^*\sigma), \mathsf{f}_{1} = \pi^{*}_{1}f$, which are regular functions on $\mathbb{L}_1\setminus \mathbb{D}_1$. We consider the LG model $(\mathbb{L}_{1}\setminus \mathbb{D}_{1}, \mathsf{g}_{2} + \mathsf{f}_{1})$.
   
    \item Let $\mathbb{L}_{2} = \mathrm{Tot}(L^{-2})$ and let $\pi_{2} :\mathbb{L}_{2}\rightarrow X$ denote the usual projection morphism. Let $\mathbb{D}_{2} = \pi_{1}^{-1}D, \mathbb{E}^{[r]}_{2} = \pi_{2}^{-1}E^{[r]}$ and let $t$ denote the tautological section of $\pi_2^*L^{-2}$.  Then $t \in \Gamma(\mathbb{L}_2,\pi_2^*L^{-2})$ and $\pi_2^*\sigma \in \Gamma(\mathbb{L}_2,\pi_1^*L^2)$. Let $\mathsf{g}_{1} = t\cdot (\pi^*\sigma), \mathsf{f}_{2} = \pi^{*}_{2}f$, which are regular functions on $\mathbb{L}_2 \setminus \mathbb{D}_2$. We consider the LG model $(\mathbb{L}_{2}\setminus \mathbb{D}_{2}, \mathsf{g}_{1} + \mathsf{f}_{2})$.

\end{itemize}
Section \ref{s : proof of t: single direct sum decomposition} will be devoted to proving the following theorem which is a global variation on results proved by Sabbah and Yu \cite{Sabbah2023cyclic} and Fresán, Sabbah, and Yu \cite{Fresan2022} respectively. 
\begin{theorem} \label{t : single direct sum decomposition}
    For every $r \geq 0$, there are filtered isomorphisms 
    \begin{align}
    H_{\mathbb{E}_{1}^{[r]}}^*(\mathbb{L}_{1}\setminus \mathbb{D}_{1},\mathsf{g}_{2} + \mathsf{f}_{1}) & \cong H^{*-1}_{\widehat{E}^{[r]}}(\widehat{X}\setminus (\widehat{D} \cup R),\widehat{f})^{(-)}(1/2) \oplus H^{*}_{B \cap E^{[r]}}(X\setminus D,f),\label{e : situation for the g reduction} \\ 
    H_{\mathbb{E}^{[r]}_{2}}^*(\mathbb{L}_{2}\setminus \mathbb{D}_{2},\mathsf{g}_{1} + \mathsf{f}_{2}) &\cong  H^{*}_{B \cap E^{[r]}}(X\setminus D,f). \label{e : situation for the f reduction}
    \end{align}
\end{theorem}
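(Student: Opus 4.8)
The plan is to establish the Cayley-type isomorphism \eqref{e : situation for the f reduction} first and then to deduce \eqref{e : situation for the g reduction} from it using the double cover $q\colon\mathbb{L}_1\to\mathbb{L}_2$ given fiberwise by $v\mapsto v^{\otimes 2}$ (in tautological coordinates, $t=s^2$): it is branched precisely along the zero section $X_0\subset\mathbb{L}_2$, has deck transformation $\tau\colon s\mapsto -s$, and satisfies $q^*\mathsf{g}_1=\mathsf{g}_2$, $q^*\mathsf{f}_2=\mathsf{f}_1$, and $q^{-1}\mathbb{E}_2^{[r]}=\mathbb{E}_1^{[r]}$. I work throughout on the $\mathbb{P}^1$-bundle compactifications $\mathbb{P}_i\supset\mathbb{L}_i$, with section at infinity $\mathbb{X}_i=\mathbb{P}_i\setminus\mathbb{L}_i$, so that $\mathsf{g}_1+\mathsf{f}_2$ (resp. $\mathsf{g}_2+\mathsf{f}_1$) extends to a rational function with a pole of order $1$ (resp. order $2$) along $\mathbb{X}_2$ (resp. $\mathbb{X}_1$), in addition to the pole $\pi_i^{-1}P$ contributed by $f$; after a blow-up if necessary we obtain nondegenerate compactified LG models, and the twisted cohomology and $F^\bullet_{\mathrm{irr}}$ are unchanged by Yu's independence result \cite{yu2014irregular}. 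The argument is uniform in $r\ge 0$, its substance being the case $r=0$; the general case follows because the constructions $\mathbb{L}_i$, $q$, $\widehat X$ restrict compatibly to the strata of $E^{[r]}$ and commute with the residue maps, so one may unwind the residue resolution \eqref{e : residue resolution} or induct on $r$ using the strict exact triangles of Proposition \ref{p : strict morphism of filtered complexes}. The affine prototypes of \eqref{e : situation for the g reduction} and \eqref{e : situation for the f reduction} are the cyclic-cover and Cayley-trick results of Sabbah--Yu \cite{Sabbah2023cyclic} and Fres\'an--Sabbah--Yu \cite{Fresan2022}, respectively.

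For \eqref{e : situation for the f reduction}, the potential $\mathsf{g}_1=t\cdot\pi_2^*\sigma$ is linear on the fibers of $\pi_2$, which makes the statement a relative Cayley trick. The core step is the $F^\bullet_{\mathrm{irr}}$-filtered quasi-isomorphism of complexes of sheaves on $X$
\[
\mathrm{R}\bar\pi_{2*}\,\mathrm{DR}_{\mathbb{E}_2^{[r]}}\!\bigl(\mathbb{P}_2\setminus(\mathbb{D}_2\cup\mathbb{X}_2),\,\mathsf{g}_1+\mathsf{f}_2\bigr)\;\xrightarrow{\ \sim\ }\;\mathrm{DR}_{B\cap E^{[r]}}(X\setminus D,f),
\]
after which one takes hypercohomology. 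This is local on $X$: over an affine open trivializing $L$ it becomes the affine Cayley result of \cite{Fresan2022}, the essential fiberwise input being that the twisted de Rham cohomology of $(\mathbb{A}^1,c\,t)$ vanishes for $c\neq 0$ and equals $H^*(\mathbb{A}^1)$ for $c=0$ (Example \ref{e : t^n LG model} with $k=1$), so the relative twisted complex is supported along $B$. The filtrations match because $F^\bullet_{\mathrm{irr}}$ is given by an explicit subsheaf formula and the order-$1$ pole of $\mathsf{g}_1$ along $\mathbb{X}_2$ contributes no half-integral steps; alternatively one may argue directly over $X$ via the relative de Rham sequence as in Proposition \ref{p : Affine bundle cohomology agrees with the base}, together with Grauert base change, keeping track of $F^\bullet_{\mathrm{irr}}$.

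For \eqref{e : situation for the g reduction}, since $\mathsf{g}_2+\mathsf{f}_1=q^*(\mathsf{g}_1+\mathsf{f}_2)$ and $\tau$ is an automorphism of the compactified LG model, the eigenspace decomposition
\[
\mathrm{R}q_*\,\mathrm{DR}_{\mathbb{E}_1^{[r]}}(\mathbb{L}_1\setminus\mathbb{D}_1,\mathsf{g}_2+\mathsf{f}_1)=(\,\cdot\,)^{(+)}\oplus(\,\cdot\,)^{(-)}
\]
is a decomposition of $F^\bullet_{\mathrm{irr}}$-filtered complexes. The $(+)$-part is $\mathrm{DR}_{\mathbb{E}_2^{[r]}}(\mathbb{L}_2\setminus\mathbb{D}_2,\mathsf{g}_1+\mathsf{f}_2)$, which by \eqref{e : situation for the f reduction} contributes $H^*_{B\cap E^{[r]}}(X\setminus D,f)$. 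For the $(-)$-part, I would run the Esnault--Viehweg eigensheaf decomposition for the double cover $q$, carried out on the compactification $\mathbb{P}_1\to\mathbb{P}_2$ --- which is branched along $X_0\cup\mathbb{X}_2$, not only $X_0$ --- to identify it with a twisted logarithmic complex of the shape $\bigl(\Omega^\bullet_{\mathbb{P}_2}(\log(X_0+\mathbb{X}_2+\mathbb{D}_2))(\ast)\otimes\mathcal{L},\ \mathrm{d}+\tfrac12\mathrm{d}\log(\cdot)+\mathrm{d}\mathsf{g}_1+\mathrm{d}\mathsf{f}_2\bigr)$ for a suitable line bundle $\mathcal{L}$. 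Pushing this forward along $\bar\pi_2$ --- the governing fiberwise computation being that the $(-)$-eigenpart of the twisted cohomology of $(\mathbb{A}^1,c\,s^2)$ is one-dimensional, concentrated in degree $1$, and equal to $F^{1/2}_{\mathrm{irr}}$ for $c\neq 0$, and vanishes for $c=0$ (Example \ref{e : t^n LG model} with $k=2$) --- produces a complex quasi-isomorphic to $\mathcal{C}_{\widehat X}[-1]$ with a $1/2$ Tate twist, where $\mathcal{C}_{\widehat X}$ is the complex whose hypercohomology is $H^*_{\widehat E^{[r]}}(\widehat X\setminus(\widehat D\cup R),\widehat f)^{(-)}$, itself described via the Esnault--Viehweg formula $\mathrm{R}\rho_*\Omega^\bullet_{\widehat X}(\log R)^{(-)}$ localized and $f$-twisted, exactly as in the proof of Proposition \ref{p : double cover interpretation of the first factor}. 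Taking hypercohomology yields the summand $H^{*-1}_{\widehat E^{[r]}}(\widehat X\setminus(\widehat D\cup R),\widehat f)^{(-)}(1/2)$.

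The main obstacle is the $(-)$-part of \eqref{e : situation for the g reduction}. The delicate task is to globalize the Esnault--Viehweg eigensheaf description onto the projective compactification while correctly tracking the branching of $\mathbb{P}_1\to\mathbb{P}_2$ along the section at infinity $\mathbb{X}_2$ (not only along $X_0$), the twisting bundle $\mathcal{L}$, and the connection form $\tfrac12\mathrm{d}\log(\cdot)$, and then to identify the resulting filtered complex, after $\mathrm{R}\bar\pi_{2*}$, with $\mathcal{C}_{\widehat X}[-1](1/2)$: here the degree shift $[-1]$ is an affine-line contribution and the fractional shift $(1/2)$ ultimately reflects the order-$2$ pole of $\mathsf{g}_2$ along $\mathbb{X}_1$, i.e. the $(\mathbb{A}^1,s^2)$ computation of Example \ref{e : t^n LG model}; pinning this shift down precisely, and verifying its compatibility with $F^\bullet_{\mathrm{irr}}$ on both sides, is where the real work is. A secondary but unavoidable chore is to check that the eigendecomposition, the Esnault--Viehweg identifications, and the pushforwards all commute with the residue maps and the localization functors $(-)_{E^{[r]}}$, so that the case $r=0$ propagates to all $r$ (the template being Remark \ref{r : commutes with residue maps}).
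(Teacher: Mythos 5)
Your high-level structure agrees with the paper's: prove \eqref{e : situation for the f reduction} as a relative Cayley trick, then view $\mathbb{L}_1\to\mathbb{L}_2$ as a $\mathbb{Z}/2$-cover, split into eigenspaces, identify the $(+)$-part with $H^*_{\mathbb{E}_2^{[r]}}(\mathbb{L}_2\setminus\mathbb{D}_2,\mathsf{g}_1+\mathsf{f}_2)$, and attribute the $(-)$-part to $\widehat{X}$. The implementations, however, diverge substantially, and the $(-)$-part is where your proposal stops short of a proof.

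For \eqref{e : situation for the f reduction}, you propose a direct sheaf-level quasi-isomorphism $\mathrm{R}\bar\pi_{2*}\mathrm{DR}_{\mathbb{E}_2^{[r]}}(\cdot)\to\mathrm{DR}_{B\cap E^{[r]}}(X\setminus D,f)$, verified in local trivializations of $L$. The paper instead proves the cohomological statement by combining three ingredients: the K\"unneth formula with $H^*(\mathbb{A}^1,t)=0$ to show that $H^*_{\mathbb{E}_2^{[r]}}(\mathbb{L}_2\setminus(\mathbb{D}_2\cup\mathbb{B}_2),\mathsf{g}_1+\mathsf{f}_2)$ vanishes, the strict long exact sequence \eqref{e : local cohomology les} to convert this into $H^*_{\mathbb{E}_2^{[r]}}\cong H^*_{\mathbb{E}_2^{[r]}\cap\mathbb{B}_2}$, and then Lemma \ref{l : two isomorphisms} (residue resolutions along the components of $B$ plus the $\mathbb{P}^1$-bundle acyclicity of Proposition \ref{p : Affine bundle cohomology agrees with the base}). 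Your route is morally equivalent but claims a stronger, sheaf-level statement without the support shift: whether $\mathrm{R}\bar\pi_{2*}$ applied to the complex localized only at $\mathbb{E}_2^{[r]}$ (rather than at $\mathbb{E}_2^{[r]}\cap\mathbb{B}_2$) lands directly in the $B$-localized complex on $X$ would need the localization step somewhere, and the paper's K\"unneth-vanishing argument is what supplies it.

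The genuine gap is the $(-)$-summand of \eqref{e : situation for the g reduction}. You propose to identify the $(-)$-eigensheaf of $\mathrm{R}q_*$ via Esnault--Viehweg on the compactification $\mathbb{P}_1\to\mathbb{P}_2$, track the branching along both $X_0$ and $\mathbb{X}_2$ and the associated twist $\mathcal{L}$, push forward along $\bar\pi_2$, and compare with $\mathcal{C}_{\widehat{X}}[-1](1/2)$. You correctly flag this as the main obstacle and leave the crucial identification as a promise. The paper avoids this computation entirely by a two-step reduction you do not have. First, the commutative diagram \eqref{e : commutative diagram from pullback}, together with injectivity of the $(+)$-inclusion from Proposition \ref{p : invariant cohomology and covers of LG models}, forces the connecting map $i_*$ in the long exact sequence \eqref{e : les that splits} to be injective; this splits the sequence and gives the clean identification $H^*_{\mathbb{E}_1^{[r]}}(\mathbb{L}_1\setminus(\mathbb{D}_1\cup\mathbb{B}_1),\mathsf{g}_2+\mathsf{f}_1)\cong H^*_{\mathbb{E}_1^{[r]}}(\mathbb{L}_1\setminus\mathbb{D}_1,\mathsf{g}_2+\mathsf{f}_1)^{(-)}$. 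Second, one base-changes along $\widehat{X}\to X$ to the cover $\widehat{\mathbb{L}}_1\to\mathbb{L}_1$ and observes that $\widehat{\mathbb{L}}_1\setminus(\widehat{\mathbb{D}}_1\cup\mathbb{R}_1)$ is \emph{globally} isomorphic to $\mathbb{A}^1_q\times(\widehat{X}\setminus(\widehat{D}\cup R))$ with $\widehat{\mathsf{g}}_2=q^2$ and with the deck transformation acting as $(q,z)\mapsto(-q,\tau(z))$; the tautological square root $\widehat{y}$ of $\rho^*\sigma$ trivializes the bundle. The K\"unneth formula (Theorem \ref{t : Kuenneth theorem}) then produces the $[-1]$ shift, the $F^{1/2}_{\mathrm{irr}}$ twist, and the $(-)$-eigenspace condition on $\widehat{X}$ all at once, because $H^1(\mathbb{A}^1,q^2)$ is one-dimensional, sits in $\gr^{1/2}_{F_{\mathrm{irr}}}$, and is odd under $q\mapsto-q$. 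This trick converts the delicate global Esnault--Viehweg bookkeeping you were contemplating into a product computation. If you want to complete your own route, the payoff would be a complex-level rather than cohomology-level statement, but you would need to (i) carry out the EV eigensheaf description on $\mathbb{P}_2$ with branching at both $X_0$ and $\mathbb{X}_2$, including the correct square-root line bundle, (ii) establish $\bar\pi_2$-acyclicity of the resulting filtered $(-)$-complex (a twisted analogue of Proposition \ref{p : Affine bundle cohomology agrees with the base}), and (iii) identify the result with the Esnault--Viehweg complex of $\widehat{X}\setminus(\widehat{D}\cup R)$ shifted and twisted, while verifying compatibility with the residue maps for each $E_j$. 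None of these steps is carried out in your proposal.
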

Theorem \ref{t : multiple direct sum decomposition} follows from Theorem \ref{t : single direct sum decomposition}. Let us start with a brief remark to make the relation between Theorem \ref{t : multiple direct sum decomposition} and Theorem \ref{t : single direct sum decomposition} clear; if $r = 1$ then $I = \emptyset$ or $\{1\}$. In either case $g_{2,\emptyset} = 0, g_{1,\emptyset^c} = \mathsf{g}_{1}$ and $\mathbb{V}_{\emptyset} = \mathbb{L}_{2}$ or $g_{2,\{1\}} = \mathsf{g}_{2}, g_{1,\{1\}^c} =  0$ and $\mathbb{V}_{\{1\}} = \mathbb{L}_{1}$. Applying \eqref{e : situation for the f reduction} and \eqref{e : situation for the g reduction} respectively, with $\mathbb{E}^{[r]} = \mathbb{D} = \emptyset$ and $\mathsf{f} = 0$ yields, 
    \[
    H^*(\mathbb{V}_{\emptyset}, g_{1,\emptyset^c}) = H^{*-1}_{B}(X),\qquad H^*(\mathbb{V}_{\{1\}},g_{2,\{1\}}) = H^{*-1}(\widehat{X}\setminus R)^{(-)}(1/2) \oplus H^{*}_{B}(X)
    \]
    as desired. This recovers the first part of Example \ref{ex : simple examples} immediately.

    Taking Theorem \ref{t : single direct sum decomposition} for granted, we prove Theorem \ref{t : multiple direct sum decomposition}.
\begin{proof}[Proof of Theorem \ref{t : multiple direct sum decomposition}]
    We prove a slightly stronger fact: If $E_1,\dots, E_r$ are snc divisors in $X$, then 
    \begin{equation}\label{e : inductive hypothesis}
    H_{\mathbb{E}^{[r]}}^*(\mathbb{V}_J,g_{2,J} + g_{1,J^c}) \cong \bigoplus_{I \subseteq J} H^{*-|I|}_{\widehat{E}^{[r]} \cap Z^I}(\widehat{X}_I\setminus {R}_I)^{(I)}(|I|/2).
    \end{equation}
    Assume \eqref{e : inductive hypothesis} is true for any $X$, $E_1,\dots, E_r$, and any collection of $k-1$ line bundles $L_1,\dots, L_{k-1}$ for $k>1$. The $k=2$ case is Theorem \ref{t : single direct sum decomposition}. We may view $\mathbb{V}_J = \mathrm{Tot}(\bigoplus_{i \in J} L^{-1}_i \oplus \bigoplus_{i \notin J} L_i^{-2})$ as the total space of a line bundle over 
    \[
    \mathbb{V}_J' = \mathrm{Tot}\left(\bigoplus_{\substack{i \in J \\ i \neq k}} L^{-1}_i \oplus \bigoplus_{\substack{i \notin J\\ i \neq k}} L_i^{-2}\right).
    \]
    Precisely, let $\pi' : \mathbb{V}_J'\rightarrow X$ be the usual projection morphism, then $\mathbb{V}_J$ is the total space of the line bundle $(\pi')^*L_k^{-1}$ if $k \in J$ and the total space of $(\pi')^*L_k^{-2}$ if $k \notin J$. We use the notation $g_{2,J}' = \sum_{i\in J,i \neq k} s_i^2(\pi')^*\sigma_i$ and $g_{1,J^c}' = \sum_{i \notin J, i\neq k} t_i(\pi')^*\sigma_i$.
    
    We address these two situations separately, applying induction on $k$ in both cases. The base case follows directly from Theorem \ref{t : single direct sum decomposition}.
    \vspace{0.2cm}
    
    \noindent \textit{Case 1: Assume  $k\in J$.} We may write $g_{2,J} = g_{2,k} + g_{2,J\setminus k}$ where  $g_{2,J\setminus k} = \sum_{j\in J\setminus k} s_j^2\cdot\pi_J^*\sigma_j$.  If $\pi : \mathbb{V}_J \rightarrow \mathbb{V}_J'$ then $\pi^*g_{2,J}' = g_{2,J\setminus k}$ and $\pi^*g_{1,J^c}' = g_{1,J^c}$. We apply \eqref{e : situation for the g reduction} directly to see that 
    \begin{equation}\label{e : first decomposition}
    H^*_{\mathbb{E}^{[r]}}(\mathbb{V}_J,g_{2,k} + g_{2,J\setminus k}+ g_{1,J^c}) = H_{\widehat{\mathbb{E}}^{'[r]}}^{*-1}(\widehat{\mathbb{V}}_J'\setminus {\mathbb{R}}_k', \widehat{g}_{2,J}' + \widehat{g}_{1,J^c}')^{(-)}(1/2) \oplus H^{*}_{\mathbb{E}^{'[r]}\cap \mathbb{B}'_k}( \mathbb{V}_J',  g_{2,J}' + g_{1,J^c}')
    \end{equation}
    where $\widehat{\mathbb{V}}_J'$ denotes the double cover of $\mathbb{V}_J'$ ramified along  the vanshing locus of $(\pi_J')^*\sigma_k$, denoted $\mathbb{B}_k'$, and where ${\mathbb{R}}_k'$ is the preimage of $\mathbb{B}_k'$ in  $\widehat{\mathbb{V}}_J'$. Following similar conventions, let $\widehat{\mathbb{E}}^{'[r]}$ and $\mathbb{E}^{'[r]}$ be the preimages of $E^{[r]}$ in $\widehat{\mathbb{V}}'_J$ and $\mathbb{V}_j'$ respectively. The notation $\widehat{g}_{2,J}', \widehat{g}_{1,J^c}'$ indicates the pullback of ${g}_{2,J}',{g}_{1,J^c}'$ to $\widehat{\mathbb{V}}_J'$. In \eqref{e : first decomposition} the notation $(-)$ indicates coinvariance with respect to the $\mathbb{Z}/2$ action $\widehat{\mathbb{V}}_J'\rightarrow \mathbb{V}'_J$.
    
    Observe that $(\mathbb{V}_J',g_{2,J}'+g_{1,J^c}') = (\mathbb{V}_{J\setminus k}, g_{2,J\setminus k} + g_{1,(J\setminus k)^c})$ where, on the right hand side, $(J\setminus k)^c$ is viewed as a subset of $\{1,\dots, k-1\}$. By induction on $k$ and Theorem \ref{t : single direct sum decomposition}, \eqref{e : inductive hypothesis} says that\footnote{This step uses the full force of \eqref{e : inductive hypothesis}. A direct proof of the weaker statement in Theorem \ref{t : multiple direct sum decomposition} would require more subtle induction at this point.}
    \begin{align}
    H_{{\mathbb B}_k'\cap \mathbb{E}^{'[r]}}^*(\mathbb{V}_J', g_{2,J}' + g_{1,J^c}') & \cong \bigoplus_{I \subseteq (J\setminus k)} H_{{R}_k \cap {Z'}^{I} \cap \widehat{E}^{[r]}}^{*-|I|}(\widehat{X}_I \setminus {R}_I)^{(I)}(|I|/2)\label{e : second decomposition} \\ 
     & \cong \bigoplus_{I \subseteq (J\setminus k)} H_{{Z}^{I}\cap \widehat{E}^{[r]}}^{*-|I|}(\widehat{X}_I \setminus {R}_I)^{(I)}(|I|/2) \label{e : second decomposition 2}
    \end{align}
    In \eqref{e : second decomposition} we have used the notation ${Z'}^I$ to denote the preimage of $\bigcap_{i \in (I^c \setminus k)} B_i$ in $\widehat{X}_I$ and in \eqref{e : second decomposition 2} we use the fact that ${Z'}^{I}\cap R_k = Z^I$. This tells us that the factors of the right hand side of \eqref{e : inductive hypothesis} corresponding to sets $I$ which are contained in $J\setminus k$ correspond to the second summand of \eqref{e : first decomposition}. We now use a similar argument to deal with the first summand of \eqref{e : first decomposition}

    Since $\widehat{\mathbb{V}}_J'  \rightarrow \mathbb{V}_J'$ is a cyclic double cover of $\mathbb{V}_J'$ ramified along the divisor $\mathbb{B}_{k}' = (\pi'_J)^{-1}B_{k}$, there is a cartesian diagram
    \[
    \begin{tikzcd}
        \widehat{\mathbb{V}}_J' \setminus {\mathbb{R}}_k' \ar[r,"2:1"] \ar[d,"\widehat{\pi}'_J"] & {\mathbb{V}}_J' \setminus {\mathbb{B}}_{k}' \ar[d,"\pi_J'"] \\
        \widehat{X}_{\{k\}} \setminus R_{k} \ar[r,"\eta_{\{k\}}"] & X \setminus B_{k}
    \end{tikzcd}
    \]
    where 
    \[
    \widehat{\mathbb{V}}_J' = \mathrm{Tot}_{\widehat{X}_{\{k\}}}\left(\bigoplus_{i \in (J\setminus k)} \eta_{\{k\}}^*L_i^{-1} \oplus \bigoplus_{i \notin J}\eta_{\{k\}}^*L_i^{-2}  \right).
    \]
We will now use the notation $\widehat{X}'_I$ to denote the $(\mathbb{Z}/2)^{|I|}$-cover of $\widehat{X}_{\{k\}}$ coming from the collection of sections $\{\eta_{\{k\}}^*\sigma_i\mid i \in I\}$ whenever $k \notin I$. Let ${R}'_{I}$ denote the preimage of the corresponding ramification divisor. Observe that  $\widehat{X}_I' = \widehat{X}_{I\cup k}$ and the union of $R'_I $ and the preimage of $B_k$ in $\widehat{X}_I'$ is $R_{I\cup k}$. 
    
    Applying induction a second time,
    \begin{align*}
        H^{*-1}_{\widehat{\mathbb{E}}^{'[r]}}(\widehat{\mathbb{V}}_J'\setminus \widehat{\mathbb{R}}_k', \widehat{g}_{2,J}'+ \widehat{g}_{1,J^c}') & \cong \bigoplus_{I\subseteq (J\setminus k)} H^{*-|I|-1}_{Z^{I\cup k}\cap \widehat{E}^{[r]}}((\widehat{X}'_{I}\setminus {R}'_k) \setminus {R}_I')^{(I)}(|I|/2) \\ &
        \cong \bigoplus_{\substack{I \cup k \subseteq J \\ k \notin I}} H^{*-|I\cup k|}_{Z^{I\cup k}\cap \widehat{E}^{[r]}}(\widehat{X}_I'\setminus {R}'_{I\cup k})^{(I)}(|I|/2).
    \end{align*}
     The exponent $(I)$ in the displayed equation above indicates only coinvariance with respect to $(\mathbb{Z}/2)^{\{i\}}$ action inducing the quotient $\widehat{X}_I \rightarrow \widehat{X}_{\{k\}}$. Therefore, taking coinvariance with respect to the $\mathbb{Z}/2$ action on the $k$-th factor, we get 
    \begin{align}\label{e : third decomposition}
        H_{\widehat{\mathbb{E}}^{'[r]}}^{*-1}(\widehat{\mathbb{V}}_J'\setminus \widehat{\mathbb{R}}_k', \widehat{g}_{2,J}'+ \widehat{g}_{1,J^c}')^{(-)} 
        \cong \bigoplus_{\substack{(I\cup k) \subseteq J\\ k \notin I}} H^{*-|I\cup k|}_{Z^{I\cup k}\cap \widehat{E}^{[r]}}(\widehat{X}_{I\cup k}\setminus {R}_{I\cup k})^{(I\cup k)}(|I|/2).
    \end{align}
    Here $(-)$ on the left indicates coinvariance under the $\mathbb{Z}/2$ action and $(I\cup k)$ on the right indicates $(\mathbb{Z}/2)^{|I\cup k|}$ coinvariance. Combining \eqref{e : first decomposition}, \eqref{e : second decomposition}, \eqref{e : second decomposition 2}, and \eqref{e : third decomposition} we obtain \eqref{e : inductive hypothesis}.
\vspace{0.2cm}

    \noindent \textit{Case 2: Assume $k \notin J$.} We may write $g_{1,J^c} = g_{1,\{k\}^c} + g_{1,(J\setminus k)^c}$, $g_{1,(J\setminus k)^c} = \sum_{j\in [k-1]\setminus J} t_i\cdot\pi_J^*\sigma_j$. We apply \eqref{e : situation for the f reduction} to see that 
    \begin{equation}\label{e : final decomposition}
    H^*_{\mathbb{E}^{[r]}}(\mathbb{V}_J,g_{1,\{k\}^c} + g_{1,(J\setminus k)^c} + g_{2,J}) \cong  H^{*}_{\mathbb{R}'_k\cap \mathbb{E}^{'[r]}}( \mathbb{V}_J',  g_{1,J^c}' + g_{2,J}').
    \end{equation}
    We use the notation developed above in \eqref{e : final decomposition}. By induction on $k$, we may write 
    \[
    H^{*}_{\mathbb{R}'_k\cap {\mathbb{E}}^{'[r]}}( \mathbb{V}_{J}',  g_{2,J}' + g_{1,J^c}') \cong \bigoplus_{I\subseteq J} H_{\widehat{E}^{[r]}\cap {R}_k \cap Z_0^{I}}^{*-|I|}(\widehat{X}_I\setminus {R}_I)^{(-)}(|I|/2)
    \]
    where $Z_0^I$ indicates the preimage of $\cap_{i \in I^c\setminus k} B_i$ in $\widehat{X}_I$. Since ${R}_k \cap Z^I_0 = Z^I$, this completes the proof.
\end{proof}

\subsection{Proof of Theorem \ref{t : single direct sum decomposition}} \label{s : proof of t: single direct sum decomposition}

The entirety of the section is devoted to proving Theorem \ref{t : single direct sum decomposition}. There are two basic morphisms that play a significant role in this proof. The first will be explained now, and the second will be explained later.  

 Observe that $\mathbb{L}_{1}:=\mathrm{Tot}(L^{-1})$ is a double cover of $\mathbb{L}_{2}:=\mathrm{Tot}({L}^{-2})$ ramified along the zero section. Precisely, if $U \times \mathbb{C}$ is a trivializing chart on $\mathbb{L}_1$, then the covering map takes $(\underline{x},t) \in U\times \mathbb{C}\subseteq \mathbb{L}_1$ to $(\underline{x},t^2) \in \mathbb{L}_2$. We let $\rho : \mathbb{L}_{1} \rightarrow \mathbb{L}_{2}$ denote this morphism, and we let $\pi_{2}:\mathbb{L}_{2} \rightarrow X$ denote the canonical morphism. Recall that $\mathsf{g}_{1} = t\cdot \pi_{2}^*\sigma$ where $t$ denotes the canonical section of $\pi_{2}^*L^{-2}$. The following diagram commutes:
\[
\begin{tikzcd}
    \mathbb{L}_{1}\ar[rd,"\pi_{1}",swap] \ar[r,"\rho"] & \mathbb{L}_{2} \ar[d,"\pi_{2}"] \\
    & X 
\end{tikzcd}
\]
We notice that $\rho^*\mathsf{g}_{1} = \mathsf{g}_{2}$ and if $\mathbb{E}_{2}^{[r]} = \pi_{2}^{-1}E^{[r]}$ then $\rho^{-1}\mathbb{E}_{2}^{[r]} = \mathbb{E}_{1}^{[r]}$. There is a $\mathbb{Z}/2$ action on $H_{\mathbb{E}_{1}^{[r]}}^*(\mathbb{L}_{1}\setminus \mathbb{D}_{1},\mathsf{g}_{2} +\mathsf{f}_{1})$ which preserves the irregular Hodge filtration. Therefore, we have a filtered direct sum decomposition into eigenspaces,
\[
H^*_{\mathbb{E}_{1}^{[r]}}(\mathbb{L}_{1}\setminus \mathbb{D}_{1},\mathsf{g}_{2} +\mathsf{f}_{1}) = H^*_{\mathbb{E}_{1}^{[r]}}(\mathbb{L}_{1}\setminus \mathbb{D}_{1},\mathsf{g}_{2} +\mathsf{f}_{1})^{(+)} \oplus H^*_{\mathbb{E}_{1}^{[r]}}(\mathbb{L}_{1}\setminus \mathbb{D}_{1},\mathsf{g}_{2} +\mathsf{f}_{1})^{(-)}.
\]
We may identify the $(+)$ eigenspace using the following  result which is a direct generalization of a classical result about the Hodge numbers of cyclic coverings.  
\begin{proposition}\label{p : invariant cohomology and covers of LG models}
    Let $(X,D,f)$ be a nondegenerate compactified  LG model and suppose $\rho : \widehat{X} \rightarrow X$ is a $(\mathbb{Z}/2)$-cover with smooth ramification divisor $R$ whose union with $D$ is snc. Then $(\widehat{X},\widehat{D} = \rho^{-1}D ,\widehat{f} = \rho^*f)$ is a nondegenerate compactified LG model and $H_{\widehat{E}^{[r]}}^*(\widehat{X}\setminus \widehat{D},\widehat{f})^{(+)} \cong H^*_{E^{[r]}}(X\setminus D,f)$.
\end{proposition}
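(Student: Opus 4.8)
The plan is to reduce the statement to a sheaf-theoretic computation of the invariant ($(+)$-eigen) part of $\rho_*$ applied to the twisted de Rham complex on $\widehat X$, in the spirit of the classical eigensheaf decomposition of Esnault--Viehweg used elsewhere in the excerpt (cf.\ the proof of Proposition~\ref{p : double cover interpretation of the first factor}). Since $\rho$ is finite, $\mathrm R\rho_* = \rho_*$ is exact; moreover the averaging projector $\tfrac12(1+\tau_*)$, with $\tau$ the deck involution, realizes $(-)^{(+)}$ as an exact functor, so passing to $(+)$-parts commutes with $\rho_*$, with forming quotient complexes, with $\otimes$ by line bundles, and with hypercohomology. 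The assertion that $(\widehat X,\widehat D,\widehat f)$ is a nondegenerate compactified LG model is routine: $\widehat X$ is smooth because the branch divisor $B=\rho(R)$ is smooth; $\widehat D\cup R$ (and $\widehat D\cup R\cup\widehat E^{[r]}$) is snc because $D\cup B$ (resp.\ $D\cup B\cup E^{[r]}$) is and $\rho$ is branched along the smooth divisor $B$; and $Z(\widehat f)=\rho^{-1}Z(f)$, $\widehat P=\rho^{-1}P$ inherit the required disjointness and snc properties from $Z(f)\cap P=\emptyset$ and $Z(f)\cup D$ snc. Irreducibility of $Z(\widehat f)$ plays no role below, so I would not dwell on it.

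The core step is the $\mathbb Z/2$-equivariant isomorphism of complexes of $\mathcal O_X$-modules
\[
\bigl(\rho_*\,\Omega_{\widehat X}^\bullet(\log\widehat D)(*\widehat P),\ \mathrm d+\mathrm d\widehat f\bigr)^{(+)}\ \cong\ \bigl(\Omega_X^\bullet(\log D)(*P),\ \mathrm d+\mathrm df\bigr),
\]
together with its localized analogue relating the quotient complexes $\mathrm{DR}_{\widehat E^{[r]}}$ and $\mathrm{DR}_{E^{[r]}}$. I would check this in local coordinates: away from $B$ the map $\rho$ is étale and the claim is immediate, so fix a point of $B$ and choose coordinates $(x_1,\dots,x_n)$ on $X$ with $B=\{x_1=0\}$ and the local components of $D$, the $E_i$, and $P$ among $\{x_j=0\}_{j\ge 2}$, together with coordinates $(\widehat x_1,x_2,\dots,x_n)$ on $\widehat X$ with $\widehat x_1^2=x_1$ and $\tau^*\widehat x_1=-\widehat x_1$. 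Then $(\rho_*\mathcal O_{\widehat X})^{(+)}=\mathcal O_X$, $(\rho_*\mathcal O_{\widehat X})^{(-)}=\widehat x_1\mathcal O_X$, and the identity $\widehat x_1\,\mathrm d\widehat x_1=\tfrac12\,\mathrm dx_1$ shows that the invariant $p$-forms in $\rho_*\Omega^p_{\widehat X}(\log\widehat D)(*\widehat P)$ are exactly the $\mathcal O_X(*P)$-span of wedges of $\{\mathrm dx_1\}\cup\{\mathrm dx_j\ \text{or}\ \mathrm dx_j/x_j\ (j\ge 2)\}$ — crucially producing $\mathrm dx_1$ and \emph{no} logarithmic pole along $B$ — i.e.\ precisely $\Omega^p_X(\log D)(*P)$. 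Because $\widehat f=\rho^*f$ is invariant, $\rho^*$ intertwines $\mathrm d+\mathrm df$ with $\mathrm d+\mathrm d\widehat f$ on invariant forms, giving the isomorphism of complexes; exactness of $\rho_*$ and of $(-)^{(+)}$ then propagates it through the short exact sequences~\eqref{e : local cohomology les 1 - sheaf} defining the localized complexes, so $\bigl(\rho_*\,\mathrm{DR}_{\widehat E^{[r]}}(\widehat X\setminus\widehat D,\widehat f)\bigr)^{(+)}\cong\mathrm{DR}_{E^{[r]}}(X\setminus D,f)$.

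For the irregular Hodge filtration: since $P\cap B=\emptyset$, $\rho$ is étale over $P$, so $\widehat P=\rho^*P$ is reduced and $\mathcal O_{\widehat X}(\lfloor\mu\widehat P\rfloor)=\rho^*\mathcal O_X(\lfloor\mu P\rfloor)$ for all $\mu\in\mathbb Q$; the projection formula and exactness of $(-)^{(+)}$ then identify $\bigl(\rho_*F^\lambda_{\mathrm{irr}}\Omega^p_{\widehat X}(\log\widehat D)(*\widehat P)\bigr)^{(+)}$ with $F^\lambda_{\mathrm{irr}}\Omega^p_X(\log D)(*P)$, so the isomorphism of complexes above is filtered. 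Applying $\mathbb H^*(X,-)$ and using $\mathrm R\rho_*=\rho_*$ together with exactness of $(-)^{(+)}$ yields the filtered isomorphism $H^*_{\widehat E^{[r]}}(\widehat X\setminus\widehat D,\widehat f)^{(+)}\cong H^*_{E^{[r]}}(X\setminus D,f)$, as desired. The main obstacle is the bookkeeping in the local computation — in particular verifying that the invariant part acquires \emph{no} logarithmic pole along the branch divisor $B$ (in contrast with the eigensheaf decomposition of $\Omega^\bullet_{\widehat X}(\log R)$, whose invariant part is $\Omega^\bullet_X(\log B)$), and that this persists after localizing along $\widehat E^{[r]}$; everything else is a formal consequence of exactness of finite pushforward and of the averaging projector.
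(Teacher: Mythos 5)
Your proof is correct, and it takes a modestly different route from the paper on the key step. Where you compute the invariant subsheaf of $\rho_*\Omega^p_{\widehat X}(\log\widehat D)(*\widehat P)$ directly in local coordinates (using $\widehat x_1^2 = x_1$ and $\widehat x_1\,\mathrm d\widehat x_1 = \tfrac12\,\mathrm dx_1$ to see that the invariant forms carry $\mathrm dx_1$ and \emph{not} $\mathrm dx_1/x_1$), the paper instead proceeds in two stages: it first cites the Esnault--Viehweg eigensheaf decomposition of $\rho_*\Omega^p_{\widehat X}(\log\widehat D\cup R)$, with a logarithmic pole along $B$ on the invariant part, and then applies $\rho_*$ to the residue short exact sequence $0\to\Omega^p_{\widehat X}(\log\widehat D)\to\Omega^p_{\widehat X}(\log\widehat D\cup R)\to i_*\Omega^{p-1}_R(\log R\cap\widehat D)\to 0$, computes $\rho_*\mathrm{Res}_R = (\mathrm{Res}_R\oplus 0)$, and identifies the kernel of $\mathrm{Res}_R$ on the invariant summand with $\Omega^p_X(\log D)$. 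Your computation buys self-containment and dispenses with the residue argument; the paper's buys reuse of a classical black box and also produces the anti-invariant summand in the same breath, which it needs elsewhere. Your handling of the $r>0$ case (push exactness of $\rho_*$ and of the $(+)$-projector through the localization sequences) is the same inductive bookkeeping the paper carries out via explicit commutative squares. One small inaccuracy: you invoke $P\cap B=\emptyset$, which is not assumed; what is actually available (from the hypothesis that $\widehat D\cup R$ is snc, hence $D\cup B$ is snc, hence no component of $P\subseteq D$ coincides with $B$) is only that $P$ and $B$ share no component, and that suffices for $\rho^*P = \widehat P$ reduced and for $\mathcal O_{\widehat X}(\lfloor\mu\widehat P\rfloor)\cong\rho^*\mathcal O_X(\lfloor\mu P\rfloor)$; the paper phrases exactly this as ``no component of $P$ is contained in $B$.'' With that correction the filtered statement goes through as you wrote it.
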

\begin{proof}
Let us first address the case where $r=0$. Then we know, classically, on the level of sheaves, that 
\begin{equation}\label{e : eigensheaf decomposition}
\rho_*\Omega^p_{\widehat{X}}(\log \widehat{D}\cup {R}) \cong \mathrm{R}\rho_*\Omega^p_{\widehat{X}}(\log \widehat{D}\cup {R}) \cong \Omega^p_X(\log D\cup B) \oplus \left(\Omega^p_X(\log D\cup B)\otimes \mathcal{O}_X(-\tfrac{1}{2}B)\right)
\end{equation}
and that $\mathrm{R}\rho_*\mathrm{d}$ restricts to the second  summand of the right hand side of \eqref{e : eigensheaf decomposition} as $\mathrm{d} + \left(\tfrac{1}{2}\right)\mathrm{d}\log \sigma$ and $\mathrm{R}\rho_*\mathrm{d}$ restricts to the first summand of \eqref{e : eigensheaf decomposition} as $\mathrm{d}$. The first summand of the right hand side of \eqref{e : eigensheaf decomposition} is the $(+1)$ eigensheaf under the $\mathbb{Z}/2$ action and the second summand is the $(-1)$ eigensheaf. Since the pole divisor of $\rho^*f$, satisfies $\widehat{P} = \rho^{-1}P$, we have $\mathcal{O}_{\widehat{X}}(*\widehat{P}) \cong \pi^*\mathcal{O}_X(*P)$, and  the projection formula tells us that 
\begin{align*}
\rho_*(\Omega^p_{\widehat{X}}(\log \widehat{D}\cup {R})(*\widehat{P})) & \cong\mathrm{R}\rho_*(\Omega^p_{\widehat{X}}(\log \widehat{D}\cup {R})(*\widehat{P})) \\ & \cong \Omega^p_X(\log D\cup B)(*P) \oplus \left(\Omega^p_X(\log D\cup B)(*P)\otimes \mathcal{O}_X(-\tfrac{1}{2}B)\right).
\end{align*}
The differential $\mathrm{d}\widehat{f} = \rho^*\mathrm{d}f$ induces the differential $\mathrm{d}{f}$ which preserves the eigensheaf decomposition. Since no component of $P$ is contained in $B$, the irregular filtration is preserved by this decomposition.

From this, we may show that the invariant part of $H^*(\widehat{X}\setminus \widehat{D}\cup R,\widehat{f})$ is isomorphic to $H^*(X\setminus D \cup B,f)$. We now show that the invariant part of $H^*(\widehat{X}\setminus \widehat{D},\widehat{f})$ is isomorphic to $H^*(X\setminus D,f)$. To do so, take the short exact sequence of sheaves 
\begin{equation}\label{e : residue ses}
0 \longrightarrow \Omega_{\widehat{X}}^p(\log \widehat{D}) \longrightarrow \Omega_{\widehat{X}}^p(\log \widehat{D} \cup {R}) \xrightarrow{\mathrm{Res}_{{R}}} i_*\Omega^{p-1}_{{R}}(\log R\cap \widehat{D})\longrightarrow 0
\end{equation}
where $i : R\hookrightarrow \widehat{X}$ is the inclusion map. Applying pushforward to this short exact sequence, along with the fact that $\rho\cdot i$ is the usual inclusion of $j:{R}\cong B\hookrightarrow X$, we see that $\mathrm{R}\rho_*i_*\Omega^{p-1}_{R}(\log {R}\cap \widehat{D}) \cong j_*\Omega^{p-1}_{{B}}(\log {B}\cap {D})$. A local calculation then shows that the induced map is
\[
\rho_*\mathrm{Res}_{{R}} = (\mathrm{Res}_R \oplus 0): \Omega^p_X(\log D\cup B) \oplus \Omega^p_X(\log D\cup B)\otimes \mathcal{O}_X(-\tfrac{1}{2}B) \longrightarrow j_*\Omega^{p-1}_{{B}}(\log {B}\cap {D}).
\]
This map is surjective, because $\mathrm{Res}_R$ is surjective. Applying $\rho_*$ to \eqref{e : residue ses}, we get a short exact sequence,
\[
0 \longrightarrow \rho_*\Omega_{\widehat{X}}^p(\log \widehat{D}) \longrightarrow \Omega^p_X(\log D\cup B) \oplus \Omega^p_X(\log D\cup B)\otimes \mathcal{O}_X(-\tfrac{1}{2}B) \longrightarrow j_*\Omega^{p-1}_{{B}}(\log {B}\cap {D}) \longrightarrow 0.
\]
The kernel of $\mathrm{Res}_R$ is $\Omega_X^p(\log D\cup B)$ is $\Omega_X^p(\log D).$ Therefore, $\mathrm{R}\rho_*\Omega^p_{\widehat{X}}(\log \widehat{D}) \cong \Omega^p_X(\log D) \oplus \Omega^p_X(\log D\cup B)\otimes \mathcal{O}_X(-\tfrac{1}{2}B)$, where the first summand is the $(+1)$ eigensheaf and the second summand is the $(-1)$ eigensheaf. Remark that $\mathrm{R}\rho_*(\mathrm{d} + \mathrm{d}\widehat{f}) = (\mathrm{d} + \mathrm{d}f, \mathrm{d} + \mathrm{d}f + \left(\tfrac{1}{2}\right) \mathrm{d}\log \sigma)$. This proves the desired result when $E = \emptyset$. 

In the general case the proof is similar to the final step of the proof of Theorem \ref{p : Affine bundle cohomology agrees with the base}. Recall that $E^{[\ell]} = E_1\cap \dots \cap E_\ell$, and let $\widehat{E}_i = \rho^{-1}E_i$. For each $j$, we have a commutative diagram of isomorphisms and inclusions,
\[
\begin{tikzcd}
    \Omega_X^\bullet(\log D + \sum_{i\neq j}E_i)(*P) \ar[r,"\cong"] \ar[d,hookrightarrow ] & \left(\rho_*\Omega^\bullet_{\widehat{X}}(\log \widehat{D} + \sum_{i \neq j} \widehat{E}_i)(*\widehat{P})\right)^{(+)} \ar[d, hookrightarrow] \\
    \Omega_X^\bullet(\log D + \sum_{i}E_i)(*P) \ar[r,"\cong"]  & \left(\rho_*\Omega^\bullet_{\widehat{X}}(\log \widehat{D} + \sum_{i} \widehat{E}_i)(*\widehat{P})\right)^{(+)} 
\end{tikzcd}
\]
Using the vanishing of $\mathrm{R}^m\rho_*\Omega^\bullet_{\widehat{X}}(\log \widehat{D} + \sum_{i\neq j} \widehat{E}_i)(*\widehat{P})$ and $\mathrm{R}^m\rho_*\Omega^\bullet_{\widehat{X}}(\log \widehat{D} + \sum_{i} \widehat{E}_i)(*\widehat{P})$ when $m>0$, we obtain isomorphisms
\[
\dfrac{\Omega_X^\bullet(\log D + \sum_{i}E_i)(*P)}{\Omega_X^\bullet(\log D + \sum_{i\neq j}E_i)(*P)} \cong \rho_*\left(\dfrac{\Omega^\bullet_{\widehat{X}}(\log \widehat{D} + \sum_{i} \widehat{E}_i)(*\widehat{P})}{\Omega^\bullet_{\widehat{X}}(\log \widehat{D} + \sum_{i \neq j} \widehat{E}_i)(*\widehat{P})} \right)^{(+)}
\]
Taking sums and applying an induction argument, we get commutative diagrams, 
\[
\begin{tikzcd}
    \sum_j\Omega_X^\bullet(\log D + \sum_{i\neq j}E_i)(*P) \ar[r,"\cong"] \ar[d,hookrightarrow ] & \sum_j \left(\rho_*\Omega^\bullet_{\widehat{X}}(\log \widehat{D} + \sum_{i \neq j} \widehat{E}_i)(*\widehat{P})\right)^{(+)} \ar[d, hookrightarrow] \\
    \Omega_X^\bullet(\log D + \sum_{i}E_i)(*P) \ar[r,"\cong"]  & \left(\rho_*\Omega^\bullet_{\widehat{X}}(\log \widehat{D} + \sum_{i} \widehat{E}_i)(*\widehat{P})\right)^{(+)} .
\end{tikzcd}
\]
From which we obtain isomorphisms
\[
\dfrac{\Omega_X^\bullet(\log D + \sum_{i}E_i)(*P)}{\sum_j\Omega_X^\bullet(\log D + \sum_{i\neq j}E_i)(*P)} \cong \rho_*\left(\dfrac{\Omega^\bullet_{\widehat{X}}(\log \widehat{D} + \sum_{i} \widehat{E}_i)(*\widehat{P})}{\sum_j \Omega^\bullet_{\widehat{X}}(\log \widehat{D} + \sum_{i \neq j} \widehat{E}_i)(*\widehat{P})} \right)^{(+)}
\]
Applying the definition of the local twisted de Rham complex, we obtain the result.
\end{proof}

Therefore, we see that 
\[
H^*_{\mathbb{E}_{1}^{[r]}}(\mathbb{L}_{1}\setminus \mathbb{D}_1,\mathsf{g}_{2} +\mathsf{f}_{1}) = H_{\mathbb{E}^{[r]}_{2}}^*(\mathbb{L}_{2} \setminus \mathbb{D}_{2},\mathsf{g}_{1} +\mathsf{f}_{2}) \oplus H_{\mathbb{E}_{1}^{[r]}}^*(\mathbb{L}_{1}\setminus \mathbb{D}_{1},\mathsf{g}_{2} +\mathsf{f}_{1})^{(-)}.
\]
Proposition \ref{p : reducing local cohomology to the base} below is surely well known to experts but it does not seem to be proven in the generality that we need (see \cite{yu2014irregular, dimca2000dwork,Fresan2022} for similar results when $f =0$, $\mathbb{B}$ is smooth, or $X$ is affine, respectively). It is a consequence of the following Lemma.
\begin{lemma}\label{l : two isomorphisms}
    There are filtered isomorphisms:
    \begin{align*}
        H^*_{\mathbb{E}_{2}^{[r]}\cap \mathbb{B}_{2}}(\mathbb{L}_{2}\setminus \mathbb{D}_{2}, \mathsf{g}_{1}+ \mathsf{f}_{2}) \cong H^*_{E^{[r]}\cap B}(X\setminus D,f) \cong H^*_{\mathbb{E}_{1}^{[r]}\cap \mathbb{B}_{1}}(\mathbb{L}_{1}\setminus \mathbb{D}_{1}, \mathsf{g}_{2} + \mathsf{f}_{1})
    \end{align*}
\end{lemma}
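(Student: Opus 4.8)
The plan is to prove that each of the two outer terms is filtered-isomorphic to the middle term $H^*_{E^{[r]}\cap B}(X\setminus D,f)$; the two arguments are identical except for the shape of the auxiliary potential, $\mathsf{g}_{1}=t\cdot\pi_{2}^*\sigma$ being linear in the fibre coordinate $t$ of $\mathbb{L}_{2}$ while $\mathsf{g}_{2}=s^{2}\cdot\pi_{1}^*\sigma$ is quadratic in the fibre coordinate $s$ of $\mathbb{L}_{1}$, so I will spell out the reduction of $H^*_{\mathbb{E}_{2}^{[r]}\cap\mathbb{B}_{2}}(\mathbb{L}_{2}\setminus\mathbb{D}_{2},\mathsf{g}_{1}+\mathsf{f}_{2})$ and only indicate the (formally identical) changes for $\mathbb{L}_{1}$. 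The single observation that makes everything work is that $B=\{\sigma=0\}$, so $\mathsf{g}_{1}$ lies in the ideal sheaf of $\mathbb{B}_{2}=\pi_{2}^{-1}B$ (and $\mathsf{g}_{2}$ in that of $\mathbb{B}_{1}=\pi_{1}^{-1}B$); in particular it restricts to the zero function on every closed stratum of $\mathbb{B}_{2}$.

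First I would reduce to a filtered quasi-isomorphism of complexes of sheaves on $X$,
\[
\mathrm{R}\pi_{2*}\,\mathrm{DR}_{\mathbb{E}_{2}^{[r]}\cap\mathbb{B}_{2}}(\mathbb{L}_{2}\setminus\mathbb{D}_{2},\mathsf{g}_{1}+\mathsf{f}_{2})\ \xrightarrow{\ \sim\ }\ \mathrm{DR}_{E^{[r]}\cap B}(X\setminus D,f),
\]
and then pass to (hyper)cohomology. To establish it, fix a $\mathbb{P}^{1}$-compactification $\overline{\mathbb{L}}_{2}$ of $\mathbb{L}_{2}$ with section at infinity $\mathbb{X}_{\infty}$ (disjoint from $\mathbb{B}_{2}$, which lies in the affine part), write $B$ as the union of its irreducible components, and apply the residue resolution \eqref{e : residue resolution} with $\mathbb{B}_{2}$ taken as the last snc hypersurface of the list and decomposed into the components $\pi_{2}^{-1}(B_{j})$. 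This presents $\mathrm{DR}_{\mathbb{E}_{2}^{[r]}\cap\mathbb{B}_{2}}(\mathbb{L}_{2}\setminus\mathbb{D}_{2},\mathsf{g}_{1}+\mathsf{f}_{2})$ as the total complex of a double complex whose column indexed by a nonempty set $T$ of components is the localized twisted log de Rham complex of $R^{T}:=\bigcap_{j\in T}\pi_{2}^{-1}(B_{j})=\pi_{2}^{-1}(B^{T})$, where $B^{T}=\bigcap_{j\in T}B_{j}$. Each $R^{T}$ is the total space of a line bundle over the smooth variety $B^{T}$; the divisor $\mathbb{D}_{2}\cap R^{T}$ and the localization locus $\mathbb{E}_{2}^{[r]}\cap R^{T}$ are pulled back from $B^{T}$; and, crucially, $(\mathsf{g}_{1}+\mathsf{f}_{2})|_{R^{T}}=\pi_{2}^{*}(f|_{B^{T}})$, because $\mathsf{g}_{1}|_{R^{T}}=0$ by the observation above and the pole divisor restricts to $\pi_{2}^{*}(P\cap B^{T})$ only (the component along $\mathbb{X}_{\infty}$ does not meet $R^{T}$). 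Thus Proposition \ref{p : Affine bundle cohomology agrees with the base} applies column by column and identifies $\mathrm{R}(\pi_{2}|_{R^{T}})_{*}$ of the $T$-th column with $\mathrm{DR}_{E^{[r]}\cap B^{T}}(B^{T}\setminus(D\cap B^{T}),f|_{B^{T}})$ as filtered complexes; by Remark \ref{r : commutes with residue maps} these identifications intertwine the residue differentials of the two double complexes. Since the target double complex is exactly the residue resolution of $\mathrm{DR}_{E^{[r]}\cap B}(X\setminus D,f)$, assembling the columns gives the filtered quasi-isomorphism displayed above. Repeating the argument with $(\mathbb{L}_{1},\pi_{1},s,\mathsf{g}_{2})$ in place of $(\mathbb{L}_{2},\pi_{2},t,\mathsf{g}_{1})$ — the only change being that $\mathsf{g}_{2}$ has a pole of order two rather than one along the section at infinity, which is immaterial once we restrict to $\mathbb{B}_{1}$ — yields the second isomorphism.

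The bookkeeping needed to run this for general $r$ (that the $\mathbb{E}_{i}$ are pulled back from $X$, that the maps constructed above commute with the residue morphisms $\mathrm{Res}_{\mathbb{E}_{i}}$, and that the relevant higher direct images along the bundle projections vanish) is the same induction used in the proofs of Propositions \ref{p : Affine bundle cohomology agrees with the base} and \ref{p : invariant cohomology and covers of LG models}, with $E^{[r]}$ simply carried along as a spectator. The hard part will be the compatibility with the irregular Hodge filtration: one must verify that the pole divisor of $\mathsf{g}_{1}+\mathsf{f}_{2}$ on $\overline{\mathbb{L}}_{2}$ forms an snc divisor together with the rest of the boundary, that its component along $\mathbb{X}_{\infty}$ is invisible after localization to $\mathbb{B}_{2}$, and that the residue resolution \eqref{e : residue resolution} is strict for $F^{\bullet}_{\mathrm{irr}}$ so that it computes the associated graded — this is where \eqref{e : ESY} and Proposition \ref{p : strict morphism of filtered complexes} are needed. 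A related point to flag is that $\mathsf{g}_{1}+\mathsf{f}_{2}$ need not define a \emph{nondegenerate} compactified LG model (for instance, when $f=0$ its zero locus $\{t\sigma=0\}$ is reducible); this causes no trouble because the whole argument is carried out at the level of the filtered complexes $\Omega^{\bullet}_{X}(\log D,E^{[r]})(*P)$ and their residue resolutions, whose formation does not require nondegeneracy, and Proposition \ref{p : Affine bundle cohomology agrees with the base} is invoked only for the line bundles $R^{T}\to B^{T}$, which carry the honestly pulled-back potential $\pi_{2}^{*}(f|_{B^{T}})$.
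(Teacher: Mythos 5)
Your proposal is correct and follows essentially the same route as the paper's proof: exploit that $\mathsf{g}_1$ (resp.\ $\mathsf{g}_2$) vanishes identically on $\mathbb{B}_2$ (resp.\ $\mathbb{B}_1$), apply the residue resolution \eqref{e : residue resolution} over the components of $B$, push forward along $\pi_2$ column by column using Proposition \ref{p : Affine bundle cohomology agrees with the base} together with Remark \ref{r : commutes with residue maps}, and compare with the residue resolution of $\mathrm{DR}_{E^{[r]}\cap B}(X\setminus D,f)$. The extra points you flag (strictness of the residue resolution for $F_{\mathrm{irr}}^\bullet$ via \eqref{e : ESY} and Proposition \ref{p : strict morphism of filtered complexes}, and the fact that nondegeneracy of $\mathsf{g}_1+\mathsf{f}_2$ is not needed because all manipulations happen at the level of sheaf complexes) are genuine subtleties that the paper treats only implicitly, and you resolve them correctly.
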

\begin{proof}
  We prove the first isomorphism. The proof of the second isomorphism is essentially  identical. Write $B = \bigcup_{i=1}^\ell B_i$ where $B_i$ are smooth and irreducible, and let $B^I = \cap_{i \in I} B_i$, $\mathbb{B}_{2}^I = \pi_{2}^{-1}B^I$. We make the important observation that $\mathsf{g}_{1}|_{\mathbb{B}_{2}^I} = 0$ for all $I$. (It is also true that $\mathsf{g}_{2}|_{\mathbb{B}_{1}^I} = 0$ for all $I$, which is why the second isomorphism follows from the same argument.) Following \eqref{e : residue resolution}, there is a residue resolution of complexes,
    \begin{equation}\label{e : residue resolution 2}
\begin{array}{ll}
0 \longrightarrow \Omega_{\mathbb{P}_{2}}^\bullet(\log \mathbb{X}_{2} + \mathbb{D}_{2}, \mathbb{E}_{2}^{[r]}\cap \mathbb{B}_{2})(*P_\mathsf{f}) & \longrightarrow \bigoplus_{|I| =1}\Omega_{\mathbb{B}_{2}^I}^{\bullet-1}(\log \partial\mathbb{B}^I_{2}, \mathbb{E}_{2}^{[r]}\cap \mathbb{B}_{2}^I)(*P_\mathsf{f}) \\ & \longrightarrow \bigoplus_{|I| = 2}\Omega_{\mathbb{B}_{2}^I}^{\bullet-2}(\log \partial\mathbb{B}_{2}^I, \mathbb{E}_{2}^{[r]}\cap \mathbb{B}_{2}^I)(*P_\mathsf{f}) \longrightarrow \dots 
\end{array}
\end{equation}
The columns of this resolution of complexes are equipped with the differential $(\mathrm{d} + \mathrm{d}\mathsf{g}_{1} + \mathrm{d}\mathsf{f}_{2})|_{\mathbb{B}_{2}^I} = (\mathrm{d} + \mathrm{d}\mathsf{f}_{2})|_{\mathbb{B}_{2}^I}$. We apply $\mathrm{R}\pi_{2*}$ to  \eqref{e : residue resolution 2}. Since each $\mathbb{B}_{2}^I$ is a projective bundle over $B^I$, we may apply Proposition \ref{p : Affine bundle cohomology agrees with the base} to each complex $(\Omega_{\mathbb{B}^I_2}^\bullet(\log \partial\mathbb{B}^I_{2},\mathbb{E}_{2}^{[r]}\cap \mathbb{B}_{2}^I)(*P_\mathsf{f}), \mathrm{d} + \mathrm{d}\mathsf{f})$ in obtain a resolution
    \begin{equation}\label{e : residue resolution 3}
\begin{array}{ll}
0 \longrightarrow \mathrm{R}\pi_{2*}\Omega_{\mathbb{P}_{2}}^\bullet(\log \mathbb{X}_{2} + \mathbb{D}_{2}, \mathbb{E}_{2}^{[r]}\cap \mathbb{B}_{2})(*P_\mathsf{f}) & \longrightarrow \bigoplus_{|I| =1}\Omega_{B^I}^{\bullet-1}(\log \partial B^I, E^{[r]}\cap B^I)(*P) \\ & \longrightarrow \bigoplus_{|I| = 2}\Omega_{B^I}^{\bullet-2}(\log \partial B^I, E^{[r]}\cap B^I)(*P) \longrightarrow \dots 
\end{array}
\end{equation}
where the columns of this resolution of complexes are equipped with differentials $\mathrm{R}\pi_{2*}(d + \mathrm{d}\mathsf{g}_{1} +\mathrm{d}\mathsf{f}_{2}),\bigoplus_{|I|=1}(\mathrm{d} + \mathrm{d}f)|_{B^I},\dots$, and, as mentioned  in Remark \ref{r : commutes with residue maps}, the morphisms connecting columns in \eqref{e : residue resolution 3} are alternating sums of residue maps. On the other hand, there is also a residue resolution of complexes 
    \begin{equation}\label{e : residue resolution 4}
\begin{array}{ll}
0 \longrightarrow \Omega_X^\bullet(\log D,E^{[r]}\cap B)(*P) & \longrightarrow \bigoplus_{|I| =1}\Omega_{B^I}^{\bullet-1}(\log \partial B^I, E^{[r]}\cap B^I)(*P) \\ & \longrightarrow \bigoplus_{|I| = 2}\Omega_{B^I}^{\bullet-2}(\log \partial B^I, E^{[r]}\cap B^I)(*P) \longrightarrow \dots 
\end{array}
\end{equation}
Therefore, we obtain a quasi-isomorphism between $\mathrm{R}\pi_{2*} \mathrm{DR}_{\mathbb{E}^{[r]}_2\cap \mathbb{B}_2}(\mathbb{L}_2\setminus \mathbb{D}, \mathsf{g}_1 + \mathsf{f}_2)$ and $\mathrm{DR}_{E^{[r]}\cap B}(X\setminus D,f)$.\end{proof}

Note that \eqref{e : situation for the f reduction} is a consequence of the following proposition.
\begin{proposition}\label{p : reducing local cohomology to the base}
    There is a filtered isomorphism,
    \[
    H^*_{\mathbb{E}^{[r]}_{2}}({\mathbb{L}}_{2} \setminus {\mathbb{D}_{2}}, \mathsf{g}_{1} + \mathsf{f}_{2}) \cong H^*_{B\cap E^{[r]}}(X\setminus D,f).
    \]
\end{proposition}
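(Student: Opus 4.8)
The plan is to derive the proposition from the localization long exact sequence \eqref{e : local cohomology les}, applied to the additional snc hypersurface $\mathbb{B}_{2}:=\pi_{2}^{-1}(B)\subseteq\mathbb{L}_{2}$, exploiting the fact that the potential $\mathsf{g}_{1}=t\cdot\pi_{2}^{*}\sigma$ is \emph{linear} in the fibre coordinate $t$ of $L^{-2}$. The key geometric input is that away from $\mathbb{B}_{2}$ the section $\pi_{2}^{*}\sigma$ of $\pi_{2}^{*}L^{2}$ is nowhere vanishing, so the restricted Landau--Ginzburg model splits off a factor $(\mathbb{A}^{1},t)$ whose twisted cohomology vanishes; this forces the ``complementary'' term in the localization sequence to be zero, so that the residue morphism identifies $H^{*}_{\mathbb{E}^{[r]}_{2}}(\mathbb{L}_{2}\setminus\mathbb{D}_{2},\mathsf{g}_{1}+\mathsf{f}_{2})$ with the cohomology supported along $\mathbb{B}_{2}\cap\mathbb{E}^{[r]}_{2}$, which is $H^{*}_{B\cap E^{[r]}}(X\setminus D,f)$ by Lemma \ref{l : two isomorphisms}.

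Concretely, I would proceed as follows. After arranging (by a blow-up if necessary) that $\mathbb{B}_{2}$, $\mathbb{D}_{2}$, the preimages $\pi_{2}^{-1}E_{i}$, the zero divisor of $\mathsf{g}_{1}+\mathsf{f}_{2}$ and the divisor at infinity form a simple normal crossings configuration, \eqref{e : local cohomology les 1 - sheaf} with $\mathbb{B}_{2}$ as the new hypersurface yields a long exact sequence
\[
\cdots\to H^{p}_{\mathbb{E}^{[r]}_{2}}(\mathbb{L}_{2}\setminus\mathbb{D}_{2},\mathsf{g}_{1}+\mathsf{f}_{2})\to H^{p}_{\mathbb{E}^{[r]}_{2}}(\mathbb{L}_{2}\setminus(\mathbb{D}_{2}\cup\mathbb{B}_{2}),\mathsf{g}_{1}+\mathsf{f}_{2})\xrightarrow{\mathrm{Res}}H^{p}_{\mathbb{E}^{[r]}_{2}\cap\mathbb{B}_{2}}(\mathbb{L}_{2}\setminus\mathbb{D}_{2},\mathsf{g}_{1}+\mathsf{f}_{2})\to\cdots
\]
all of whose maps are strict for $F^{\bullet}_{\mathrm{irr}}$ by Proposition \ref{p : strict morphism of filtered complexes}. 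On $\mathbb{L}_{2}\setminus\mathbb{B}_{2}$ the pair $(\pi_{2},\mathsf{g}_{1})$ is a fibrewise-linear isomorphism onto $(X\setminus B)\times\mathbb{A}^{1}$, carrying $\mathbb{D}_{2}$ and $\mathbb{E}^{[r]}_{2}$ to $(D\setminus B)\times\mathbb{A}^{1}$ and $(E^{[r]}\setminus B)\times\mathbb{A}^{1}$, and $\mathsf{g}_{1}+\mathsf{f}_{2}$ to $f\boxplus t$ with $t$ the coordinate on $\mathbb{A}^{1}$. The Künneth theorem \ref{t : Kuenneth theorem}, applied to the models $(X,D\cup B,f)$ localized along $E^{[r]}$ and $(\mathbb{P}^{1},\{\infty\},t)$, then gives a filtered isomorphism
\[
H^{*}_{\mathbb{E}^{[r]}_{2}}(\mathbb{L}_{2}\setminus(\mathbb{D}_{2}\cup\mathbb{B}_{2}),\mathsf{g}_{1}+\mathsf{f}_{2})\;\cong\;\bigoplus_{p+q=*}H^{p}_{E^{[r]}}(X\setminus(D\cup B),f)\otimes H^{q}(\mathbb{A}^{1},t),
\]
and $H^{*}(\mathbb{A}^{1},t)=0$ by the $k=1$ case of Example \ref{e : t^n LG model}. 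Hence the middle term of the long exact sequence vanishes in every degree, so $\mathrm{Res}$ is an isomorphism of underlying vector spaces; being also strict for $F^{\bullet}_{\mathrm{irr}}$, it is a filtered isomorphism. Composing with the filtered isomorphism $H^{*}_{\mathbb{E}^{[r]}_{2}\cap\mathbb{B}_{2}}(\mathbb{L}_{2}\setminus\mathbb{D}_{2},\mathsf{g}_{1}+\mathsf{f}_{2})\cong H^{*}_{B\cap E^{[r]}}(X\setminus D,f)$ of Lemma \ref{l : two isomorphisms} gives the proposition, the cohomological shift being the one carried by the residue morphism $\mathrm{Res}$.

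The step I expect to require the most care is the compatibility of the product decomposition $\mathbb{L}_{2}\setminus\mathbb{B}_{2}\cong(X\setminus B)\times\mathbb{A}^{1}$ with the irregular Hodge filtrations, so that Theorem \ref{t : Kuenneth theorem} applies on the nose: one must check that the filtration induced by the polar divisor of $\mathsf{g}_{1}$ (the section at infinity of the $\mathbb{P}^{1}$-bundle compactifying $\mathbb{L}_{2}$) together with that of $\mathsf{f}_{2}$ matches the product filtration used in the Künneth statement, which comes down to choosing the snc compactification of $(X\setminus B)\times\mathbb{A}^{1}$ compatibly with the one induced from the compactification of $\mathbb{L}_{2}$. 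A second, more routine, point is arranging the simple normal crossings hypotheses for $\mathbb{B}_{2}$ relative to the other divisors; this is available after a blow-up because $B\cup D\cup\bigcup_{i}E_{i}$ and the relevant zero loci are already snc on $X$ and $\pi_{2}$ is smooth. Granting these, everything else --- strictness of $\mathrm{Res}$, the vanishing $H^{*}(\mathbb{A}^{1},t)=0$, and the appeal to Lemma \ref{l : two isomorphisms} --- is immediate from the material already developed in the excerpt.
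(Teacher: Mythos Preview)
Your approach is essentially identical to the paper's: both proofs add $\mathbb{B}_{2}=\pi_{2}^{-1}(B)$ to the localization sequence \eqref{e : local cohomology les}, trivialize $\mathbb{L}_{2}\setminus(\mathbb{D}_{2}\cup\mathbb{B}_{2})$ as a product with $\mathbb{A}^{1}$, invoke K\"unneth together with $H^{*}(\mathbb{A}^{1},t)=0$ to annihilate the open term, and finish with Lemma \ref{l : two isomorphisms}. One small slip: once the middle term $H^{*}_{\mathbb{E}^{[r]}_{2}}(\mathbb{L}_{2}\setminus(\mathbb{D}_{2}\cup\mathbb{B}_{2}),\mathsf{g}_{1}+\mathsf{f}_{2})$ vanishes, it is the \emph{connecting map} $H^{*}_{\mathbb{E}^{[r]}_{2}\cap\mathbb{B}_{2}}\to H^{*+1}_{\mathbb{E}^{[r]}_{2}}$ that becomes an isomorphism, not $\mathrm{Res}$ (which has zero source); this is harmless for the argument and matches the paper's equation \eqref{e : isom in cohomology}.
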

\begin{proof}
    Let $\mathbb{B}_2 = \pi_2^{-1}B$ as before. Because $B$ is the vanishing locus of $\sigma$, and $\sigma$ is a section of $L^2$, $\mathbb{L}_{2}\setminus (\mathbb{D}_{2} \cup \mathbb{B}_{2}) \cong (X\setminus D) \times \mathbb{A}^1$ and under this identification, $\mathsf{g}_{1}$ is projection onto the second coordinate. By the K\"unneth formula (Theorem \ref{t : Kuenneth theorem}) and the fact that $H^*(\mathbb{A}^1,t) = 0$, we see that $H^*_{\mathbb{E}_{2}}({\mathbb{L}}_{2}\setminus ({\mathbb{D}}_{2}\cup \mathbb{B}_{2}),\mathsf{g}_{1} + \mathsf{f}_2) \cong 0$. Therefore, the long exact sequence in cohomology \eqref{e : local cohomology les} tells us there is an isomorphism between 
    \begin{equation}\label{e : isom in cohomology}
    H^*_{\mathbb{E}^{[r]}_{2}}({\mathbb{L}}_{2}\setminus {\mathbb{D}}_{2}, \mathsf{g}_{1} + \mathsf{f}_{2})\cong H^*_{\mathbb{E}^{[r]}_{2}\cap \mathbb{B}_{2}}({\mathbb{L}}_{2}\setminus {\mathbb{D}}_{2}, \mathsf{g}_{1}+\mathsf{f}_{2}).
    \end{equation}
    The proposition then follows from Lemma \ref{l : two isomorphisms}.
\end{proof}

Now we use this to decompose the cohomology of the LG model $(\mathbb{L}_{1}\setminus \mathbb{D}_{1}, \mathsf{g}_{2} + \mathsf{f}_{2})$. There is a direct sum decomposition
\[
H_{\mathbb{E}_{1}^{[r]}}^*(\mathbb{L}_{1}\setminus \mathbb{D}
_{1},\mathsf{g}_{2} +\mathsf{f}_{1}) \cong H_{\mathbb{E}_{1}^{[r]}}^*(\mathbb{L}_{1} \setminus  \mathbb{D}_{1},\mathsf{g}_{2}+\mathsf{f}_{1})^{(+)} \oplus H_{\mathbb{E}_{1}^{[r]}}^*(\mathbb{L}_{1} \setminus  \mathbb{D}_{1},\mathsf{g}_{2}+\mathsf{f}_{1})^{(-)}.
\]
By Proposition \ref{p : invariant cohomology and covers of LG models} (using the double covering map $\mathbb{L}_1 \setminus \mathbb{D}_1 \rightarrow \mathbb{L}_2 \setminus \mathbb{D}_2$) and Proposition \ref{p : reducing local cohomology to the base} this becomes 
\begin{equation}\label{e : eigenspace decomposition}
H_{\mathbb{E}_{1}^{[r]}}^*(\mathbb{L}_{1}\setminus \mathbb{D}
_{1},\mathsf{g}_{2} +\mathsf{f}_{1}) \cong H^*_{B \cap E^{[r]}}(X\setminus D,f) \oplus H_{\mathbb{E}_{1}^{[r]}}^*(\mathbb{L}_{1} \setminus  \mathbb{D}_{1},\mathsf{g}_{2}+\mathsf{f}_{1})^{(-)}.
\end{equation}
We provide an alternative description of the second summand of the right hand side of \eqref{e : eigenspace decomposition}. There is a commutative diagram, \eqref{e : commutative diagram from pullback} below, whose top horizontal arrow is an isomorphism by Proposition \ref{p : reducing local cohomology to the base} and the right vertical morphism is injective by Proposition \ref{p : invariant cohomology and covers of LG models}.
\begin{equation}\label{e : commutative diagram from pullback}
\begin{tikzcd}
    H_{\mathbb{E}^{[r]}_{2} \cap \mathbb{B}_{2}}^{*-1}({\mathbb{L}}_{2}\setminus {\mathbb{D}}_{2},\mathsf{g}_1 + \mathsf{f}_{2}) \ar[r,"\cong"]\ar[d,"\rho^*"]& H_{\mathbb{E}^{[r]}_{2}}^*(\mathbb{L}_{2}\setminus \mathbb{D}_{2},\mathsf{g}_{1} + \mathsf{f}_{2}) \ar[d,hookrightarrow,"p"] \\
    H^{*-1}_{\mathbb{E}_{1}^{[r]}\cap \mathbb{B}_1}(\mathbb{L}_{1}\setminus \mathbb{D}_{1},\mathsf{g}_{2} +  \mathsf{f}_{1}) \ar[r,"i_*"] & H_{\mathbb{E}_{1}^{[r]}}^*( \mathbb{L}_{1}\setminus \mathbb{D}_{1},\mathsf{g}_{2} + \mathsf{f}_{1}).
\end{tikzcd}
\end{equation}
By commutativity, $\rho^*$ is injective, so by Lemma \ref{l : two isomorphisms}, it must be an isomorphism.  Therefore, the image of $i_*$ in \eqref{e : commutative diagram from pullback} is identified with the image of $p$ which, by Proposition \ref{p : invariant cohomology and covers of LG models}, is $H^*_{\mathbb{E}_{1}^{[r]}}(\mathbb{L}_{1}\setminus \mathbb{D}_{1},\mathsf{g}_{2} +\mathsf{f}_{1})^{(+)}$. From \eqref{e : local cohomology les} we have a long exact sequence
\begin{align}\label{e : les that splits}
\dots \longrightarrow H^{*-1}_{\mathbb{E}^{[r]}_{1}\cap \mathbb{B}_{1}}(\mathbb{L}_{1}\setminus \mathbb{D}_{1},\mathsf{g}_{2} +  \mathsf{f}_{1}) &\xrightarrow{\,\, i_*\,\,}  H_{\mathbb{E}_{1}^{[r]}}^*( \mathbb{L}_{1}\setminus \mathbb{D}_{1}, \mathsf{g}_{2} + \mathsf{f}_{1}) \\ &\longrightarrow H_{\mathbb{E}_{1}^{[r]}}^*( \mathbb{L}_{1}\setminus (\mathbb{D}_{1}\cup \mathbb{B}_{1}),\mathsf{g}_{2} + \mathsf{f}_{1}) \longrightarrow \dots 
\end{align}
Since $i_*$ is injective the long exact sequence \eqref{e : les that splits} splits into a collection of short exact sequences:
    \begin{align*}\label{e : les that splits}
0 \longrightarrow H^{*-1}_{\mathbb{E}^{[r]}_{1}\cap \mathbb{B}_{1}}(\mathbb{L}_{1}\setminus \mathbb{D}_{1},\mathsf{g}_{2} +  \mathsf{f}_{1}) &\xrightarrow{\,\, i_*\,\,}  H_{\mathbb{E}_{1}^{[r]}}^*( \mathbb{L}_{1}\setminus \mathbb{D}_{1}, \mathsf{g}_{2} + \mathsf{f}_{1}) \\ &\longrightarrow H_{\mathbb{E}_{1}^{[r]}}^*( \mathbb{L}_{1}\setminus (\mathbb{D}_{1}\cup \mathbb{B}_{1}),\mathsf{g}_{2} + \mathsf{f}_{1}) \longrightarrow 0 
\end{align*}
As above, we have a filtered splitting,
\[
H_{\mathbb{E}_{1}^{[r]}}^*( \mathbb{L}_{1}\setminus \mathbb{D}_{1}, \mathsf{g}_{2} + \mathsf{f}_{1})  \cong H_{\mathbb{E}_{1}^{[r]}}^*( \mathbb{L}_{1}\setminus \mathbb{D}_{1}, \mathsf{g}_{2} + \mathsf{f}_{1})^{(-)}  \oplus H_{\mathbb{E}_{1}^{[r]}}^*( \mathbb{L}_{1}\setminus \mathbb{D}_{1}, \mathsf{g}_{2} + \mathsf{f}_{1})^{(+)}. 
\]
Since the image of $i_*$ is $H_{\mathbb{E}_{1}^{[r]}}^*( \mathbb{L}_{1}\setminus \mathbb{D}_{1}, \mathsf{g}_{2} + \mathsf{f}_{1})^{(+)}$, there is a filtered isomorphism
\begin{equation}\label{e : remove divisor}
H_{\mathbb{E}_{1}^{[r]}}^{*}( \mathbb{L}_{1}\setminus (\mathbb{D}_{1}\cup \mathbb{B}_{1}),\mathsf{g}_{2} + \mathsf{f}_{1}) \cong H_{\mathbb{E}_{1}^{[r]}}^*(\mathbb{L}_{1}\setminus  \mathbb{D}_{1},\mathsf{g}_{2}+\mathsf{f}_{1})^{(-)}.
\end{equation}

We will now construct a morphisms that will allow us to identify the left hand side of \eqref{e : remove divisor} with  $H^{*-1}_{\widehat{E}^{[r]}}(\widehat{X}\setminus \widehat{D}, \widehat{f})^{(-)}(1/2)$. This will complete the proof. Given the ramified double cover $\eta: \widehat{X} \rightarrow X$, there is a cartesian diagram
\[
\begin{tikzcd}
    \widehat{\mathbb{L}}_{1} \ar[r,"\eta"] \ar[d,"\widehat{\pi}_{1}"] & \mathbb{L}_{1} \ar[d,"\pi_{1}"] \\
    \widehat{X} \ar[r,"\rho"] & X
\end{tikzcd}
\]
where $\widehat{\mathbb{L}}_1$ is $\mathrm{Tot}(\rho^*L)$. Let $\widehat{\mathbb{E}}_{1}^{[r]} = \rho^{-1}\mathbb{E}_{1}^{[r]}$ and let ${\mathbb{R}}_{1} = \rho^{-1}\mathbb{B}_{1}$. By Proposition \ref{p : invariant cohomology and covers of LG models} there is a filtered isomorphism 
\[
H^*_{\widehat{\mathbb{E}}_{1}^{[r]}}(\widehat{\mathbb{L}}_{1}\setminus (\widehat{\mathbb{D}}_{1}\cup {\mathbb{R}}_{1}),\widehat{\mathsf{g}}_{2}+\mathsf{f}_{1})^{(+)}\cong H^*_{\mathbb{E}_{1}^{[r]}}(\mathbb{L}_{1}\setminus (\mathbb{D}_{1}\cup \mathbb{B}_{1}),\mathsf{g}_{2} + \mathsf{f}_{1}).
\]

\begin{remark}[On singular ramification divisors]\label{r : the double cover might not be smooth}
    The variety $\widehat{X}$ is not smooth in general. It acquires singularities in the preimages of singular points of $R$. Therefore $\widehat{\mathbb{L}}$ is not smooth either, in general. We deal with this by repeatedly blowing up $X$ in $R$ until the proper transform of $R$ is smooth. Since we are only concerned with the cohomology of the complement of ${\mathbb{R}}_{1}$. To avoid complicating notation, we will ignore this blow up below. 
\end{remark}

Let $\tau:  \widehat{X}\rightarrow \widehat{X}$ be the automorphism so that $\widehat{X}/\tau \cong X$. The final step in the proof of Theorem \ref{t : multiple direct sum decomposition} is to show that 
\begin{equation}\label{e : final isomorphism}
H^*_{\widehat{\mathbb{E}}_{1}^{[r]}}(\widehat{\mathbb{L}}_{1}\setminus (\widehat{\mathbb{D}}_{1}\cup  {\mathbb{R}}_{1}),\widehat{\mathsf{g}}_{2}+\mathsf{f}_{1})^{(+)}\cong H^{*-1}(\widehat{X}\setminus (\widehat{D}\cup R),f)^{(-)}(1/2).
\end{equation}
This is deduced from the following geometric calculation and the K\"unneth formula. Details appear below.
\begin{proposition}
    There is a commutative diagram
    \[
    \begin{tikzcd}
    \widehat{\mathbb{L}}_{1} \setminus (\widehat{\mathbb{D}}_{1} \cup {\mathbb{R}}_{1}) \ar[r, "\widehat{\phi}"] \ar[d,"\eta"] & \mathbb{A}^1 \times (\widehat{X}\setminus (\widehat{D} \cup {R}))   \ar[d,"\mu"] \\
    {\mathbb{L}}_{1} \setminus ({\mathbb{D}}_{1} \cup {\mathbb{R}}_{1}) \ar[r, "{\phi}"] & \mathbb{A}^1 \times (X\setminus (D \cup {R}))  
    \end{tikzcd}
    \]
    where $\widehat{\phi}$ and $\phi$ are isomorphisms. The following properties hold.
    \begin{enumerate}
        \item If $q :(\widehat{X}\setminus (\widehat{D} \cup {R})) \times \mathbb{A}^1\rightarrow \mathbb{A}^1$ is the regular function obtained by projection onto the second coordinate, $\phi^*(q^2) = \widehat{\mathsf{g}}_2$.
        \item The map $\mu$ is the quotient by the automorphism $\widehat{\tau}: (q,z)\mapsto (-q,\tau(z))$.
    \end{enumerate}
\end{proposition}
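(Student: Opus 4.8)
The plan is to construct the top horizontal isomorphism $\widehat\phi$ by hand and then read off the rest by passing to $\mathbb Z/2$-quotients; everything is powered by the single fact that $\rho^*\sigma$ is a perfect square on $\widehat X$. Realize the double cover as $\widehat X=\{y^{2}=\sigma\}\subseteq\mathrm{Tot}(L)$, with $y$ the tautological section of the pullback of $L$ to $\mathrm{Tot}(L)$; then $y$ restricts to a section $y\in\Gamma(\widehat X,\rho^*L)$ with $y^{2}=\rho^*\sigma$, vanishing exactly along $R$, and the involution $\tau$ (so that $\widehat X/\tau\cong X$) satisfies $\tau^*y=-y$. Writing $\widehat s$ for the pullback to $\widehat{\mathbb L}_1=\mathrm{Tot}_{\widehat X}(\rho^*L^{-1})$ of the tautological section of $\mathbb L_1$ --- so that $\widehat{\mathsf g}_2=\widehat s^{\,2}\cdot\widehat\pi_1^*\rho^*\sigma$ by definition of $\mathsf g_2$ --- one obtains $\widehat{\mathsf g}_2=\widehat q^{\,2}$, where $\widehat q:=\widehat s\cdot\widehat\pi_1^*y$ is a genuine regular function on $\widehat{\mathbb L}_1$ (it is a section of $\widehat\pi_1^*\rho^*(L^{-1}\otimes L)=\mathcal O$).

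Next I would produce $\widehat\phi$. Over $\widehat X\setminus R$ the section $y$ is nowhere zero, so $y^{-1}$ trivializes $\rho^*L^{-1}$; under the induced trivialization $\widehat{\mathbb L}_1|_{\widehat X\setminus R}\xrightarrow{\ \sim\ }\mathbb A^1\times(\widehat X\setminus R)$ the tautological section $\widehat s$ corresponds to the first coordinate divided by $y$, so $\widehat q=\widehat s\cdot\widehat\pi_1^*y$ corresponds to exactly the first coordinate. Restricting over $\widehat X\setminus(\widehat D\cup R)$ --- recall $\widehat{\mathbb D}_1=\widehat\pi_1^{-1}\widehat D$ and $\mathbb R_1=\widehat\pi_1^{-1}R$ as reduced subschemes --- yields the isomorphism $\widehat\phi:=(\widehat q,\widehat\pi_1)\colon\widehat{\mathbb L}_1\setminus(\widehat{\mathbb D}_1\cup\mathbb R_1)\xrightarrow{\ \sim\ }\mathbb A^1\times(\widehat X\setminus(\widehat D\cup R))$, and since $\widehat\phi$ pulls back the coordinate $q$ on the $\mathbb A^1$-factor to $\widehat q$ we get $\widehat\phi^*(q^{2})=\widehat{\mathsf g}_2$, which is property (1). (The same $\widehat\phi$ identifies $\mathsf f_1$ with the pullback of $\rho^*f$ from the $\widehat X$-factor, which is what later feeds the K\"unneth formula of Theorem~\ref{t : Kuenneth theorem}.) Although $\widehat X$ is singular in general, its singular locus lies over $R$, so all of this takes place over the smooth variety $\widehat X\setminus R$ and the blow-up of Remark~\ref{r : the double cover might not be smooth} never enters.

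Finally I would treat equivariance and descent. The deck involution of $\eta\colon\widehat{\mathbb L}_1\to\mathbb L_1$ is $\tau_X:=\tau\times_X\mathrm{id}$; it fixes $\widehat s$ and carries $\widehat\pi_1^*y$ to $\widehat\pi_1^*\tau^*y=-\widehat\pi_1^*y$, whence $\tau_X^*\widehat q=-\widehat q$, i.e.\ $\widehat\phi\circ\tau_X=\widehat\tau\circ\widehat\phi$ with $\widehat\tau(q,z)=(-q,\tau z)$. Since $\tau$ acts freely on $\widehat X\setminus R$, both $\tau_X$ and $\widehat\tau$ act freely, $\eta$ restricts to the corresponding quotient map, and descending the $\tau_X$-equivariant isomorphism $\widehat\phi$ produces the bottom isomorphism $\phi$ and makes the square commute with $\mu$ the quotient by $\widehat\tau$; this is property (2).

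I do not expect a genuine obstacle: the statement is forced by $\rho^*\sigma=y^{2}$. The points to watch are (i) the sign in $\tau_X^*\widehat q=-\widehat q$, inherited from $\tau^*y=-y$ --- this is precisely what puts the $-q$ in $\widehat\tau$, hence (after combining $\widehat\phi$ with Theorem~\ref{t : Kuenneth theorem} and the computation of $H^*(\mathbb A^1,q^{2})$ from Example~\ref{e : t^n LG model}) produces both the $(-)$-eigenspace and the fractional shift $(1/2)$ in \eqref{e : final isomorphism} and \eqref{e : situation for the g reduction}; and (ii) that $\mathbb L_1\setminus(\mathbb D_1\cup\mathbb B_1)$ is only a (typically non-trivial) affine-line bundle over $X\setminus(D\cup B)$, so the bottom-right corner of the diagram is to be understood as the quotient $\bigl(\mathbb A^1\times(\widehat X\setminus(\widehat D\cup R))\bigr)/\widehat\tau$ rather than a literal product --- an immaterial abuse, since only $\widehat\phi$ and the $\tau_X$-equivariance are used afterwards, together with Proposition~\ref{p : invariant cohomology and covers of LG models}.
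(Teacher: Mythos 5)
Your argument is correct and matches the paper's proof in every essential respect: you build $\widehat\phi$ from the regular function $\widehat q=\widehat s\cdot\widehat\pi_1^*y$ obtained by cancelling $\rho^*L^{-1}$ against $\rho^*L$, verify $\widehat q^{\,2}=\widehat{\mathsf g}_2$ from $y^2=\rho^*\sigma$, and read the sign in $\widehat\tau$ off of $\tau^*y=-y$, exactly as in the paper. One remark worth making: your caveat (ii) is not merely an abuse-of-notation issue but an actual correction to the paper's proof, which asserts that the tautological section $s$ is a regular function on $\mathbb L_1\setminus\mathbb D_1$; this fails whenever $L$ is nontrivial on $X\setminus D$, so the bottom row must indeed be read as the quotient $\bigl(\mathbb A^1\times(\widehat X\setminus(\widehat D\cup R))\bigr)/\widehat\tau$ rather than a literal product, and, as you note, this is harmless because the subsequent cohomology computation uses only $\widehat\phi$, the $\tau_X$-equivariance, and the K\"unneth formula.
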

\begin{proof}
    On $\widehat{\mathbb{L}}_1 \setminus (\widehat{\mathbb{D}}_1\cup {\mathbb{R}}_1)$, both $\widehat{\pi}_1^*\rho^*L^{- 2}$ and $\widehat{\pi}_1^*\rho^*L^{-1}$ are trivial bundles because $L^2$ and $L$ are trivial over $X\setminus D$. This means that $\hat{s}$ and  $\widehat{\pi}_1^*\sigma$ are regular functions on $\widehat{\mathbb{L}}_1 \setminus (\widehat{\mathbb{D}}_1 \cup \mathbb{R}_1)$. Now we explain that $\widehat{\pi}_1^*\sigma$ has square root $\hat{y}$. To see this, recall that $\sigma$ is a regular function on $X\setminus B$. Moreover, abusing notation to consider $\sigma$ and $y$ as functions on $\mathbb{A}^1_y \times X\setminus B$, $\widehat{X} \setminus R$ can be written as the hypersurface $y^2 = \sigma$ in $\mathbb{A}^1_y \times (X\setminus B)$ and $\sigma|_{\widehat{X}\setminus R} = \rho^*\sigma$. Therefore, $\rho^*\sigma$ has square root $y$, and we may let $\hat{y} = \widehat{\pi}_1^*y$. Therefore, $\widehat{\mathsf{g}}_2 = \rho^*\mathsf{g}_2 = \rho^*(s^2\cdot \pi_1^*\sigma) = (\rho^*s\cdot \hat{y})^2$. Note that $\widehat{y}(\tau(z)) = - \widehat{y}(z)$.
    
    We may define an isomorphism 
    \[
    \widehat{\phi}: \widehat{\mathbb{L}}_{1} \setminus (\widehat{\mathbb{D}}_{1}\cup {\mathbb{R}}_{1}) \longrightarrow \mathbb{A}^1 \times (\widehat{X}\setminus (\widehat{D}\cup {R}))  ,\quad x \longmapsto (\hat{s}(x)\cdot \hat{y}(x), \widehat{\pi}_1(x)).
    \]
    The function $\hat{y}$ is nonvanishing which means that $\widehat{\phi}$ is an isomorphism. Furthermore, $\phi^*(q^2) = \widehat{\mathsf{g}}_2$, which is statement (1) in the proposition.

    The trivialization $\phi$ is constructed similarly. If $s$ denotes the tautological section of $\pi_1^*L_1$ on $\mathbb{L}_1$ then $s$ is a regular function on $\mathbb{L}_1 \setminus \mathbb{D}_1$ and we may define
    \[
    {\phi}: {\mathbb{L}}_{1} \setminus ({\mathbb{D}}_{1}\cup {\mathbb{R}}_{1}) \longrightarrow \mathbb{A}^1 \times ({X}\setminus ({D}\cup {R}))  ,\quad x \longmapsto ({s}(x), {\pi}_1(x)).
    \]
    We obtain the following diagram
    \[
    \begin{tikzcd}
    \widehat{\mathbb{L}}_{1} \setminus (\widehat{\mathbb{D}}_{1} \cup {\mathbb{R}}_{1})  \ar[d,"\eta"] & \mathbb{A}^1 \times (\widehat{X}\setminus (\widehat{D} \cup {R}))\ar[l, swap, "\widehat{\phi}^{-1}"]  \\
    {\mathbb{L}}_{1} \setminus ({\mathbb{D}}_{1} \cup {\mathbb{R}}_{1}) \ar[r, "{\phi}"] & \mathbb{A}^1 \times (X\setminus (D \cup {R}))  
    \end{tikzcd}
    \]
    The composition $\mu:= \phi \cdot \eta \cdot \widehat{\phi}^{-1}$ is equal to the map
    \[
    (q,z) \longmapsto (q/\widehat{y}(z), \rho(z)).
    \]
    By construction, $\mu(q_1,z_1) = \mu(q_2,z_2)$ if and only if either $z_1 = z_2$ and $q_1=q_2$, or  $z_1 = \tau(z_2)$ and $q_1 = -q_2$ (because $\widehat{y}(\tau(z)) = - \widehat{y}(z)$). This proves the second statement in the proposition.
\end{proof}
We apply the K\"unneth formula for the irregular filtered twisted cohomology (Theorem \ref{t : Kuenneth theorem}) to deduce \eqref{e : final isomorphism}, which will complete the proof of Theorem \ref{t : single direct sum decomposition}. The K\"unneth formula can be interpreted as usual by taking the external tensor product of twisted de Rham cohomology classes. Specializing Example \ref{e : t^n LG model} we see that the classes $[\alpha\cdot \mathrm{d}q]$, $\alpha\in\mathbb{C}$ give $\gr^{1/2}_{F_\mathrm{irr}}H^1(\mathbb{A}^1_q,q^2) \cong H^1(\mathbb{A}^1_q,q^2)$, so the invariant cohomology classes with respect to $\widehat{\tau}$ are $[\omega] \boxtimes [\alpha \cdot \mathrm{d}q] \in H_{\widehat{E}^{[r]}}^*(\widehat{X}\setminus \widehat{D},\widehat{f}) \boxtimes H^{1}(\mathbb{A}^1,q^2)$ where $[\omega] \in H_{\widehat{E}^{[r]}}^*(\widehat{X}\setminus \widehat{D},\widehat{f})^{(-)}$. Therefore,
\begin{align*}
H_{\widehat{\mathbb{E}}_{1}^{[r]}}^*(\widehat{\mathbb{L}}_{1} \setminus (\widehat{\mathbb{D}}_{1}\cup {\mathbb{B}}_{1}), \mathsf{g}_2 + \mathsf{f}_{1})^{(+)} & \cong (H_{\widehat{E}^{[r]}}^{*-1}(\widehat{X}\setminus \widehat{D},\widehat{f}) \boxtimes H^{1}(\mathbb{A}^1,q^2))^{(+)} \\
& \cong H_{\widehat{E}^{[r]}}^{*-1}(\widehat{X}\setminus \widehat{D},\widehat{f})^{(-)}(1/2).
\end{align*}
This completes the proof of Theorem \ref{t : single direct sum decomposition}.

\begin{remark}
    Theorem \ref{t : single direct sum decomposition} can likely be extended to a quasi-isomorphism of filtered complexes of  sheaves. However the discussion following Proposition \ref{p : reducing local cohomology to the base} is completely cohomological, so our proof is only valid on the level of filtered vector spaces. 
\end{remark}

\section{Hodge number duality for Clarke mirror pairs}\label{s:toric}
In this section, we recall background on toric varieties, Clarke duality, and one of the main results of our previous work \cite{HL2025}. We first recollect some background about toric varieties to set up notation. 

Let $N$ and $M$ be dual lattices of rank $d$ with the natural bilinear pairing $\langle -,- \rangle:N \times M \to \mathbb{Z}$. We write $N_\mathbb{K}:=N \otimes_\mathbb{Z} \mathbb{K}$ and $M_\mathbb{K}=M \otimes_\mathbb{Z} \mathbb{K}$ for $\mathbb{K}=\mathbb{Q}, \mathbb{R}, \mathbb{C}$. A fan $\Sigma \in N_\mathbb{R}$ is a collection of strongly convex polyhedral cones such that each face of a cone in $\Sigma$ is also a cone in $\Sigma$, and the intersection of two cones in $\Sigma$ is a face of each cone.

Let $\Sigma[1]=\{\rho_i\mid i=1, \dots, n\}$ be the set of primitive generators of $\Sigma$. Consider the monomial ideal of $\mathbb{C}[x_1, \cdots, x_n]$, $J_\Sigma:=\langle \prod_{\rho_i \subsetneq c} x_i \mid c \in \Sigma \rangle$ and the induced quasi-affine variety $\mathbb{C}^n \setminus V(J_\Sigma)$. We also have the morphism of the lattices $\beta:\mathbb{Z}^n \to N$ that sends the standard basis $e_i$ to $\rho_i$, which induces the morphism of tori $T_{\beta}:(\mathbb{C}^*)^n \to (\mathbb{C}^*)^d$. We let $G_{\beta}$ be its kernel.
\begin{defn}
    A toric variety $T(\Sigma)$ is the quotient 
    \[
    (\mathbb{C}^n \setminus V(J_\Sigma))/ G_\beta
    \]
     where $G_{\beta}$ acts freely via the action of $(\mathbb{C}^*)^n$.
\end{defn}

Any toric variety is stratified by tori $T_c$ of dimension $(d-\dim c)$ corresponding to the cone $c \in \Sigma$. We let $T(\Sigma)_c$ denote the closure of $T_c$ in $T(\Sigma)$. In particular, each one dimensional cone $c$, or its ray generator $\rho$ determines the torus-invariant divisor, which we also denote by $E_\rho = T(\Sigma)_c$. 

\begin{example}\label{eg:cayley}
     Suppose we have a line bundle $L = \mathcal{O}_{T(\Sigma)}(-\sum_{i=1}^n a_iE_i)$ where $E_1,\dots, E_n$ are toric boundary divisors of $T(\Sigma)$ and $a_i \in \mathbb{N}$. For each cone $c$ of $\Sigma$ there is a cone
\begin{equation*}
\widetilde{c} = \mathrm{Cone}(\{(\rho_i, a_i) \mid \rho_i \in c[1]\} \cup (0,1)) \in \Sigma_L.
\end{equation*}
Then $\mathrm{Tot}(L)$ is a toric variety whose fan, denoted $\Sigma_L$, is the union of the cones $\widetilde{c}$ and their faces.
\end{example}

Below, we list some properties of the fans and their relations to the associated toric variety:
\begin{enumerate}
    \item A cone $c$ is called \emph{unimodular} if the primitive integral ray generators of $c$ form a basis of $N$. A fan $\Sigma$ is \emph{unimodular} if every maximal cone is unimodular. Then, the associated toric variety $T(\Sigma)$ is a manifold.  
    \item A cone $c$ is called \emph{simplicial} if the primitive integral ray generators of $c$ form a basis of $N$. A fan $\Sigma$ is \emph{simplicial} if every maximal cone is simplicial. Then, the associated toric variety $T(\Sigma)$ is an orbifold.  
    \item A cone $c$ in $N_\mathbb{R}$ is called \emph{Gorenstein} if there is some $m_c\in M$ so that the integral collection of points $c \cap \{n \mid \langle n, m_c \rangle = 1\}$ generates the cone $c$. A fan $\Sigma$ is \emph{Gorenstein} if all cones are Gorenstein. Then, the associated toric variety $T(\Sigma)$ is Gorenstein. 
    \item A fan $\Sigma$ is \emph{quasiprojective} if there is a convex function on $\mathrm{Supp}(\Sigma)$ which is linear on each cone of $\Sigma$ and takes integral values on $N \cap \mathrm{Supp}(\Sigma)$. Then, the associated toric variety $T(\Sigma)$ is quasiprojective. 
\end{enumerate}

Suppose that we have a finite collection of integral points $A$ in $M$. We consider a Laurent polynomial $w \in \mathbb{C}[M]$ of the form 
\[
w=\sum_{m \in A}u_m\underline{x}^m
\]
for some $u_m \in \mathbb{C}^*$.
For any toric variety $T(\Sigma)$, one can see that $w$ becomes a regular function on $T(\Sigma)$ if $\langle m, \rho \rangle \geq 0$ for every primitive integral ray generator $\rho \in \Sigma[1] $ and $m \in A$.


For later use, we also define the notion of toric Deligne--Mumford stack introduced in \cite{borisov2005orbifold}. A \emph{stacky fan} is a simplicial fan $\Sigma$ equipped with additional data: a positive integer $\beta_\rho$ assigned to each primitive integral ray generator $\rho \in \Sigma[1]$. We denote this as ${\bf \Sigma} = (\Sigma, \beta)$, where $\Sigma$ is referred to as the underlying fan of ${\bf \Sigma}$. The assignment $\beta$ can be viewed as a lattice morphism $\beta:\mathbb{Z}^n \to N$ that sends the standard basis $e_i$ to $\beta_\rho \cdot \rho$. Similar to the global construction of toric varieties, it induces the morphism of tori $T_{\beta}:(\mathbb{C}^*)^n \to (\mathbb{C}^*)^d$. Since $\beta$ has finite cokernel, $T_{\beta}$ is surjective. Let $G_{\beta}$ be its kernel. 

\begin{defnprop}
    A toric Deligne--Mumford (DM) stack $T(\mathbf{\Sigma})$ is the quotient stack 
    \[
    \left[(\mathbb{C}^n \setminus V(J_\Sigma))/G_{\beta}\right]
    \]
    where $G_{\beta}$ acts via the action of $(\mathbb{C}^*)^n$.
\end{defnprop} 

The underlying toric variety $T(\Sigma)$ is the coarse moduli space for $T({\bf \Sigma})$ so that we have a canonical morphism $\pi_{\bf\Sigma}:T({\bf \Sigma}) \to T(\Sigma)$. Any simplicial fan $\Sigma$ has a canonical stacky fan structure where $\beta_\rho = 1$ for all $\rho \in \Sigma[1]$. For a stacky fan ${\bf \Sigma}$, let ${\bf \Sigma}[1]$ denote the set $\{\beta_\rho \rho \mid \rho \in \Sigma[1]\}$. We often write $c \in {\bf \Sigma}$ which means that we take ray generators of the cone $c$ to be extended ones $\{\beta_\rho \rho \mid \rho \in c\}$.   
\begin{defn}
    We say that ${\bf \Sigma}$ is \emph{convex} if the polyhedral complex 
     \[
        \Delta_{\bf\Sigma} = \bigcup_{{c}\in \bf{\Sigma}}\mathrm{Conv}(c[1] \cup \{0\})
    \]
    has convex support. 
\end{defn}

\begin{example}\label{eg:P1}
    Let $\Sigma$ be a fan for $\mathbb{P}^1$ with ray generator $\rho_1=(1,0)$ and $\rho_2=(-1,0)$ and $E_1$ and $ E_2$ be corresponding toric divisors, respectively. As in Example \ref{eg:cayley}, consider a line bundle $L=\mathcal{O}(-2E_1-2E_2)$. The corresponding fan $\Sigma_{L}$ is generated by $\tilde{\rho}_0=(0,1), \tilde{\rho}_1=(1,2), \tilde{\rho}_2(-1,2)$. Note that  this fan is not convex. 
    
    On the other hand, we can introduce a stacky structure on $\Sigma_L$ to get a convex fan as follows: define 
    \[
    \beta(\tilde{\rho}_i)=\begin{cases}
        2 & i=0 \\
        1 & i=1,2
    \end{cases}.
    \]
    Then ${\bf \Sigma}_L=(\Sigma_L, \beta)$ becomes a convex stacky fan and the corresponding toric DM stack is the root stack of $\mathrm{Tot}(L)$ along the zero section, which we denote $\sqrt{\mathrm{Tot}(L)/0_{T(\Sigma)}}$. 
\end{example}
Some of the properties of a fan naturally extend to a stacky fan as properties of the underlying fan: ${\bf \Sigma}$ is called unimodular, simplicial, or quasiprojective if its underlying fan $\Sigma$ has these properties. An exception is the Gorenstein property, where we say ${\bf \Sigma}$ is Gorenstein if every cone $c$ in ${\bf \Sigma}$ is Gorenstein.

\begin{defn}\label{d:adjectives}
    Let ${\bf \Sigma} \subseteq N_\mathbb{R}, {\bf \check{\Sigma}}\subseteq M_\mathbb{R}$ be a pair of quasiprojective, simplicial stacky fans. We say that $({\bf \Sigma}, {\bf \check{\Sigma}})$ is \emph{Clarke dual} if the following two conditions hold:
    \begin{enumerate}
        \item (Regularity) $\langle n, m \rangle \geq 0 \text{ for all } n \in \mathrm{Supp}({\bf{\Sigma}}), m \in \mathrm{Supp}(\bf{\check{{\Sigma}}}).$
        \item (Convexity) Both ${\bf \Sigma}$ and ${\bf \check{\Sigma}}$ are convex. 
    \end{enumerate}
\end{defn}

\begin{example}\label{ex : reflexive polytope}
    A convex polytope $\Delta \subseteq N_\mathbb{R}$ is {\em reflexive} if its vertices are located at points in $N$, and its polar dual,
    \[
    \Delta^\circ = \{ m \in M_\mathbb{R} \mid m(n) \geq -1  \, \, \forall \,\, n \in \Delta\}
    \]
    also has vertices located at points in $M$. Let $C = \mathrm{cone}(\Delta \times 1) \subseteq N_\mathbb{R}\times \mathbb{R}$ and $\check{C} = \mathrm{cone}(\Delta^\circ \times 1) \subseteq M_\mathbb{R}\times \mathbb{R}$. By results of Batyrev \cite{batyrev1994dual2} we may choose simplicial quasiprojective fans $\Sigma, \check{\Delta}$ refining $C$ and $\check{C}$ whose ray generators are points in $(\Delta \times 1) \cap (N\times 1)$ and $(\Delta^\circ \times 1) \cap( M\times 1)$ respectively. Then $\Sigma$ and $\check{\Sigma}$ form a Clarke dual pair. 
\end{example}

Given a Clarke dual pair of stacky fans, the regularity condition ensures that a Laurent polynomial
\[
w({\bf \Sigma}) = 1 + \sum_{n \in {\bf \Sigma}[1]} u_n \underline{x}^n \in \mathbb{C}[N]
\]
where $u_n \in \mathbb{C}^*$ are chosen generically,
defines a regular function on $T(\check{\Sigma})$. By composing it with the canonical morphism $\pi_{\mathbf{\Sigma}}$, we obtain a regular function on $T(\check{\bf \Sigma})$ which we also write as $w({\bf \Sigma})$ by abuse of notation. Thus, we have an induced pair of LG models
\[
(T({\bf \Sigma}),w(\check{\bf \Sigma})),\qquad (T(\check{\bf \Sigma}), w({\bf \Sigma}))
\]
which is called a \emph{Clarke mirror pair of stacky LG models}. In \cite{HL2025}, we have justified this terminology, particularly the term ``mirror", by verifying an irregular version of Hodge number duality (see Theorem \ref{t:Cdual}). To state this, we briefly recall the notion of orbifold cohomology \cite{ChenRuan2004orb}. 

Let $Y$ be a toric DM stack (or an orbifold). The orbifold cohomology of $Y$ is defined as the usual cohomology of the inertia stack of $Y$ with a particular twist. Each component of the inertia stack is referred to as a \emph{twisted sector}, and the twist assigned to each component is called its \emph{age}. To simplify the discussion, we only present  a description for the toric case, which suffices for the purposes of this article.  

Let ${\bf \Sigma}\subset N$ be a simplicial stacky fan and $T({\bf \Sigma})$ be the associated toric DM stack. For each cone ${c}$ in $\bf{\Sigma}$, we let
\begin{equation*}
    \mathrm{Box}^\circ({c}) := \left\{ \left. \sum_{\rho\in {c}[1]} a_\rho \rho \,\, \right| \,\, a_\rho \in (0,1) \right\}\cap N.
\end{equation*}
\begin{theorem}[{\cite[Theorem 1]{poddar2003orbifold}},{\cite[Proposition 5.2]{borisov2005orbifold}}]\label{t:poddar} 
    The components of the inertia stack of $T({\bf \Sigma})$ are parametrized by the union of $\mathrm{Box}^\circ({c})$ over all $c \in {\bf \Sigma}$. For each $g=\sum a_\rho \rho \in \mathrm{Box}^\circ({c})$, the corresponding twisted sector is the closed substack $T({\bf{\Sigma}})_c$, whose coarse moduli space is the closed torus orbit of $T(\Sigma)$ associated with the cone $c$. The orbifold cohomology of $T(\mathbf{\Sigma})$ is given by 
\[
H_\mathrm{orb}^*(T(\mathbf{\Sigma}))=\bigoplus_{c \in {\bf\Sigma}} \bigoplus_{g \in \mathrm{Box}^\circ({c})} H^{*-2\iota(g)}(T({\bf\Sigma})_c)(-\iota(g)) 
\]
where $\iota(g)=\sum a_i$. 
\end{theorem}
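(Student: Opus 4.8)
The plan is to compute the inertia stack $I T(\mathbf{\Sigma})$ directly from the global quotient presentation $T(\mathbf{\Sigma}) = [U/G_\beta]$, where $U = \mathbb{C}^n \setminus V(J_\Sigma)$ and $G_\beta = \ker(T_\beta : (\mathbb{C}^*)^n \to (\mathbb{C}^*)^d)$, and then feed the answer into the Chen--Ruan definition of orbifold cohomology \cite{ChenRuan2004orb}. Since $G_\beta$ is diagonalizable, hence abelian, every conjugacy class is a single element and $I[U/G_\beta] = \bigsqcup_{g \in G_\beta} [U^g/G_\beta]$, where $U^g$ denotes the fixed locus of $g$. Thus the problem splits into three pieces: (i) decide which $g$ contribute, i.e.\ have $U^g \neq \emptyset$, and describe $[U^g/G_\beta]$; (ii) match the contributing $g$'s with $\bigsqcup_{c \in \mathbf{\Sigma}} \mathrm{Box}^\circ(c)$; (iii) compute the age of each sector and assemble the cohomology.

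For (i), write $g = (\zeta_1, \dots, \zeta_n) \in (\mathbb{C}^*)^n$ inside $G_\beta$; its diagonal action on $\mathbb{C}^n$ fixes exactly the coordinate subspace $\{x_i = 0 : \zeta_i \neq 1\}$, so $U^g = \{x_i = 0 : \zeta_i \neq 1\} \cap U$, which is nonempty precisely when $S(g) := \{i : \zeta_i \neq 1\}$ has the property that $\{\rho_i : i \in S(g)\}$ lies in a common cone of $\Sigma$; since $\Sigma$ is simplicial this cone $c$ is uniquely determined with $c[1] = \{\rho_i : i \in S(g)\}$, and then $[U^g/G_\beta]$ is exactly the closed substack $T(\mathbf{\Sigma})_c$ whose coarse space is the torus orbit closure associated to $c$. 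For (ii), one writes $g = \exp(2\pi i\,\mathbf{a})$ with a rational lift $\mathbf{a} = (a_1, \dots, a_n) \in [0,1)^n$; the condition $g \in G_\beta$ becomes $\sum_i a_i\,\beta_{\rho_i}\rho_i \in N$ (using the extended generators), and the condition $U^g \neq \emptyset$ forces $a_i > 0$ exactly for $i \in S(g) = c[1]$, so the assignment $g \mapsto v(g) := \sum_{\rho \in c[1]} a_\rho\,\rho$ is a bijection from contributing group elements to $\bigsqcup_c \mathrm{Box}^\circ(c)$ (disjointness being the fact that a point in the relative interior of a cone determines that cone). This is the heart of \cite{poddar2003orbifold, borisov2005orbifold}, whose arguments I would follow; in the toric/abelian setting each step is bookkeeping with character lattices, but keeping track of the stacky weights $\beta_\rho$ and verifying that $[U^g/G_\beta]$ carries the correct induced stacky structure is exactly where one must be careful.

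For (iii), the element $g$ with $v(g) = \sum_{\rho \in c[1]} a_\rho\,\rho$ acts on the normal bundle of $T(\mathbf{\Sigma})_c \hookrightarrow T(\mathbf{\Sigma})$, which splits into line bundles indexed by $\rho \in c[1]$, by the eigenvalue $e^{2\pi i a_\rho}$ on the $\rho$-th summand; hence the age of this twisted sector is $\iota(g) = \sum_{\rho \in c[1]} a_\rho$, as claimed. Plugging the decomposition of $I T(\mathbf{\Sigma})$ and these ages into $H^*_{\mathrm{orb}}(T(\mathbf{\Sigma})) = \bigoplus_{\mathcal{Z}} H^{*-2\,\mathrm{age}(\mathcal{Z})}(\mathcal{Z})(-\mathrm{age}(\mathcal{Z}))$, the sum being over components $\mathcal{Z}$ of the inertia stack, yields the displayed formula. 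The main obstacle is step (ii): the precise lattice-theoretic dictionary between $G_\beta$ and box elements with the stacky weights correctly incorporated; the fixed-locus analysis of (i) and the eigenvalue computation of (iii) are essentially formal once that is in hand.
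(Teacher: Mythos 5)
The paper cites Theorem~\ref{t:poddar} from \cite{poddar2003orbifold} and \cite{borisov2005orbifold} without giving its own proof, so there is no internal argument to compare against. Your sketch correctly reproduces the standard route those references take: the abelian global-quotient presentation $T(\mathbf{\Sigma})=[U/G_\beta]$ gives $IT(\mathbf{\Sigma})=\bigsqcup_{g\in G_\beta}[U^g/G_\beta]$, the fixed-locus analysis shows $U^g\neq\emptyset$ exactly when $S(g)$ spans a cone $c$, the exponential dictionary $g=\exp(2\pi i\,\mathbf{a})\leftrightarrow\sum a_i\beta_{\rho_i}\rho_i\in N$ matches contributing elements bijectively with $\bigsqcup_c\mathrm{Box}^\circ(c)$, and the normal-bundle eigenvalues give $\iota(g)=\sum a_i$. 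The one step you rightly flag as needing care—that $[U^g/G_\beta]$ is $T(\mathbf{\Sigma})_c$ with its correct stacky structure—is indeed the content of the quotient-stacky-fan analysis in \cite{borisov2005orbifold}, though for the displayed cohomological formula it is harmless since rational cohomology of a DM stack only sees the coarse space.
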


We note that the age grading $\iota(g)$ is rational in general. When $\mathbf{\Sigma}$ has a trivial stacky structure, equivalently $\mathbf{\Sigma}=\Sigma$, then $\iota(g) \in \mathbb{Z}$ if and only if $\Sigma$ is Gorenstein. 

Analogously, for a LG model $(T({\bf \Sigma}),w)$, we can naturally define the orbifold twisted cohomology and the orbifold irregular Hodge filtration. We denote the orbifold twisted cohomology by $H^*_{\mathrm{orb}}(T({\bf \Sigma}), w)$, and the associated graded pieces of the irregular Hodge filtration is given by 
\[
H_\mathrm{orb}^{\lambda, \mu}(T(\mathbf{\Sigma}), w)=\bigoplus_{c \in {\bf\Sigma}} \bigoplus_{g \in \mathrm{Box}^\circ({c})} H^{\lambda-\iota(g),\mu-\iota(g)}(T({\bf\Sigma})_c, w) 
\]
for $\lambda, \mu \in \mathbb{Q}$. When the rational gradings happen to be integral, we adopt the more familiar notation $(p,q)$ in place of $(\lambda, \mu)$ to denote Hodge components. Therefore, the corresponding orbifold irregular Hodge numbers\footnote{In \cite{HL2025}, we used the letter $f$ to denote Hodge numbers in order to distinguish them from the Deligne--Hodge numbers. However, since we primarily work with smooth and proper varieties in this article, such a distinction is unnecessary, and we will use the letter $h$ throughout the article.} are
\[
h^{\lambda,\mu}_{\mathrm{orb}}(T(\mathbf{\Sigma}), w)=\sum_{c \in {\bf \Sigma}}\sum_{g \in \mathrm{Box}^\circ(c)}h^{\lambda-\iota(g), \mu-\iota(g)}(T({\bf \Sigma})_c, w). 
\]

\begin{theorem}\label{t:Cdual}
Let $({\bf\Sigma}, {\bf \check{\Sigma}})$ be a Clarke dual pair. For $\lambda,\mu \in \mathbb{Q}$, we have the identification of the orbifold irregular Hodge numbers 
 \[
 h_\mathrm{orb}^{\lambda,\mu}(T({\bf \Sigma}), w({\bf \check{\Sigma}}))=h_\mathrm{orb}^{d-\lambda,\mu}(T({\bf \check{\Sigma}}), w(\bf {\Sigma})).
 \]
\end{theorem}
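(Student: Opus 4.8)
The statement is the principal result of our earlier paper \cite{HL2025}, so here I only outline the strategy one would follow. The plan is to reduce the identity to a combinatorial statement about lattice polytopes — a stacky, $\mathbb{Q}$-graded refinement of the Batyrev--Borisov duality \cite{batyrev1996mirror} — and then to deduce that combinatorial statement from polar duality of the convex polyhedral complexes $\Delta_{\mathbf{\Sigma}}$ and $\Delta_{\check{\mathbf{\Sigma}}}$ together with the regularity pairing.

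First I would compute each side explicitly. Using the orbifold decomposition of Theorem \ref{t:poddar}, in its evident extension to LG models recorded just above, one has
\[
h^{\lambda,\mu}_{\mathrm{orb}}(T(\mathbf{\Sigma}), w(\check{\mathbf{\Sigma}})) = \sum_{c \in \mathbf{\Sigma}} \sum_{g \in \mathrm{Box}^\circ(c)} h^{\lambda - \iota(g),\, \mu - \iota(g)}\bigl(T(\mathbf{\Sigma})_c, w(\check{\mathbf{\Sigma}})|_{T(\mathbf{\Sigma})_c}\bigr),
\]
so it suffices to understand the irregular Hodge numbers of the restricted LG models $(T(\mathbf{\Sigma})_c, w|_{T(\mathbf{\Sigma})_c})$, each of which is again a stacky toric LG model, now on the toric stack of the quotient fan $\mathbf{\Sigma}/c$. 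For these I would apply Proposition \ref{p : yu filtration versus hodge filtration}: once the nondegeneracy hypothesis is checked (it holds for generic coefficients $u_n$), the irregular Hodge numbers equal the ordinary Hodge numbers of a smooth generic fiber of $w|_{T(\mathbf{\Sigma})_c}$, which is a quasi-smooth toric hypersurface whose Newton polytope is a face of $\Delta_{\check{\mathbf{\Sigma}}}$ determined by $c$. Feeding in the Danilov--Khovanskii/Batyrev lattice-point formulas for the Hodge numbers of such hypersurfaces and summing over all cones, Box elements and ages, one obtains a generating polynomial — a string-theoretic $E$-function $E(\mathbf{\Sigma}, w(\check{\mathbf{\Sigma}}); u, v)$ — whose $(\lambda,\mu)$-coefficient is the desired orbifold irregular Hodge number and which is expressed purely in terms of $\Delta_{\check{\mathbf{\Sigma}}}$ and its face poset.

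An alternative route, closer to the Cayley-trick philosophy of the present paper, is to first present $T(\check{\mathbf{\Sigma}})$ as the total space of a sum of line bundles over a smaller toric stack $\check{T}_0$; then a stacky version of Proposition \ref{p : well known} identifies $H^*_{\mathrm{orb}}(T(\check{\mathbf{\Sigma}}), w(\mathbf{\Sigma}))$, with its irregular Hodge filtration, with the local cohomology along a Batyrev--Borisov Calabi--Yau complete intersection $\check{Z}\subseteq\check{T}_0$ up to a Tate twist, and dually for $(T(\mathbf{\Sigma}), w(\check{\mathbf{\Sigma}}))$ and $Z\subseteq T_0$. One then invokes a stacky generalization of Batyrev--Borisov stringy Hodge number duality for $(Z,\check{Z})$, uses Poincaré--Lefschetz duality to pass from local to ordinary cohomology, and bookkeeps the Tate twists and age shifts to conclude. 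The Künneth formula of Theorem \ref{t : Kuenneth theorem} is used throughout to split off the fiber directions.

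The main obstacle, in either route, is that the gradings are genuinely $\mathbb{Q}$-valued: the ages $\iota(g)$ need not be integral, the polytopes $\Delta_{\mathbf{\Sigma}}$ and $\Delta_{\check{\mathbf{\Sigma}}}$ need not be reflexive, and the irregular Hodge filtration is a $\mathbb{Q}$-filtration, so one cannot quote Batyrev--Borisov off the shelf. One must instead set up a version of their polytope identity valid for an arbitrary lattice polytope equipped with a quasiprojective unimodular (stacky) subdivision, carrying the fractional grading, and check that polar duality exchanges its two sides under $\lambda \mapsto d-\lambda$ with $\mu$ fixed — which in turn requires matching the twisted sectors of $T(\mathbf{\Sigma})$ with those of $T(\check{\mathbf{\Sigma}})$. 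Getting this combinatorial framework right is where essentially all the difficulty lies; once it is in place, the duality is a formal consequence of the symmetry of the whole construction under $\mathbf{\Sigma} \leftrightarrow \check{\mathbf{\Sigma}}$.
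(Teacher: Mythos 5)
The paper does not prove Theorem \ref{t:Cdual} in the text at all: it is stated as a black-box citation to the authors' earlier work \cite{HL2025} (see the remark immediately preceding the statement, ``In \cite{HL2025}, we have justified this terminology \ldots\ by verifying an irregular version of Hodge number duality''). You correctly recognize this, so there is no in-paper argument to compare your outline against.

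That said, your first proposed route contains an internal tension worth flagging. You invoke Proposition \ref{p : yu filtration versus hodge filtration} on each twisted sector $(T(\mathbf{\Sigma})_c, w|_{T(\mathbf{\Sigma})_c})$ to convert irregular Hodge numbers into the ordinary Hodge numbers of a smooth generic fiber. But that proposition carries the explicit hypothesis $\mathrm{gr}_{F_{\mathrm{irr}}}^\lambda H^* = 0$ for $\lambda \in \mathbb{Q}\setminus\mathbb{Z}$, i.e.\ the very integrality that, as you say yourself at the end, genuinely fails in the general (non-Gorenstein) stacky Clarke setting. Passing to a generic fiber necessarily collapses the fractional grading that Theorem \ref{t:Cdual} is designed to track; the paper only uses Proposition \ref{p : yu filtration versus hodge filtration} in conjunction with Corollary \ref{c:Cdualgoren}, precisely where Gorensteinness forces integrality. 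A proof of the full $\mathbb{Q}$-graded statement cannot route through a classical generic-fiber Hodge structure and must instead manipulate the irregular filtration (or a twistor/$E$-polynomial avatar of it) directly. Your second route (Cayley trick plus a stacky Batyrev--Borisov duality) is closer in spirit to what must actually happen, but as you note, the stacky, non-reflexive duality is itself the content to be proved, so the outline leaves the essential step unaddressed.
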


\begin{example}
    Let ${\bf \Sigma}_L$ be the one introduced in Example \ref{eg:P1}. Let $\check{\Sigma}$ be a fan in the dual lattice $M$ corresponding to $\mathbb{P}^1$ as well. Denote the ray generators as $\check{\rho}_1=(1,0)$ and $\check{\rho}_2=(-1,0)$ whose associated toric divisor is denoted by $\check{E}_1$ and $\check{E}_2$, respectively. Consider the line bundle ${P}=\mathcal{O}(-\check{E}_1-\check{E}_2)$ and let $\check{\Sigma}_{P}$ be the corresponding fan. We impose the trivial stacky structure on $\check{\Sigma}_{P}$ and write the resulting stacky fan as ${\bf \check{\Sigma}}_{P}$. It is straightforward to check that $({\bf \Sigma}_L,{\bf \check{\Sigma}}_{P})$ form a Clarke dual pair. Also, the induced Clarke mirror pair of LG models is given by 
    \[
    (T({\bf {\Sigma}}_{L}), w({\bf \check{\Sigma}}_{P})), \qquad  (T({\bf \check{\Sigma}}_{P}), w({\bf {\Sigma}}_{L}))
    \]
    where 
    \begin{itemize}
        \item $T({\bf {\Sigma}}_{L})=\sqrt{\mathrm{Tot}(L)/0_{T(\Sigma)}}$, $w({\bf \check{\Sigma}}_{P})=(x+x^{-1}+1)y$.
        \item $T({\bf \check{\Sigma}}_{P})=\mathrm{Tot}(P)$, $w({\bf {\Sigma}}_{L})=(\check{x}+\check{x}^{-1}+1)\check{y}^2$.
    \end{itemize}
    Here we choose all the coefficients to be $1$ and describe each regular function as a Laurent polynomial where $x,y$ (resp. $\check{x}, \check{y}$) are the chosen coordinates. More invariantly, if we write $\sigma_{gen}$ (resp. $ \sigma_{tor}$) for a generic (resp. toric) section of $\mathcal{O}_{\mathbb{P}^1}(2)$, then $w({\bf \check{\Sigma}}_{P})=\sigma_{gen}\sigma_{tor}t$ where $t$ is the fiber coordinate of the line bundle $L \to T(\Sigma)$. The parallel argument allows to write the regular function $w({\bf \Sigma}_{L})$ as $\check{\sigma}_{gen}\check{\sigma}_{tor}{\check{t}^2}$. 

    Next, we compute the orbifold Hodge numbers. Denote $D=D_{tor} \cup D_{gen}$ where $D_{tor}=\{\sigma_{tor}=0\}$ and $D_{gen}=\{\sigma_{gen}=0\}$. In this case, $D$ consists of $4$ points. The parallel description is applied to the Clarke dual part. For $\lambda, \mu \in \mathbb{Q}$, 
    \[\begin{aligned}
        H^{\lambda, \mu}_{\mathrm{orb}}(T({\bf {\Sigma}}_{L}), w({\bf \check{\Sigma}}_{P}))&=H^{\lambda, \mu}(\mathrm{Tot}(L), \sigma_{gen}\sigma_{tor}t) \oplus H^{\lambda-\frac{1}{2}, \mu-\frac{1}{2}}(T(\Sigma)) \\
        &= H^{\lambda, \mu}_{D}(\mathbb{P}^1) \oplus H^{\lambda-\frac{1}{2}, \mu-\frac{1}{2}}(\mathbb{P}^1).
    \end{aligned}
    \] 
    On the mirror side, we have
    \[\begin{aligned}
        H^{\lambda, \mu}_{\mathrm{orb}}(T({\bf \check{\Sigma}}_{P}), w({\bf {\Sigma}}_{L}))&=H^{\lambda, \mu}(\mathrm{Tot}(P), \check{\sigma}_{gen}\check{\sigma}_{tor}\check{t}^2) \\
        &= H^{\lambda, \mu}_{D}(\mathbb{P}^1) \oplus H^{\mu-\frac{1}{2}}(\mathbb{P}^1, \Omega^{\lambda-\frac{1}{2}}_{\mathbb{P}^1}(\log \check{D})\otimes P)
    \end{aligned}
    \]
    where the second equality follows from Theorem \ref{t : single direct sum decomposition}. One can check the Hodge number duality in this case. 
\end{example}

\begin{corollary}\label{c:Cdualgoren}
    Let $(\Sigma, \check{\Sigma})$ be a Gorenstein Clarke dual pair. For $p,q \in \mathbb{Z}$, we have the identification of the orbifold Hodge numbers
    \[
    h^{p,q}_\mathrm{orb}(T(\Sigma),w(\check{\Sigma}))=h^{d-p,q}_\mathrm{orb}(T(\check{\Sigma}), w(\Sigma)).
    \]
\end{corollary}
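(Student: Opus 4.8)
The plan is to deduce this directly from Theorem~\ref{t:Cdual}. A Gorenstein Clarke dual pair $(\Sigma,\check{\Sigma})$ is by convention equipped with the trivial stacky structure, so it is in particular a Clarke dual pair in the sense of Definition~\ref{d:adjectives}, and Theorem~\ref{t:Cdual} applies verbatim to give $h^{\lambda,\mu}_\mathrm{orb}(T(\Sigma),w(\check{\Sigma})) = h^{d-\lambda,\mu}_\mathrm{orb}(T(\check{\Sigma}),w(\Sigma))$ for all $\lambda,\mu\in\mathbb{Q}$. Specializing to $\lambda=p\in\mathbb{Z}$, $\mu=q\in\mathbb{Z}$ yields the displayed identity. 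The one substantive point is to verify that, under the Gorenstein hypothesis, the orbifold irregular Hodge numbers on both sides are supported on $\mathbb{Z}^2$, so that the $(p,q)$-indexed numbers $h^{p,q}_\mathrm{orb}$ carry the same information as the full $\mathbb{Q}$-indexed family and the corollary is a genuine statement about the classical orbifold Hodge numbers.

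To establish this integrality I would use the decomposition $H_\mathrm{orb}^{\lambda,\mu}(T(\Sigma),w)=\bigoplus_{c\in\Sigma}\bigoplus_{g\in\mathrm{Box}^\circ(c)}H^{\lambda-\iota(g),\mu-\iota(g)}(T(\Sigma)_c,\,w|_{T(\Sigma)_c})$ recorded just before Theorem~\ref{t:Cdual}. Two inputs are needed. First, because $\Sigma$ is Gorenstein and carries the trivial stacky structure, every age $\iota(g)$ lies in $\mathbb{Z}$, as noted in the discussion following Theorem~\ref{t:poddar}; hence the age shift preserves integrality of bidegrees. Second, for each cone $c$ the twisted de Rham cohomology of the orbit-closure Landau--Ginzburg model $(T(\Sigma)_c,\,w|_{T(\Sigma)_c})$ must have irregular Hodge filtration supported in integer degrees. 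Granting these, the vanishing of $h^{\lambda,\mu}_\mathrm{orb}$ off $\mathbb{Z}^2$ follows, and the corollary is then immediate from Theorem~\ref{t:Cdual}.

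The second input is where the actual work lies, and I expect it to be the main, though essentially combinatorial, obstacle. The orbit closure $T(\Sigma)_c$ is the toric variety of the star of $c$ in the quotient lattice, and $w|_{T(\Sigma)_c}$ is again a Laurent polynomial; completing as in Section~\ref{s:coverLG} to a nondegenerate compactified LG model $(\overline{T},\overline{D},\bar w)$, one must show the pole divisor $P$ of $\bar w$ can be taken reduced. Once $P$ is reduced, $F_\mathrm{irr}^\lambda\mathcal{O}_{\overline T}(*P)=\mathcal{O}_{\overline T}(\lfloor\lambda P\rfloor)$ only jumps at integral $\lambda$, so by the definition of $F_\mathrm{irr}^\bullet$ the irregular Hodge filtration on $H^*(\overline T\setminus\overline D,\bar w)$ is a $\mathbb{Z}$-filtration. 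Reducedness of $P$ is exactly where the Gorenstein condition enters: the ray generators of the compactifying fan can be chosen at lattice height one over the relevant supporting faces of the Newton polytope of $w$, which is precisely the statement that each component of $P$ occurs with multiplicity one. (If $w|_{T(\Sigma)_c}$ vanishes identically, the model is trivial and $F_\mathrm{irr}^\bullet$ is the ordinary Hodge filtration, which on a Gorenstein toric orbit closure is again integral, so the conclusion persists.) Assembling these observations with Theorem~\ref{t:Cdual} completes the argument.
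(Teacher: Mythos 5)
Your overall strategy coincides with the paper's: apply Theorem~\ref{t:Cdual} and reduce the corollary to showing $h^{\lambda,\mu}_\mathrm{orb}(T(\Sigma),w(\check\Sigma))=0$ for $\lambda,\mu\notin\mathbb{Z}$, which you correctly split into integrality of the age shifts (from $\Sigma$ Gorenstein) plus integrality of the irregular Hodge filtration on each twisted sector $\bigl(T(\Sigma)_c,\,w|_{T(\Sigma)_c}\bigr)$. The paper disposes of the second point by invoking that the Gorenstein condition on $\check\Sigma$ makes $w(\check\Sigma)$ tame; you instead attempt a direct proof by exhibiting a nondegenerate compactification whose pole divisor $P$ is reduced.

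It is at precisely this step --- the one you yourself identify as ``where the actual work lies'' --- that the proposal has a gap. The justification offered is that ``the ray generators of the compactifying fan can be chosen at lattice height one over the relevant supporting faces of the Newton polytope of $w$,'' but this phrase does not parse: the compactifying rays live in $N_\mathbb{R}$ while the Newton polytope of $w(\check\Sigma)$ lives in $M_\mathbb{R}$, so there is no well-defined notion of the former sitting at height one over faces of the latter. What is actually required is that one can complete $\Sigma$ (and, separately, each star fan for $T(\Sigma)_c$) to a simplicial quasiprojective fan all of whose new rays $\rho'$ satisfy $\langle\rho',m\rangle\geq -1$ for every exponent $m$ appearing in $w(\check\Sigma)$ --- equivalently, that the lattice points of the polar dual of $\mathrm{Conv}(\check\Sigma[1]\cup\{0\})$ lying outside $\mathrm{Supp}(\Sigma)$ suffice to triangulate the complement. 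This is not an immediate consequence of the cone-by-cone Gorenstein condition on $\check\Sigma$ and you give no argument for it; since the remainder of your reasoning (reduced $P$ implies the filtration $F_\mathrm{irr}^\lambda$ jumps only at integral $\lambda$, hence so does the induced filtration on cohomology) is routine, this unproven claim is the entire mathematical content of the corollary, and without it the proposal does not constitute a proof.
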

\begin{proof}
    Since $\Sigma$ is Gorenstein, the age gradings are integers. Also, the Gorenstein condition on $\check{\Sigma}$ implies that $w(\check{\Sigma})$ is tame. Therefore $h^{\lambda, \mu}_\mathrm{orb}(T(\Sigma), w(\check{\Sigma}))=0$ for any $\lambda, \mu \in \mathbb{Q} \setminus \mathbb{Z}$. The conclusion follows from Theorem \ref{t:Cdual}.
\end{proof}

    \section{Mirror symmetry for Galois cover Calabi--Yau varieties}
    We introduce two different singular Calabi--Yau pairs associated to the nef partition data, which will be proved to satisfy Hodge number duality.
    
    Let $\Delta \subset M_{\mathbb{R}}$ be a reflexive polytope and let $\Sigma_\Delta$ denote its spanning fan. A nef partition  of $\Delta$ is a partition of the set of vertices of $\Delta$ into subsets $S_1,\dots, S_k$ so that for each $i$, there is a $\Sigma_\Delta$ linear convex function $\psi_i$ so that $\psi_i(n) = 1$ if $n \in S_i$ and $\psi_i(n) = 0$ if $n \in S_j, j\neq i$. We let $\Delta_i = \mathrm{conv}(S_i \cup 0_N)$. To indicate that $\Delta_1,\dots, \Delta_k$ form a nef partition of $\Delta,$ we will write $\Delta = \Delta_1\cup \dots \cup \Delta_k$. Given a nef partition, denote 
    \[
    \check{\Delta}_i = \{ m \in M_\mathbb{R} \mid m(n) \geq -1, \,\, \forall \,\, n \in \Delta_i, m(n) \geq 0 \,\, \forall \,\, n \in \Delta_j, j\neq i\}.
    \]
    Let $\check{\Delta} = \mathrm{conv}(\check{\Delta}_1,\dots, \check{\Delta}_k)$. Then $\check{\Delta}_1,\dots, \check{\Delta}_k$ forms a nef partition of $\check{\Delta}$. We have $\Delta^\circ = \check{\Delta}_1 + \dots + \check{\Delta}_k$ and $\check{\Delta}^\circ = \Delta_1 + \dots + \Delta_k$ where $+$ indicates Minkowski sum of polytopes\footnote{In \cite{Hosono2024doubleCY}, Hosono, Lee, Lian, and Yau let $\Sigma_\Delta$ denote the normal fan of $\Delta$, or equivalently the spanning fan of $\Delta^\circ$, rather than the spanning fan of $\Delta$. The notation used in this paper is consistent with that of our earlier work \cite{HL2025}, and was chosen for its compatibility with the Clarke mirror construction.}.
    
    As mentioned in the introduction, we always assume that the associated toric varieties $T(\Sigma_\Delta)$ and $T(\Sigma_{\check{\Delta}})$ admit MPCS resolutions, and denote them by $T_\Delta$ and $T_{\check{\Delta}}$, respectively. Note that this is equivalent to choosing projective unimodular triangulations of $\Delta$ and $\check{\Delta}$. Such triangulations can fail to exist if $\dim \Delta \geq 4$. We make this assumption to ensure that we only have to deal with singular varieties, rather than singular DM stacks.   

For each $i$, let $E_{\Delta_i}$ be the toric divisor on $T_{\Delta}$ corresponding to the chosen projective unimodular triangulation of $\Delta_i$ and $\mathcal{O}(E_{\Delta_i})$ be the corresponding line bundle. The nef partition $\Delta=\Delta_1\cup\cdots\cup\Delta_k$ induces the decomposition of the anticanonical line bundle $K^{-1}_{T_\Delta}=\bigotimes_{i=1}^k \mathcal{O}(E_{\Delta_i})$. Also, each integral point of $\check{\Delta}_i$ provides a section of $\mathcal{O}(E_{\Delta_i})$, and sections corresponding to integral points in $\check{\Delta}_i$ form a basis of $H^0(T_{\Delta},\mathcal{O}(E_{\Delta_i})$. In particular, we will use the notation $\sigma_{i,gen}$ to denote a generic section and $\sigma_{i, tor}$ will denote the section corresponding to the point $0$. The vanishing locus of $\sigma_{i,tor}$ is $E_{\Delta_i}$, and the vanishing locus of $\sigma_{i,gen}$ is a smooth hypersurface in $T_{\Delta}$. Then the product of these two sections $\sigma_i:=\sigma_{i,gen}\sigma_{i,tor}$ becomes a section of $\mathcal{O}(2E_{\Delta_i})$. We also write $D_i=\{\sigma_i=0\}, D_{i,gen}=\{\sigma_{i,gen}=0\}, D_{i,tor}=\{\sigma_{i,tor}=0\}$ for the vanishing loci and $D=D_1\cup\cdots\cup D_k$. Then by construction, these divisors are at worst simple normal crossings.

To this data, we associate two branched covers of $T_\Delta$:
    \begin{enumerate}
        \item $\pi:\widehat{T}_\Delta \to T_\Delta$, where $\widehat{T}_\Delta$ is the $(\mathbb{Z}/2)^k$-Galois cover of $T_{\Delta}$.
        \item $\pi:\widetilde{T}_\Delta \to T_\Delta$, where $\widetilde{T}_\Delta$ is the branched double cover of $T_{\Delta}$ whose branch locus is $D$.
    \end{enumerate}
    
\begin{lemma}\label{l:CY}
    Both $\widehat{T}_\Delta$ and $\widetilde{T}_\Delta$ are singular Calabi--Yau varieties with at worst orbifold singularities. 
\end{lemma}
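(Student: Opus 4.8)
The plan is to show, for $Y$ equal to $\widehat{T}_\Delta$ or $\widetilde{T}_\Delta$, that $Y$ is a connected projective variety which is normal with at worst abelian quotient (orbifold) singularities and has trivial dualizing sheaf $\omega_Y\cong\mathcal{O}_Y$. Projectivity is automatic, since $Y$ is finite over the projective variety $T_\Delta$. Connectedness follows from the fact that $\widehat{T}_\Delta\to T_\Delta$ is the $(\mathbb{Z}/2)^k$-cover determined by the data $\{(\mathcal{O}(E_{\Delta_i}),\sigma_i)\}_{i=1}^k$: for every nonempty $A\subseteq[k]$ the section $\prod_{i\in A}\sigma_i=\big(\prod_{i\in A}\sigma_{i,gen}\big)\cdot\big(\prod_{i\in A}\sigma_{i,tor}\big)$ has divisor containing the reduced component $D_{i,gen}$ for any $i\in A$, hence is not a square, so no intermediate double cover of $\widehat{T}_\Delta$ splits; in particular $\widehat{T}_\Delta$ is connected, and so is the intermediate double cover $\widetilde{T}_\Delta$ (corresponding to $A=[k]$).

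The heart of the argument is the local analytic structure. Near a point of the smooth variety $T_\Delta$ we may pick coordinates $x_1,\dots,x_n$ in which the branches of $D$ are coordinate hyperplanes; since $D=\sum_i D_i$ is reduced, each irreducible component of $D$ lies in exactly one $D_i$, so the branches through the point split into \emph{pairwise disjoint} subsets $S_1,\dots,S_k$, with $D_i$ cut out locally by the squarefree monomial $m_i=\prod_{j\in S_i}x_j$. For $\widetilde{T}_\Delta$ the local model is $\{y^2=\prod_j x_j\}$, where $j$ runs over all branches of $D$; this is the classical abelian quotient $\mathbb{C}^m/G$ with $G=\ker\big((\mathbb{Z}/2)^m\xrightarrow{\,\Sigma\,}\mathbb{Z}/2\big)$ acting by sign changes on coordinates. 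For $\widehat{T}_\Delta$ the local model is the complete intersection $Y_{\mathrm{loc}}=\{y_i^2=m_i(\underline x)\mid i\in[k]\}$, and I would identify it as a quotient by base changing along the squaring map $q\colon\mathbb{C}^n\to\mathbb{C}^n$, $u\mapsto(u_1^2,\dots,u_n^2)$: the fibre product $Y_{\mathrm{loc}}\times_{\mathbb{C}^n,q}\mathbb{C}^n$ is the union of the $2^k$ smooth graphs $\{y_i=\varepsilon_i\prod_{j\in S_i}u_j\}$ indexed by $\varepsilon\in\{\pm1\}^k$, and the deck group $(\mathbb{Z}/2)^n$ of $q$ permutes these graphs through the homomorphism $v\mapsto(\langle v,\mathbf{1}_{S_i}\rangle)_i$. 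Disjointness of the nonempty $S_i$ makes the $\mathbf{1}_{S_i}$ linearly independent over $\mathbb{F}_2$, so this homomorphism is surjective, $(\mathbb{Z}/2)^n$ acts transitively on the graphs, and a single graph $\cong\mathbb{C}^n$ maps finitely and surjectively onto $Y_{\mathrm{loc}}$, exhibiting $Y_{\mathrm{loc}}\cong\mathbb{C}^n/\bigcap_i\ker\langle-,\mathbf{1}_{S_i}\rangle$; if some $S_i=\emptyset$, $Y_{\mathrm{loc}}$ is a disjoint union of copies of the model for the remaining indices, and the same conclusion applies. Hence $Y$ has at worst abelian quotient singularities, and in particular is normal. (Since $\widetilde{T}_\Delta=\widehat{T}_\Delta/\ker\big((\mathbb{Z}/2)^k\to\mathbb{Z}/2\big)$, the statement for $\widetilde{T}_\Delta$ may alternatively be deduced from that for $\widehat{T}_\Delta$.)

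For the dualizing sheaf I would invoke the canonical-bundle formula for a double cover $\rho\colon Z\to W$ of a smooth variety $W$ attached to a section of $M^{\otimes2}$ with reduced branch divisor: $Z$ is Gorenstein and $\omega_Z\cong\rho^*(\omega_W\otimes M)$. The branch divisor $D$ of $\widetilde{T}_\Delta$ lies in $|\mathcal{O}_{T_\Delta}(\textstyle\sum_i 2E_{\Delta_i})|=|K_{T_\Delta}^{-2}|$, with $M=\bigotimes_i\mathcal{O}(E_{\Delta_i})=K_{T_\Delta}^{-1}$, so $\omega_{\widetilde{T}_\Delta}\cong\pi^*(\omega_{T_\Delta}\otimes K_{T_\Delta}^{-1})\cong\mathcal{O}_{\widetilde{T}_\Delta}$. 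For $\widehat{T}_\Delta$ I would run the formula along the tower of double covers $\widehat{T}_\Delta\to\widehat{T}_{\Delta,[k-1]}\to\cdots\to\widehat{T}_{\Delta,1}\to T_\Delta$, the $i$-th step being branched over the preimage of $D_i$ with associated line bundle the pullback of $\mathcal{O}(E_{\Delta_i})$, hence contributing a twist by $\mathcal{O}(E_{\Delta_i})$; accumulating these, $\omega_{\widehat{T}_\Delta}\cong\pi^*\big(\omega_{T_\Delta}\otimes\bigotimes_i\mathcal{O}(E_{\Delta_i})\big)\cong\mathcal{O}_{\widehat{T}_\Delta}$. I expect the one genuine difficulty to be the quotient-singularity claim for $\widehat{T}_\Delta$---verifying that this iterated double cover is \'etale-locally a finite abelian quotient of a smooth variety---which relies on the combinatorial input that the $D_i$ pairwise share no component; the rest is either classical (the $\{y^2=\prod x_j\}$ model, the double-cover canonical formula) or formal.
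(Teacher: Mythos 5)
Your proof is correct, but it takes a genuinely different (self-contained) route from the paper, whose entire proof of Lemma~\ref{l:CY} is a citation of \cite[Proposition~A.3]{Hosono2024doubleCY} together with the remark that iterating that result handles $\widehat{T}_\Delta$. You instead work from scratch: the orbifold-singularity claim is established by a local analytic computation (base changing the local model $\{y_i^2=m_i(\underline x)\}$ along the squaring map $q$ and identifying $Y_{\mathrm{loc}}$ as $\mathbb{C}^n$ modulo the stabilizer of one of the $2^k$ graphs, using that disjointness of the $S_i$ makes the indicator vectors $\mathbf{1}_{S_i}$ linearly independent over $\mathbb{F}_2$), while the triviality of $\omega_Y$ is obtained from the Hurwitz/adjunction formula $\omega_Z\cong\rho^*(\omega_W\otimes M)$ for a double cover --- applied directly in the case of $\widetilde{T}_\Delta$ and iterated through the tower for $\widehat{T}_\Delta$ (equivalently by adjunction for the complete intersection in $\mathrm{Tot}(\bigoplus_i\mathcal{O}(E_{\Delta_i}))$). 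Both arguments are standard, and your account of connectedness (no intermediate cover splits because $\prod_{i\in A}\sigma_i$ contains the reduced component $D_{i,gen}$, hence is not a square) is a nice detail the paper leaves implicit. The paper's proof is shorter at the cost of opacity on the iterated case and the precise nature of the singularities; yours makes the two ingredients --- abelian quotient local structure and the canonical bundle computation --- explicit and independent of the external reference, which is arguably preferable for a reader who wants to see why $\widehat{T}_\Delta$ (not just $\widetilde{T}_\Delta$) has orbifold singularities.
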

	\begin{proof}
    The proof follows from \cite[Proposition A.3]{Hosono2024doubleCY}. This shows that $\widetilde{T}_\Delta$ is Calabi--Yau because $K^{-1}_{T_\Delta}\cong \bigotimes_{i=1}^k\mathcal{O}(E_{\Delta_i})$.  By iteratively applying the same argument from \textit{loc. cit.,} we conclude that 
  $\widehat{T}_\Delta$ is Calabi--Yau as well. 
	\end{proof}
    
  Applying the parallel construction on the mirror side, we obtain two singular Calabi--Yau pairs
    \[
    (\widehat{T}_\Delta, \widehat{T}_{\check{\Delta}}), \qquad (\widetilde{T}_{\Delta}, \widetilde{T}_{\check{\Delta}}).
    \]
    We will prove the following result in Section \ref{s : toric extremal transitions}. Since both $\widehat{T}_\Delta$ and $\widehat{T}_{\check{\Delta}}$ have at worst orbifold singularities, their cohomology groups carry pure Hodge structures. We emphasize that the Hodge numbers referred to in Theorem \ref{t:toricmirror} are usual (not orbifold) Hodge numbers. The fact that singularities are at worst orbifold follows from the fact that $\widehat{T}_\Delta$ is an iterated double cover branched along orbifold normal crossings divisors at each step.
    
     \begin{theorem}\label{t:toricmirror}
       Let the notation be as above. Then the Hodge number duality for $(\widehat{T}_\Delta, \widehat{T}_{\check{\Delta}})$ holds. In other words, for $p,q \in \mathbb{Z}$ and $d = \dim T_{\Delta}$,
          \[h^{p,q}(\widehat{T}_\Delta)=h^{d-k-p,q}(\widehat{T}_{\check{\Delta}}).\]
    \end{theorem}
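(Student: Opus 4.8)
The plan is to reduce the asserted duality to Hodge number duality for a Clarke dual pair of stacky toric Landau--Ginzburg models (Corollary~\ref{c:Cdualgoren}), using the Cayley-type machinery of Section~\ref{s:coverLG} to pass between the cohomology of $\widehat T_\Delta$ and the irregular cohomology of such models.

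\emph{From covers to Landau--Ginzburg models.} Since $T_\Delta$ is a smooth projective toric variety, $D_i=D_{i,gen}\cup D_{i,tor}$ is snc, and $\mathcal{O}(D_i)\cong\mathcal{O}(E_{\Delta_i})^{\otimes2}$, Proposition~\ref{p : double cover interpretation of the first factor} applies with $X=T_\Delta$, $B_i=D_i$, $L_i=\mathcal{O}(E_{\Delta_i})$, giving a filtered decomposition $H^*(\widehat T_\Delta)\cong\bigoplus_{I\subseteq[k]}H^*(\widehat T_{\Delta,I}\setminus R_I)^{(I)}$, and similarly for $\widehat T_{\check\Delta}$. Theorem~\ref{t : multiple direct sum decomposition} and its eigenspace restrictions, applied to the data $(L_i,\sigma_i)$ with $\sigma_i=\sigma_{i,gen}\sigma_{i,tor}$, then realize every summand — up to an explicit rational Tate twist — as a graded piece of the irregular cohomology of toric Landau--Ginzburg models built from the bundles $\mathcal{O}(\pm E_{\Delta_i})$ and $\mathcal{O}(-2E_{\Delta_i})$; equivalently, one may run the complete intersection form of the Cayley trick (Proposition~\ref{p : well known}) on $\widehat T_\Delta=\bigcap_i\{y_i^2=\sigma_i\}\subseteq W:=\mathrm{Tot}_{T_\Delta}(\bigoplus_i\mathcal{O}(E_{\Delta_i}))$ and on its partial intersections. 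In each case one must check that the irregular Hodge numbers are concentrated in integer degrees, which follows from the Gorenstein, hence tame, nature of the cones involved (as in the proof of Corollary~\ref{c:Cdualgoren}) and uses that $\widehat T_\Delta$ is Calabi--Yau (Lemma~\ref{l:CY}).

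\emph{Recognizing the Clarke dual pair.} The potentials occurring above are Laurent polynomials with generic coefficients whose Newton polytopes are governed by the doubled dual polytopes $2\check\Delta_i$ (the factor $2$ reflecting $\sigma_i\in\Gamma(L_i^{\otimes2})$) together with the distinguished monomials supplied by the toric sections $\sigma_{i,tor}$, i.e.\ by the apices $0\in\check\Delta_i$ — precisely the combinatorial data behind the polytopes $\Delta_{\mathrm{II}}=\mathrm{Conv}(\check\Delta_i\times e_i\cup 0\times(-e_i))$ and $\check\Delta_{\mathrm{I}}=\mathrm{Conv}(2\check\Delta_i\times e_i\cup 0\times(-e_i))$ of the introduction. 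One builds quasiprojective simplicial stacky fans $({\bf\Sigma},{\bf\check\Sigma})$ refining the associated cones, with ray generators at the relevant lattice points and with $\mathbb{Z}/2$ root-stack structures along the zero sections of the $\mathcal{O}(-2E_{\Delta_i})$-directions to force convexity (just as $\sqrt{\mathrm{Tot}(L)/0}$ repairs the non-convex $\mathrm{Tot}(L)$ in Example~\ref{eg:P1}), so that $(T({\bf\Sigma}),w({\bf\check\Sigma}))$ and $(T({\bf\check\Sigma}),w({\bf\Sigma}))$ are the Landau--Ginzburg models of the previous step for $\Delta$ and $\check\Delta$. The two conditions of Definition~\ref{d:adjectives} must then be checked: regularity $\langle n,m\rangle\geq0$, which reduces to the nef-partition inequalities defining the $\check\Delta_i$ and the evident nonnegativity between the $y_i^2$- and $t_i$-directions; and convexity of both stacky fans, which is what the root stacks provide. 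Since the cones are Gorenstein, Corollary~\ref{c:Cdualgoren} gives $h^{p,q}_{\mathrm{orb}}(T({\bf\Sigma}),w({\bf\check\Sigma}))=h^{N-p,q}_{\mathrm{orb}}(T({\bf\check\Sigma}),w({\bf\Sigma}))$ with $N$ the common dimension.

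\emph{Bookkeeping, and the main obstacle.} It remains to unwind the first two steps on both sides in parallel. The root-stack twisted sectors contribute extra summands indexed by subsets $S\subseteq[k]$, each with age $|S|/2$; these must be matched term by term with the eigenspace summands $H^*(\widehat T_{\Delta,I}\setminus R_I)^{(I)}$ of Proposition~\ref{p : double cover interpretation of the first factor}, using Theorem~\ref{t : multiple direct sum decomposition} iteratively (or the partial Cayley trick). Tallying the age shifts, the $|I|/2$-shifts of Theorem~\ref{t : multiple direct sum decomposition}, and the codimension shifts from Poincaré--Lefschetz duality against $N$, the orbifold duality of the previous step collapses to $h^{p,q}(\widehat T_\Delta)=h^{d-k-p,q}(\widehat T_{\check\Delta})$ for $d=\dim T_\Delta$. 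I expect the principal difficulty to be exactly this: constructing the convex stacky toric model with the correct root-stack structure, and confirming that its orbifold irregular Hodge structure — twisted sectors included — is an honest repackaging of $H^*(\widehat T_\Delta)$, neither over- nor under-counting relative to the decomposition of Proposition~\ref{p : double cover interpretation of the first factor}, so that the rational shifts all cancel and one lands on an integer-indexed duality of the stated form.
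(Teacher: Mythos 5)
Your strategy runs through the eigenspace decomposition of Proposition~\ref{p : double cover interpretation of the first factor}, Theorem~\ref{t : multiple direct sum decomposition}, and a Clarke dual pair of \emph{stacky} fans with root-stack structures, and you correctly identify the last, term-by-term matching of twisted sectors to eigensummands as the main unresolved difficulty. The paper's actual proof of Theorem~\ref{t:toricmirror} takes a genuinely different and substantially shorter route that avoids that difficulty entirely: it uses neither the eigenspace decomposition of Proposition~\ref{p : double cover interpretation of the first factor} nor any stacky fans. The idea is to view $\widehat{T}_\Delta = \bigcap_i\{y_i^2 = \sigma_i\}$ as a smooth complete intersection inside the \emph{projective} toric variety $T_{\Delta_{\mathrm{I}}}$, where $\Delta_{\mathrm{I}} = \mathrm{Conv}(\Delta_i\times e_i^*\cup 0\times(-e_i^*))$; this is the fiber product of the $\mathbb{P}^1$-bundles $\mathbb{P}(\mathcal{O}(E_{\Delta_i})\oplus\mathcal{O})$ over $T_\Delta$, and is smooth. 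The Cayley LG model is then built on $T(\Sigma_{A_{\mathrm{I}}}) = \mathrm{Tot}_{T_{\Delta_{\mathrm{I}}}}(\bigoplus_i\pi^*\mathcal{O}(E_{\Delta_i}))$ with potential $\sum_i t_i(y_i^2-\sigma_i)$; the corresponding fan $\Sigma_{A_{\mathrm{I}}}$ is already unimodular and Gorenstein, so no $\mathbb{Z}/2$ root-stack is needed and convexity holds outright. Its Clarke dual $\Sigma_{A_{\mathrm{II}}}$, built the same way on the mirror side, is likewise unimodular and Gorenstein, and the chain Poincar\'e--Lefschetz duality $\Rightarrow$ Proposition~\ref{p : well known} $\Rightarrow$ Corollary~\ref{c:Cdualgoren} $\Rightarrow$ Proposition~\ref{p : well known} $\Rightarrow$ Poincar\'e--Lefschetz duality closes the argument in a few lines.

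The concrete gap in your proposal is therefore twofold. First, you introduce root-stack structures ``to force convexity,'' but this is unnecessary and leads you into the bookkeeping problem you flag: the correct move is to compactify $\mathrm{Tot}_{T_\Delta}(\bigoplus_i\mathcal{O}(E_{\Delta_i}))$ to the smooth $(\mathbb{P}^1)^k$-bundle $T_{\Delta_{\mathrm{I}}}$, so that everything stays in the unimodular Gorenstein regime of Corollary~\ref{c:Cdualgoren}. When you invoke Proposition~\ref{p : well known}, you must apply it to the compact $T_{\Delta_{\mathrm{I}}}$, not the open $W$. Second, the matching of twisted sectors against the decomposition of Proposition~\ref{p : double cover interpretation of the first factor}, which you leave as ``the principal difficulty,'' is never performed in the paper's proof of this theorem, precisely because the complete-intersection formulation sidesteps it. Your proposed route is actually much closer in spirit to how the paper subsequently handles Theorem~\ref{t:HLLY}, where the stacky fans $\Sigma_{2A_J, A_{J^c}}$ with root stacks and the eigenspace bookkeeping of Theorem~\ref{t : multiple direct sum decomposition} really are required; conflating the two leads you to attempt an unnecessarily intricate argument for the present theorem, one that as written is incomplete at the crucial step.
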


    The second pair $(\widetilde{T}_{\Delta}, \widetilde{T}_{\check{\Delta}})$ was first introduced in \cite{Hosono2024doubleCY}, and we refer to such pairs as HLLY mirror pairs. It was conjectured that the Hodge number duality holds so that it is indeed a mirror pair. The case where $d=3$ was proven in \emph{op. cit.}
    \begin{conjecture}[Hosono--Lee--Lian--Yau \cite{Hosono2024doubleCY}]\label{conj:HLLY}
        The Calabi--Yau pair $(\widetilde{T}_{\Delta}, \widetilde{T}_{\check{\Delta}})$ is a mirror pair. 
    \end{conjecture}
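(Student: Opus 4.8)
The plan is to express the Hodge numbers of $\widehat{T}_\Delta$ through the irregular Hodge numbers of an explicit toric Landau--Ginzburg model, and then to invoke the Hodge number duality of Corollary \ref{c:Cdualgoren}. First apply the machinery of Section \ref{s:coverLG} with $X=T_\Delta$, $L_i=\mathcal{O}(E_{\Delta_i})$, and $\sigma_i=\sigma_{i,gen}\,\sigma_{i,tor}$, so that $\widehat{T}_\Delta=\widehat{X}_{[k]}$. By Proposition \ref{p : double cover interpretation of the first factor} there is a filtered decomposition $H^*(\widehat{T}_\Delta)\cong\bigoplus_{I\subseteq[k]}H^*(\widehat{X}_I\setminus R_I)^{(I)}$; and applying Theorem \ref{t : multiple direct sum decomposition} with $J=I$ identifies each $H^*(\widehat{X}_I\setminus R_I)^{(I)}$, up to a degree shift by $|I|$ and a Tate twist by $|I|/2$, with the ``top'' eigenspace summand of the twisted cohomology $H^*(\mathbb{V}_I,g_{2,I}+g_{1,I^c})$, these LG models being tied to one another by the filtered inclusions of Corollary \ref{cor:another description}. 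So everything reduces to comparing, eigenspace by eigenspace, the irregular Hodge numbers of the LG models $(\mathbb{V}_I,g_{2,I}+g_{1,I^c})$ over $T_\Delta$ with those of their counterparts $(\check{\mathbb{V}}_I,\check{g}_{2,I}+\check{g}_{1,I^c})$ over $T_{\check\Delta}$.

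The crucial input is that each $(\mathbb{V}_I,g_{2,I}+g_{1,I^c})$ is one member of a Clarke mirror pair of stacky LG models. Because the potential contains the squares $s_i^2$ for $i\in I$, the right toric ambient space is not $\mathbb{V}_I$ itself but the stack $\mathbb{W}_I$ obtained by taking, for each $i\in I$, the root stack of $\mathrm{Tot}(\mathcal{O}(-E_{\Delta_i}))$ along its zero section as in Example \ref{eg:P1}; its stacky fan is given by the iterated Example \ref{eg:cayley} construction with stacky structure on the new rays, it is Gorenstein, and $g_{2,I}+g_{1,I^c}$ descends to a tame regular function on it. One then verifies that $\mathbb{W}_I$ and its analogue $\check{\mathbb{W}}_I$ over $T_{\check\Delta}$ form a Clarke dual pair in the sense of Definition \ref{d:adjectives}: the regularity and convexity conditions are exactly those encoded by the dual nef partitions $\{\check\Delta_i\}$ and $\{\Delta_i\}$, and the Clarke potential $w(\check{\mathbf{\Sigma}})$ is precisely $g_{2,I}+g_{1,I^c}$. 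This is the $k$-fold, with-toric-section generalization of the rank-one computation carried out in the final Example of Section \ref{s:toric}; when $I=[k]$ the underlying polytopes are $\Delta_{\mathrm{II}}$ and $\check{\Delta}_{\mathrm{I}}$ from the introduction.

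Then, applying Corollary \ref{c:Cdualgoren} to the Gorenstein Clarke dual pair $(\mathbb{W}_I,\check{\mathbb{W}}_I)$ yields $h^{p,q}_\mathrm{orb}(\mathbb{W}_I,g_{2,I}+g_{1,I^c})=h^{r-p,q}_\mathrm{orb}(\check{\mathbb{W}}_I,\check{g}_{2,I}+\check{g}_{1,I^c})$, where $r$ is the rank of the ambient lattice. By Theorem \ref{t:poddar} the left-hand side breaks up as an untwisted sector --- the honest twisted cohomology of $(\mathbb{V}_I,g_{2,I}+g_{1,I^c})$ --- plus box sectors along the stacky rays, and a direct comparison identifies those box sectors, together with their ages, with the eigenspace summands $H^{*-|I'|}_{Z^{I'}}(\widehat{X}_{I'}\setminus R_{I'})^{(I')}(|I'|/2)$ of Theorem \ref{t : multiple direct sum decomposition}; the same dictionary holds over $T_{\check\Delta}$. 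Matching the $I$-summand on one side with the $I$-summand on the other, the ages and the half-integer twists cancel and the Clarke reflection $p\mapsto r-p$ reduces to the reflection asserted in Theorem \ref{t:toricmirror} after the contribution of the $k$ fibre directions of $\mathbb{V}_{[k]}$ is absorbed into the shift. Finally, since $T_\Delta$ and $T_{\check\Delta}$ are MPCS resolutions the base carries no stacky structure, so the orbifold cohomology occurring above is the ordinary cohomology of the orbifolds $\widehat{T}_\Delta$ and $\widehat{T}_{\check\Delta}$, which is a pure Hodge structure; this gives $h^{p,q}(\widehat{T}_\Delta)=h^{d-k-p,q}(\widehat{T}_{\check{\Delta}})$.

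The main obstacle will be the combinatorial heart of the second step: proving that the fibrewise square-root stack attached to $\mathbb{V}_I$ really is Clarke dual, in the strict sense of Definition \ref{d:adjectives} (with both convexity conditions), to its counterpart over $T_{\check\Delta}$, that the two Clarke potentials are exactly those built from the products $\sigma_{i,gen}\sigma_{i,tor}$, and that the resulting pair is Gorenstein so that Corollary \ref{c:Cdualgoren} --- not merely Theorem \ref{t:Cdual} --- applies. Running alongside is the bookkeeping required to keep the eigenspace-indexed summands aligned across the three isomorphisms of Theorem \ref{t : multiple direct sum decomposition}, Theorem \ref{t:poddar}, and Theorem \ref{t:Cdual}, so that all Tate twists and ages cancel and the reflection lands in degree $d-k$ rather than in the ambient toric dimension. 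A secondary technical matter (cf. Remark \ref{r : the double cover might not be smooth}) is that $\widehat{T}_\Delta$ is only an orbifold, so the argument must be run with Deligne--Mumford stacks throughout, or else one blows up the singular loci of the branch divisors and checks that this does not affect the cohomology in question.
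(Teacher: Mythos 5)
Your proposal has a fundamental gap: what it establishes (modulo the details you flag) is the duality
$h^{p,q}(\widehat{T}_\Delta)=h^{d-k-p,q}(\widehat{T}_{\check{\Delta}})$
for the iterated $(\mathbb{Z}/2)^k$-Galois covers $\widehat{T}_\Delta$, which is Theorem \ref{t:toricmirror}. But Conjecture \ref{conj:HLLY} is about the single double covers $\widetilde{T}_\Delta$ branched along $B=\{\prod_i\sigma_i=0\}$. By Proposition \ref{p : double cover interpretation of the first factor}, $H^*(\widetilde{T}_\Delta)^{(-)}$ is only the one summand of $H^*(\widehat{T}_\Delta)$ labeled by $I=[k]$, and a Hodge number identity for the full sum does not, on its own, give one for this individual summand. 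Isolating it is precisely the content of Section \ref{s:HLLY}: one must vary $J$ through the family of Clarke pairs $(\Sigma_{2A_J,A_{J^c}},\Sigma_{\check{A}_J,2\check{A}_{J^c}})$, track the tripartite index $B_{I^{(-)},I^{(0)},I^{(+)}}$ recording twisted sector, coinvariant type, and inert factors, prove the binomial identities of Lemma \ref{l:Clrk identity}, and run the induction of Theorem \ref{t:hdual} and Lemma \ref{l:clrk mir}. None of this appears in the proposal, and it is where the actual work lies.

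A second, related issue is the step where you match the ``$I$-summand on one side with the $I$-summand on the other.'' Theorem \ref{t:Cdual} equates orbifold irregular Hodge numbers only as totals; it does not provide a sector-by-sector isomorphism. Worse, the two sides are stratified incompatibly: one side decomposes by twisted sectors (indexed by $I^{(0)}$), the other by coinvariant eigenspaces (indexed by $I^{(-)}$). Compare Example \ref{eg:k=2}, where the summands are $B_{\emptyset,I^{(0)},I^{(+)}}$ on one side and $\check{B}_{I^{(-)},\emptyset,I^{(+)}}$ on the other. Any summand-level statement must be extracted combinatorially, which again is what Section \ref{s:HLLY} does. Incidentally, for Theorem \ref{t:toricmirror} itself the paper avoids stacky complications entirely by working with the honest (unimodular, Gorenstein) fans of $\mathrm{Tot}\bigl(\bigoplus\pi^*\mathcal{O}(E_{\Delta_i})\bigr)$ and $\mathrm{Tot}\bigl(\bigoplus\pi^*\mathcal{O}(2E_{\Delta_i})\bigr)$ over $T_{\Delta_\mathrm{I}}$ and $T_{\check{\Delta}_\mathrm{II}}$, so that Proposition \ref{p : well known} and Corollary \ref{c:Cdualgoren} apply directly without box-sector bookkeeping; but proving Theorem \ref{t:toricmirror} this way or yours is not the same as proving the conjecture.
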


    We will prove the Hodge number duality for HLLY mirror pairs in Section \ref{s:HLLY}, providing justification for Conjecture \ref{conj:HLLY}.  
    \begin{theorem}\label{t:HLLY}
         Let the notation be as above. Then the Hodge number duality for  $(\widetilde{T}_{\Delta}, \widetilde{T}_{\check{\Delta}})$ holds. In other words, for $p,q \in \mathbb{Z}$ and $d = \dim T_\Delta$,
    \[h^{p,q}(\widetilde{T}_{\Delta})=h^{d-p,q}( \widetilde{T}_{\check{\Delta}}).\]
    \end{theorem}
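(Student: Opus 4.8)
The plan is to deduce the Hodge number duality for the HLLY pair $(\widetilde{T}_\Delta,\widetilde{T}_{\check\Delta})$ from the already-established duality for the Galois-cover pair $(\widehat{T}_\Delta,\widehat{T}_{\check\Delta})$ (Theorem \ref{t:toricmirror}), using Proposition \ref{p : double cover interpretation of the first factor} and Theorem \ref{t : multiple direct sum decomposition} as the bridge. The key structural fact is that $\widetilde{T}_\Delta$ is the quotient of $\widehat{T}_\Delta$ by the kernel $H$ of the product character $\chi\colon(\mathbb{Z}/2)^k\to\mathbb{Z}/2$, so its cohomology is the $H$-invariant part of $H^*(\widehat{T}_\Delta)$. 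By the eigenspace decomposition \eqref{eq: cohomological decomposition}, $H^*(\widehat{T}_\Delta)\cong\bigoplus_{I\subseteq[k]}H^*(\widehat{T}_I\setminus R_I)^{(I)}$, where I now write $\widehat{T}_I$ for the $(\mathbb{Z}/2)^{|I|}$-cover of $T_\Delta$ built from the sections $\sigma_i=\sigma_{i,gen}\sigma_{i,tor}$, $i\in I$. The $H$-invariant part picks out precisely the summands with $I=\emptyset$ and $I=[k]$; concretely, $H^*(\widetilde{T}_\Delta)\cong H^*(T_\Delta)\oplus H^{*}(\widehat{T}_{[k]})^{([k])}$, and by Proposition \ref{p : double cover interpretation of the first factor} the last summand is $H^*(\widetilde{T}_\Delta)^{(-)}$ — so this is really just the Esnault--Viehweg splitting $H^*(\widetilde{T}_\Delta)=H^*(T_\Delta)\oplus H^*(\widetilde{T}_\Delta)^{(-)}$. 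The point is therefore to show that the ``interesting'' piece $H^*(\widetilde{T}_\Delta)^{(-)}$, suitably Tate-twisted, is swapped with its mirror counterpart, and that the leftover piece $H^*(T_\Delta)$ (which is the same on both sides up to the duality for $T_\Delta$, $T_{\check\Delta}$ as Batyrev toric varieties) does not interfere.

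First I would set up the dictionary. By Proposition \ref{p : double cover interpretation of the first factor} and the remark following it, $H^{*}(\widetilde{T}_\Delta)^{(-)}(k/2)$ is a filtered direct summand of $H^*(\mathbb{V}_{[k]},g_{2,[k]})$, where $\mathbb{V}_{[k]}=\mathrm{Tot}(\bigoplus_{i=1}^k\mathcal{O}(E_{\Delta_i})^{-1})$ with potential $g_{2,[k]}=\sum_i s_i^2\,\pi^*\sigma_i$ and $\sigma_i=\sigma_{i,gen}\sigma_{i,tor}$. Next I would recognize this LG model, together with its mirror, as a Clarke mirror pair: the polytope data $\Delta_{\mathrm{II}}=\mathrm{Conv}(\check\Delta_i\times e_i\cup 0\times(-e_i)\mid i)$ and $\check\Delta_{\mathrm I}=\mathrm{Conv}(2\check\Delta_i\times e_i\cup 0\times(-e_i)\mid i)$ introduced in the introduction produce exactly the toric LG models whose total spaces are (root stacks over) $\mathbb{V}_{[k]}$ with potential $\sum s_i^2\,\sigma_{i,gen}\sigma_{i,tor}$, because $\sigma_{i,tor}$ corresponds to the lattice point $0$ and $\sigma_{i,gen}$ to the generic combination of lattice points of $\check\Delta_i$, while the squared fiber coordinate $s_i^2$ records the ``$2\check\Delta_i$'' dilation versus the ``$\check\Delta_i$'' in $\Delta_{\mathrm{II}}$. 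Having identified the pair, Corollary \ref{c:Cdualgoren} (or Theorem \ref{t:Cdual}) gives $h^{\lambda,\mu}_{\mathrm{orb}}$-duality for these LG models. Here I must check the Gorenstein/tameness hypothesis — this is where the nef partition and reflexivity of $\Delta$ are used — so that the orbifold irregular Hodge numbers are integral and can be compared with honest Hodge numbers; the $\lambda=p$ pieces on one side match $\lambda=d'-p$ pieces on the other, with $d'$ the appropriate dimension $=d+k$ for $\mathbb{V}_{[k]}$.

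Then I would translate back: by Proposition \ref{p : well known}, Proposition \ref{p : yu filtration versus hodge filtration}, and the vanishing of fractional irregular Hodge numbers coming from tameness, the irregular Hodge numbers of $(\mathbb{V}_{[k]},g_{2,[k]})$ compute the ordinary Hodge numbers of the relevant cohomology; by Theorem \ref{t : multiple direct sum decomposition} (the $J=[k]$ case) $H^*(\mathbb{V}_{[k]},g_{2,[k]})$ decomposes into the pieces $H^{*-|I|}_{Z^I}(\widehat{T}_I\setminus R_I)^{(I)}(|I|/2)$ for $I\subseteq[k]$, and the $I=[k]$ piece is $H^{*-k}(\widetilde{T}_\Delta)^{(-)}(k/2)$. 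The duality from Corollary \ref{c:Cdualgoren} must then be shown to respect this decomposition, i.e. to match the $I=[k]$ piece on the $\Delta$ side with the $I=[k]$ piece on the $\check\Delta$ side (and likewise the smaller-$I$ pieces among themselves, which by induction on $k$ — passing to sub-nef-partitions — are controlled by the same statement in lower dimension or by the $\widehat{T}$-duality Theorem \ref{t:toricmirror} together with \eqref{eq: cohomological decomposition}). Once $h^{p,q}(\widetilde{T}_\Delta)^{(-)}=h^{d-p,q}(\widetilde{T}_{\check\Delta})^{(-)}$ is in hand, adding back $h^{p,q}(T_\Delta)=h^{d-p,q}(T_{\check\Delta})$ (Batyrev duality for the ambient toric varieties, which holds since these are MPCS resolutions) and invoking the splitting $H^*(\widetilde{T}_\Delta)=H^*(T_\Delta)\oplus H^*(\widetilde{T}_\Delta)^{(-)}$ yields $h^{p,q}(\widetilde{T}_\Delta)=h^{d-p,q}(\widetilde{T}_{\check\Delta})$.

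The main obstacle I anticipate is precisely the bookkeeping in the last step: Clarke duality a priori only knows the \emph{total} orbifold irregular Hodge numbers of the LG models, and one needs to argue that the $(\mathbb{Z}/2)^k$-eigenspace decomposition of Theorem \ref{t : multiple direct sum decomposition} is compatible with the duality isomorphism of \cite{HL2025}, so that eigenspaces are matched with eigenspaces rather than merely the sums agreeing. I expect this compatibility to follow from functoriality of the Clarke construction under the finite group action — the $(\mathbb{Z}/2)^k$ acts on both LG models compatibly with the duality, since it is visibly a toric automorphism group commuting with the mirror recipe — but making this precise, and in particular identifying the mirror of the character $\chi$ correctly, is the delicate point. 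A secondary technical check is the careful treatment of the non-smooth ramification loci (Remark \ref{r : the double cover might not be smooth}): since $D_{i,tor}\cup D_{i,gen}$ is only orbifold-normal-crossings on the MPCS resolution $T_\Delta$, one must ensure the blow-up procedure there does not disturb the Hodge numbers we are computing, which is fine because we only ever use cohomology of complements of the ramification divisors.
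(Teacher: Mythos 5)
Your setup is right, and you have correctly identified both the bridge (Proposition \ref{p : double cover interpretation of the first factor}, Theorem \ref{t : multiple direct sum decomposition}, recognizing the target cohomology as a fractionally-shifted summand of a Clarke-mirror LG model) and, importantly, where the real difficulty lies. But the resolution you sketch for that difficulty is a genuine gap, and it is not how the paper proceeds.

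The problem is this: Corollary \ref{c:Cdualgoren} / Theorem \ref{t:Cdual} is a \emph{numerical} identity of (orbifold irregular) Hodge numbers, not a canonical isomorphism of vector spaces that one could take $(\mathbb{Z}/2)^k$-eigenparts of. Nowhere in \cite{HL2025} is equivariance of the duality under the torus-fixing involutions established, and the ``functoriality of the Clarke construction under the finite group action'' that you appeal to is not available as a black box. Even if one granted an equivariant refinement, ``identifying the mirror of the character $\chi$'' is precisely the kind of statement that needs proof, not assertion: a priori, the duality could match the $I=[k]$ summand on one side with a linear combination of summands indexed by various $I'$ on the other. Your own last paragraph flags this, but then waves it away; as written, the proof does not close.

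The paper avoids this issue entirely by never trying to match individual eigenspaces under a single duality. Instead it runs Clarke duality for \emph{every} subset $J \subseteq [k]$ simultaneously, via the stacky fans $\Sigma_{2A_J,A_{J^c}}$ and their duals $\Sigma_{\check A_J,2\check A_{J^c}}$. Each such pair yields one numerical identity \eqref{e:mir} involving a large sum of the quantities $B_{I^{(-)},I^{(0)},I^{(+)}}$. One also brings in the $\widehat{T}$-duality (Theorem \ref{t:toricmirror}), which contributes the identity for $\sum_{a} B_{k-a,a,0}$. The key insight is that by taking appropriate signed and weighted linear combinations of the identities over all $J$ of a fixed size $p$ — this is the binomial gymnastics of Lemma \ref{l:Clrk identity} — one can eliminate everything except $B_{a,b,0}+B_{b,a,0}$, and then an induction on $a+b$ (Lemma \ref{l:clrk mir}) handles the remaining cases, using a ``coarsening'' trick: replacing a subset $I$ of indices by the singleton $\sum I$ corresponding to the coarser nef partition, which via \eqref{e : contraction relation} identifies $B_{I^{(-)},I^{(0)},I^{(+)}}$ with a shifted version for a smaller $k$. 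So the paper's route is combinatorial elimination across a family of dualities, whereas yours relies on a single duality being eigenspace-compatible; the former works because it only ever uses the identities additively, the latter would require a refinement of Clarke duality that has not been established. If you want to salvage your approach you would have to actually prove the equivariance statement — which would be a significantly stronger result than what is used in the paper.
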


    \begin{remark}
        According to Proposition \ref{p : double cover interpretation of the first factor}, $H^*(\widetilde{T}_\Delta)$ is a filtered direct summand of $H^*(\widehat{T}_\Delta)$. Therefore, we may view Theorem \ref{t:HLLY} as a more refined statement than Theorem \ref{t:toricmirror}.
    \end{remark}

    \section{Toric extremal transition}\label{s : toric extremal transitions}
    We describe a general framework of toric extremal transitions \cite{MR1673108} and the Hodge number duality results. Using this framework, we will prove Theorem \ref{t:toricmirror}.

    Suppose we have a pair of reflexive polytopes ${\Delta}_{\mathrm{II}} \subseteq \check{\Delta}_{\mathrm{I}}$ of the same dimension $d$. Dually, we also have a pair of reflexive polytopes ${\Delta}_{\mathrm{I}} \subseteq \check{\Delta}_{\mathrm{II}}$. Let ${T}_{\Delta_\mathrm{I}}$ be an MPCP resolution of the toric Fano variety $T(\Sigma_{\Delta_{\mathrm{I}}})$ attached to the polytope ${\Delta}_{\mathrm{I}}$, and ${T}_{\check{\Delta}_{\mathrm{II}}}$ be an MPCP resolution of the Fano toric variety $T(\Sigma_{\check{\Delta}_{\mathrm{II}}})$ attached to $\check{\Delta}_{\mathrm{II}}$. The classic geometric picture \cite{MR1673108} is that attached to this data we have two families of singular Calabi--Yau varieties in ${T}_{\Delta_\mathrm{I}}$ and ${T}_{\check{\Delta}_\mathrm{II}}$. Viewing integral points of $\check{\Delta}_{\mathrm{I}}$ as monomial sections of $K^{-1}_{{T}_{\Delta_{\mathrm{I}}}}$, we see that integral points of ${\Delta}_{\mathrm{II}}$ determine a family of hypersurfaces in ${T}_{\Delta_\mathrm{I}}$ as well. Furthermore, for a generic choice of such a section, the corresponding hypersurface, which we may denote $X_{{\Delta}_{\mathrm{II}}}'$, is usually singular. We may view $X_{{\Delta}_{\mathrm{II}}}'$ as a degeneration of a very general anticanonical hypersurface $X_{\check{\Delta}_{\mathrm{I}}}$ in ${T}_{\Delta_\mathrm{I}}$. Each $X'_{{\Delta}_{\mathrm{II}}}$ is birational to an anticanonical hypersurface $X_{{\Delta}_{\mathrm{II}}}$ in ${T}_{\check{\Delta}_\mathrm{II}}$. The pair of operations consisting of degeneration of $X_{\check{\Delta}_{\mathrm{I}}}\rightsquigarrow X_{{\Delta}_{\mathrm{II}}}'$ along with the birational map $X_{{\Delta}_{\mathrm{II}}} \dashrightarrow X_{{\Delta}_{\mathrm{II}}}'$ is called an extremal transition. Applying polar duality, we obtain a dual extremal transition, from which we obtain the following diagrams.
    \begin{equation}\label{e : geometric transition}
    \begin{tikzcd}
      & X_{{\Delta}_{\mathrm{II}}} \ar[d,dashed]  \\
      X_{\check{\Delta}_{\mathrm{I}}} \ar[r,rightsquigarrow] & X'_{{\Delta}_{\mathrm{II}}} 
    \end{tikzcd} \hspace{2cm} 
    \begin{tikzcd}
      & X_{\check{\Delta}_{\mathrm{II}}} \ar[d,rightsquigarrow]  \\
      X_{{\Delta}_{\mathrm{I}}} \ar[r,dashed] & X'_{{\Delta}_{\mathrm{I}}}
    \end{tikzcd}
    \end{equation}
    
    We have Hodge number duality between pairs $X_{\check{\Delta}_{\mathrm{I}}}$ and $X_{\Delta_{\mathrm{I}}}$, and between   $X_{\Delta_{\mathrm{II}}}$ and $X_{\check{\Delta}_{\mathrm{II}}}$. The first goal of this section is to prove that a similar duality holds between $X_{{\Delta}_{\mathrm{II}}}'(\subset T_{\Delta_\mathrm{I}})$ and $X_{{\Delta}_{\mathrm{I}}}'(\subset T_{\Delta_\mathrm{II}})$.

    In the following statement the varieties $X'_{\Delta_\mathrm{I}}$ and $X'_{\Delta_{\mathrm{II}}}$ are singular and their cohomology admits a mixed Hodge structure which is not necessarily pure. We let $h^{p,q}(X) = \gr_F^p H^{p+q}(X)$.  In the case where $H^{p+q}(X)$ admits a pure Hodge structure, these are just the usual Hodge numbers.

    \begin{theorem}\label{t:transition}
        Assume the MPCS resolutions ${T}_{\Delta_\mathrm{I}}$ and ${T}_{\Delta_\mathrm{II}}$ are smooth. Then Hodge number duality holds between $X'_{{\Delta}_{\mathrm{II}}}$ and $X'_{{\Delta}_{\mathrm{I}}}$: For $p,q \in \mathbb{Z}$ and $d = \dim T_{\Delta}'$,
        \[
        h^{p,q}(X'_{{\Delta}_{\mathrm{II}}}) = h^{d-1-p,q}(X'_{{\Delta}_{\mathrm{I}}}).
        \]
    \end{theorem}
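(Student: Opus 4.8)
The plan is to reduce the statement about the singular Calabi--Yau hypersurfaces $X'_{\Delta_{\mathrm{II}}}$ and $X'_{\Delta_{\mathrm{I}}}$ to the Hodge number duality for a Clarke mirror pair of stacky LG models, via the Cayley-type results of Section \ref{s:coverLG}. First I would recognize that $X'_{\Delta_{\mathrm{II}}} \subset T_{\Delta_{\mathrm{I}}}$ is the zero locus of a section $\sigma$ of $K^{-1}_{T_{\Delta_{\mathrm{I}}}}$ which is \emph{not} generic: because $\Delta_{\mathrm{II}} \subseteq \check\Delta_{\mathrm{I}}$, the section $\sigma$ is supported on the sublattice of monomials coming from $\Delta_{\mathrm{II}}$, and dually the inclusion $\Delta_{\mathrm{I}} \subseteq \check\Delta_{\mathrm{II}}$ forces a corresponding degeneracy on the mirror side. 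The key is therefore to express $H^{p+q}_{X'_{\Delta_{\mathrm{II}}}}(T_{\Delta_{\mathrm{I}}})$ (equivalently, by Poincar\'e--Lefschetz duality, the primitive part of $H^*(X'_{\Delta_{\mathrm{II}}})$ up to the ambient toric cohomology, which matches on both sides) as the twisted de Rham cohomology of the LG model $(\mathrm{Tot}(K_{T_{\Delta_{\mathrm{I}}}}), g_\sigma)$ via Proposition \ref{p : well known}, and then to identify this LG model with one side of a Clarke mirror pair of stacky LG models whose dual is the analogous object built from $X'_{\Delta_{\mathrm{I}}}$.

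The concrete steps I would carry out are as follows. (1) Set up the combinatorics: starting from the inclusion $\Delta_{\mathrm{II}}\subseteq\check\Delta_{\mathrm{I}}$ of reflexive polytopes and its dual $\Delta_{\mathrm{I}}\subseteq\check\Delta_{\mathrm{II}}$, build the two stacky fans --- on one side the fan $\Sigma_{K^{-1}}$ (Example \ref{eg:cayley}) over the chosen unimodular triangulation of $\Delta_{\mathrm{I}}$, together with the extra ray $(0,1)$, with a stacky structure chosen so that the resulting stacky fan is convex (as in Example \ref{eg:P1}); on the other side the analogous construction over $\Delta_{\mathrm{II}}$. (2) Check that these two stacky fans form a Clarke dual pair in the sense of Definition \ref{d:adjectives}: the regularity condition is exactly the statement that the Laurent polynomial $w$ built from the lattice points of $\check\Delta_{\mathrm{I}}$ (resp. $\check\Delta_{\mathrm{II}}$) is a regular function on $\mathrm{Tot}(K_{T_{\Delta_{\mathrm{I}}}})$ (resp. $\mathrm{Tot}(K_{T_{\Delta_{\mathrm{II}}}})$), and convexity is arranged by the stacky structure. (3) Apply Theorem \ref{t:Cdual} (or Corollary \ref{c:Cdualgoren} in the Gorenstein case) to get $h^{\lambda,\mu}_{\mathrm{orb}}(T(\mathbf\Sigma),w(\check{\mathbf\Sigma})) = h^{d'-\lambda,\mu}_{\mathrm{orb}}(T(\check{\mathbf\Sigma}),w(\mathbf\Sigma))$ where $d' = d+1$ is the dimension of the total space. (4) Translate the orbifold irregular Hodge numbers of these LG models into ordinary Hodge numbers: first pass from the stacky total space to the genuine total space $\mathrm{Tot}(K_{T_{\Delta_{\mathrm{I}}}})$ (the root-stack structure only affects the twisted sectors, which contribute toric strata common to both sides and cancel in the duality), then apply Proposition \ref{p : well known} to identify $\gr^p_{F_{\mathrm{irr}}}H^{p+q}(\mathrm{Tot}(K_{T_{\Delta_{\mathrm{I}}}}),g_\sigma)$ with $\gr^p_F H^{p+q}_{X'_{\Delta_{\mathrm{II}}}}(T_{\Delta_{\mathrm{I}}})$, and finally use Poincar\'e--Lefschetz duality and the Gysin/restriction sequence relating $H^*_{X'_{\Delta_{\mathrm{II}}}}(T_{\Delta_{\mathrm{I}}})$ to $H^*(X'_{\Delta_{\mathrm{II}}})$, noting that the purely toric contributions $H^*(T_{\Delta_{\mathrm{I}}})$ that are added or removed obey their own duality by the toric mirror combinatorics. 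Accounting for the dimension shift (the hypersurface has dimension $d-1$, and the LG model total space dimension $d+1$ produces the shift $\gr^p \leftrightarrow \gr^{(d+1)-1-1-p} = \gr^{d-1-p}$ after removing the two fiber directions and the rank-one local cohomology shift) gives exactly $h^{p,q}(X'_{\Delta_{\mathrm{II}}}) = h^{d-1-p,q}(X'_{\Delta_{\mathrm{I}}})$.

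The main obstacle I expect is step (4): carefully matching the \emph{ordinary} mixed Hodge numbers of the singular hypersurface $X'_{\Delta_{\mathrm{II}}}$ --- which may fail to be pure --- with the irregular Hodge numbers produced by the Clarke duality theorem, keeping precise track of (a) the twisted-sector (orbifold age) contributions, which must be shown to pair up correctly or cancel, (b) the ambient toric cohomology classes entering through the local cohomology long exact sequence, and (c) the several fractional/integral Tate twists and degree shifts coming from Proposition \ref{p : well known}, Poincar\'e--Lefschetz duality, and the passage $\mathrm{Tot}(K) \leftrightarrow$ hypersurface. In particular one must verify that $\mathrm{gr}^\lambda_{F_{\mathrm{irr}}}H^*$ vanishes for non-integral $\lambda$ for the relevant LG models --- this is where the Gorenstein hypothesis and the hypothesis that $T_{\Delta_{\mathrm{I}}}, T_{\Delta_{\mathrm{II}}}$ are \emph{smooth} MPCS resolutions enter, ensuring (via Corollary \ref{c:Cdualgoren} and Proposition \ref{p : well known}) that everything is concentrated in integer bidegrees so the comparison with Deligne's Hodge numbers is legitimate. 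The bookkeeping of which toric pieces are common to both sides, and hence drop out of the duality, is the part that will require the most care.
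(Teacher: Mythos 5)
Your approach matches the paper's: Poincar\'e--Lefschetz duality converts $h^{p,q}(X'_{\Delta_{\mathrm{II}}})$ into local cohomology Hodge numbers $h^{d-p,d-q}_{X'_{\Delta_{\mathrm{II}}}}(T_{\Delta_{\mathrm{I}}})$, Proposition \ref{p : well known} converts these into irregular Hodge numbers of the LG model on $\mathrm{Tot}(K_{T_{\Delta_{\mathrm{I}}}})$, and Corollary \ref{c:Cdualgoren} supplies the duality between the two LG models built from the unimodular Gorenstein Clarke dual pair $\Sigma_{A_\mathrm{I}},\Sigma_{A_\mathrm{II}}$. The complications you anticipate in step (4) largely do not arise: the ray generators of $\Sigma_{A_{\mathrm{I}}}$ and $\Sigma_{A_{\mathrm{II}}}$ all lie at height $1$ (refining the Gorenstein cones $C_{\Delta_\mathrm{I}}$ and $C_{\Delta_{\mathrm{II}}}$), so these are already convex ordinary fans with trivial stacky structure and integral age grading, and the isomorphism $H^*_{Z_\sigma}(X)\cong H^{*-2c}(Z_\sigma)$ recorded after Proposition \ref{p : well known} is a direct isomorphism of mixed Hodge structures, so no Gysin/restriction sequence or cancellation of ambient toric classes is needed.
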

    
\begin{proof}
    By assumption, we have a MPCS resolution ${T}_{\Delta_\mathrm{I}}$ obtained by taking a projective unimodular triangulation of the $\Delta$, which induces a triangulation on the faces of the polytope $\Delta_{\mathrm{I}}$, as in \cite{batyrev1994dual2}. Given this data we get $\mathrm{Tot}(K_{T_{\Delta_\mathrm{I}}})$, which is a toric variety whose fan, denoted by $\Sigma_{A_{\mathrm{I}}}$, is smooth. Then $\Sigma_{A_{\mathrm{I}}}$ is a smooth refinement of the Gorenstein cone $C_{\Delta_{\mathrm{I}}} = \mathrm{Cone}(\Delta_{\mathrm{I}}\times \{1\}) \subseteq M \times \mathbb{Z}$. The parallel argument applies to get another fan ${\Sigma}_{A_{\mathrm{II}}}$. Since $\Delta_{\mathrm{II}} \subseteq \check{\Delta}_{\mathrm{I}}$, we have 
    \[
    \mathrm{Supp}({\Sigma}_{A_{\mathrm{II}}}) \subseteq \mathrm{Cone}(\check{\Delta}_{\mathrm{I}}\times \{1\}) = C_{\check{\Delta}_{\mathrm{I}}}.
    \]
    The definition of the polar dual is equivalent to $C_{\Delta_\mathrm{I}}$ and $C_{\check{\Delta}_\mathrm{I}}$ being dual cones.  Therefore, ${\Sigma}_{A_{\mathrm{I}}}$ and ${\Sigma}_{A_{\mathrm{II}}}$ form a unimodular, Gorenstein Clarke dual pair of fans (see also \cite[Proposition 6.12]{HL2025}). The corresponding Clarke mirror pair of Landau--Ginzburg models is
    \[
    T({\Sigma}_{A_{\mathrm{I}}}) = \mathrm{Tot}(K_{T_{\Delta_\mathrm{I}}}), \quad w({\Sigma}_{A_{\mathrm{II}}}) = g_{\phi}
    \]
    where $\phi$ is a section of $K^{-1}_{{T}_{\Delta_{\mathrm{I}}}}$ whose vanishing locus in $T_{\Delta_\mathrm{I}}$ is $X'_{{\Delta}_{\mathrm{II}}}$ and similarly, 
    \[
    T({\Sigma}_{A_{\mathrm{II}}}) = \mathrm{Tot}(K_{T_{\Delta_\mathrm{II}}}), \quad w({\Sigma}_{A_{\mathrm{I}}}) = g_{\mu}
    \]
    where $\mu$ is a section of  $K^{-1}_{T_{\Delta_\mathrm{II}}}$ whose vanishing locus in $T_{\Delta_\mathrm{II}}$ is $X'_{{\Delta}_{\mathrm{I}}}$. We see that: 
    \[
    \begin{aligned}
        h^{p,q}(X'_{{\Delta}_{\mathrm{II}}})&=h^{d-p,d-q}_{X'_{{\Delta}_{\mathrm{II}}}}(T_{\Delta_\mathrm{I}})\quad & (\text{Poincar\'e--Lefschetz duality, \cite[(1.7.1)]{Fujiki1980duality}}) \\
        &= h^{d-p,d-q}(T({\Sigma}_{A_{\mathrm{I}}}),w({\Sigma}_{A_{\mathrm{II}}})) \quad &(\text{Proposition \ref{p : well known}}) \\
        &= h^{1+p,d-q}(T({\Sigma}_{A_{\mathrm{II}}}),w({\Sigma}_{A_{\mathrm{I}}})) \quad & \text{(Corollary \ref{c:Cdualgoren})}\\
        &= h^{1+p,d-q}_{X'_{{\Delta}_{\mathrm{I}}}}(T_{\Delta_\mathrm{II}}) \quad & \text{(Proposition \ref{p : well known})} \\
        &= h^{d-1-p,q}(X'_{{\Delta}_\mathrm{I}})\quad  & \text{(Poincar\'e--Lefschetz duality)}.
    \end{aligned}
    \]
    Here, we have used the fact that $X'_{\Delta_\mathrm{I}},X'_{\Delta_\mathrm{II}}$ are compact in the application of Poincar\'e--Lefschetz duality. The application of Proposition \ref{p : well known} is justified by Corollary \ref{c:Cdualgoren}. 
\end{proof}

The following example demonstrates that when $k=1$, the HLLY mirror construction can be interpreted as a duality between singular varieties sitting in a diagram of the form \eqref{e : geometric transition}, therefore, Theorem \ref{t:HLLY} follows from Theorem \ref{t:transition} when $k=1$.\begin{example}\label{e:toricexthyp}
    Let $\Delta \subset M_{\mathbb{R}}$ be a reflexive polytope and $\check{\Delta} \subset N_{\mathbb{R}}$ be its dual. Choose a projective unimodular triangulation of $\Delta$ and $\check{\Delta}$ as before. We construct a pair of reflexive polytopes $\Delta_{\mathrm{II}} \subseteq \check{\Delta}_{\mathrm{I}}$ as follows:
    \[
    \begin{aligned}
        \Delta_{\mathrm{II}}&:=\mathrm{Conv}(\check{\Delta} \times \{1\} \cup 0 \times \{-1\}) \subset N_\mathbb{R} \times \mathbb{R}, \\
        \check{\Delta}_{\mathrm{I}}&:=\mathrm{Conv}(2\check{\Delta} \times \{1\} \cup 0 \times \{-1\}) \subset N_\mathbb{R} \times \mathbb{R}.
    \end{aligned}
    \]
    It is easy to see that both $\Delta_{\mathrm{II}}$ and $\check{\Delta}_{\mathrm{I}}$ are reflexive, whose reflexive duals are given by $\check{\Delta}_{\mathrm{II}}:=\mathrm{Conv}(2{\Delta} \times \{1\} \cup 0 \times \{-1\})$ and ${\Delta}_{\mathrm{I}}:=\mathrm{Conv}({\Delta} \times \{1\} \cup 0 \times \{-1\})$ in $N_\mathbb{R} \times \mathbb{R}$, respectively. 

   Recall that we always assume projective unimodular triangulations of $\Delta$ and $\check{\Delta}$. Choosing such triangulations induce projective  unimodular triangulations of $\Delta_{\mathrm{II}}$ and $\Delta_{\mathrm{I}}$, respectively, and the associated toric varieties $T_{\Delta_{\mathrm{II}}}$ and $T_{{\Delta}_\mathrm{I}}$ are smooth.
    
    Moreover, $T_{{\Delta}_\mathrm{I}}$ is the projectivization of the bundle $\mathcal{O}\oplus K_{T_{{\Delta}}}$. We let $\pi:T_{{\Delta}_\mathrm{I}} \to T_{{\Delta}}$ be the projection. Since $\mathbb{P}(\mathcal{O}\oplus K_{T_{\Delta}}) \cong \mathbb{P}(K_{T_{\Delta}}^{-1}\oplus \mathcal{O})$, the adjunction formula yields an isomorphism $K^{-1}_{T_{\Delta_\mathrm{I}}} \cong \pi^*(K_{T_{{\Delta}}}^{-2})\otimes \mathcal{O}(2)$. This means that the sections corresponding to the integral points in $\check{\Delta} \times \{1\}\subset \check{\Delta}_{\mathrm{I}}$ should be understood as sections of $K^{-2}_{T_{{\Delta}}}$ whose sum is of the form $\sigma_{gen}\sigma_{tor}$. Also, the section corresponding to $0\times\{-1\}$ can be written as $y^2$ where $y$ is the coordinate at the of $T_{\Delta_\mathrm{I}}$. By choosing generic coefficients of these sections, this defines a hypersurface of the form $\{y^2-\sigma_{gen}\sigma_{tor}=0\}$ that is a double cover of $T_{{\Delta}}$ branched over $D=\{\sigma_{gen}\sigma_{tor}=0\}$.
    As a corollary of Theorem \ref{t:transition}, this proves Conjecture \ref{conj:HLLY} when $k=1$.
\end{example}

There is a direct generalization of Theorem \ref{t:transition} to the case of complete intersections but we do not spell this out in general here. Instead, we will prove Theorem \ref{t:toricmirror} which can be considered as a complete intersection generalization of Theorem \ref{t:transition} in a very particular case. 

\begin{proof}[Proof of Theorem \ref{t:toricmirror}]
    Given a pair of reflexive polytopes $\Delta,\check{\Delta}$, a nef partition $\Delta=\Delta_1+\cdots+\Delta_k$ and its dual $\check{\Delta}=\check{\Delta}_1+\cdots+\check{\Delta}_k$, we define a pair of reflexive polytopes $\Delta_{\mathrm{II}} \subset \check{\Delta}_{{\mathrm{I}}}$:
    \[
    \begin{aligned}
        \Delta_{\mathrm{II}}&:=\mathrm{Conv}(\check{\Delta}_i\times e_i \cup 0\times -e_i\mid i=1, \cdots, k) \subset M_\mathbb{R}\times \mathbb{R}^k \\
        \check{\Delta}_{\mathrm{I}}&:=\mathrm{Conv}(2\check{\Delta}_i\times e_i \cup 0\times -e_i \mid i=1, \cdots, k) \subset M_\mathbb{R}\times \mathbb{R}^k 
    \end{aligned}
    \]
    where $\{e_1, \cdots, e_k\}$ is the standard basis of $\mathbb{R}^k$. It is clear that the origin $(0,0)\in M_\mathbb{R}\times \mathbb{R}^k$ is the unique interior integral point of the both, and a direct check shows that they are reflexive. The reflexive duals are constructed in the same way
    \[
    \begin{aligned}
        \Delta_{\mathrm{I}}&=\mathrm{Conv}({\Delta}_i\times e^*_i \cup 0\times -e^*_i\mid i=1, \cdots, k) \subset N_\mathbb{R}\times \mathbb{R}^k \\
        \check{\Delta}_{\mathrm{II}}&=\mathrm{Conv}(2{\Delta}_i\times e^*_i \cup 0\times -e^*_i\mid i=1, \cdots, k) \subset N_\mathbb{R}\times \mathbb{R}^k 
    \end{aligned}
    \]
    where $\{e^*_1, \cdots, e^*_k\}$ are the dual basis of $\{e_1, \cdots, e_k\}$. As before, we choose projective unimodular triangulations of $\Delta$ and $\check{\Delta}$, which in turn induce triangulations of the above reflexive polytopes.

    In this setup, $T_{\Delta_\mathrm{I}}$ is the fiber product of the projectivizations $\mathbb{P}(\mathcal{O}(E_{{\Delta_i}})\oplus \mathcal{O})$ of $\mathcal{O}(E_{\Delta_i})\oplus\mathcal{O}$. As explained in Example \ref{e:toricexthyp}, for each $i$, the integral points in $\Delta_{\mathrm{II}, i}:=\Delta_{\mathrm{II}} \cap (M\times e_i)$ define a hypersurface of the form $y_i^2-\sigma_i$ where $\sigma_i:=\sigma_{i,gen}\sigma_{i,tor}$ and $y_i$ is the coordinate at the infinity of $\mathbb{P}(E_{\Delta_i})$. For the moment, we denote this hypersurface in $T_{\Delta_\mathrm{I}}$ by $X_{\Delta_{\mathrm{II},i}}'$. The complete intersection of the hypersurfaces $X'_{\Delta_{\mathrm{II},i}}$ is the $(\mathbb{Z}/2)^k$ cover of $T_\Delta$ denoted $\widehat{T}_{\Delta}$ in the preceding section.

    Let $\Sigma_{A_\mathrm{I}}$ denote the fan that determines the total space of the line bundle ${\pi}^*\mathcal{O}({E}_{\Delta_1})\oplus \dots \oplus {\pi}^*\mathcal{O}({E}_{\Delta_k})$ where ${\pi} : T_{{\Delta}_\mathrm{I}} \rightarrow T_{{\Delta}}$ is the usual projection map\footnote{The total space of any split vector bundle over a toric variety is itself toric \cite{Cox2011toricbook}.} and similarly let $\Sigma_{\check{A}_\mathrm{II}}$ denote the fan determining the total space of the vector bundle $\pi^*\mathcal{O}(2{E}_{\Delta_1})\oplus \dots \oplus \pi^*\mathcal{O}(2{E}_{\Delta_k})$ where $\pi : T_{\check{\Delta}_{\mathrm{II}}} \rightarrow T_{\Delta}$ also denotes a projection map. We may define $\Sigma_{\check{A}_\mathrm{I}}$ and $\Sigma_{{A}_{\mathrm{II}}}$ similarly.    
    The pairs of fans $(\Sigma_{A_\mathrm{I}},\Sigma_{\check{A}_\mathrm{I}})$ and $(\Sigma_{\check{A}_\mathrm{II}},\Sigma_{{A}_\mathrm{II}})$ form unimodular, Gorenstein Clarke mirror pairs \cite[Proposition 6.12]{HL2025}. One may check that $\Sigma_{A_\mathrm{II}}\subseteq \Sigma_{\check{A}_\mathrm{I}}$. Therefore, by definition, the pair $(\Sigma_{A_\mathrm{I}},\Sigma_{A_\mathrm{II}})$ is also an unimodular, Gorenstein Clarke mirror pair of fans.
    
    As we put different basis element $e_i$ for each $i$, the Landau--Ginzburg model $(T(\Sigma_{A_\mathrm{I}}), w(\Sigma_{A_{\mathrm{II}}}))$ corresponds to the complete intersection of $X_{\Delta_{\mathrm{II}},1}', \cdots, X_{\Delta_{\mathrm{II}},k}'$, that is  $\widehat{T}_{\Delta}$. Precisely, if $t_1,\dots, t_k$ are coordinate functions on the fibers of $T(\Sigma_{A_\mathrm{I}}) = \mathrm{Tot}({\pi}^*\mathcal{O}({E}_{\Delta_1})\oplus \dots \oplus {\pi}^*\mathcal{O}({E}_{\Delta_k}))$, then
    \[
    w(\Sigma_{A_\mathrm{II}}) = \sum_{i=1}^k t_i(y_i^2 -\sigma_i).
    \]
    Therefore, Proposition \ref{p : well known}, along with Corollary \ref{c:Cdualgoren}, allows us to relate the irregular Hodge numbers of the Landau--Ginzburg model $(T(\Sigma_{A_\mathrm{I}}), w(\Sigma_{A_{\mathrm{II}}}))$ to the Hodge numbers of the iterated double cover $\widehat{T}_{\Delta} \rightarrow T_\Delta$.
    
    Applying the parallel construction, we obtain an unimodular, Gorenstein Clarke mirror pair (cf. \cite[Proposition 6.12]{HL2025})
    \[(T(\Sigma_{A_\mathrm{I}}), w(\Sigma_{A_{\mathrm{II}}})), \qquad (T(\Sigma_{A_\mathrm{II}}), w(\Sigma_{A_{\mathrm{I}}})).
    \]
    We have the following identity of Hodge numbers: For $p,q \in \mathbb{Z}$, 
    \[
    \begin{aligned}
        h^{p,q}(\widehat{T}_{\Delta})&=h^{d-p,d-q}_{\widehat{T}_{\Delta}}(T_{\Delta_\mathrm{I}}) \quad  & \text{(Poincar\'e--Lefschetz duality)} \\
        &= h^{d-p,d-q}(T({\Sigma}_{A_{\mathrm{I}}}),w({\Sigma}_{A_{\mathrm{II}}})) \quad &(\text{Proposition \ref{p : well known}})\\
        &= h^{k+p,d-q}(T({\Sigma}_{A_{\mathrm{II}}}),w({\Sigma}_{A_{\mathrm{I}}})) \quad & \text{(Corollary \ref{c:Cdualgoren})} \\
        &= h^{k+p,d-q}_{\widehat{T}_{\check{\Delta}}}(T_{\Delta_{\mathrm{II}}}) \quad &(\text{Proposition \ref{p : well known}}) \\
        &= h^{d-k-p,q}(\widehat{T}_{\check{\Delta}}). \quad  & \text{(Poincar\'e--Lefschetz duality)}
    \end{aligned}
    \]
\end{proof}

\begin{remark}
        It is natural to ask whether the proofs above work for cyclic covers of higher order. It turns out that they do not, as the nef partitions constructed in the proof of Theorem \ref{t:toricmirror} depend on the double covering property. We do not expect a mirror symmetry for higher order cyclic covers.
    \end{remark}

    \section{A proof of HLLY conjecture}\label{s:HLLY}
	
	We prove Theorem \ref{t:HLLY} (HLLY conjecture) by combining Theorem \ref{t:toricmirror} and some other Clarke dualities. 
	
As before, let's start with a pair of reflexive dual polytopes $(\Delta,\check{\Delta})$ with a nef partition $\Delta=\Delta_1\cup\cdots\cup\Delta_k$ and its dual nef partition $\check{\Delta}=\check{\Delta}_1\cup\cdots\cup\check{\Delta}_k$. We choose a projective unimodular triangulation of each $\Delta_i$ and $\check{\Delta}_i$ whose integral points are denoted by $A_i$ and $\check{A}_i$, respectively. We keep the same notation as in the previous sections. 

For $a_i \in \mathbb{Z}_{\geq 0}$, we define a stacky fan $\Sigma_{\{a_iA_i|i=1, \dots, k\}} \subset M \times \mathbb{Z}^k$ whose ray generators are of the form $\{\rho \times a_ie_i|\rho \in A_i \}$. For simplicity, once a subset $J \subseteq \{1, \cdots, k\}$ is taken, we write $\Sigma_{2A_{J}, A_{J^c}}=\Sigma_{\{2A_i |i \in J\} \cup \{A_i | i \notin J\}}$. For example, when $J = \emptyset$, then $\Sigma_{2A_{J}, A_{J^c}}$ is the fan for the total space of the vector bundle $ \mathcal{O}(-E_{\Delta_1}) \oplus \cdots \oplus \mathcal{O}(-E_{\Delta_k})$ over $T_{\Delta}$. The general case is described below. By applying the parallel construction for $\check{A}_i$'s, we obtain several Clarke dual pairs.

\begin{lemma}
    Fix a subset $J \subseteq \{1, \cdots, k\}$. Then the pair $(\Sigma_{2A_{J}, A_{J^c}}, \Sigma_{\check{A}_{J}, 2\check{A}_{J^c}})$ forms a simplicial, quasiprojective, Clarke dual pair. 
\end{lemma}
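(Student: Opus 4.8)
The plan is to verify, in order, the three ingredients of a Clarke dual pair in the sense of Definition~\ref{d:adjectives}: that each stacky fan is simplicial and quasiprojective, that the pair satisfies the regularity condition, and that both stacky fans are convex.

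First I would make the geometry explicit. Write $a_i = 2$ for $i \in J$ and $a_i = 1$ for $i \notin J$. By the construction of Example~\ref{eg:cayley}, $\Sigma_{2A_J, A_{J^c}}$ is the stacky fan of the total space of $\bigoplus_{i=1}^k \mathcal{O}(-a_i E_{\Delta_i})$ over $T_\Delta$, the factor with $a_i = 2$ contributing a root stack structure along the corresponding fiber direction; dually, $\Sigma_{\check A_J, 2\check A_{J^c}}$ is the stacky fan of the analogous split bundle $\bigoplus_{i=1}^k \mathcal{O}(-b_i E_{\check\Delta_i})$ over $T_{\check\Delta}$ with $b_i = 1$ for $i \in J$ and $b_i = 2$ for $i \notin J$. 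Since the chosen projective unimodular triangulations make $T_\Delta$ and $T_{\check\Delta}$ simplicial and quasiprojective, the same holds for these total spaces: a maximal cone of $\Sigma_{2A_J, A_{J^c}}$ is generated by the lifts of the $d$ rays of a maximal cone of $\Sigma_\Delta$ together with the $k$ fiber rays $a_1e_1, \dots, a_k e_k$, a set of $d+k$ linearly independent vectors, and a strictly convex support function for $\Sigma_\Delta$ extends to one for $\Sigma_{2A_J,A_{J^c}}$ by adding a term that is strictly convex in the fiber directions. This is a routine generalization of \cite[Proposition~6.12]{HL2025}, whose argument carries over mutatis mutandis.

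Next I would verify regularity, which is the only genuine computation and the one place where the asymmetric scaling enters. Any $n \in \mathrm{Supp}(\Sigma_{2A_J,A_{J^c}}) \subseteq M_\mathbb{R}\times\mathbb{R}^k$ is a nonnegative combination of the ray generators, hence $n = \bigl(\sum_i \sum_{\rho\in A_i} t_{i,\rho}\,\rho,\ (a_1 s_1,\dots,a_k s_k)\bigr)$ with $t_{i,\rho}\ge 0$ and $s_i := \sum_{\rho\in A_i} t_{i,\rho}$, and likewise $m \in \mathrm{Supp}(\Sigma_{\check A_J,2\check A_{J^c}}) \subseteq N_\mathbb{R}\times\mathbb{R}^k$ has the form $m = \bigl(\sum_l \sum_{\check\rho\in\check A_l} u_{l,\check\rho}\,\check\rho,\ (b_1 w_1,\dots,b_k w_k)\bigr)$ with $u_{l,\check\rho}\ge 0$ and $w_l := \sum_{\check\rho} u_{l,\check\rho}$. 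The defining inequalities of the dual nef partition give $\langle\rho,\check\rho\rangle \ge 0$ when $\rho\in\Delta_i$ and $\check\rho\in\check\Delta_l$ with $i\ne l$, and $\langle\rho,\check\rho\rangle\ge -1$ when $i=l$; combining this with the pairing on the extra $\mathbb{Z}^k$ factors,
\[
\langle n,m\rangle \;=\; \sum_{i,l}\sum_{\rho,\check\rho} t_{i,\rho}u_{l,\check\rho}\langle\rho,\check\rho\rangle \;+\; \sum_{i=1}^k a_i b_i\, s_i w_i \;\ge\; -\sum_{i=1}^k s_i w_i + \sum_{i=1}^k a_i b_i\, s_i w_i .
\]
The key observation is that $a_i b_i = 2$ for every $i$ — indeed $(a_i,b_i) = (2,1)$ if $i\in J$ and $(a_i,b_i)=(1,2)$ if $i\notin J$ — so the right-hand side equals $\sum_i s_i w_i \ge 0$, which is regularity.

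Finally I would address convexity, which I expect to be the most delicate point, although it is bookkeeping rather than a real difficulty. The extended ray generators $\rho\times a_i e_i$ with $\rho\in A_i$ are exactly the lattice points lying in the faces $\Delta_i\times\{a_ie_i\}$ of the polytope $\mathrm{Conv}\bigl(\{0\}\cup\bigcup_i \Delta_i\times\{a_ie_i\}\bigr)$ (cut out by the functional $v\mapsto v_i$), and the underlying fan of $\Sigma_{2A_J,A_{J^c}}$ refines the cone over this polytope according to the triangulation induced by the chosen coherent triangulations of the $\Delta_i$. Because those triangulations are projective, the induced triangulation is regular, so $\Delta_{\Sigma_{2A_J,A_{J^c}}}$ has convex support; the dual side is identical. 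The care needed here is entirely in checking that the induced triangulation is genuinely coherent and has precisely the claimed lattice points as vertices, which is the kind of verification already carried out in \cite[\S 6]{HL2025}. With convexity established, the lemma follows, and Theorem~\ref{t:Cdual} (and Corollary~\ref{c:Cdualgoren} in the Gorenstein situation) then applies to the associated Clarke mirror pair of Landau--Ginzburg models.
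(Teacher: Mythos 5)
Your proof is correct and takes the same approach as the paper's (very terse) argument, which simply asserts that simpliciality and quasiprojectivity are ``straightforward,'' that regularity ``follows directly from the definition of nef partitions,'' and that convexity ``follows from a similar argument as in \cite[Proposition 6.12]{HL2025}.'' You spell out the regularity computation explicitly, isolating the key observation that $a_i b_i = 2$ for every $i$ under the asymmetric assignment of weights, so that the off-diagonal nef-partition contribution $-\sum_i s_i w_i$ is absorbed by the fiber term $\sum_i a_i b_i s_i w_i$; this is exactly the content the paper leaves implicit, and it is the one place where the specific $(2,1)$ versus $(1,2)$ pattern is used. The description of $T(\Sigma_{2A_J, A_{J^c}})$ as a root stack of a split vector bundle total space, with the stacky multiplicity attached to the fiber ray $0\times a_i e_i$, matches the paper's own identification (cf.\ Example \ref{eg:P1}), and the deferral of convexity to \cite[Proposition 6.12]{HL2025} is precisely what the paper does. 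No gaps.
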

\begin{proof}
   It is straightforward to verify that the fans are simplicial and quasiprojective. The regularity of the pair follows directly from the definition of nef partitions. Convexity follows from a similar argument as in \cite[Proposition 6.12]{HL2025}.
\end{proof}

To such a Clarke pair $(\Sigma_{2A_{J}, A_{J^c}}, \Sigma_{\check{A}_{J}, 2\check{A}_{J^c}})$, the associated LG models are given as follows:
\[(T(\Sigma_{2A_{J}, A_{J^c}}), w(\Sigma_{\check{A}_{J}, 2\check{A}_{J^c}})), \qquad (T(\Sigma_{\check{A}_{J}, 2\check{A}_{J^c}}), w(\Sigma_{{2A}_{J}, {A}_{J^c}}))
\]
where
\begin{itemize}
	\item $T(\Sigma_{2A_{J}, A_{J^c}})=\sqrt{\mathrm{Tot}(-2E_{\Delta_J}-E_{\Delta_{J^c}})/0_{J}}$ and $w(\Sigma_{\check{A}_{J}, 2\check{A}_{J^c}})=g_{1,J}+g_{2,J^c}$. 
	\item $T(\Sigma_{\check{A}_{J}, 2\check{A}_{J^c}})=\sqrt{\mathrm{Tot}(-2E_{\check{\Delta}_{J^c}}-E_{\check{\Delta}_{J}})/0_{J^c}}$ and $w(\Sigma_{2{A}_{J}, {A}_{J^c}})=\check{g}_{1,J^c}+\check{g}_{2,J}$. 
\end{itemize} 
We simplify notation by dropping  $\mathcal{O}$ when referring to the total space of the corresponding line bundle. For example, 
\[
\mathrm{Tot}(-2E_{\Delta_J}-E_{\Delta_{J^c}}):=\mathrm{Tot}\left(\bigoplus_{j \in J}\mathcal{O}(-2E_{\Delta_j})\oplus \bigoplus_{j \notin J} \mathcal{O}(-E_{\Delta_j})\right).\]
Also, if $J = \{j_1,\dots, j_m\}$, then   $\sqrt{\mathrm{Tot}(-2E_{\Delta_J} - E_{\Delta_{J^c}})/0_{J}}$ is the fiber product 
\[
\sqrt{\mathrm{Tot}(-2E_{j_1})/0_{E_{j_1}}} \times_{T_\Delta} \dots \times_{T_\Delta}\sqrt{\mathrm{Tot}(-2E_{j_m})/0_{E_{j_m}}}\times_{T_\Delta} \mathrm{Tot}(-E_{\Delta_{J^c}})
\]
where $0_{E_i}$ denotes the zero section of the projection map. An analogous description applies to $\sqrt{\mathrm{Tot}(-2E_{\check{\Delta}_{J^c}}-E_{\check{\Delta}_{J}})/0_{J^c}}$.



	By Theorem \ref{t : multiple direct sum decomposition} and Theorem \ref{t:poddar}, the orbifold cohomology 
		\[H_{\mathrm{orb}}^*\left(\sqrt{\mathrm{Tot}(-2E_{\Delta_J}-E_{\Delta_{J^c}})/0_{J}}, g_{1,J}+g_{2,J^c}\right)\]
		can be decomposed based on both age grading and the grading in Corollary \ref{cor:another description} by coinvariant degree. Namely, this orbifold cohomology is decomposed as follows:
        \[
        \begin{aligned}
            &\bigoplus_{I^{(0)} \subseteq J} H^{*-|I^{(0)}|}\left(\mathrm{Tot}(-2E_{\Delta_{J\setminus I^{(0)}}}-E_{\Delta_{J^c}}), g_{1,J\setminus I^{(0)}}+g_{2,J^c}\right)\left(\frac{|I^{(0)}|}{2}\right)=\\
            & \bigoplus_{I^{(0)} \subseteq J, I^{(-)} \subseteq J^c}H^{*-|I^{(0)}|}\left(\mathrm{Tot}(-2E_{\Delta_{J\setminus I^{(0)} \cup J^c \setminus I^{(-)}}}-E_{\Delta_{I^{(-)}}}), g_{1,J\setminus I^{(0)} \cup J^c \setminus I^{(-)}}+g_{2, I^{(-)}}\right)^{(I^{(-)})}\left(\frac{|I^{(0)}|}{2}\right)\\
            & =\bigoplus_{I^{(0)} \subseteq J, I^{(-)} \subseteq J^c}H^{*-|I^{(0)}|}\left(\mathrm{Tot}(-2E_{\Delta_{I^{(+)}}}-E_{\Delta_{I^{(-)}}}), g_{1,I^{(+)}}+g_{2, I^{(-)}}\right)^{(I^{(-)})}\left(\frac{|I^{(0)}|}{2}\right).
        \end{aligned}
        \]
		where $I^{(0)}\subseteq J$ consists of the indices corresponding to the twisted sector (=number of shifting), and $I^{(-)}$ parametrizes the indices of the coinvariant part and $I^{(+)}:=\{1, \cdots, k\} \setminus  I^{(-)} \sqcup I^{(0)}$ that parametrizes the number of invariant parts. See also Remark \ref{rem:notation change} for the superscript notation $(I^{(-)})$. We define $B^{\lambda, \mu}_{I^{(0)}, I^{(-)}, I^{(+)}}$ to be the dimension of the direct summand indexed by $(I^{(0)}, I^{(-)})$ in the above cohomology group. Namely
        \begin{equation}\label{e:B def}
            B^{\lambda, \mu}_{I^{(-)}, I^{(0)}, I^{(+)}}:=\dim H^{\lambda-\frac{|I^{(0)}|}{2}, \mu-\frac{|I^{(0)}|}{2}}\left(\mathrm{Tot}(-2E_{\Delta_{I^{(+)}}}-E_{\Delta_{I^{(-)}}}), g_{1,I^{(+)}}+g_{2, I^{(-)}}\right)^{(I^{(-)})}.
        \end{equation}
        We will use the notation $B_{I^{(-)},I^{(0)},I^{(+)}}$ to denote the element of $\mathbb{Z}^{\tfrac{1}{2}\mathbb{Z} \times \tfrac{1}{2}\mathbb{Z}}$ whose $(\lambda,\mu)$ component is $B_{I^{(-)},I^{(0)},I^{(+)}}^{\lambda,\mu}$. Later, we will use a shifting operation on such objects, defined by letting $B_{I^{(-)},I^{(0)},I^{(+)}}(a)$ be such that
        \begin{equation}\label{e: shifting degrees}
            B_{I^{(-)},I^{(0)},I^{(+)}}^{\lambda ,\mu}(a) = B_{I^{(-)},I^{(0)},I^{(+)}}^{\lambda -a,\mu-a}
        \end{equation}
        for any $a \in \tfrac{1}{2}\mathbb{Z}$.
        \begin{remark}
            If we let $\widehat{T}_{\Delta,I^{(-)}}$ denote the $(\mathbb{Z}/2)^{|I^{(-)}|}$ cover of $T_\Delta$ with branch divisors $D_i, i \in I^{(-)}$, constructed as in Section \ref{s : statement}, then we may rewrite 
            \begin{equation}\label{e : restatement of definition of B}
            B^{\lambda,\mu}_{I^{(-)},I^{(0)},I^{(+)}} = h^{\lambda-\frac{|I^{(0)}|-|I^{(-)}|}{2},\mu-\frac{|I^{(0)}|-|I^{(-)}|}{2}}_{Z^{I^{(+)}}}(\widehat{T}_{\Delta,I^{(-)}})^{(I^{(-)})}.
            \end{equation}
            where $Z^{I^{(+)}}$ is the preimage of $\bigcap_{i \in I^{(+)}}D_i$ in $\widehat{T}_{\Delta,I^{(-)}}$.
        \end{remark}

		 Due to Theorem \ref{t:Cdual}, we have the identity of Hodge numbers: For $\lambda, \mu \in \mathbb{Q}$, 
		 \[h_{\mathrm{orb}}^{\lambda, \mu}\left(\sqrt{\mathrm{Tot}(-2E_{\Delta_J}-E_{\Delta_{J^c}})/0_{J}}, g_{1,J}+g_{2,J^c}\right)=h_{\mathrm{orb}}^{d+k-\lambda, \mu}\left(\sqrt{\mathrm{Tot}(-2E_{\check{\Delta}_{J^c}}-E_{\check{\Delta}_{J}})/0_{J^c}}, \check{g}_{1,J}+\check{g}_{2,J^c}\right).\]
		 The degree shifting of each summand in the both is determined by $|I^{(-)} \sqcup I^{(0)}|$. Therefore, to acquire symmetry between $I^{(-)}$ and $I^{(0)}$, we consider another Clarke dual pair $(\Sigma_{A_J, 2A_{J^c}}, \Sigma_{2\check{A}_{J}, \check{A}_{J^c} })$ that is obtained by simply changing the role of $J$ and $J^c$ from the previous pair. Then we get a similar identity of Hodge numbers: For $\lambda, \mu \in \mathbb{Q}$, 
		 \[h_{\mathrm{orb}}^{\lambda, \mu}\left(\sqrt{\mathrm{Tot}(-2E_{\Delta_{J^c}}-E_{\Delta_{J}})/0_{J^c}}, g_{1,J^c}+g_{2,J}\right)=h_{\mathrm{orb}}^{d+k-\lambda, \mu}\left(\sqrt{\mathrm{Tot}(-2E_{\check{\Delta}_{J}}-E_{\check{\Delta}_{J^c}})/0_{J}}, \check{g}_{1,J^c}+\check{g}_{2,J}\right).\]
		 Now we combine these two Hodge number dualities to deduce the weaker one. In other words, we pair up the orbifold cohomology 
		 \begin{equation}\label{e:grping}
         H_{\mathrm{orb}}^*\left(\sqrt{\mathrm{Tot}(-2E_{\Delta_J}-E_{\Delta_{J^c}})/0_{J}}, g_{1,J}+g_{2,J^c}\right)\bigoplus H_{\mathrm{orb}}^*\left(\sqrt{\mathrm{Tot}(-2E_{\Delta_{J^c}}-E_{\Delta_{J}})/0_{J^c}}, {g}_{1,J^c}+{g}_{2,J}\right).
		 \end{equation}
		 The mirror counterpart can be simply obtained by replacing $\Delta$ and $g$ by $\check{\Delta}$ and $\check{g}$, respectively. Also, the degree shifting of each summand in the both is determined by $|I^{(-)} \sqcup I^{(0)}|$. 
         Then the weaker version of Hodge number duality can be written as  
		 \begin{equation}\label{e:mir}
		 \sum_{I^{(0)}\subseteq J, I^{(-)}\subseteq J^c} B^{\lambda, \mu}_{I^{(-)}, I^{(0)}, I^{(+)}}+B^{\lambda, \mu}_{I^{(0)}, I^{(-)}, I^{(+)}}= \sum_{I^{(0)}\subseteq J, I^{(-)}\subseteq J^c} \check{B}^{d+k-\lambda, \mu}_{I^{(-)}, I^{(0)}, I^{(+)}}+\check{B}^{d+k-\lambda, \mu}_{I^{(0)}, I^{(-)}, I^{(+)}}
		 \end{equation}
		where $\check{B}^{d+k-\lambda, \mu}_{I^{(-)}, I^{(0)}, I^{(+)}}$'s are defined in the same way as \eqref{e:B def}. Since the degree of the mirror dual part is canonically determined once the Hodge grading $(\lambda, \mu)$ of $B_{I^{(-)}, I^{(0)}, I^{(+)}}$ is specified, we simply say that the sum 
		\[
		 \sum_{I^{(0)}\subseteq J, I^{(-)}\subseteq J^c} B_{I^{(-)}, I^{(0)}, I^{(+)}}+B_{I^{(0)}, I^{(-)}, I^{(+)}}
		 \]
		satisfies the \emph{mirror relation} because \eqref{e:mir} holds for all $(\lambda, \mu)$. We will say that the mirror relation is satisfied for any finite sum of $B_{I^{(-)},I^{(0)},I^{(+)}}$ if the analogue of \eqref{e:mir} holds. 

        Note that 
        \[
        B^{\lambda,\mu}_{[k],\emptyset,\emptyset} = h^{\lambda-k/2,\mu-k/2}(\widetilde{T}_\Delta)^{(-)},\qquad B^{\lambda,\mu}_{\emptyset,[k],\emptyset} = h^{\lambda-k/2,\mu-k/2}(T_\Delta).
        \]
        So, by Proposition \ref{p : double cover interpretation of the first factor}, Theorem \ref{t:HLLY} reduces to showing that 
        \begin{equation}\label{e : satisfies the mirror relation}
            B_{[k],\emptyset,\emptyset} + B_{\emptyset,[k],\emptyset} \text{ satisfies the mirror relation.}
        \end{equation}
        The reader should also observe that, following Proposition \ref{p : double cover interpretation of the first factor} and \eqref{e : restatement of definition of B}, we have 
        \[
        \sum_{I^{(0)}\cup I^{(-)} = [k]} B^{\lambda,\mu}_{I^{(-)},I^{(0)},\emptyset} = h^{\lambda - k,\mu-k}(\widehat{T}_\Delta).
        \]
        Therefore, Proposition \ref{p : double cover interpretation of the first factor} and Theorem \ref{t:toricmirror} tell us that:
        \begin{equation}
            \sum_{I^{(0)}\cup I^{(-)} = [k]} B_{I^{(-)},I^{(0)},\emptyset} \text{ satisfies the mirror relation.}
        \end{equation}
        
        Note that if $k-|I^{(+)}| \in 2\mathbb{Z}$ then $B^{\lambda,\mu}_{I^{(-)}, I^{(0)}, I^{(+)}} = 0$ unless $\lambda,\mu \in \mathbb{Z}$ and similarly if $k - |I^{(+)}| \in 2\mathbb{Z}+1$ then $B^{\lambda,\mu}_{I^{(-)}, I^{(0)}, I^{(+)}} = 0$ unless $\lambda,\mu \in (\tfrac{1}{2}) + \mathbb{Z}$. Since we are largely interested in the case where $|I^{(+)}| = 0$, we can distinguish between the cases where $|I^{(+)}|$ is even or odd. We focus only on the case where $|I^{(+)}|$ is even, as this is the equality relevant to \eqref{e : satisfies the mirror relation} and hence the proof of  Conjecture \ref{conj:HLLY}. Also, note that the mirror relation \eqref{e:mir} is preserved under scalar multiplication and addition. 
        
		Before getting into the proof of Theorem \ref{t:HLLY}, we examine the cases $k=2$ and $k=3$ to illustrate how this description is used in the proof of Conjecture \ref{conj:HLLY}.
		  
		\begin{example}\label{eg:k=2}
			Let's examine the case when $k=2$. Consider the following Clarke dual pair 
			\[
			(\Sigma_{2A_1, 2A_2}, \Sigma_{\check{A}_1, \check{A}_2}).
			\]
			The induced LG model is given by 
			\[
			(T(\Sigma_{2A_1, 2A_2}), w(\Sigma_{\check{A}_1, \check{A}_2})), \qquad (T(\Sigma_{\check{A}_1, \check{A}_2}), w(\Sigma_{2A_1, 2A_2}))
			\]
			where 
			\begin{itemize}
				\item $T(\Sigma_{2A_1, 2A_2})=\sqrt{\mathrm{Tot}(-2E_{\Delta_1}-2E_{\Delta_2})/0_{\{1,2\}}}$ and $w(\Sigma_{\check{A}_1, \check{A}_2})=g_{1,\{1\}}+g_{1,\{2\}}$.
				\item $T(\Sigma_{\check{A}_1, \check{A}_2})=\mathrm{Tot}(-E_{\check{\Delta}_1}-E_{\check{\Delta}_2})$, and $w(\Sigma_{2A_1, 2A_2})=\check{g}_{2,\{1\}}+\check{g}_{2,\{2\}}$.
			\end{itemize}
			Let's compute the orbifold cohomology. For the LG model $ (T(\Sigma_{2A_1, 2A_2}), w(\Sigma_{\check{A}_1, \check{A}_2}))$, we have 
			\[
			\begin{aligned}
				H_{\mathrm{orb}}^{\lambda, \mu}(T(\Sigma_{2A_1, 2A_2}), w(\Sigma_{\check{A}_1, \check{A}_2}))&=H^{\lambda, \mu}(\mathrm{Tot}(-2E_{\Delta_1}-2E_{\Delta_2}), g_{1,\{1\}}+g_{1,\{2\}}) \\
				&\oplus H^{\lambda-\frac{1}{2}, \mu-\frac{1}{2}}(\mathrm{Tot}(-2E_{\Delta_2}), g_{1,\{2\}}) \\
				&\oplus H^{\lambda-\frac{1}{2}, \mu-\frac{1}{2}}(\mathrm{Tot}(-2E_{\Delta_1}), g_{1,\{1\}}) \\
				&\oplus H^{\lambda-1, \mu-1}(T_\Delta).
			\end{aligned}
			\]
			Following the previous notation, the dimension can be written as $B^{\lambda, \mu}_{\emptyset, \emptyset, \{1,2\}}+B^{\lambda, \mu}_{\emptyset, \{1\}, \{2\}}+B^{\lambda, \mu}_{\emptyset, \{2\}, \{1\}}+B^{\lambda, \mu}_{\emptyset, \{1,2\}, \emptyset}$.
			On the other hand, for the LG model $ (T(\Sigma_{\check{A}_1, \check{A}_2}), w(\Sigma_{2A_1, 2A_2}))$, we have 
			\[
			\begin{aligned}
				H_{(\mathrm{orb})}^{\lambda, \mu}(T(\Sigma_{\check{A}_1, \check{A}_2}), w(\Sigma_{2A_1, 2A_2}))&=H^{\lambda, \mu}(\mathrm{Tot}(-2E_{\check{\Delta}_1}-2E_{\check{\Delta}_2}), \check{g}_{1,\{1\}}+\check{g}_{1,\{2\}}) \\
				&\oplus H^{\lambda, \mu}(\mathrm{Tot}(-2E_{\check{\Delta}_1}-E_{\check{\Delta}_2}), \check{g}_{1,\{1\}}+\check{g}_{2,\{2\}})^{\{2\}^{(-)}} \\
				&\oplus H^{\lambda, \mu}(\mathrm{Tot}(-E_{\check{\Delta}_1}-2E_{\check{\Delta}_2}), \check{g}_{2,\{1\}}+\check{g}_{1,\{2\}})^{\{1\}^{(-)}} \\
				&\oplus H^{\lambda, \mu}(\mathrm{Tot}(-E_{\check{\Delta}_1}-E_{\check{\Delta}_2}), \check{g}_{2,\{1\}}+\check{g}_{2,\{2\}})^{\{1,2\}^{(-)}}.
			\end{aligned}
			\]
			Again, the dimension becomes  $\check{B}^{\lambda, \mu}_{\emptyset, \emptyset, \{1,2\}}+\check{B}^{\lambda, \mu}_{\{2\}, \emptyset, \{1\}}+\check{B}^{\lambda, \mu}_{\{1\}, \emptyset, \{2\}}+\check{B}^{\lambda, \mu}_{\{1,2\}, \emptyset, \emptyset}$.
By looking at the part where $|I^{(+)}|$ is even (equivalently $\lambda, \mu \in \mathbb{Z}$ in this case), for every $\lambda, \mu \in \mathbb{Q}$, we get
			\[
			B^{\lambda, \mu}_{\emptyset, \emptyset, \{1,2\}}+B^{\lambda, \mu}_{\emptyset, \{1,2\}, \emptyset}= \check{B}^{d+2-\lambda, \mu}_{\emptyset, \emptyset, \{1,2\}}+\check{B}^{d+2-\lambda, \mu}_{\{1,2\}, \emptyset, \emptyset}.
			\]
In fact, more symmetrically, 
			\[	B_{\emptyset, \emptyset, \{1,2\}}+B_{\emptyset, \{1,2\}, \emptyset}+{B}_{\emptyset, \emptyset, \{1,2\}}+{B}_{\{1,2\}, \emptyset, \emptyset}\]
			satisfies the mirror relation \eqref{e:mir}.
			
			One may also consider another Clarke dual pair $(\Sigma_{2A_1, A_2}, \Sigma_{\check{A}_1, 2\check{A}_2})$ and the induced LG models:
			\[
			(T(\Sigma_{2A_1, A_2}), w(\Sigma_{\check{A}_1, 2\check{A}_2})), \qquad (T(\Sigma_{\check{A}_1, 2\check{A}_2}), w(\Sigma_{2A_1, A_2})).
			\]
			Then it is easy to see that the sum
			\[
			B_{\emptyset, \emptyset, \{1,2\}}+B_{\{2\}, \{1\}, \emptyset}+{B}_{\emptyset, \emptyset, \{1,2\}}+{B}_{\{1\}, \{2\}, \emptyset}
			\]
			satisfies the mirror relation \eqref{e:mir}. By substracting one from the other, we eliminate the terms $B_{\emptyset, \emptyset, \{1,2\}}$, and get the sum $B_{\emptyset, \{1, 2\},\emptyset}-B_{\{2\}, \{1\}, \emptyset}-{B}_{\{1\}, \{2\}, \emptyset}+{B}_{\{1,2\},\emptyset, \emptyset}$ which satisfies the mirror relation \eqref{e:mir}. Combining with the Hodge number duality for $(\widehat{T}_\Delta, \widehat{T}_{\check{\Delta}})$ (Theorem \ref{t:toricmirror}), this proves Conjecture \ref{conj:HLLY}.
		\end{example}
		
		\begin{example}\label{eg:k=3}
			Let's examine the case when $k=3$. First, consider the mirror pair \[
            (\Sigma_{2A_1, 2A_2, 2A_3}, \Sigma_{\check{A}_1, \check{A}_2, \check{A}_3}).\] 
            Performing the similar computation as in the case $k=2$, we obtain the following sum that satisfies the mirror relation \eqref{e:mir}: 
\begin{equation}\label{e:k=3,p=0}
\begin{aligned}
    &B_{\emptyset, \{1,2,3\}, \emptyset}+B_{\emptyset, \{1\}, \{2,3\}}+B_{\emptyset, \{2\}, \{1,3\}}+B_{\emptyset, \{3\}, \{1,2\}}\\
    &+B_{ \{1\}, \emptyset,\{2,3\}}+B_{\{2\},\emptyset,  \{1,3\}}+B_{\{3\}, \emptyset, \{1,2\}}+B_{\{1,2,3\},\emptyset,  \emptyset}.
\end{aligned}
\end{equation}
	Note that this is the sum with $|I^{(+)}|$ being even, which requires the odd number of shiftings (i.e. $\lambda, \mu \in \mathbb{Z}+1/2$). On the other hand, from the Clarke pair $(\Sigma_{A_1, 2A_2, 2A_3}, \Sigma_{2\check{A}_1, \check{A}_2, \check{A}_3})$, we get
	\begin{equation}\label{e:k=3, p=1}
			\begin{aligned}
&B_{\{2,3\}, \{1\}, \emptyset}+B_{\emptyset, \{1\}, \{2,3\}}+B_{\emptyset, \{2\}, \{1,3\}}+B_{\emptyset, \{3\}, \{1,2\}}\\
&+B_{ \{1\}, \emptyset,\{2,3\}}+B_{\{2\},\emptyset,  \{1,3\}}+B_{\{3\}, \emptyset, \{1,2\}}+B_{\{1\}, \{2,3\}, \emptyset}.			 
			\end{aligned}
	\end{equation}
		By taking $J=\{2\}$ and $J=\{3\}$, we also obtain the similar sums where only the first and the last terms are different. Then by subtracting there three sums from the sum \eqref{e:k=3,p=0} multiplied by $3$, we will get the sum
			\[
            \begin{aligned}
                &B_{\emptyset, \{1,2,3\}, \emptyset}+B_{\{1,2,3\},\emptyset,  \emptyset}\\
                &-(B_{\{2,3\}, \{1\}, 0}+B_{\{1\}, \{2,3\}, 0})-(B_{\{1,3\}, \{2\}, 0}+B_{\{2\}, \{1,3\}, 0})-(B_{\{1,2\}, \{3\}, 0}+B_{\{3\}, \{1,2\}, 0})
            \end{aligned}
			\]
            that satisfies the mirror relation \eqref{e:mir}. Now, comparing with the Hodge number duality of $(\widehat{T}_\Delta, \widehat{T}_{\check{\Delta}})$, we can prove Conjecture \ref{conj:HLLY}: $B_{\emptyset, \{1,2,3\}, \emptyset}+B_{\{1,2,3\},\emptyset,  \emptyset}$ satisfies the mirror relation \eqref{e:mir}. Furthermore, subtracting this sum from \eqref{e:k=3,p=0}, we obtain the middle terms of \eqref{e:k=3,p=0} satisfies the mirror relation \eqref{e:mir}, so does $B_{\{2,3\}, \{1\}, \emptyset}+B_{\{1\}, \{2,3\}, \emptyset}$. Since the role of a partition of $\{1,2,3\}$ doesn't matter, we conclude that for $\{i,j,k\}=\{1,2,3\}$, the following sum satisfies the mirror relation \eqref{e:mir}:
			\begin{equation}\label{e:k=3, special}
				B_{\{i,j\}, \{k\}, \emptyset}+B_{\{k\}, \{i,j\}, \emptyset}.
			\end{equation}
        Plugging it back to \eqref{e:k=3,p=0}, the result follows. 
		\end{example}

		For the cases when $k \geq 4$, we do not need to keep track of all the indices of the summands for each $J$. Instead, we group all $J$'s with the same size together. For $|J| = p$ with $0 \leq p \leq \lfloor \frac{k}{2} \rfloor$, we group the summands $B$'s and denote
        $B_{a,b,c}=\sum_{|I^{(-)}|=a, |I^{(0)}|=b, |I^{(+)}|=c} B_{I^{(-)}, I^{(0)}, I^{(+)}}$ for any $a,b,c \geq 0$.

		\begin{lemma}
        The sum of all $B_{I^{(-)},I^{(0)},I^{(+)}}$ with $|J| = p$ and $|I^{(+)}| = |J \setminus (I^{(-)}\cup I^{(0)})| \in 2\mathbb{Z}$ is given by 
		\[
			T(k,p):=\sum_{n=0}^{p} \sum_{\substack{p-n \leq i \leq k-n, \\ i \in 2\mathbb{Z}}}\binom{i}{p-n}\left(B_{k-n-i,n,i}+B_{n,k-n-i,i}\right).\]
		\end{lemma}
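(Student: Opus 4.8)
The plan is to prove the identity by a purely combinatorial reorganization of the sum — essentially Fubini on a finite index set together with one elementary binomial count. There is no analytic or geometric content beyond what has already been assembled in \eqref{e:grping}--\eqref{e:mir}; in particular I will take for granted the decompositions of the two summands of \eqref{e:grping} into the pieces $B_{I^{(-)},I^{(0)},I^{(+)}}$ recorded there and in \eqref{e:B def}.

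First I would make the left-hand side precise. For a fixed $J\subseteq[k]$ with $|J|=p$, the combined orbifold cohomology in \eqref{e:grping} contributes, via the three-line display preceding \eqref{e:B def} and the dual indexing of \eqref{e:mir}, exactly the pieces $B_{I^{(-)},I^{(0)},I^{(+)}}$ with $I^{(0)}\subseteq J$, $I^{(-)}\subseteq J^{c}$, together with the pieces $B_{I^{(0)},I^{(-)},I^{(+)}}$ for the same pairs $(I^{(0)},I^{(-)})$, where throughout $I^{(+)}=[k]\setminus(I^{(0)}\sqcup I^{(-)})$. The quantity to be computed is the sum of all of these over all $J$ of size $p$, subject to $|I^{(+)}|\in2\mathbb{Z}$; since the same abstract piece $B_{A,B,C}$ can arise from several different such $J$, this sum is taken with that multiplicity.

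The key step is to exchange the order of summation. Rather than first choosing $J$, I would sum over ordered partitions $(A,B,C)$ of $[k]$ with $|C|$ even and, for each one, count the sets $J$ of size $p$ that produce $B_{A,B,C}$ from the first summand: these are exactly $J=B\sqcup S$ with $S\subseteq C$ and $|S|=p-|B|$, so there are $\binom{|C|}{p-|B|}$ of them; the same count, with the first two slots interchanged, governs the swapped contribution. Carrying this out and relabeling, the left-hand side becomes $\sum_{(A,B,C)}\binom{|C|}{p-|B|}\bigl(B_{A,B,C}+B_{B,A,C}\bigr)$, the sum ranging over ordered partitions of $[k]$ with $|C|$ even. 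Grouping by the sizes $(|A|,|B|,|C|)=(k-n-i,\,n,\,i)$ and using $\sum_{|A|=k-n-i,\,|B|=n,\,|C|=i}B_{A,B,C}=B_{k-n-i,n,i}$ (and its analogue for the reversed term), this equals $\sum_{n,i:\,i\in2\mathbb{Z}}\binom{i}{p-n}\bigl(B_{k-n-i,n,i}+B_{n,k-n-i,i}\bigr)$.

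It then remains only to match the ranges of summation with those in the definition of $T(k,p)$: the binomial $\binom{i}{p-n}$ vanishes unless $0\le p-n$ and $p-n\le i$, i.e.\ $0\le n\le p$ and $i\ge p-n$; the constraint $I^{(-)}\subseteq J^{c}$ (equivalently $|A|=k-n-i\le k-p$) is the same inequality $i\ge p-n$, so it is automatically enforced; and $|A|\ge 0$ gives $i\le k-n$. These are exactly the bounds in $T(k,p)$, and the identity follows. The ``hard part'' is nothing more than this bookkeeping: one must check that the two a priori separate conditions $I^{(0)}\subseteq J$ and $I^{(-)}\subseteq J^{c}$ collapse into the single range $p-n\le i\le k-n$, so that no admissible term is omitted or double-counted, and that the correct multiplicity is precisely $\binom{i}{p-n}$. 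As a consistency check one can verify the resulting formula against the explicit computation for $k=2$, $p=1$ carried out in Example~\ref{eg:k=2}.
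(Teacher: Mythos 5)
Your argument is correct and is essentially the same bookkeeping argument that the paper uses (the paper phrases it as "fix a partition $(I^{(-)},I^{(0)},I^{(+)})$ and count the $J$'s that see it," you phrase it as "sum over $J$ and reorganize by partition" — the two are the same Fubini step). In both cases the content is the observation that a set $J$ of size $p$ produces the piece $B_{A,B,C}$ from the first (resp.\ second) summand of \eqref{e:grping} precisely when $J=B\sqcup S$ (resp.\ $J=A\sqcup S$) with $S\subseteq C$, yielding the multiplicity $\binom{|C|}{p-|B|}$ (resp.\ $\binom{|C|}{p-|A|}$), after which grouping by sizes recovers the ranges in $T(k,p)$.
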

		
		\begin{proof}
        Fix $I^{(-)},I^{(+)}$ so that $|I^{(-)}| = n, |I^{(+)}| = k-n-i$. The coefficient of $B_{I^{(-)},I^{(0)},I^{(+)}}$ in $T(k,p)$ is equal to the number of subsets $J$ of $[k]$ of size $p$ containing $I^{(-)}$ but not $I^{(0)}$ or equivalently, the number of subsets of $[k]\setminus I^{(-)}\cup I^{(0)} = [k] \setminus I^{(+)}$ of size $p-n$, which is just ${i \choose p-n}$. The same argument holds with $n$ and $k-n-i$ exchanged.\end{proof}

		\begin{lemma}\label{l:Clrk identity}
			There are the binomial identities:
			\begin{enumerate}
				\item When $k$ is even, 
					\begin{equation}\label{e:Cl1}
					\sum_{p=0}^{\frac{k}{2}}(-1)^pT(k,p)-(-1)^{\frac{k}{2}}\frac{1}{2}T(k,k/2)=\sum_{a=0}^k(-1)^aB_{k-a,a,0}.
				\end{equation}
				\item When $k$ is odd, 
					\begin{equation}\label{e:Cl2}
					\sum_{p=0}^{\lfloor\frac{k}{2}\rfloor}(-1)^p(k-2p)T(k,p)=\sum_{a=0}^{\lfloor\frac{k}{2}\rfloor}(-1)^a(k-2a)(B_{k-a,a,0}+B_{a,k-a,0}).
				\end{equation}
			\end{enumerate}
		\end{lemma}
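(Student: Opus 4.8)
The plan is to treat each symbol $B_{a,b,c}$ with $a+b+c=k$ (and, ultimately, each $B_{I^{(-)},I^{(0)},I^{(+)}}$) as a free generator of a bigraded abelian group, so that both \eqref{e:Cl1} and \eqref{e:Cl2} become identities among the coefficients of these generators and hence pure combinatorial statements. The first step is a coefficient extraction: matching the two indices $(k-n-i,n,i)$ and $(n,k-n-i,i)$ occurring in the definition of $T(k,p)$ against a fixed triple $(a,b,c)$ shows that the coefficient of $B_{a,b,c}$ in $T(k,p)$ equals $\binom{c}{p-b}+\binom{c}{p-a}$, with the convention $\binom{c}{j}=0$ for $j\notin\{0,\dots,c\}$. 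Since $\sum_{p\ge 0}\binom{c}{p-b}x^p=x^b(1+x)^c$, this packages into a single generating function
\[
\mathcal{G}(x):=\sum_{p\ge 0}x^p\,T(k,p)=\sum_{\substack{a+b+c=k\\ c\text{ even}}}(x^a+x^b)(1+x)^c\,B_{a,b,c},
\]
a polynomial of degree $\le k$ in $x$; in particular $T(k,p)=0$ for $p\notin\{0,\dots,k\}$.

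The second ingredient is the symmetry $T(k,p)=T(k,k-p)$, which I would obtain by substituting $x\mapsto x^{-1}$ in $\mathcal{G}$ and multiplying by $x^k$: this sends $(x^a+x^b)(1+x)^c$ to $(x^{\,k-a-c}+x^{\,k-b-c})(1+x)^c=(x^b+x^a)(1+x)^c$, so $\sum_p x^{k-p}T(k,p)=\mathcal{G}(x)$. Using this symmetry together with the parity of $k$, the one-sided sums in the statement rewrite as halves of the full alternating sums. When $k$ is even, pairing $p$ with $k-p$ and isolating the middle term $p=k/2$ gives $\sum_{p=0}^{k/2}(-1)^pT(k,p)-\tfrac12(-1)^{k/2}T(k,k/2)=\tfrac12\sum_{p=0}^{k}(-1)^pT(k,p)=\tfrac12\,\mathcal{G}(-1)$. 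When $k$ is odd, the weight $k-2p$ changes sign and $(-1)^p$ contributes $(-1)^k=-1$ under $p\mapsto k-p$, so the two halves of $\sum_{p=0}^k(-1)^p(k-2p)T(k,p)$ coincide; combining this with $\sum_p(k-2p)x^pT(k,p)=k\mathcal{G}(x)-2x\mathcal{G}'(x)$ yields $\sum_{p=0}^{\lfloor k/2\rfloor}(-1)^p(k-2p)T(k,p)=\tfrac12\bigl(k\,\mathcal{G}(-1)+2\,\mathcal{G}'(-1)\bigr)$.

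It then remains to evaluate $\mathcal{G}$ and $\mathcal{G}'$ at $x=-1$. Since $(1+x)^c$ vanishes to order $c$ there, $\mathcal{G}(-1)$ sees only the $c=0$ summands, and so does $\mathcal{G}'(-1)$ — there are no $c=1$ terms because the sum ranges over even $c$, and for $c\ge 2$ each term of the derivative still carries a positive power of $(1+x)$. For $k$ even, $a+b=k$ forces $(-1)^a=(-1)^b$, so $\mathcal{G}(-1)=2\sum_{a+b=k}(-1)^aB_{a,b,0}$ and $\tfrac12\mathcal{G}(-1)=\sum_{a+b=k}(-1)^aB_{a,b,0}$, which is \eqref{e:Cl1} after the relabelling $a\leftrightarrow k-a$ (harmless since $k$ is even). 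For $k$ odd, $a+b=k$ forces $(-1)^b=-(-1)^a$, hence $\mathcal{G}(-1)=0$, while differentiating the $c=0$ terms gives $\mathcal{G}'(-1)=\sum_{a+b=k}(-1)^a(b-a)B_{a,b,0}$; thus the left side of \eqref{e:Cl2} equals $\mathcal{G}'(-1)$, and splitting this sum at $a=\lfloor k/2\rfloor$ and folding the upper half onto the lower via $a\mapsto k-a$ produces exactly $\sum_{a=0}^{\lfloor k/2\rfloor}(-1)^a(k-2a)\bigl(B_{k-a,a,0}+B_{a,k-a,0}\bigr)$. The effort here is entirely bookkeeping — pinning down the coefficient-extraction formula and the precise range of $p$, and tracking the parity-dependent signs in the folding — rather than anything deep; notably, the two different shapes of the statement (the $\tfrac12T(k,k/2)$ correction for even $k$ versus the $(k-2p)$ weight for odd $k$) are forced precisely by whether $(-1)^a+(-1)^b$ or $(-1)^a-(-1)^b$ survives the specialization $x=-1$, i.e.\ by whether the content sits in $\mathcal{G}$ or in $\mathcal{G}'$.
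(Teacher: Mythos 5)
Your proof is correct, and it takes a genuinely different route from the paper's. The paper argues directly at the level of coefficients: it fixes a triple $(n_0, i_0)$, writes down the coefficient of $B_{2m-n_0-i_0,n_0,i_0}$ in the left-hand side as an explicit sum of binomials, re-indexes using $\binom{i_0}{2m-n_0-p}=\binom{i_0}{p-(2m-n_0-i_0)}$ to fold the two halves together, and then invokes the vanishing $\sum_{q}(-1)^q\binom{i_0}{q}=0$ for $i_0>0$. Your approach repackages the same coefficient extraction $\binom{c}{p-b}+\binom{c}{p-a}$ into the generating polynomial $\mathcal{G}(x)=\sum_{c\ \mathrm{even}}(x^a+x^b)(1+x)^c\,B_{a,b,c}$ and then specializes at $x=-1$. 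This buys two things. First, the symmetry $T(k,p)=T(k,k-p)$, which the paper uses only implicitly through re-indexing, drops out cleanly from $x^k\mathcal{G}(x^{-1})=\mathcal{G}(x)$. Second, and more importantly, the structural reason for the two different shapes of the identity becomes transparent: the alternating binomial vanishing $(1-1)^c=0$ is precisely the order-$c$ zero of $(1+x)^c$ at $x=-1$, so for $k$ even the surviving $c=0$ content is exactly $\tfrac12\mathcal{G}(-1)$ (whence the $\tfrac12T(k,k/2)$ correction), whereas for $k$ odd one has $\mathcal{G}(-1)=0$ and the content sits in $\mathcal{G}'(-1)$, which is why the $(k-2p)$ weight — coming from $k\mathcal{G}-2x\mathcal{G}'$ — is needed. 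The evenness of $c$ is what keeps $\mathcal{G}'(-1)$ from picking up contamination from $c=1$; you flag this correctly. The two proofs rest on the same combinatorial facts, but yours isolates the mechanism more cleanly.
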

		 
		 \begin{proof}
         Both identities can be obtained by the well-known binomial identities. 
		 	\begin{enumerate}
		 	    \item Let $k=2m$ and fix $0 \leq n_0 \leq p \leq m$ and $0 \leq i_0 \leq 2m-n_0$. Then the coefficient of $B_{2m-n_0-i_0, n_0, i_0}$ of $T(2m, p)$ is given as 
                \[
                \binom{i_0}{2m-n_0-p}+\binom{i_0}{p-n_0}
                \]
                Therefore, the coefficient of  $B_{k-n_0-i_0, n_0, i_0}$ of the LHS in \eqref{e:Cl1} becomes
                \[
                (-1)^m\binom{i_0}{m-n_0}+\sum_{p=0}^{m-1}(-1)^p \left\{ \binom{i_0}{2m-n_0-p}+\binom{i_0}{p-n_0}\right\}
                \]
                When $i_0>0$, we replace the sign of $\binom{i_0}{2m-n_0-p}$ by $(-1)^p+(2m-2p)$. Then the sum becomes $(-1)^{n_0}\sum_{p=0}^{2m-n_0}(-1)^p \binom{i_0}{p}$. Since $i_0 \leq 2m-n_0$, this sum must vanish. When $i_0=0$, the only non-trivial summand is $\binom{i_0}{p-n_0}$ when $p=n_0$. 
                \item Let $k=2m+1$ and fix $0 \leq n_0 \leq p \leq m$ and $0 \leq i_0 \leq 2m-n_0$. Then the coefficient of $B_{2m+1-n_0-i_0, n_0, i_0}$ of $T(2m, p)$ is given as 
                \[
                \binom{i_0}{2m+1-n_0-p}+\binom{i_0}{p-n_0}
                \]
                 Therefore, the coefficient of  $B_{2m+1-n_0-i_0, n_0, i_0}$ of the LHS in \eqref{e:Cl2} becomes
                \[
               \sum_{p=0}^{m}(-1)^p(2m+1-2p) \left\{ \binom{i_0}{2m+1-n_0-p}+\binom{i_0}{p-n_0}\right\}
                \]
                Note that this summation is symmetric with respect to $i_0=2m-2n_0+1$. Since we only consider $i_0$ is even, it is enough to compute the case $i_0<2m-2n_0+1$. In fact, when $i_0>0$, this summation can be simplified to 
                \[(2m-2n_0-i_0)\sum_{a=0}^{i_0/2-1}(-1)^a\binom{i_0}{a} +\frac{2m-2n_0-i_0}{2}(-1)^{i_0/2}\binom{i_0}{\frac{i_0}{2}},\]
                which vanishes for the same reason in the first assertion. When $i_0=0$, the only non-trivial summand is $\binom{i_0}{p-n_0}$ when $p=n_0$. 
		 	\end{enumerate}
		 \end{proof}
		 
		 Using these binomial identities, we  prove the following theorem.
		 \begin{theorem}\label{t:hdual}
		 	For any $a,b \geq 0$, the sum 
		 	\[B_{a,b,0}+B_{b,a,0} 
		 	\]
		 	satisfies the mirror relation \eqref{e:mir}. In particular when $a=0, b=k$ or equivalently $a=k, b=0$, this implies Theorem \ref{t:HLLY}.
		 \end{theorem}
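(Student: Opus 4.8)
The plan is to prove Theorem \ref{t:hdual} by induction on $k$, using the binomial identities of Lemma \ref{l:Clrk identity} together with the base cases $k=2,3$ worked out in Examples \ref{eg:k=2} and \ref{eg:k=3}. The key structural inputs are: (i) for each size $p$ with $0 \le p \le \lfloor k/2\rfloor$, the orbifold cohomology of the Clarke pair attached to $(\Sigma_{2A_J,A_{J^c}},\Sigma_{\check A_J,2\check A_{J^c}})$ together with its ``transpose'' $(\Sigma_{A_J,2A_{J^c}},\ldots)$ yields, via Theorem \ref{t:Cdual}, that $T(k,p)$ (as defined in the Lemma) satisfies the mirror relation; this is because $T(k,p)$ is exactly the even-$|I^{(+)}|$ part of the sum \eqref{e:grping} summed over all $J$ of size $p$, and the mirror relation is additive; (ii) by Theorem \ref{t:toricmirror} and the remark following \eqref{e : restatement of definition of B}, $\sum_{I^{(0)}\cup I^{(-)}=[k]}B_{I^{(-)},I^{(0)},\emptyset} = B_{0,k,0}+B_{k,0,0}+\sum_{0<a<k}(\text{terms with }I^{(+)}=\emptyset)$ — wait, more precisely $\sum_{a+b=k}B_{a,b,0}$ — satisfies the mirror relation.

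Granting these, the argument runs as follows. First I would record that each $T(k,p)$ satisfies the mirror relation, for $p=0,\dots,\lfloor k/2\rfloor$ (and that $T(k,k/2)$ does when $k$ is even, hence so does $\tfrac12 T(k,k/2)$ since the relation is closed under scalar multiplication). Then I would form the specific signed linear combination appearing on the left-hand side of \eqref{e:Cl1} (when $k$ is even) or \eqref{e:Cl2} (when $k$ is odd). Since the mirror relation is preserved under $\mathbb{Z}$-linear — in fact $\mathbb{Q}$-linear — combinations, this combination satisfies the mirror relation. By Lemma \ref{l:Clrk identity} this combination equals $\sum_{a=0}^k(-1)^a B_{k-a,a,0}$ in the even case, respectively $\sum_{a=0}^{\lfloor k/2\rfloor}(-1)^a(k-2a)(B_{k-a,a,0}+B_{a,k-a,0})$ in the odd case. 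Now I would peel off terms using the inductive hypothesis: by induction, $B_{a,b,0}+B_{b,a,0}$ satisfies the mirror relation for every pair with $a+b<k$ (this needs care — one should instead induct on $k$ with the statement ``for all $a,b\ge 0$ with $a+b\le k$'', so that at stage $k$ we already know all $B_{a,b,0}+B_{b,a,0}$ with $a+b<k$, obtained by running the same argument for smaller values of $k$, noting that the objects $B_{a,b,0}$ only involve the covers with $I^{(+)}=\emptyset$ and their meaning does not change with $k$). Subtracting off all these known summands from $\sum_{a=0}^k(-1)^aB_{k-a,a,0}$ leaves exactly a nonzero multiple of $B_{k,0,0}+B_{0,k,0}$ in the even case (the middle term $B_{k/2,k/2,0}$, combined with itself, is $2B_{k/2,k/2,0}$ which is of the form $B_{a,b,0}+B_{b,a,0}$ with $a=b$, and is therefore also handled), and similarly in the odd case; hence $B_{k,0,0}+B_{0,k,0}$ satisfies the mirror relation.

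The step I expect to be the main obstacle is making the bookkeeping in the inductive peeling-off rigorous: one must be sure that after subtracting all the $B_{a,b,0}+B_{b,a,0}$ with $a+b<k$ (and, if needed, the diagonal term $2B_{k/2,k/2,0}$), the residual coefficient of $B_{k,0,0}+B_{0,k,0}$ in the identity of Lemma \ref{l:Clrk identity} is genuinely nonzero, so that the mirror relation for the sum forces it for $B_{k,0,0}+B_{0,k,0}$. In the even case the coefficient of $B_{k,0,0}$ (i.e. $a=0$) in $\sum_a(-1)^aB_{k-a,a,0}$ is $+1$ and of $B_{0,k,0}$ (i.e. $a=k$) is $(-1)^k=+1$; since all intermediate pairs cancel in symmetric pairs against known mirror-relation sums, the residue is $B_{k,0,0}+B_{0,k,0}$, nonzero. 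In the odd case the weights $(k-2a)$ at $a=0$ and $a=k$ are $+k$ and $(-1)^k(k-2k)=+k$ (using $k$ odd), again giving a nonzero multiple $k(B_{k,0,0}+B_{0,k,0})$ after cancellation, and dividing by $k$ completes the step. Once $B_{k,0,0}+B_{0,k,0}$ is known, substituting back into (the even-$|I^{(+)}|$ part of) the $J$-indexed Clarke dualities and using downward induction on $|I^{(+)}|$ recovers $B_{a,b,0}+B_{b,a,0}$ for all remaining $a,b$ with $a+b=k$, so the full statement of Theorem \ref{t:hdual} at level $k$ follows, closing the induction. Finally, specializing to $a=0$, $b=k$ and invoking Proposition \ref{p : double cover interpretation of the first factor} (which identifies $B_{[k],\emptyset,\emptyset}$ and $B_{\emptyset,[k],\emptyset}$ with the relevant Hodge numbers of $\widetilde T_\Delta$ and $T_\Delta$) together with \eqref{e : satisfies the mirror relation} yields Theorem \ref{t:HLLY}, i.e. Conjecture \ref{conj:HLLY}.
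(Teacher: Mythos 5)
Your outline correctly identifies the two binomial identities of Lemma \ref{l:Clrk identity} and the Clarke--duality input from Theorem \ref{t:toricmirror} as the main ingredients, but the ``peeling off'' step is where the argument breaks down, and the gap is genuine.

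The difficulty is that every summand of $\sum_{a=0}^{k}(-1)^a B_{k-a,a,0}$ satisfies $|I^{(-)}|+|I^{(0)}|=k$, because $B_{a,b,0}$ is \emph{defined} as a sum over partitions $[k]=I^{(-)}\sqcup I^{(0)}\sqcup I^{(+)}$ with $|I^{(+)}|=0$, forcing $a+b=k$. Thus none of the intermediate terms $B_{k-a,a,0}$ for $1\le a\le k-1$ has $a+b<k$, and your inductive hypothesis (``$B_{a,b,0}+B_{b,a,0}$ satisfies the mirror relation for $a+b<k$'') gives you nothing to subtract from the binomial-identity sum. Your parenthetical remark that ``the objects $B_{a,b,0}$ only involve the covers with $I^{(+)}=\emptyset$ and their meaning does not change with $k$'' is also false: $B_{a,b,0}$ depends intrinsically on the ambient $k$ (it sums over all size-$(a,b)$ partitions of $[k]$), and for $a+b<k$ the symbol has no meaning at fixed $k$. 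As written, your argument is circular: isolating $B_{k,0,0}+B_{0,k,0}$ from the binomial identity requires first knowing the mirror relation for the middle terms $B_{k-a,a,0}+B_{a,k-a,0}$, which all live at level $a+b=k$.

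What is missing is precisely the paper's Lemma \ref{l:clrk mir}, the coarsening reduction: $B_{I_1,I_2,\emptyset}$ for a $k$-part nef partition can be related, up to a degree shift (\eqref{e : contraction relation}), to $B_{\sum I_1, I_2,\emptyset}$ for a coarser nef partition with fewer parts, and summing cleverly over $I_1,I_2$ and subtracting $\binom{a+b}{a}(B_{1,1,0}+B_{1,1,0})$ (known from the $k=2$ base case) proves the mirror relation for $B_{a,b,0}+B_{b,a,0}$ with $a,b\ge 2$ from the inductive hypothesis on partitions with fewer parts. Only after this are the two ``boundary'' cases $B_{k,0,0}+B_{0,k,0}$ and $B_{k-1,1,0}+B_{1,k-1,0}$ accessible, and one must then solve a $2\times 2$ linear system coming from the binomial identity together with $B_{\widehat T}=\sum_a B_{k-a,a,0}$ (Theorem \ref{t:toricmirror}) to separate them. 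Your ``downward induction on $|I^{(+)}|$'' at the end does not substitute for this: nothing in the binomial identities lets you isolate $B_{k,0,0}+B_{0,k,0}$ before the $a,b\ge 2$ cases are known.
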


         Recall that an element $i$ in $I_1$ or $I_2$ corresponds to a set of integral points $A_i$. For $j=1,2$, we introduce a singleton $\sum I_j$ that corresponds to the union of $A_i$'s over all $i \in I_j$. For example, $B_{\sum I_2, \sum I_1, \emptyset}+B_{\sum I_1, \sum I_2, \emptyset}$ corresponds to the coarser nef partition $\Delta=\Delta_1' \cup \Delta_2'$ and $\check{\Delta}=\check{\Delta}_1' \cup \check{\Delta}_2'$ where $\Delta'_1=\cup_{i \in I_1} \Delta_i$,$\Delta'_2=\cup_{i \in I_2}\Delta_i$, $\check{\Delta}'_1=\cup_{i \in I_1} \check{\Delta}_i$ and $\check{\Delta}'_2=\cup_{i \in I_2}\check{\Delta}_i$, with the previous choice of projective unimodular triangulations. A crucial observation, which can be deduced easily, e.g. from \eqref{e : restatement of definition of B}, is that 
         \begin{equation}\label{e : contraction relation}
         B_{I^{(-)},I^{(0)},I^{(+)}} = B_{I^{(-)},\sum I^{(0)},I^{(+)}}((|I^{(0)}|-1)/2).
         \end{equation}
         Here, the $(-)$ notation is as in \eqref{e: shifting degrees}. 
		 \begin{lemma}\label{l:clrk mir}
		 	Suppose that Theorem \ref{t:hdual} holds for all nef partitions of all reflexive polytopes $\Delta = \Delta_1\cup\dots \cup \Delta_N$ and all $a+b \leq N$. Then the sum $B_{a,b,0}+B_{b,a,0}$ satisfies the mirror relation \eqref{e:mir} for $a+b \leq N+1$ and $a,b \geq 2$. 
		 \end{lemma}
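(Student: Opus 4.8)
The plan is to reduce each constituent of $B_{a,b,0}+B_{b,a,0}$ to a two-block coarsening of the nef partition, where the inductive hypothesis is applied in the symmetric base case $a'=b'=1$. If $a+b\le N$ the statement is exactly Theorem \ref{t:hdual} for the nef partition at hand, so it suffices to treat $a+b=N+1=:k$; since $a,b\ge 2$ we then have $k\ge 4$ and $N=k-1\ge 3$. Unfolding the grouped symbols, $B_{a,b,0}=\sum B_{I^{(-)},I^{(0)},\emptyset}$ over partitions $[k]=I^{(-)}\sqcup I^{(0)}$ with $|I^{(-)}|=a$, $|I^{(0)}|=b$; writing $P=I^{(-)}$, $Q=I^{(0)}$ and using that the $b$-subsets of $[k]$ are exactly the complements of the $a$-subsets,
\[
B_{a,b,0}+B_{b,a,0}=\sum_{\substack{[k]=P\sqcup Q\\ |P|=a,\ |Q|=b}}\bigl(B_{P,Q,\emptyset}+B_{Q,P,\emptyset}\bigr),
\]
so it is enough to show that each summand $B_{P,Q,\emptyset}+B_{Q,P,\emptyset}$ satisfies the mirror relation.

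Fix an ordered pair $(P,Q)$ and let $\Delta=\Delta'_P\cup\Delta'_Q$, with dual $\check\Delta=\check\Delta'_P\cup\check\Delta'_Q$, be the coarser two-block nef partition obtained by amalgamating the blocks indexed by $P$ and by $Q$, carrying the triangulations induced from the chosen projective unimodular ones, exactly as in the discussion preceding the lemma. Applying the contraction relation \eqref{e : contraction relation} to the twisted-sector index and, by the same reasoning (using the description \eqref{e : restatement of definition of B} together with Proposition \ref{p : double cover interpretation of the first factor}), also to the coinvariant index, I obtain
\[
B_{P,Q,\emptyset}+B_{Q,P,\emptyset}=B'_{1,1,0}\Bigl(\tfrac{k}{2}-1\Bigr),
\]
where $B'_{1,1,0}=B'_{\sum P,\sum Q,\emptyset}+B'_{\sum Q,\sum P,\emptyset}$ is the corresponding group for the two-block partition, $\sum P$ and $\sum Q$ denoting its amalgamated indices. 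The identical identity holds after replacing $\Delta$ and $B$ by $\check\Delta$ and $\check B$; since $d+2-\lambda'$ equals $d+k-\lambda$ once the degrees are shifted by $\tfrac{k}{2}-1$, the contraction identifies the mirror relation for $B_{P,Q,\emptyset}+B_{Q,P,\emptyset}$ (relative to the $k$-block partition) with the mirror relation for $B'_{1,1,0}$ (relative to the two-block coarsening).

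To finish, I would apply the inductive hypothesis to $\Delta=\Delta'_P\cup\Delta'_Q$: it has $2\le N$ blocks, so Theorem \ref{t:hdual} holds for it, and with $a'=b'=1$ (hence $a'+b'=2\le N$) it gives that $B'_{1,1,0}+B'_{1,1,0}=2B'_{1,1,0}$ satisfies the mirror relation, whence so does $B'_{1,1,0}$ (the relation is preserved under scalar multiplication). By the preceding step each $B_{P,Q,\emptyset}+B_{Q,P,\emptyset}$ satisfies the mirror relation, and summing over the $(P,Q)$ (preservation under addition) yields the claim for $B_{a,b,0}+B_{b,a,0}$.

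The step I expect to require the most care is the contraction in the second paragraph: one must verify that \eqref{e : contraction relation} is available for the coinvariant slot as well as for the twisted-sector slot, that the two-block coarsening of $\check\Delta$ is the polar dual of the coarsening of $\Delta$ so that the Clarke-dual structure and the induced triangulations are respected on both sides, and that these identifications intertwine the rational shift $\tfrac{k}{2}-1$ with the shift of the mirror degree from $d+k-\lambda$ to $d+2-\lambda$. This is bookkeeping rather than anything deep; once it is in place, the lemma follows from a single invocation of the hypothesis in the case $(a',b')=(1,1)$ for two-block partitions.
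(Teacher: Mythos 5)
Your reduction to the two-block coarsening does not go through: the claimed identity $B_{P,Q,\emptyset}+B_{Q,P,\emptyset}=B'_{1,1,0}\left(\tfrac{k}{2}-1\right)$ is false. The contraction relation \eqref{e : contraction relation} for the twisted-sector slot shifts by $+\tfrac{|I^{(0)}|-1}{2}$, whereas amalgamating the coinvariant slot (via \eqref{e : restatement of definition of B} together with Proposition \ref{p : double cover interpretation of the first factor}) shifts by $-\tfrac{|I^{(-)}|-1}{2}$: the two slots enter \eqref{e : restatement of definition of B} through the twist $\lambda-\tfrac{|I^{(0)}|-|I^{(-)}|}{2}$ with \emph{opposite} signs, so the two contractions pull in opposite directions. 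Carrying this through with $|P|=a$ and $|Q|=b$, one finds $B_{P,Q,\emptyset}=B'_{\sum P,\sum Q,\emptyset}\left(\tfrac{b-a}{2}\right)$ and $B_{Q,P,\emptyset}=B'_{\sum Q,\sum P,\emptyset}\left(\tfrac{a-b}{2}\right)$. The shifts $\pm\tfrac{b-a}{2}$ differ unless $a=b$, and in any case they never equal the value $\tfrac{k}{2}-1=\tfrac{a+b}{2}-1$ that would be needed to transport the two-block relation at $d+2-\lambda'$ to the $k$-block relation at $d+k-\lambda$ when $a,b\ge 2$. Consequently $B_{P,Q,\emptyset}+B_{Q,P,\emptyset}$ is not a uniform shift of $B'_{1,1,0}$, and the mirror relation for the two-block coarsening does not transfer.

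This sign discrepancy is exactly what the paper's argument is built to circumvent, and it is not mere bookkeeping. Instead of collapsing both slots at once, the paper adds the auxiliary term $\left(B_{\sum I_2,\sum I_1,\emptyset}+B_{\sum I_1,\sum I_2,\emptyset}\right)\left(\tfrac{a+b}{2}-1\right)$ and reassociates the four summands crosswise so that each resulting pair contracts, using \eqref{e : contraction relation} alone (one slot amalgamated, no sign clash), into an $(a+1)$-block or $(b+1)$-block partition. Since $a,b\ge 2$, both $a+1$ and $b+1$ are at most $N$, and the inductive hypothesis is invoked there with $\{a',b'\}=\{1,a\}$ or $\{1,b\}$, \emph{not} with $\{a',b'\}=\{1,1\}$ for a two-block partition as you propose. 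Summing over $(I_1,I_2)$ and subtracting the auxiliary two-block contribution (handled by Example \ref{eg:k=2}) recovers the statement. The extra step missing from your proposal is precisely the insertion of this two-block correction term, which lets one trade between the $I^{(-)}$ and $I^{(0)}$ slots without ever collapsing both to singletons.
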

		 
		 \begin{proof}
		 	 Consider the sum $B_{I_1, I_2, \emptyset}+B_{I_2, I_1, \emptyset}$ for $|I_1|=a, |I_2|=b$ and $a+b \leq N+1$. By adding $(B_{\sum I_2, \sum I_1, \emptyset}+B_{\sum I_1, \sum I_2, \emptyset})(|I_1\cup I_2|/2 -1)$ to $B_{I_1, I_2, \emptyset}+B_{I_2, I_1, \emptyset}$ and then resumming, we obtain the following:
		 	 \begin{equation}\label{eq: summation formula}
             \left(B_{\sum I_2, \sum I_1, \emptyset}(|I_1\cup I_2|/2 -1) + B_{I_1, I_2, \emptyset}\right)+\left(B_{I_2, I_1, \emptyset}+B_{\sum I_1, \sum I_2, \emptyset}(|I_1\cup I_2|/2 -1)\right).
		 	 \end{equation}
		 	 Since $B_{I_1, I_2, \emptyset}$ can be viewed as $B_{I_1, \sum I_2, \emptyset}$ up to the degree shift as in \ref{e : contraction relation}, we can reduce the sum $(B_{\sum I_2, \sum I_1, \emptyset}+ B_{I_1, I_2, \emptyset})$ to the case $N=a+1$. The parallel argument applies to the other term $(B_{I_2, I_1, \emptyset}+B_{\sum I_1, \sum I_2, \emptyset})$. Therefore, \eqref{eq: summation formula} is equal to
             \begin{equation}
             \left(B_{\sum I_2, I_1, \emptyset}+ B_{I_1, \sum I_2, \emptyset}\right)(|I_2|/2 -1/2)+\left(B_{I_2,\sum I_1, \emptyset}+B_{\sum I_1, I_2, \emptyset}\right)(|I_1|/2 -1/2).
             \end{equation}
             The hypothesis of the Lemma implies that 
             \[
\sum_{|I_1| =a,|I_2|=b}\left[\left(B_{\sum I_2, I_1, \emptyset}+ B_{I_1, \sum I_2, \emptyset}\right)(|I_2|/2 -1/2)+\left(B_{I_2,\sum I_1, \emptyset}+B_{\sum I_1, I_2, \emptyset}\right)(|I_1|/2 -1/2)\right]
             \]
             satisfies the mirror relation \eqref{e:mir}. We may also take the sum of expressions in \eqref{eq: summation formula} over all $|I_1| = a, |I_2| = b$. This sum is equal to $B_{a,b} + B_{b,a} + \binom{a+b}{a}B_{1,1,0}$. We showed that  $\binom{a+b}{a}B_{1,1,0}$ satisfies the mirror relation \eqref{e:mir} in Example \ref{eg:k=2}, so it follows that $B_{a,b,0}+B_{b,a,0}$ satisfies the mirror relation. 
		 \end{proof}

         \begin{example}
             Consider the sum $B^{\lambda, \mu}_{\{1,2\}, \{3,4\}, \emptyset}+B^{\lambda, \mu}_{\{3,4\}, \{1,2\}, \emptyset}$. From Example \ref{eg:k=2} and Example \ref{eg:k=3}, we get 
             \[
             \begin{aligned}
                 &\left(B^{\lambda, \mu}_{\{1,2\}, \{3,4\}, \emptyset}+B^{\lambda, \mu}_{\{3,4\}, \{1,2\}, \emptyset}\right)+\left(B^{\lambda-1, \mu-1}_{\{1+2\}, \{3+4\}, \emptyset}+B^{\lambda-1, \mu-1}_{\{3+4\}, \{1+2\}, \emptyset}\right) \\
                 &=\left(B^{\lambda-\frac{1}{2}, \mu-\frac{1}{2}}_{\{1,2\}, \{3+4\}, \emptyset}+B^{\lambda-\frac{1}{2}, \mu-\frac{1}{2}}_{\{3+4\}, \{1,2\}, \emptyset}\right)+\left(B^{\lambda-\frac{1}{2}, \mu-\frac{1}{2}}_{\{1+2\}, \{3,4\}, \emptyset}+B^{\lambda-\frac{1}{2}, \mu-\frac{1}{2}}_{\{3,4\}, \{1+2\}, \emptyset}\right)
             \end{aligned}
             \]
             where $\{1+2\}=\sum\{1,2\}$ and $\{3+4\}=\sum\{3,4\}$ are the singletons introduced in the proof of Lemma \ref{l:clrk mir}; the expressions $1+2$ and $3+4$ are to be understood as formal sums. Applying the mirror relations for $k=3$ (see \eqref{e:k=3, special}) and a similar computation, this sum is equal to 
             \[
             \begin{aligned}
                 &\left(\check{B}^{d+3-\left(\lambda-\frac{1}{2}\right), \mu-\frac{1}{2}}_{\{1,2\}, \{3+4\}, \emptyset}+\check{B}^{d+3-\left(\lambda-\frac{1}{2}\right), \mu-\frac{1}{2}}_{\{3+4\}, \{1,2\}, \emptyset}\right)+\left(\check{B}^{d+3-\left(\lambda-\frac{1}{2}\right), \mu-\frac{1}{2}}_{\{1+2\}, \{3,4\}, \emptyset}+\check{B}^{d+3-\left(\lambda-\frac{1}{2}\right), \mu-\frac{1}{2}}_{\{3,4\}, \{1+2\}, \emptyset}\right) \\
                 &= \left(\check{B}^{d+4-\lambda, \mu}_{\{1,2\}, \{3,4\}, \emptyset}+\check{B}^{d+4-\lambda, \mu}_{\{3,4\}, \{1,2\}, \emptyset}\right)+\left(\check{B}^{d+2-(\lambda-1), \mu-1}_{\{1+2\}, \{3+4\}, \emptyset}+\check{B}^{d+2-(\lambda-1), \mu-1}_{\{3+4\}, \{1+2\}, \emptyset}\right)
             \end{aligned}
             \]
             Since we have the mirror relation \eqref{e:mir} for $B^{\lambda-1, \mu-1}_{\{1+2\}, \{3+4\}, \emptyset}+B^{\lambda-1, \mu-1}_{\{3+4\}, \{1+2\}, \emptyset}$, we conclude that the mirror relation \eqref{e:mir} holds for the sum $B^{\lambda, \mu}_{\{1,2\}, \{3,4\}, \emptyset}+B^{\lambda, \mu}_{\{3,4\}, \{1,2\}, \emptyset}$.
         \end{example}
        The proof of Theorem \ref{t:hdual} then reduces to proving that the mirror relation holds for $B_{N+1,0,0} + B_{0,N+1,0}$ and $B_{N,1,0} + B_{1,N,0}$ for all $N$. This can be deduced from Lemma \ref{l:Clrk identity} as argued below.
         
		 \begin{proof}[Proof of Theorem \ref{t:hdual}]
		 	We proceed by induction on $a+b$. When $a + b \leq 3$, the result has already been proved in Examples~\ref{eg:k=2} and~\ref{eg:k=3}. Suppose that the mirror relation~\eqref{e:mir} holds for all $a + b \leq N$. Then Theorem~\ref{t:toricmirror} provides the mirror relation for
		 	\begin{equation}\label{e:compl}
		 		B_{\widehat{T}}:=\sum_{a=0}^{N+1} B_{N+1-a,a,0}.
		 	\end{equation}
When $N+1$ is even, it follows from Lemma \ref{l:Clrk identity} that the sum $\Sigma_{a=0}^{N+1}(-1)^a B_{N+1-a,a,0}$ satisfies the mirror relation \eqref{e:mir}. By adding and subtracting $B_{\widehat{T}}$ in \eqref{e:compl} to this, one can see that the following sums satisfy the mirror relation \eqref{e:mir}:
		 		\[2\sum_{a=0;even}^k B_{N+1-a,a,0},\qquad 2\sum_{a=0;odd} B_{N+1-a,a,0}.
		 		\]
		 		By applying Lemma \ref{l:clrk mir} and the inductive hypothesis, we see that the sums $B_{N+1, 0,0}+B_{0,N+1,0}$ and $B_{N,1,0}+B_{1,N,0}$ satisfy the mirror relation \eqref{e:mir}.

		 		 When $N+1$ is odd, it follows from Lemma \ref{l:Clrk identity} that the sum $\Sigma_{a=0}^{\lfloor \frac{N+1}{2} \rfloor}(-1)^a(N+1-2a)(B_{N+1-a,a,0}+B_{a,N+1-a,0})$ satisfies the mirror relation \eqref{e:mir}. By Lemma \ref{l:clrk mir} and the inductive hypothesis, we may deduce that the mirror relation \eqref{e:mir} holds for 
		 		\[(N+1)(B_{N+1,0,0}+B_{0,N+1,0})-(N-1)(B_{N,1,0}+B_{1,N+1,0}).
		 		\]
		  Applying the same argument to the sum $B_{\widehat{T}}$ in \eqref{e:compl}, the mirror relation \eqref{e:mir} holds for 
		 	\[(B_{N+1,0,0}+B_{0,N+1,0})+(B_{N,1,0}+B_{1,N+1,0}).
		 	\]
		 Combining these two sums, we obtain the mirror relation for both $B_{N+1,0,0} + B_{0,N+1,0}$ and $B_{N,1,0} + B_{1,N,0}$.
		 \end{proof}
    
    \begin{remark}
    As one may notice, the Hodge number identities used in the proof of Theorem~\ref{t:HLLY} are not optimal: We only consider the sum over terms for which $|I^{(+)}|$ is even and we work with the weaker form given in~\eqref{e:grping}. This suggests that several interesting identities remain unexplored. To keep the paper concise and well-organized, we do not pursue these further and leave them to the reader.
    \end{remark}

\bibliography{hom}
\bibliographystyle{abbrv}
\end{document}